\pgfplotsset{compat=newest}
\newtheorem{theorem}{Theorem}
\newtheorem{corollary}[theorem]{Corollary}
\newtheorem{lemma}[theorem]{Lemma}
\newtheorem{proposition}[theorem]{Proposition}
\theoremstyle{remark} 
\newtheorem{remark}[theorem]{Remark}
\theoremstyle{definition} 
\newtheorem{definition}[theorem]{Definition}
\newtheorem{example}[theorem]{Example}
\numberwithin{theorem}{section}
\numberwithin{equation}{section}
\def\N{{\mathbb N}}
\def\R{{\mathbb R}}
\newcommand{\om}{\omega}
\renewcommand{\vec}[1]{\bm{#1}}
\newcommand{\Dom}{\mathcal{O}}
\newcommand{\BDom}{\partial\Dom}
\newcommand{\Dir}{{\rm Dir}}
\newcommand{\BC}{{\rm BC}}
\newcommand{\DD}{\Delta_{\Dir}}
\newcommand{\mc}{\mathcal}
\newcommand{\mrm}{\mathrm}
\newcommand{\Hinf}{H^{\infty}}
\newcommand{\RR}{\mathbb{R}}
\newcommand{\CC}{\mathbb{C}}
\newcommand{\NN}{\mathbb{N}}
\newcommand{\OO}{\mathcal{O}}
\newcommand{\WW}{\mathbb{W}}
\renewcommand{\SS}{\mathcal{S}}
\newcommand{\PP}{\mathcal{P}}
\newcommand{\II}{\mathcal{I}}
\newcommand{\half}{\frac{1}{2}}
\newcommand{\RRdh}{\RR^d_+}
\newcommand{\RRd}{\RR^d}
\newcommand{\Cc}{C_{\mathrm{c}}}
\renewcommand{\d}{\partial}
\newcommand{\del}{\Delta}
\newcommand{\grad}{\nabla}
\newcommand{\delDir}{\del_{\operatorname{Dir}}}
\newcommand{\delNeu}{\del_{\operatorname{Neu}}}
\newcommand{\odd}{\operatorname{odd}}
\newcommand{\dist}{\operatorname{dist}}
\newcommand{\eps}{\varepsilon}
\newcommand{\ph}{\varphi}
\newcommand{\gam}{\gamma}
\newcommand{\cDir}{{\rm c,Dir}}
\newcommand{\cNeu}{{\rm c,Neu}}
\newcommand{\Neu}{{\rm Neu}}
\newcommand{\Tr}{\operatorname{Tr}}
\newcommand{\loc}{{\rm loc}}
\renewcommand{\b}{{\rm b}}
\newcommand{\cl}{{\rm cl}}
\renewcommand{\tilde}[1]{\widetilde{#1}}
\newcommand{\cir}[1]{\accentset{\circ}{#1}}
\DeclarePairedDelimiter\abs{\lvert}{\rvert}
\DeclarePairedDelimiter\cbrace\{\}
\DeclarePairedDelimiter{\nrm}\lVert\rVert
\newcommand{\hab}[1]{\bigl(#1\bigr)}
\newcommand{\cbraceb}[1]{\bigl\{#1\bigr\}}
\newcommand{\bracb}[1]{\bigl[#1\bigr]}
\newcommand{\has}[1]{\Bigl(#1\Bigr)}
\newcommand{\cbraces}[1]{\Bigl\{#1\Bigr\}}
\newcommand{\dd}{\hspace{2pt}\mathrm{d}}
\newcommand{\ii}{{\rm i}}
\DeclareMathOperator{\ind}{\mathbf{1}}
\DeclareMathOperator{\UMD}{UMD}
\DeclareMathOperator{\BIP}{BIP}
\DeclareMathOperator{\id}{id}
\begin{document}

\title[Functional calculus for the Laplacian on rough domains]{Functional calculus on weighted Sobolev spaces for the Laplacian on rough domains}

\author[N. Lindemulder]{Nick Lindemulder}
\address[Nick Lindemulder]{}
\email{nick.lindemulder@gmail.com}

\author[E. Lorist]{Emiel Lorist}
\address[Emiel Lorist]{Delft Institute of Applied Mathematics\\
Delft University of Technology \\ P.O. Box 5031\\ 2600 GA Delft\\The
Netherlands} \email{e.lorist@tudelft.nl}

\author[F.B. Roodenburg]{Floris B. Roodenburg}
\address[Floris Roodenburg]{Delft Institute of Applied Mathematics\\
Delft University of Technology \\ P.O. Box 5031\\ 2600 GA Delft\\The
Netherlands} \email{f.b.roodenburg@tudelft.nl}

\author[M.C. Veraar]{Mark C. Veraar}
\address[Mark Veraar]{Delft Institute of Applied Mathematics\\
Delft University of Technology \\ P.O. Box 5031\\ 2600 GA Delft\\The
Netherlands} \email{m.c.veraar@tudelft.nl}

\makeatletter
\@namedef{subjclassname@2020}{%
  \textup{2020} Mathematics Subject Classification}
\makeatother

\subjclass[2020]{Primary: 47A60; Secondary: 35K20, 46E35}

\keywords{Functional calculus, Laplace operator, weights, maximal regularity, rough domains}

\thanks{The first author is supported by the grant OCENW.KLEIN.358 of the Dutch Research Council (NWO). The second author is partially supported by the Academy of Finland through grant no. 336323 and partially supported by the VENI grant VI.Veni.242.057 of the NWO. The third and fourth authors are supported by the VICI grant VI.C.212.027 of the NWO}

\begin{abstract}
We study the Laplace operator on domains subject to Dirichlet or Neumann boundary conditions. We show that these operators admit a bounded $\Hinf$-functional calculus on weighted Sobolev spaces, where the weights are powers of the distance to the boundary. 
Our analysis applies to bounded $C^{1,\lambda}$-domains with $\lambda\in[0,1]$, revealing a crucial trade-off: lower domain regularity can be compensated by enlarging the weight exponent. As a primary consequence, we establish maximal regularity for the corresponding heat equation. This extends the well-posedness theory for parabolic equations to domains with minimal smoothness, where classical methods are inapplicable. 
\end{abstract}

\maketitle

\setcounter{tocdepth}{1}
\tableofcontents

\section{Introduction}\label{sec:intro}
This paper contributes to the extensive study of the Laplace operator on domains with minimal boundary regularity (often referred to as rough domains), see, e.g., \cite{JK81, JK89, JK95, Ver84, Wo07} and the monographs \cite{Gr85, MS09} and references therein. In particular, we are interested in the $\Hinf$-functional calculus for the Laplacian on inhomogeneous weighted Sobolev spaces. 
The $\Hinf$-functional calculus provides a powerful framework for establishing well-posedness and regularity results for (possibly nonlinear) partial and stochastic partial differential equations ((S)PDEs).
Therefore, the $\Hinf$-calculus for sectorial operators is widely studied, see for instance \cite{DHP03, HNVW17, HNVW24, KW04} and the references therein. Applications to PDEs and SPDEs can, e.g., be found in \cite{DDHPV04, DK13, KKW06, KW17, PrSi, We06} and \cite{AV22, AV25, NVW12, NVW15b}, respectively.\\

Given a bounded $C^2$-domain $\OO\subseteq \R^d$, it is well known that the Laplacian with Dirichlet boundary conditions on $L^p(\OO)$ with $p\in(1,\infty)$ and domain $W^{2,p}(\OO)\cap W^{1,p}_0(\OO)$ generates an analytic $C_0$-semigroup, has the maximal regularity property and admits a bounded $\Hinf$-functional calculus. However, if the regularity of $\OO$ is too low (say Lipschitz or $C^1$), these properties fail and explicit counterexamples can be constructed, see \cite{Da79_Green, MS09}. In such counterexamples, the derivatives of the solutions to the resolvent equation
\begin{align*}
\lambda u-\Delta u &= f, \\
u|_{\partial \OO} &= 0,
\end{align*}
 can drastically blow up near the boundary $\d\OO$. As a consequence,
the canonical domain of the Dirichlet Laplacian on $L^p(\OO)$ is no longer a closed subspace of $W^{2,p}(\OO)$. 
Moreover, if one is interested in higher-order Sobolev regularity of the solution $u$, then more smoothness of $\OO$ is needed (see \cite{Ev10, KrBook08}), and 
additional boundary conditions for the data $f$ (compatibility conditions) need to be imposed (see \cite{DD11}). These additional boundary conditions for the data occur, in particular, in the study of mixed-order systems (see \cite{DS15}).

To set up a satisfying well-posedness and regularity theory for PDE \emph{without} such additional regularity or compatibility conditions, one can use a weighted function space for the solution $u$. 
In particular, one can consider spatial weights of the form $w_{\gam}^{\d\OO}(x):=\operatorname{dist}(x, \d\OO)^\gam$ for some suitable $\gam\in\RR$, which compensate the blow-up of the derivatives of the solution near $\d\OO$ and relax compatibility conditions. Partial differential equations on weighted spaces have already been studied extensively, see for instance \cite{DK15,DK18,DKZ16,Ki08, KL13,Kr99b,Kr01, Ne12} for deterministic equations and \cite{KK18,Ki04_divform,Ki04_varcoef, Kr94b, KL99} for stochastic equations.  \\

As stated, we are interested in the $\Hinf$-functional calculus for the Laplacian on inhomogeneous weighted Sobolev spaces of order $k \in \N_0$. This was studied in \cite{LLRV24, LV18} for the Dirichlet and Neumann Laplacian on the half-space $\R^d_+$. In the present paper, we extend the results to bounded domains $\OO$ with minimal smoothness, while ensuring that the canonical domain of the Laplacian is a closed subspace of a weighted Sobolev space of order $k+2$.

 Our main result for the Dirichlet Laplacian is as follows, see Theorems \ref{thm:Dirichlet_Laplacian} and \ref{thm:scalarDir}. For the definition of the involved spaces, the reader is referred to Section \ref{sec:weighted_Sob_spaces}.

\begin{theorem}[$\Hinf$-calculus for the Dirichlet Laplacian]\label{thm:introDir}
  Let $p\in(1,\infty)$, $k\in\NN_0$, $\lambda\in [0,1]$ and  $\gam\in (-1, 2p-1)\setminus\{p-1\}$. Furthermore, suppose that
\[ \lambda > 1- \tfrac{\gam+1}{p}\qquad \text{or, equivalently} \qquad \gam> (1-\lambda)p-1
\]
and  $\OO$ is a bounded $C^{1,\lambda}$-domain.
Then for all $\mu\geq 0$ the operator 
\begin{equation*}
    \mu-\delDir \quad \text{ on }\quad W^{k,p}(\OO, w_{\gam+kp}^{\d\OO}) \quad\text{ with }\quad D(\delDir)= W^{k+2,p}_\Dir(\OO, w_{\gam+kp}^{\d\OO})
\end{equation*}
has a bounded $\Hinf$-calculus of angle zero.
\end{theorem}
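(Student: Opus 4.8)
The strategy is to reduce the bounded-domain statement to the known half-space results of \cite{LLRV24, LV18} via localization and a perturbation argument, exploiting the trade-off between domain regularity and weight exponent encoded in the hypothesis $\gamma > (1-\lambda)p - 1$. The plan is as follows.

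\emph{Step 1: Localization.} Cover $\overline{\OO}$ by finitely many open sets $U_0, U_1, \dots, U_N$, where $U_0 \Subset \OO$ is interior and each $U_j$ ($j \geq 1$) meets $\partial\OO$ and, after a rigid motion, is the graph region of a $C^{1,\lambda}$-function $\phi_j$. Fix a subordinate partition of unity $\{\chi_j\}$. On the interior patch the weight $w_{\gam+kp}^{\d\OO}$ is comparable to a constant, so $\mu - \delDir$ there is just a uniformly elliptic operator on an unweighted Sobolev space, for which the bounded $\Hinf$-calculus is classical (e.g.\ via \cite{DHP03, KW04}). On each boundary patch $U_j$, straighten the boundary by the $C^{1,\lambda}$-diffeomorphism $\Phi_j$ associated to $\phi_j$, which maps $U_j \cap \OO$ to a piece of $\R^d_+$ and $U_j \cap \partial\OO$ into $\partial\R^d_+$. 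Under $\Phi_j$ the Dirichlet Laplacian is transformed into a second-order elliptic operator $A_j = -\sum a_{\ell m}^{(j)} \d_\ell \d_m + (\text{lower order})$ on $\R^d_+$ with Dirichlet boundary conditions, where the principal coefficients $a_{\ell m}^{(j)}$ are continuous (since $\nabla\phi_j \in C^{0,\lambda} \subseteq C^0$) and can be arranged to be $\delta$-close in $L^\infty$ to the constant-coefficient Laplacian on the support of $\chi_j$ by shrinking $U_j$; moreover the distance weight $w_{\gam+kp}^{\d\OO}$ pulls back to a weight comparable to $w_{\gam+kp}^{\d\R^d_+}(y) = y_d^{\gam+kp}$, because $\Phi_j$ is bi-Lipschitz.

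\emph{Step 2: Half-space model operator and perturbation.} By \cite{LLRV24, LV18}, the model operator $\mu - \delDir$ on $W^{k,p}(\R^d_+, w_{\gam+kp}^{\d\R^d_+})$ with domain $W^{k+2,p}_\Dir(\R^d_+, w_{\gam+kp}^{\d\R^d_+})$ has a bounded $\Hinf$-calculus of angle zero, precisely for $\gam \in (-1, 2p-1)\setminus\{p-1\}$; this is where the range of $\gamma$ enters. One then perturbs: write $A_j = (-\Delta) + (A_j - (-\Delta))$. The difference is a second-order operator whose top-order coefficients have $L^\infty$-norm $\leq \delta$ on $\supp\chi_j$ and whose lower-order coefficients lie in $C^{0,\lambda}$. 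The key point is a \emph{weighted} perturbation estimate: a second-order term with small top coefficients, together with first- and zeroth-order terms, is relatively bounded with small bound with respect to $\mu - \delDir$ on the weighted space $W^{k,p}(w_{\gam+kp}^{\d\R^d_+})$ — here one uses that $\|\d^2 u\|_{W^{k,p}(w_{\gam+kp})} + \mu\|u\|_{W^{k,p}(w_{\gam+kp})} \lesssim \|(\mu-\delDir)u\|_{W^{k,p}(w_{\gam+kp})}$ uniformly in $\mu$, a consequence of the angle-zero $\Hinf$-calculus for the model, plus weighted Hardy-type inequalities to control lower-order terms against $\d^2 u$ and $u$ with a gain as $\mu \to \infty$. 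Perturbation theory for the $\Hinf$-calculus (bounded or small relative perturbations preserve a bounded $\Hinf$-calculus, with the angle staying zero after adding a suitable shift $\mu_0$; see \cite{KW04, DHP03}) then gives a bounded $\Hinf$-calculus of angle zero for $\mu - A_j$ on the straightened patch for $\mu \geq \mu_0$.

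\emph{Step 3: Gluing and removing the shift.} Reassemble: for $u$ supported near $\partial\OO$ in $U_j$, the operator $\mu - \delDir$ acts, after transport by $\Phi_j$ and multiplication by $\chi_j$, like $\mu - A_j$ up to commutator terms $[\delDir, \chi_j]$ which are first order, hence lower-order relative perturbations that are again handled by Step 2's estimates (with the gain in $\mu$). Patching the resolvents $R(\lambda, \mu - \delDir) = \sum_j \chi_j R(\lambda, \mu - A_j)(\text{transported}) + (\text{error})$ and using that the errors are small for $|\lambda|$ large in the relevant sector, one obtains the resolvent bounds and then the $\Hinf$-bounds $\|f(\mu-\delDir)\| \lesssim \|f\|_{\Hinf(\Sigma_\sigma)}$ for every $\sigma > 0$, i.e.\ angle zero, for all $\mu \geq \mu_0$. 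Finally, since $\OO$ is bounded the semigroup generated by $\delDir$ is exponentially stable (the first Dirichlet eigenvalue is strictly positive, and this persists on the weighted scale by the same localization), so $\delDir$ itself — and hence $\mu - \delDir$ for every $\mu \geq 0$ — already has spectrum in a sector strictly inside the right half-plane; a standard argument (e.g.\ \cite{KW04}) upgrades the $\Hinf$-calculus from $\mu \geq \mu_0$ down to $\mu \geq 0$.

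\emph{Main obstacle.} The delicate point is Step 2: making the perturbation argument work \emph{in the weighted Sobolev space} $W^{k,p}(w_{\gam+kp}^{\d\R^d_+})$ when the domain regularity is only $C^{1,\lambda}$ with $\lambda$ possibly $0$. When $\lambda < 1$ the coefficient $\nabla\phi_j$ is merely continuous (or H\"older), so the first-order terms produced by straightening have coefficients of low regularity and — crucially — when differentiated $k$ times to estimate in $W^{k,p}$ they do not obviously produce admissible terms. The resolution is exactly the trade-off in the hypothesis: the constraint $\gam > (1-\lambda)p - 1$ guarantees enough weight near the boundary that the weighted Hardy inequality absorbs these rough lower-order contributions (their singular behavior at $\d\OO$ is dominated by the extra factor $\dist(\cdot,\d\OO)^{\gam - (1-\lambda)p + 1}$), so that the relative bound can still be made $<1$ after choosing the patches small and $\mu$ large. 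Verifying this absorption carefully — tracking how the H\"older exponent $\lambda$ of the coordinate change interacts with the weight power $\gam + kp$ and the number of derivatives $k$ — is the technical heart of the proof.
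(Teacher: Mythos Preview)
Your overall architecture (half-space model $\to$ perturb to special domains $\to$ localize to bounded domains $\to$ remove the shift via spectral information) matches the paper's, and you have correctly located the main obstacle. However, the proposed resolution of that obstacle has a genuine gap.

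\textbf{The diffeomorphism.} In Step~1 you straighten the boundary by ``the $C^{1,\lambda}$-diffeomorphism $\Phi_j$ associated to $\phi_j$'', i.e.\ the standard map $\Phi_j(x)=(x_1-\phi_j(\tilde x),\tilde x)$. With this choice the transformed principal coefficients are built from $\nabla\phi_j\in C^{0,\lambda}$, and for $k\geq 1$ you must take up to $k$ further derivatives of these coefficients to estimate in $W^{k,p}$. These derivatives simply do not exist --- there is nothing for a Hardy inequality to absorb. Your proposed fix (``the weighted Hardy inequality absorbs these rough lower-order contributions'') presupposes that higher derivatives of the coordinate change exist with a quantifiable blow-up at $\partial\OO$, but the standard $\Phi_j$ does not have this property. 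The paper's key device is precisely to replace $\Phi_j$ by the Dahlberg--Kenig--Stein pullback $\Psi$ (Lemma~\ref{lem:localization_weighted_blow-up}): $\Psi$ is $C^\infty$ in the interior, and $|\partial^\alpha h_1(x)|\lesssim [\OO]_{C^{1,\lambda}}\,\mathrm{dist}(x,\partial\OO)^{-(|\alpha|-1-\lambda)_+}$. Only with this regularised diffeomorphism does your Hardy-absorption heuristic become an actual argument; this is carried out in Lemmas~\ref{lem:estB1B2}--\ref{lem:estB3}, and the condition $\gamma>(1-\lambda)p-1$ enters exactly where you expect (see also Remark~\ref{rem:notPhi}).

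\textbf{The perturbation step.} Your claim in Step~2 that ``small relative perturbations preserve a bounded $H^\infty$-calculus'' is false in general; the McIntosh--Yagi counterexample~\cite{MY90} shows this. The perturbation $B=\Delta^\Psi-\Delta$ is of the \emph{same order} as $\Delta$ (the term $((\Delta h_1)\circ\Psi^{-1})\partial_1$ behaves like a second-order operator once the blow-up of $\Delta h_1$ is accounted for), so one cannot rely on lower-order perturbation results alone. The paper uses Theorem~\ref{thm:perturbcalculus}, which requires in addition that $B$ be well-behaved relative to a \emph{fractional} power $A^{1\pm\alpha}$. Verifying this forces one to identify $D((\mu-\Delta_{\mathrm{Dir}})^{1/2})$ and $D((\mu-\Delta_{\mathrm{Dir}})^{3/2})$ as concrete weighted Sobolev spaces (Proposition~\ref{prop:domain_char_RRdh}) and, for $\gamma\in(p-1,2p-1)$, to prove a nontrivial {\v S}ne\u{\i}berg-type interpolation identity (Proposition~\ref{prop:Sneiberg}); for $\gamma\in(-1,p-1)$ and $k=0$ a duality/extrapolation argument (Lemma~\ref{lem:lebesB}) is needed instead. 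None of this is visible in your sketch.

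In short: the plan is right, but the two load-bearing ingredients --- the Dahlberg--Kenig--Stein pullback and the fractional-power perturbation theorem with its accompanying domain identifications --- are missing, and without them Steps~1--2 do not go through for $k\geq 1$ or for domains rougher than $C^2$.
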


Theorem \ref{thm:introDir} generalises the result in \cite[Theorem 6.1]{LV18}, which is restricted to the case $k=0$ and to bounded $C^2$-domains. Theorem \ref{thm:introDir} allows for bounded $C^1$-domains if $\gam\in(p-1, 2p-1)$, while for $\gam\in(-1,p-1)$ we obtain that the smoothness of the domain may depend on the weight: if the power of the weight is larger, then a rougher domain is allowed. The smoothness parameter $\lambda$ is almost optimal. Indeed, solving the Dirichlet problem in the scale of weighted Sobolev spaces with a gain of two derivatives for the solution requires the boundary of the domain to have $W^{2-(\gam+1)/p,p}$-smoothness, see \cite[Theorem 15.6.1 applied to $\ell=2-(\gam+1)/p$]{MS09} and \cite[Section 14.6.1]{MS09} for an explicit counterexample with $C^1$-domains.
Furthermore, for $\gam=p-1$ the domain characterisation in Theorem \ref{thm:introDir} in terms of spaces with vanishing traces fails, see \cite[Remark 4.3]{LLRV24}, and for this reason we omit this case. \\

Concerning the Neumann Laplacian on bounded domains, we prove the following result, see Theorems \ref{thm:Neumann_Laplacian} and \ref{thm:scalarNeu}. 
\begin{theorem}[$\Hinf$-calculus for the Neumann Laplacian]\label{thm:introNeu}
  Let $p\in(1,\infty)$ and $\lambda\in (0,1]$. Furthermore, suppose that either
  \begin{enumerate}[(i)]
    \item $k\in\NN_0$,  $\gam\in (p-1, 2p-1)$, $\lambda> 2-\frac{\gam+1}{p}$ and $\OO$ is a bounded $C^{1,\lambda}$-domain, or, 
    \item $k\in\NN_1$, $\gam\in (-1, p-1)$, $\lambda> 1-\frac{\gam+1}{p}$ and $\OO$ is a bounded $C^{2,\lambda}$-domain.
  \end{enumerate}
Then for all $\mu>0$ the operator 
\begin{equation*}
    \mu-\delNeu \quad \text{ on }\quad W^{k,p}(\OO, w_{\gam+(k-1)p}^{\d\OO}) \quad\text{ with }\quad D(\delNeu)= W^{k+2,p}_\Neu(\OO, w_{\gam+(k-1)p}^{\d\OO})
\end{equation*}
has a bounded $\Hinf$-calculus of angle zero. Moreover, using function spaces modulo constants gives the result for all $\mu\geq 0$.
\end{theorem}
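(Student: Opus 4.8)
The plan is to reduce the Neumann case to the Dirichlet-type analysis already carried out for the half-space in \cite{LLRV24, LV18} together with a localisation and perturbation argument, following the same architecture as the proof of Theorem~\ref{thm:introDir} (i.e.\ Theorems~\ref{thm:Dirichlet_Laplacian} and \ref{thm:scalarDir}). First I would establish the result on the model half-space $\R^d_+$ with the Neumann boundary condition and the weight $w_{\gam+(k-1)p}^{\d\R^d_+}$; this is where the shift in the weight exponent by $(k-1)p$ rather than $kp$ comes from, reflecting that the Neumann trace costs one derivative less than the Dirichlet trace in the scale of weighted spaces, and this model case should be available from or a direct adaptation of \cite{LLRV24, LV18}. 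Having the half-space calculus with angle zero, I would then use a standard partition of unity subordinate to a finite boundary chart cover of the bounded $C^{1,\lambda}$- (resp.\ $C^{2,\lambda}$-) domain $\OO$: on each boundary patch, flatten the boundary via the chart, transforming $\delNeu$ into a second-order elliptic operator on $\R^d_+$ whose top-order coefficients are the identity plus an error controlled by the $C^{0,\lambda}$-modulus of the chart derivatives, while the interior patches reduce to the full-space Laplacian, which has a bounded $\Hinf$-calculus on any unweighted $W^{k,p}$. The weighted Sobolev spaces are compatible with these operations since $w_{\gam+(k-1)p}^{\d\OO}$ localises, near $\d\OO$, to (a comparable multiple of) the half-space distance weight, and multiplication by smooth cutoffs is bounded on $W^{k,p}(\OO,w)$.

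The core mechanism is then a perturbation argument: the bounded $\Hinf$-calculus of angle zero is stable under perturbations that are small relative to the operator in the appropriate sense (lower-order terms are relatively bounded with small constant on a suitable sector, and a small perturbation of the top-order part can be absorbed because the $C^{0,\lambda}$-smallness of the coefficient error, quantified against the weight via a Hardy-type inequality, yields a genuinely small multiplicative constant). Here the condition $\lambda > 2 - \tfrac{\gam+1}{p}$ in case~(i), respectively $\lambda > 1 - \tfrac{\gam+1}{p}$ in case~(ii), is precisely what makes the coefficient perturbation small enough in the relevant operator norm: it balances the Hölder gain $\lambda$ of the chart against the negative weight exponent $\gam+(k-1)p$ shifted appropriately, and it is the same trade-off that appears in Theorem~\ref{thm:introDir}. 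The distinction between the two cases — $C^{1,\lambda}$ suffices when $\gam\in(p-1,2p-1)$ but $C^{2,\lambda}$ and $k\ge 1$ are needed when $\gam\in(-1,p-1)$ — arises because for the smaller weight range the Neumann trace of elements of $W^{k+2,p}$ is only meaningful (and the domain characterisation $W^{k+2,p}_\Neu$ in terms of vanishing conormal derivative only makes sense) when $k\ge 1$ and the domain carries enough smoothness to make the conormal derivative well defined on the chart level.

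The main obstacle I anticipate is handling the Neumann boundary condition under the chart transformation: unlike the Dirichlet condition $u|_{\d\OO}=0$, which is transported cleanly by composition with a diffeomorphism, the Neumann (conormal) derivative picks up the Jacobian of the chart, so after flattening one does not get the pure $\partial_{x_d}u = 0$ condition on $\R^d_+$ but a boundary operator with $C^{0,\lambda}$- (or $C^{1,\lambda}$-) coefficients. One must therefore either develop the half-space $\Hinf$-calculus for such perturbed oblique-derivative boundary operators, or — more in the spirit of the paper — reduce the perturbed boundary operator to the flat Neumann one by a further change of unknown, checking that this transformation is bounded on the weighted Sobolev scale. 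Controlling this boundary perturbation in the weighted setting, where trace theorems and boundedness of the associated boundary-operator corrections must be tracked through the weight $w_{\gam+(k-1)p}^{\d\OO}$, is the technically delicate point; everything else (analyticity, density of domains, the passage to function spaces modulo constants to reach $\mu=0$, since $0$ is an eigenvalue of $\delNeu$ with constant eigenfunctions) is routine once the localised pieces and their perturbation estimates are in place.
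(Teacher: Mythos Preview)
Your high-level architecture---half-space model, then localise via boundary charts, then perturb---matches the paper's, and you correctly note the shift in the weight exponent and the mod-constants fix for $\mu=0$. However, two of the paper's central technical ingredients are missing from your plan, and without them the argument would not go through for rough domains with $k\geq 1$.

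First, the chart: a standard flattening $\Phi(x)=(x_1-h(\tilde x),\tilde x)$ only yields an isomorphism on $W^{k+2,p}$ when the domain is at least $C^{k+2}$, which is precisely what the paper avoids. The paper instead uses the Dahlberg--Kenig--Stein pullback $\Psi$ (Lemma~\ref{lem:localization_weighted_blow-up}), which is $C^\infty$ in the interior with higher derivatives blowing up like powers of the distance to the boundary; the weighted norms absorb this blow-up exactly when $\gam$ satisfies the stated thresholds (Proposition~\ref{prop:isomorphisms}). As a bonus, the Neumann space is \emph{defined} via $\Psi_*$ (Definition~\ref{def:spaces_special}), so your anticipated oblique-derivative obstacle simply does not arise: $\Psi_*$ maps $W^{k,p}_\Neu(\OO)$ isomorphically to $W^{k,p}_\Neu(\R^d_+)$ by construction.

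Second, the perturbation step is more delicate than ``small relative bound''. The paper stresses that the $H^\infty$-calculus is \emph{not} stable under small perturbations (the McIntosh--Yagi counterexample). After conjugating by $\Psi_*$ one has $\Delta^\Psi=\Delta+B_1+B_2+B_3$, where $B_1,B_2$ are second order and $B_3$ behaves like a second-order term when the domain is only $C^{1,\lambda}$. One must invoke Theorem~\ref{thm:perturbcalculus}, whose hypothesis requires the perturbation to be bounded between \emph{fractional} domains $D(A^{1+\alpha})\to D(A^\alpha)$ or $D(A^{1-\alpha})\to D(A^{-\alpha})$; this forces a characterisation of $D((\mu-\delNeu)^{1/2})$ and $D((\mu-\delNeu)^{3/2})$ as weighted Sobolev spaces (Proposition~\ref{prop:domain_char_RRdh_Neu}) via complex interpolation (Proposition~\ref{prop:complex_int_W_Neu}). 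Your proposal does not mention this machinery. Relatedly, your explanation for why case~(ii) needs $C^{2,\lambda}$ is off: it is not a trace-definition issue but rather that for $j=1$ the extra regularity makes $B_3$ a genuine lower-order perturbation (Lemma~\ref{lem:estB3}\ref{it:est3B3}), allowing Theorem~\ref{thm:lowe_ord_pert} to handle it after $B_1+B_2$ have been absorbed.
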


Note that, compared to Theorem \ref{thm:introDir}, the Sobolev spaces in Theorem \ref{thm:introNeu} have a smaller weight exponent, which is consistent with \cite[Theorem 1.2]{LLRV24}. Figure \ref{fig:1} visualises the parameters of the spaces in Theorem \ref{thm:introDir} and \ref{thm:introNeu} where we obtain a bounded $\Hinf$-calculus. Similar to the case of Dirichlet boundary conditions, we expect that the regularity of the domain in Theorem \ref{thm:introNeu} is almost optimal as well, see \cite[Section 15.6]{MS09} for some related results in this direction.

\begin{figure}[!ht]
  \centering
  \begin{tikzpicture}[x=0.75pt,y=0.75pt,yscale=1,xscale=1]
\begin{axis} [set layers=axis on top,
axis lines=middle,
xlabel near ticks,
ylabel near ticks,
xmin=0,
xmax=4.3,
ymin=-1,
ymax=15.2,
xtick={0,1,...,3,4},
ytick={-1,2,...,14},
yticklabels={$-1$,$p-1$, $2p-1$, $3p-1$, $4p-1$, $5p-1$},
grid style={line width=.1pt, draw=black!50},
major grid style={line width=.5pt,draw=black!50},
legend style={at={(0.055,0.97)},anchor=north west,fill=none},legend style={cells={align=left}},
enlargelimits=0.05,
disabledatascaling,
ylabel style={rotate=-90},
]

\addplot[fill=darkgray, fill opacity = 0.5, draw = none, area legend] coordinates {(0,2) (0,5) (4,17) (4,14)};

\addplot[fill=gray, fill opacity = 0.5, draw = none,area legend] coordinates {(0,-1) (0,2) (4,14) (4,11)};

\addplot[fill=lightgray, fill opacity = 0.5, draw = none,area legend] coordinates {(1,-1) (0,-1) (4,11) (4,8)};

\end{axis}
\node at (260,-15) [anchor=south, black]{smoothness $k$};
\node at (-45,130) [anchor=south, rotate=90, black]{weight exponent $\alpha$};
\node at (130,130) [anchor=south, rotate=33.5, black]{\scriptsize $\mu-\delDir$ on bounded $C^1$-domain};
\node at (130,95) [anchor=south, rotate=33.5, black]{\scriptsize $\mu-\delDir$ and $\mu-\delNeu$ on bounded $C^{1,\lambda}$-domain};
\node at (147,70) [anchor=south, rotate=33.5, black]{\scriptsize $\mu-\delNeu$ on bounded $C^{2,\lambda}$-domain};
\end{tikzpicture}
  \caption{The spaces $W^{k,p}(\OO, w^{\d\OO}_{\alpha})$ where $\mu-\delDir$ and $\mu-\delNeu$ as in Theorems \ref{thm:introDir} and \ref{thm:introNeu} (with $\alpha=\gam+kp$ and $\alpha=\gam+(k-1)p$, respectively) admit a bounded $\Hinf$-calculus.}\label{fig:1}
\end{figure}

The main novelties of our results are the following.
\begin{enumerate}[(i)]
  \item We prove the boundedness of the $\Hinf$-calculus, which is, in general, much harder to prove than maximal regularity and yields the boundedness of many singular integral operators \cite{KW01}. In particular, boundedness of the $\Hinf$-calculus implies (stochastic) maximal regularity \cite{HNVW24,NVW12}. Maximal regularity and higher-order regularity results for the heat equation with Dirichlet and Neumann boundary conditions are contained in Section \ref{subsec:MR_Riesz}. In particular, we recover some maximal regularity results for the Dirichlet Laplacian from \cite{KK04} (for bounded $C^1$-domains) and \cite{KN14} (for bounded $C^{1,\lambda}$-domains and $k=0$). For the latter case, our results with $k\geq 1$ are new.
  The Neumann Laplacian on the half-space is studied on weighted Sobolev spaces in \cite{DK18, DKZ16} (for $k=0$) and \cite{LLRV24}, but a systematic study on bounded domains seems to be unavailable until now.
  \item The smoothness of the domain $\OO$ in Theorems \ref{thm:introDir} and \ref{thm:introNeu} is \emph{independent} of the smoothness $k$ of the Sobolev space. The reason for this is that we do not use the standard localisation procedure from the half-space to domains (see, e.g., \cite{DHP03, Ev10, KrBook08}). This standard localisation procedure typically works for $C^{k+2}$-domains. Instead, we apply a more sophisticated  $C^1$-diffeomorphism suitable for the weighted setting. We discuss this in more detail below.  
\end{enumerate}

The key ingredient in the proofs of Theorems \ref{thm:introDir} and \ref{thm:introNeu} is the perturbation of the $\Hinf$-calculus on the half-space (obtained in \cite{LLRV24}) to \emph{special domains}, i.e. domains above the graph of a function with compact support. A common method is to relate the Laplacian on the half-space and on a special domain via a diffeomorphism. However, due to the low regularity of the domain, we cannot use the standard diffeomorphism as in, e.g., \cite{DHP03, Ev10, KrBook08, Wl87}. Instead, we construct a variant of the \emph{Dahlberg--Kenig--Stein pullback}, based on ideas in \cite{KK04, KimD07, Wl87}.
This diffeomorphism straightens the boundary, preserves the distance to the boundary and leaves the direction of the normal vector at the boundary invariant.Moreover, higher-order derivatives exist, but blow up near the boundary of the domain. This blow-up is compensated by the weights in our spaces. 

With estimates on this diffeomorphism at hand, we can employ perturbation theorems for the $\Hinf$-calculus to extend the results to special domains. Another difficulty arising in this perturbation argument is that, if the regularity of the domain is too low, then the perturbations are of the same order as the Laplacian. It is known that the $\Hinf$-calculus is not stable under small perturbations \cite{MY90}. Additionally, we need the perturbations to be well behaved with respect to a fractional power of the original operator. This requires the identification of certain complex interpolation spaces and fractional domains to perform the perturbation argument.
Finally, by another localisation argument, based on lower-order perturbations, the $\Hinf$-calculus on special domains is transferred to bounded domains. \\

We comment on some related and open problems. Theorems \ref{thm:introDir} and \ref{thm:introNeu} provide the bounded $\Hinf$-calculus on Sobolev spaces with integer smoothness, and with complex interpolation, the bounded $\Hinf$-calculus can also be obtained on spaces with fractional smoothness. However, an intrinsic characterisation of these complex interpolation spaces seems unavailable. Furthermore, we expect that our results can be extended to spaces with negative smoothness via duality. Some results for the weak (Dirichlet) Laplacian on weighted spaces are contained in \cite{BDS16, OS19}.

An interesting question regarding the smoothness of the domain is whether for $\gam\in(p-1,2p-1)$ the assumption of $C^1$-domains can be weakened to Lipschitz domains. In general, the analysis for Lipschitz domains becomes much more involved and different techniques are required than for $C^1$-domains, see for instance \cite{JK81, JK89, JK95, Wo07} and the references therein. We believe that our method should work for domains with a small Lipschitz character. The $\Hinf$-calculus on Lipschitz domains could be important for studying SPDEs in the weighted setting, see \cite{Ki08_Lipschitz, Ki09, Ki14,  Ki09_erratum}, where the range of weights is significantly smaller than $\gam\in(p-1,2p-1)$.

\subsection*{Outline}  The outline of this paper is as follows. In Section \ref{sec:prelim} we introduce some preliminary concepts and results needed throughout the paper. In Section \ref{sec:weighted_Sob_spaces} we study weighted Sobolev spaces on domains and prove characterisations for these spaces. In Section \ref{sec:frac_domain} we prove results on the fractional domains of the Laplacian on the half-space, which are required for perturbation of the $\Hinf$-calculus. In Section \ref{sec:calc_spec_dom} we perturb the $\Hinf$-calculus from the half-space to special domains, and in Section \ref{sec:calc_dom} we perform a localisation procedure to obtain the $\Hinf$-calculus on bounded domains. Moreover, as a consequence, we obtain maximal regularity for the heat equation and boundedness of Riesz transforms. Finally, in Appendix \ref{sec:appendix_lemma} we provide  localisation techniques on rough domains.

\section{Preliminaries}\label{sec:prelim}
\subsection{Notation} We denote by $\NN_0$ and $\NN_1$ the set of natural numbers starting at $0$ and $1$, respectively. 
For $a\in\RR$, we use the notation $(a)_+=a$ if $a\geq 0$ and $(a)_+=0$ otherwise.

For $d\in\NN_1$, the half-space is given by $\RRdh=\RR_+\times\RR^{d-1}$, where $\RR_+=(0,\infty)$ and for $x\in \RRdh$ we write $x=(x_1,\tilde{x})$ with $x_1\in \RR_+$ and $\tilde{x}\in \RR^{d-1}$. For $\gam\in\RR$, $\OO\subseteq \RR^d$ open and $x\in\OO$ we define the power weight $w_{\gam}^{\d\OO}(x):=\mrm{dist}(x,\d\OO)^\gam$.

For two topological vector spaces $X$ and $Y$, the space of continuous linear operators is $\mc{L}(X,Y)$ and $\mc{L}(X):=\mc{L}(X,X)$. Unless specified otherwise, $X$ will always denote a Banach space with norm $\|\cdot\|_X$ and the dual space is $X':=\mc{L}(X,\CC)$.

For a linear operator $A:X\supseteq D(A)\to X$ on a Banach space $X$ we denote by $\sigma(A)$ and $\rho(A)$  the spectrum and resolvent set, respectively. For $\lambda\in\rho(A)$, the resolvent operator is given by $R(\lambda,A)=(\lambda-A)^{-1}\in \mc{L}(X)$.

We write $f\lesssim g$ (resp. $f\gtrsim g$) if there exists a constant $C>0$, possibly depending on parameters which will be clear from the context or will be specified in the text, such that $f\leq Cg$ (resp. $f\geq Cg$). Furthermore, $f\eqsim g$ means $f\lesssim g$ and $g\lesssim  f$.\\

For an open and non-empty $\OO\subseteq \RR^d$ and $\ell\in\NN_0\cup\{\infty\}$, the space $C^\ell(\OO;X)$ denotes the space of $\ell$-times continuously differentiable functions from $\OO$ to some Banach space $X$. In the case $\ell=0$ we write $C(\OO;X)$ for $C^0(\OO;X)$. Furthermore, we write $C^\ell_\b(\OO;X)$ for the space of all functions $f\in C^\ell(\OO;X)$ such that $\d^{\alpha} f$ is bounded on $\OO$ for all multi-indices $\alpha\in \NN_0^d$ with $|\alpha|\leq \ell$. 

Let $\Cc^{\infty}(\OO;X)$ be the space of compactly supported smooth functions on $\OO$ equipped with its usual inductive limit topology. The space of $X$-valued distributions is given by $\mc{D}'(\OO;X):=\mc{L}(\Cc^{\infty}(\OO);X)$. Moreover, $\Cc^{\infty}(\overline{\OO};X)$ is the space of smooth functions with their support in a compact set contained in $\overline{\OO}$.

 We denote the Schwartz space by $\SS(\RRd;X)$ and $\SS'(\RRd;X):=\mc{L}(\SS(\RRd);X)$ is the space of $X$-valued tempered distributions. For $\OO\subseteq \RR^d$ we define $\SS(\OO;X):=\{u|_{\OO}: u\in\SS(\RRd;X)\}$.

Finally, for $\theta\in(0,1)$ and a compatible couple $(X,Y)$ of Banach spaces, the complex interpolation space is denoted by $[X,Y]_{\theta}$.

\subsection{Holomorphic functional calculus}\label{sec:prelim_func_calc} In this section, we collect the required preliminaries on sectorial operators with a bounded $\Hinf$-calculus.

\subsubsection{Definitions} 
For $\om\in(0,\pi)$, let $\Sigma_{\om}:=\{z\in\CC\setminus\{0\}:|\operatorname{arg}(z)|<\om\}$ be a sector in the complex plane.
\begin{definition}\label{def:sect}
  An injective, closed linear operator $(A,D(A))$ with dense domain and dense range on a Banach space $X$ is called \emph{sectorial} if there exists a $\om\in (0,\pi)$ such that $\sigma(A)\subseteq\overline{\Sigma_{\om}}$ and
  \begin{equation*}
    \sup_{\lambda\in \CC\setminus \overline{\Sigma_{\om}}}\|\lambda R(\lambda,A)\|<\infty.
  \end{equation*}
  Furthermore, the angle of sectoriality $\om(A)$ is defined as the infimum over all possible $\om>0$.
\end{definition}

To continue, we introduce the following Hardy spaces. Let $\om\in(0,\pi)$, then $H^1(\Sigma_{\om})$ is the space of all holomorphic functions $f:\Sigma_{\om}\to \CC$ such that
\begin{equation*}
  \|f\|_{H^1(\Sigma_{\om})}:=\sup_{|\nu|<\om}\|t\mapsto f(e^{\ii\nu}t)\|_{L^1(\RR_+,\frac{\mathrm{d}t}{t})}<\infty.
\end{equation*}
Moreover, let $H^{\infty}(\Sigma_{\om})$ be the space of all bounded holomorphic functions on the sector with norm
\begin{equation*}
  \|f\|_{H^{\infty}(\Sigma_{\om})}:=\sup_{z\in\Sigma_{\om}}|f(z)|.
\end{equation*}

\begin{definition}
  Let $A$ be a sectorial operator on a Banach space $X$ and let $\om\in(\om(A),\pi)$, $\nu\in(\om(A), \om)$ and $f\in H^{1}(\Sigma_{\om})$. We define the operator
  \begin{equation*}
    f(A):=\frac{1}{2\pi \ii}\int_{\partial \Sigma_{\nu}}f(z)R(z,A)\dd z,
  \end{equation*}
  where $\d \Sigma_{\nu}$ is oriented counterclockwise. The operator $A$ has a \emph{bounded $\Hinf(\Sigma_{\om})$-calculus} if there exists a $C>0$ such that
  \begin{equation*}
    \|f(A)\|\leq C\|f\|_{\Hinf(\Sigma_{\om})}\quad \text{ for all }f\in H^1(\Sigma_{\om})\cap\Hinf(\Sigma_{\om}).
  \end{equation*}
  Furthermore, the angle of the $\Hinf$-calculus $\om_{\Hinf}(A)$ is defined as the infimum over all possible $\om>\om(A)$.
\end{definition}
For more details on the $\Hinf$-calculus, the reader is referred to \cite{Ha06} and \cite[Chapter 10]{HNVW17}.

\subsubsection{Fractional domains} Let $A$ be a sectorial operator and let $\alpha\in\CC$. To define fractional powers $A^{\alpha}$, we need a functional calculus allowing for holomorphic functions of polynomial growth. This is known as the \emph{extended functional calculus} and the reader is referred to \cite[Chapter 15]{HNVW24} or \cite[Appendix 15.C]{KW04} for a detailed study of extended functional calculi and fractional powers. In particular, $A^\alpha$ is again sectorial.

A sectorial operator $A$ on a Banach space $X$ has \emph{bounded imaginary powers} ($\BIP$) if $A^{\ii s}$  extends to a bounded operator on $X$ for every $s\in\RR$. The angle is given by $\om_{\BIP}(A):=\inf\{\om\in\RR:\sup_{s\in\RR}e^{-\om|s|}\|A^{\ii s}\|<\infty\}$.
Moreover, a bounded $\Hinf$-calculus implies $\BIP$ and $\om_{\BIP}(A)\leq \om_{\Hinf}(A)$, see \cite[Section 15.3]{HNVW24}.

We recall a result on the interpolation of fractional domains. For details on interpolation theory, the reader is referred to \cite{BL76} and \cite{Tr78}.
\begin{proposition}[{\cite[Corollary 15.3.10]{HNVW24}}]\label{prop:frac_domain}
  Let $A$ be a sectorial operator on a Banach space $X$ and assume that $A$ has $\BIP$. Then for all $\theta\in(0,1)$ and $0\leq \alpha<\beta$ we have
  \begin{equation*}
    D(A^{(1-\theta)\alpha+\theta\beta})=[D(A^{\alpha}), D(A^{\beta})]_{\theta}.
  \end{equation*}
\end{proposition}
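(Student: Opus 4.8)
The plan is to reduce the statement to the single identity $[X,D(A)]_\theta=D(A^\theta)$ and then prove each of its two inclusions by Calderón's complex interpolation method, the crucial input being that $\BIP$ provides $\|A^{\ii s}\|\le Ce^{(\omega_{\BIP}(A)+\e)|s|}$, so that after multiplication by a Gaussian $e^{\delta(z-\theta)^2}$ the relevant operator-valued functions on the strip $S=\{0\le\Re z\le1\}$ are bounded — in fact decaying — on the two boundary lines. For the reduction, first observe that $A^\beta$ is again sectorial with $\BIP$ and $(A^\beta)^{\ii s}=A^{\ii\beta s}$, so the general $\beta$ follows from $\beta=1$; and to remove $\alpha$, let $Y:=D(A^\alpha)$ with its graph norm and $A_Y$ the part of $A$ in $Y$. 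Since $A^{\ii s}$ commutes with $A^\alpha$, $A_Y$ is sectorial on $Y$ with $\BIP$, and $D(A_Y^\sigma)=D(A^{\alpha+\sigma})$ with equivalent norms for $\sigma>0$ (the norm equivalence being the moment inequality for $A$); applying the case $\alpha=0$ to $A_Y$ with exponent $\beta-\alpha$ then gives $[D(A^\alpha),D(A^\beta)]_\theta=[Y,D(A_Y^{\beta-\alpha})]_\theta=D(A_Y^{\theta(\beta-\alpha)})=D(A^{(1-\theta)\alpha+\theta\beta})$.

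For $D(A^\theta)\hookrightarrow[X,D(A)]_\theta$: using the standard identity $D(A^\theta)=D((1+A)^\theta)$ with equivalent norms, write $x=(1+A)^{-\theta}y$ with $y:=(1+A)^\theta x$ and $\|y\|_X\eqsim\|x\|_{D(A^\theta)}$, and set $f(z):=e^{\delta(z-\theta)^2}(1+A)^{-z}y$ on $S$. Since $1+A$ is sectorial and boundedly invertible, $(1+A)^{-s}$ is bounded for $s\in[0,1]$, and since $A$ has $\BIP$ so does $1+A$, giving $\|(1+A)^{\ii s}\|\le Ce^{(\omega+\e)|s|}$; hence $z\mapsto(1+A)^{-z}$ is holomorphic on $S$ and the Gaussian dominates its growth on $\partial S$. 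One has $f(\theta)=x$ and checks that $f$ lies in the Calderón class $\mathcal{F}(X,D(A))$ with $\|f\|_{\mathcal{F}(X,D(A))}\lesssim\|y\|_X$ — on the line $\Re z=1$ one shifts one factor $1+A$ onto the resolvent, writing $Af(1+\ii t)=e^{\delta(\cdot)}(1+A)^{-\ii t}\,A(1+A)^{-1}y$ — so that $x=f(\theta)\in[X,D(A)]_\theta$ with the desired bound.

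For $[X,D(A)]_\theta\hookrightarrow D(A^\theta)$: let $x=f(\theta)$ with $f\in\mathcal{F}(X,D(A))$, and put $\rho_n(\lambda):=\lambda(n^{-1}+\lambda)^{-1}(1+n^{-1}\lambda)^{-1}$, so that $\rho_n(A)\to\id$ strongly, $\sup_n\|\rho_n(A)\|<\infty$, and $\rho_n(A)$ maps $X$ into $D(A)\cap R(A)$. For each fixed $n$, $g_n(z):=e^{\delta(z-\theta)^2}A^z\rho_n(A)f(z)$ is bounded and holomorphic on $S$ (the Gaussian again beats $\|A^{\ii s}\|$), so the three-lines lemma gives $\|A^\theta\rho_n(A)x\|_X=\|g_n(\theta)\|_X\le\max\bigl(\sup_t\|g_n(\ii t)\|_X,\ \sup_t\|g_n(1+\ii t)\|_X\bigr)\lesssim\|f\|_{\mathcal{F}(X,D(A))}$, uniformly in $n$ — on the right boundary line one moves a power of $A$ onto $f(1+\ii t)\in D(A)$. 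Letting $n\to\infty$ and using that $A^\theta$ is closed together with the characterisation $D(A^\theta)=\{x\in X:\sup_n\|A^\theta\rho_n(A)x\|_X<\infty\}$ then yields $x\in D(A^\theta)$ with $\|A^\theta x\|_X\lesssim\|f\|_{\mathcal{F}(X,D(A))}$, and unwinding the reduction completes the proof.

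I expect the main obstacle to be technical rather than structural: because only $\BIP$ — not a bounded $\Hinf$-calculus — is available, boundedness on the strip can never be deduced from a bound on the symbol, so every operator appearing must be kept in the factored form (imaginary power of a $\BIP$-operator)$\,\circ\,$(bounded operator built from resolvents); relatedly, the passage to the limit $n\to\infty$ in a possibly non-reflexive $X$ must go through closedness of $A^\theta$ and the above characterisation of its domain rather than through weak compactness. The auxiliary facts — that $1+A$ inherits $\BIP$ from $A$, the identity $D(A^\theta)=D((1+A)^\theta)$, and the domain characterisation — are standard for sectorial operators (cf.\ \cite{Ha06} and \cite[Chapter~15]{HNVW24}) and I would simply quote them.
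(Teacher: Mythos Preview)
The paper does not prove this proposition; it is simply quoted from \cite[Corollary 15.3.10]{HNVW24}. Your sketch is essentially the standard argument found there (and in \cite[Section~6.6]{Ha06}): reduce to $[X,D(A)]_\theta=D(A^\theta)$ via the part of $A$ in $D(A^\alpha)$ and the fact that fractional powers inherit $\BIP$, then run two three-lines arguments with a Gaussian damping factor. The reduction and the inclusion $D(A^\theta)\hookrightarrow[X,D(A)]_\theta$ are fine as written.

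One point deserves more care. For the converse inclusion you invoke the characterisation
\[
D(A^\theta)=\bigl\{x\in X:\sup_n\|A^\theta\rho_n(A)x\|_X<\infty\bigr\},
\]
but in a non-reflexive space mere boundedness of $(A^\theta\rho_n(A)x)_n$ does not immediately give $x\in D(A^\theta)$ via closedness of $A^\theta$, since you cannot extract a weakly convergent subsequence. The clean fix is to show this sequence is actually Cauchy: the operators $\rho_n(A)$ are uniformly bounded on both $X$ and $D(A)$, hence on $[X,D(A)]_\theta$ by interpolation, and they converge strongly to the identity there (density of $D(A)$ plus uniform boundedness). Applying your three-lines bound to $\rho_n(A)x-\rho_m(A)x\in D(A)$ in place of $x$ then yields
\[
\|A^\theta\rho_n(A)x-A^\theta\rho_m(A)x\|_X\lesssim\|\rho_n(A)x-\rho_m(A)x\|_{[X,D(A)]_\theta}\to 0,
\]
after which closedness of $A^\theta$ finishes the job. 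Alternatively, one can bypass the approximation entirely and obtain the converse inclusion in one stroke by abstract Stein interpolation (as in \cite{Vo92}, also used elsewhere in the paper) applied to the family $z\mapsto e^{\delta(z-\theta)^2}(1+A)^{z}$ viewed as operators $X\to X$ on $\Re z=0$ and $D(A)\to X$ on $\Re z=1$.
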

Moreover, by \cite[Proposition 15.2.12]{HNVW24} we have for a sectorial operator $A$ that $D((\mu+A)^{\alpha})=D(A^{\alpha})$ for all $\mu\geq 0$ and $\alpha> 0$.

\subsubsection{Perturbation of the \texorpdfstring{$\Hinf$-}{holomorphic functional }calculus}\label{subsec:calc_pert} We collect some known perturbation results for the $\Hinf$-calculus. For further perturbation results for the $H^\infty$-calculus, the reader is referred to \cite{HNVW24, KKW06, KLW23, KW04}. We start with a result for shifting the $\Hinf$-calculus.
 \begin{proposition}[{\cite[Proposition 16.2.6]{HNVW24}}]\label{prop:calc_pert_Id}
  Let $A$ be a sectorial operator on a Banach space $X$ and let $\om \in (\om(A),\pi)$.
  \begin{enumerate}[(i)]
    \item If $A $ has a bounded $\Hinf(\Sigma_{\om})$-calculus, then $\mu+A$ has a bounded $\Hinf(\Sigma_{\om})$-calculus for all $\mu> 0$. Moreover, the constant in the estimate for the $\Hinf$-calculus can be taken independent of $\mu$.
    \item\label{it:prop:calc_pert_Id2} If $\mu_0 +A$  has a bounded $\Hinf(\Sigma_{\om})$-calculus for some $\mu_0> 0$, then $\mu+A$ has a bounded $\Hinf(\Sigma_{\om})$-calculus for all $\mu> 0$.
  \end{enumerate}
\end{proposition}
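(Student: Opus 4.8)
The plan is to treat the two parts in turn: part~(i) says that a positive real shift can be pulled inside the calculus, and part~(ii) combines (i) with a perturbation argument to allow a \emph{backward} shift as well.

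\emph{Part (i).} I first record the elementary geometric fact that for $\mu\ge0$ and $\theta\in(0,\pi)$ one has $\mu+\overline{\Sigma_\theta}\subseteq\overline{\Sigma_\theta}$ and $\mu+\Sigma_\theta\subseteq\Sigma_\theta$ (adding a non-negative real number to a point of a sector does not increase the modulus of its argument). Hence $\sigma(\mu+A)=\mu+\sigma(A)\subseteq\mu+\overline{\Sigma_{\om(A)}}\subseteq\overline{\Sigma_{\om(A)}}$, and, using $R(\lambda,\mu+A)=R(\lambda-\mu,A)$ together with the bound $|\lambda|\le C_\theta|\lambda-\mu|$ for $\lambda\notin\overline{\Sigma_\theta}$, $\theta\in(\om(A),\om)$, the operator $\mu+A$ is sectorial with $\om(\mu+A)\le\om(A)$; for $\mu>0$ it is moreover invertible, since $-\mu\notin\sigma(A)$. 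The same geometric fact gives, for $f\in\Hinf(\Sigma_\om)$, that the shifted function $\tau_\mu f:=f(\,\cdot\,+\mu)$ again lies in $\Hinf(\Sigma_\om)$ with $\|\tau_\mu f\|_{\Hinf(\Sigma_\om)}\le\|f\|_{\Hinf(\Sigma_\om)}$. The key identity is the translation compatibility
\[
 f(\mu+A)=(\tau_\mu f)(A),\qquad f\in H^1(\Sigma_\om)\cap\Hinf(\Sigma_\om),
\]
where the right-hand side is formed with the (extended) bounded $\Hinf(\Sigma_\om)$-calculus of $A$; equivalently, writing $\phi_\mu(z)=z+\mu$ so that $\mu+A=\phi_\mu(A)$, this is the composition rule $f(\phi_\mu(A))=(f\circ\phi_\mu)(A)$ for the sector-preserving map $\phi_\mu$. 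If one prefers to avoid the composition rule, the identity can be proved by first verifying it on the resolvents $f=r_\lambda=(\lambda-\,\cdot\,)^{-1}$, $\lambda\notin\overline{\Sigma_\om}$ — where it reduces to $R(\lambda,\mu+A)=R(\lambda-\mu,A)$, $\tau_\mu r_\lambda=r_{\lambda-\mu}$ and $\lambda-\mu\notin\overline{\Sigma_\om}$ — and then extending to $H^1\cap\Hinf$ by multiplicativity and the convergence properties of the calculus. Granting it, we obtain $\|f(\mu+A)\|=\|(\tau_\mu f)(A)\|\le C_A\|\tau_\mu f\|_{\Hinf(\Sigma_\om)}\le C_A\|f\|_{\Hinf(\Sigma_\om)}$ with $C_A$ the $\Hinf(\Sigma_\om)$-calculus constant of $A$, which is independent of $\mu$, proving (i).

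\emph{Part (ii).} For $\mu\ge\mu_0$ it suffices to apply part~(i) to the operator $B:=\mu_0+A$, which has a bounded $\Hinf(\Sigma_\om)$-calculus by hypothesis, with the non-negative shift $\mu-\mu_0$: then $\mu+A=(\mu-\mu_0)+B$ has a bounded $\Hinf(\Sigma_\om)$-calculus. For $0<\mu<\mu_0$ one writes $\mu+A=(\mu_0+A)-(\mu_0-\mu)\Id$, a perturbation of $\mu_0+A$ by the bounded, resolvent-commuting operator $-(\mu_0-\mu)\Id$. Since $\mu_0+A$ is invertible, $\sigma(\mu_0+A)\subseteq\mu_0+\overline{\Sigma_{\om(A)}}$ lies at a positive distance from the origin, so $R(\,\cdot\,,\mu_0+A)$ is bounded on the integration contour $\partial\Sigma_\nu$, $\nu\in(\om(A),\om)$, and therefore the perturbation is relatively small on the contour once $\mu$ is close enough to $\mu_0$. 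A perturbation theorem for the $\Hinf$-calculus under bounded, resolvent-commuting, relatively small perturbations then gives a bounded $\Hinf(\Sigma_\om)$-calculus for $\mu+A$ for all $\mu$ in a left neighbourhood of $\mu_0$. As $\dist(0,\sigma(\mu_1+A))\gtrsim\mu_1$ again for every $\mu_1>0$, the admissible step at each level scales like the current shift; iterating — say along $\mu_{j+1}=(1-\tfrac12\eta)\mu_j$ for a fixed $\eta=\eta(\om,\om(A))\in(0,1)$, so that $\mu_j\downarrow0$ — and invoking the forward-shift result of (i) at each level, we obtain a bounded $\Hinf(\Sigma_\om)$-calculus for $\mu+A$ for all $\mu\ge\mu_j$, hence for all $\mu>0$.

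\emph{The main obstacle} is the backward step in (ii). The $\Hinf$-calculus is not stable under arbitrary small perturbations, so it is essential that the perturbation there is a \emph{scalar multiple of the identity} (hence commutes with all resolvents) and that $\mu_0+A$ is \emph{invertible} — the latter is exactly what makes $(\mu_0-\mu)\Id$ relatively small on a contour; one must also accept that the resulting calculus constant for $\mu+A$ may blow up as $\mu\downarrow0$ (unlike in (i), no uniformity in $\mu$ is claimed) and check that the geometric iteration indeed exhausts $(0,\mu_0)$. The only other non-formal point is the translation-compatibility identity in (i): it is standard but genuinely uses the composition rule / uniqueness of the holomorphic functional calculus, since the naive substitution $w=z-\mu$ in the Cauchy integral pushes the contour $\partial\Sigma_\nu$ into $\Sigma_\om$, where $f$ need not be defined.
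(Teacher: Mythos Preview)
The paper does not supply its own proof of this proposition; it is quoted verbatim from \cite[Proposition 16.2.6]{HNVW24}. So there is no in-paper argument to compare against, only the cited reference.

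Your argument for part~(i) is the standard one and is correct: sectoriality of $\mu+A$ plus the composition rule $f(\mu+A)=(\tau_\mu f)(A)$ with $\|\tau_\mu f\|_{\Hinf(\Sigma_\om)}\le\|f\|_{\Hinf(\Sigma_\om)}$ immediately gives the claim with a $\mu$-independent constant.

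For part~(ii) your strategy is sound, but the sentence ``a perturbation theorem for the $\Hinf$-calculus under bounded, resolvent-commuting, relatively small perturbations then gives\ldots'' is doing real work that you have not unpacked. What actually makes this step go through is the Neumann series
\[
R(z,\mu+A)=R(z,\mu_0+A)\sum_{k\ge0}\bigl(-(\mu_0-\mu)R(z,\mu_0+A)\bigr)^k,
\]
valid for $(\mu_0-\mu)\sup_{z\in\partial\Sigma_\nu}\|R(z,\mu_0+A)\|<1$, together with the estimate $\int_{\partial\Sigma_\nu}\|R(z,\mu_0+A)\|^2\,|dz|<\infty$ (finite near $0$ because $0\in\rho(\mu_0+A)$, and decaying like $|z|^{-2}$ at infinity). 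The $k=0$ term is $f(\mu_0+A)$, bounded by hypothesis; the $k\ge1$ terms are controlled by $\|f\|_\infty$ times a convergent geometric series. With this made explicit, your iteration $\mu_{j+1}=(1-\tfrac12\eta)\mu_j$ is justified, since $\sup_{\partial\Sigma_\nu}\|R(\cdot,\mu_j+A)\|\lesssim\mu_j^{-1}$ gives an admissible step proportional to $\mu_j$.

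The route taken in \cite{HNVW24} is somewhat more direct: for $0<\mu<\mu_0$ one writes $f(\mu+A)=g(\mu_0+A)$ with $g(z)=f(z-(\mu_0-\mu))$ and uses that, for an \emph{invertible} sectorial operator with bounded $\Hinf(\Sigma_\om)$-calculus, the calculus extends to bounded holomorphic functions on a punctured sector $\Sigma_\om\setminus\overline{B_r(0)}$ (equivalently, one deforms the contour near the origin). This avoids the iteration and gives the result in one step for every $\mu\in(0,\mu_0)$. Your approach trades this contour-deformation input for a more elementary resolvent expansion at the cost of an inductive argument; both are legitimate.
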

In the case of a lower-order perturbation, we have the following result.
\begin{theorem}[{\cite[Theorem 16.2.7]{HNVW24}}]\label{thm:lowe_ord_pert}
  Let $A$ be a sectorial operator on a Banach space $X$. Let $\om\in (\om(A), \pi)$ and assume that $A$ has a bounded $\Hinf(\Sigma_{\om})$-calculus. Let $\alpha\in (0,1)$ and assume that $B$ is a linear operator on $X$ such that $D(B)\supseteq D(A^{\alpha})$ and
  \begin{equation}\label{eq:lower_ord}
    \|Bu\|_X\leq C\|A^{\alpha}u\|_X,\qquad u\in D(A),
  \end{equation}
  for some $C>0$. Then there exists a $\mu\geq 0$ such that $\mu+A+B$ with $D(\mu+A+B)=D(A)$ has a bounded $\Hinf(\Sigma_{\om})$-calculus.
\end{theorem}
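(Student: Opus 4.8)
\emph{The plan} is to push the spectrum of $A$ to the right until $B$ becomes a small perturbation, and then to expand the resolvent of the shifted perturbed operator in a Neumann series inside the Dunford integral. Concretely, fix $\om''\in(\om(A),\om)$ and, for $\mu>0$, put $A_\mu:=\mu+A$. By Proposition~\ref{prop:calc_pert_Id}(i) the operator $A_\mu$ is sectorial with $\om(A_\mu)=\om(A)$ and has a bounded $\Hinf(\Sigma_\om)$-calculus with constant independent of $\mu$; it is moreover boundedly invertible, with $\|A_\mu^{-\beta}\|\lesssim_\om\mu^{-\beta}$ for $\beta\ge0$ (apply the calculus to $z\mapsto(\mu+z)^{-\beta}$, using $\dist(-\mu,\Sigma_\om)\gtrsim_\om\mu$). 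Since $D(A_\mu^\alpha)=D(A^\alpha)$ (the fact recalled right after Proposition~\ref{prop:frac_domain}) and $\sup_{\mu\ge0}\|A^\alpha A_\mu^{-\alpha}\|<\infty$ (apply the calculus to $z\mapsto(z/(\mu+z))^\alpha$), the hypothesis \eqref{eq:lower_ord} yields $D(B)\supseteq D(A_\mu)$ together with $\|BA_\mu^{-\alpha}\|_{\mathcal L(X)}\le C_0$, where $C_0$ is independent of $\mu$.

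The quantitative core of the argument is the pair of resolvent bounds, valid for all $\lambda\in\CC\setminus\overline{\Sigma_{\om''}}$ with constants depending only on $A,\om'',\alpha$:
\[
  \|A_\mu^\alpha R(\lambda,A_\mu)\|_{\mathcal L(X)}\lesssim\min\{\mu^{\alpha-1},|\lambda|^{\alpha-1}\},
  \qquad
  \|R(\lambda,A_\mu)\|_{\mathcal L(X)}\lesssim\tfrac{1}{\max\{\mu,|\lambda|\}}.
\]
These follow from sectoriality of $A_\mu$, the moment inequality $\|A_\mu^\alpha y\|\lesssim\|A_\mu y\|^\alpha\|y\|^{1-\alpha}$, and — for $|\lambda|\lesssim\mu$ — the Neumann series $R(\lambda,A_\mu)=-\sum_{k\ge0}\lambda^kA_\mu^{-k-1}$. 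In particular $\|BR(\lambda,A_\mu)\|\le C_0\|A_\mu^\alpha R(\lambda,A_\mu)\|\lesssim\mu^{\alpha-1}$, which is $<1$ for $\mu$ large. The factorisation $\lambda-(A_\mu+B)=(I-BR(\lambda,A_\mu))(\lambda-A_\mu)$ then shows that $A_\mu+B$ with $D(A_\mu+B)=D(A)$ is closed, boundedly invertible and sectorial with $\om(A_\mu+B)\le\om''<\om$, and that $R(\lambda,A_\mu+B)=R(\lambda,A_\mu)\sum_{n\ge0}(BR(\lambda,A_\mu))^n$ for $\lambda\notin\overline{\Sigma_{\om''}}$.

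Now fix $\nu\in(\om'',\om)$; since $\sigma(A_\mu+B)\subseteq\overline{\Sigma_{\om''}}$ and $0\in\rho(A_\mu+B)$, the contour $\d\Sigma_\nu$ lies in $\rho(A_\mu+B)$, so for $f\in H^1(\Sigma_\om)\cap\Hinf(\Sigma_\om)$ we may insert the Neumann series and interchange sum and integral (justified by absolute convergence):
\[
  f(A_\mu+B)=\sum_{n\ge0}T_n,\qquad
  T_n:=\frac1{2\pi\ii}\int_{\d\Sigma_\nu}f(\lambda)\,R(\lambda,A_\mu)\bigl(BR(\lambda,A_\mu)\bigr)^n\dd\lambda.
\]
The term $T_0=f(A_\mu)$ obeys $\|T_0\|\lesssim\|f\|_{\Hinf(\Sigma_\om)}$ with constant independent of $\mu$, by the first paragraph. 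For $n\ge1$ write $BR(\lambda,A_\mu)=(BA_\mu^{-\alpha})(A_\mu^\alpha R(\lambda,A_\mu))$ and estimate the integrand by the two bounds above: the outermost resolvent contributes $\max\{\mu,|\lambda|\}^{-1}$ rather than $|\lambda|^{-1}$, which removes the divergence of $\int_{\d\Sigma_\nu}\|R(\lambda,A_\mu)\|\,|\dd\lambda|$ at the vertex, while each of the $n$ factors $A_\mu^\alpha R(\lambda,A_\mu)$ contributes $\min\{\mu^{\alpha-1},|\lambda|^{\alpha-1}\}$, the $|\lambda|^{\alpha-1}$ ensuring integrability at infinity and the $\mu^{\alpha-1}$ producing geometric decay. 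The resulting elementary integral gives $\|T_n\|\lesssim(C\mu^{\alpha-1})^n\|f\|_{\Hinf(\Sigma_\om)}$ uniformly in $f$; summing over $n$, once $\mu$ is so large that $C\mu^{\alpha-1}<1$, gives $\|f(A_\mu+B)\|\lesssim\|f\|_{\Hinf(\Sigma_\om)}$ for all $f\in H^1(\Sigma_\om)\cap\Hinf(\Sigma_\om)$. Hence $\mu+A+B$ has a bounded $\Hinf(\Sigma_\om)$-calculus.

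I expect the main subtlety to be the behaviour at the vertex of the sector: the $\Hinf$-estimate must be in the $\Hinf$-norm of $f$, whereas a crude bound of the terms $T_n$ ($n\ge1$) would only control them by $\|f\|_{H^1}$, since $\int_{\d\Sigma_\nu}\|R(\lambda,A_\mu)\|\,|\dd\lambda|$ diverges at the vertex. Shifting first so that $A_\mu$ is invertible is precisely what repairs this: it upgrades the near-vertex resolvent bound from $|\lambda|^{-1}$ to $\mu^{-1}$ and simultaneously makes the Neumann series converge. The remaining point requiring care is that $B$ is $A_\mu^\alpha$-bounded with a constant uniform in $\mu$, which rests on the identification $D(A_\mu^\alpha)=D(A^\alpha)$ and the uniform boundedness of $A^\alpha A_\mu^{-\alpha}$.
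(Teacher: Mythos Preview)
The paper does not give its own proof of this statement: Theorem~\ref{thm:lowe_ord_pert} is quoted verbatim from \cite[Theorem~16.2.7]{HNVW24} and used as a black box in Section~\ref{sec:calc_spec_dom}. There is therefore no in-paper argument to compare against.

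That said, your proof sketch is correct and is essentially the standard argument one finds in the cited reference. The shift to $A_\mu=\mu+A$ makes the operator boundedly invertible with $\|A_\mu^{-\beta}\|\lesssim\mu^{-\beta}$, and the uniform bound $\|BA_\mu^{-\alpha}\|\le C_0$ (via $D(A_\mu^\alpha)=D(A^\alpha)$ and the uniform boundedness of $A^\alpha A_\mu^{-\alpha}$, both of which are available in the paper's Section~\ref{sec:prelim_func_calc}) is the right input. Your two resolvent bounds are correct: the moment inequality handles $|\lambda|\gtrsim\mu$, and the Neumann expansion $R(\lambda,A_\mu)=-A_\mu^{-1}\sum_{k\ge0}(\lambda A_\mu^{-1})^k$ handles $|\lambda|\lesssim\mu$, with the intermediate range covered by either. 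The Neumann series for $R(\lambda,A_\mu+B)$ then converges uniformly in $\lambda\notin\overline{\Sigma_{\om''}}$, and your termwise estimate
\[
\|T_n\|\;\le\;\frac{\|f\|_{\Hinf(\Sigma_\om)}}{2\pi}\int_{\partial\Sigma_\nu}\max\{\mu,|\lambda|\}^{-1}\bigl(C_0K\max\{\mu,|\lambda|\}^{\alpha-1}\bigr)^n\,|\ddn\lambda|
\;\lesssim\;(C\mu^{\alpha-1})^n\|f\|_{\Hinf(\Sigma_\om)}
\]
for $n\ge1$ (with $C$ independent of $n$, since the $|\lambda|$-integral contributes a factor $1+\tfrac{1}{n(1-\alpha)}\le 1+\tfrac{1}{1-\alpha}$) is exactly what is needed to sum the series. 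Your diagnosis of the vertex issue is accurate: the improved bound $\|R(\lambda,A_\mu)\|\lesssim\mu^{-1}$ near $\lambda=0$ is what turns the a~priori $H^1$-type control of the tail terms into genuine $\Hinf$-control.
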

To extend the $\Hinf$-calculus of the Laplacian on $\RRdh$ to domains in Sections \ref{sec:calc_spec_dom} and \ref{sec:calc_dom}, we need to deal with perturbations that are not of lower order. Unfortunately, the $H^\infty$-calculus is not stable under small perturbations, as shown in a counterexample by McIntosh and Yagi \cite{MY90}. Instead, for the $H^\infty$-calculus, one has statements of the following type, in which the perturbation is in addition required to be well behaved with respect to a fractional power of the original operator.

\begin{theorem}[{\cite[Theorem 16.2.8]{HNVW24}}]\label{thm:perturbcalculus}
Let $A$ be a sectorial operator on a Banach space $X$ such that $0\in\rho(A)$. Let $\om\in(\om(A),\pi)$ and assume that $A$ has a bounded $H^\infty(\Sigma_{\om})$-calculus. Let $B$ be a linear operator on $X$ such that $ D(B)\supseteq D(A)$. Suppose that there is an $\eta>0$ such that
\begin{enumerate}[(i)]
  \item\label{it:thm:perturbcalculus1} $\nrm{Bu}_X \leq \eta \,\nrm{Au}_X, \qquad u \in D(A).$
\end{enumerate}
Moreover, suppose that at least one of the following relative bounds is satisfied:
\begin{enumerate}[resume*]
  \item\label{it:thm:perturbcalculus2} there exists an $\alpha \in (0,1)$ such that $B(D(A^{1+\alpha}))\subseteq D(A^\alpha)$ and
\begin{equation*}
\nrm{A^\alpha Bu}_X \leq C \nrm{A^{1+\alpha}u}_X, \qquad u\in D(A^{1+\alpha}),
\end{equation*}
  \item\label{it:thm:perturbcalculus3} there exists an $\alpha \in (0,1)$ such that
  \begin{equation*}
\nrm{A^{-\alpha} Bu}_X \leq C \nrm{A^{1-\alpha}u}_X, \qquad u\in D(A^{1-\alpha}),
\end{equation*}
\end{enumerate}
for some $C>0$. Then there exists an $\tilde{\eta}>0$ such that, if \ref{it:thm:perturbcalculus1} holds with $\eta<\tilde{\eta}$, then $A+B$ with $D(A+B)=D(A)$ has a bounded $H^\infty(\Sigma_\om)$-calculus.
\end{theorem}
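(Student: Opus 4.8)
The plan is to transfer the bounded $H^{\infty}$-calculus from $A$ to $A+B$ via a Neumann series in the resolvent, using \ref{it:thm:perturbcalculus1} to make that series converge and the fractional power hypothesis \ref{it:thm:perturbcalculus2} or \ref{it:thm:perturbcalculus3} to upgrade the estimate for each term from an $H^{1}$-bound to an $H^{\infty}$-bound. Since the conclusion only strengthens as $\om$ decreases, I would first fix $\om'\in(\om(A),\om)$ and a ray angle $\nu\in(\om',\om)$. Because $0\in\rho(A)$ and $A$ is sectorial of angle $<\om'$, one has the uniform decay $\|R(z,A)\|\lesssim(1+|z|)^{-1}$ and, more generally, $\|A^{\sigma}R(z,A)\|\lesssim(1+|z|)^{\sigma-1}$ for $\sigma\in[0,1]$, on $\partial\Sigma_{\nu}$; these are the bounds used throughout. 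Writing $BR(z,A)=(BA^{-1})(zR(z,A)-I)$ and invoking \ref{it:thm:perturbcalculus1} shows $\|BR(z,A)\|\le \eta\,c_0$ uniformly (with $c_0=c_0(A)$), so for $\eta$ small $I-BR(z,A)$ is boundedly invertible; hence $z-(A+B)=(I-BR(z,A))(z-A)$ is boundedly invertible for $z\in\CC\setminus\overline{\Sigma_{\om'}}$ and for $z$ near $0$. This gives that $A+B$ is sectorial with $\om(A+B)\le\om'<\om$, $D(A+B)=D(A)$, $0\in\rho(A+B)$, and the convergent Neumann series $R(z,A+B)=\sum_{n\ge0}R(z,A)\bigl(BR(z,A)\bigr)^{n}$.

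\emph{Step 2 (the perturbation series for the calculus).} For $f\in H^{1}(\Sigma_{\om})\cap H^{\infty}(\Sigma_{\om})$, integrating the Neumann series against $f$ over $\partial\Sigma_{\nu}$ yields $f(A+B)=\sum_{n\ge0}T_{n}$ with
\[
T_{n}:=\frac{1}{2\pi\ii}\int_{\partial\Sigma_{\nu}}f(z)\,R(z,A)\bigl(BR(z,A)\bigr)^{n}\,\dd z ,
\]
where $T_{0}=f(A)$ satisfies $\|T_{0}\|\le C_{A}\|f\|_{H^{\infty}(\Sigma_{\om})}$ by the assumed calculus of $A$, with $C_{A}$ its calculus constant. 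The target is an estimate $\|T_{n}\|\le C_{1}\,\delta^{n}\|f\|_{H^{\infty}(\Sigma_{\om})}$ with $\delta=\delta(\eta)<1$ for $\eta$ small; summing then gives $\|f(A+B)\|\le\frac{C_{1}}{1-\delta}\|f\|_{H^{\infty}(\Sigma_{\om})}$ on the dense class $H^{1}\cap H^{\infty}$, and the bounded $H^{\infty}(\Sigma_{\om})$-calculus of $A+B$ follows by the usual approximation/convergence argument (see \cite[Chapter~10]{HNVW17}), using the sectoriality from Step~1.

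\emph{Step 3 (the key estimate --- and the main obstacle).} The naive bound $\|R(z,A)(BR(z,A))^{n}\|\lesssim(1+|z|)^{-1}(\eta c_{0})^{n}$ only gives $\|T_{n}\|\lesssim(\eta c_{0})^{n}\|f\|_{H^{1}}$, i.e.\ the $H^{1}$-norm; and inserting extra resolvents does not help, since $B$ has the same order as $A$ and each additional $R(z,A)$ is compensated by a factor $z$. This $(1+|z|)^{-1}$-versus-$H^{\infty}$ mismatch is exactly why the $H^{\infty}$-calculus is unstable under small perturbations \cite{MY90}, and it is here that \ref{it:thm:perturbcalculus2}/\ref{it:thm:perturbcalculus3} enter. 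Since a bounded $H^{\infty}$-calculus implies $\BIP$, Proposition~\ref{prop:frac_domain} identifies $[X,D(A)]_{\theta}=D(A^{\theta})$; interpolating the two available bounds for $B$ --- the small order-one bound $\|Bu\|\le\eta\|Au\|$ on $D(A)$ and the fixed shift-by-one bound on the appropriate fractional scale furnished by \ref{it:thm:perturbcalculus2}/\ref{it:thm:perturbcalculus3} --- produces, for every $\beta\in(0,\alpha)$, a shift-by-one bound for $B$ on the scale of $A^{1\mp\beta}$ with the \emph{small} norm $\lesssim\eta^{1-\beta/\alpha}$. Feeding this into the series, one rewrites each $T_{n}$ by conjugating the factors $B$ with suitable fractional powers of $A$, so that they are replaced by bounded operators of small norm and, after moving the non-commuting factors aside, the scalar contour integrals can be controlled by the bounded $H^{\infty}$-calculus of $A$ (using Cauchy's estimates to pass from $f$ to functions such as $z\mapsto z^{k}f^{(k)}(z)$, whose $H^{\infty}(\Sigma_{\om'})$-norm is $\lesssim C^{k}\|f\|_{H^{\infty}(\Sigma_{\om})}$), while keeping track of domains through $R(z,A)\colon D(A^{\sigma})\to D(A^{\sigma+1})$. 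This yields $\|T_{n}\|\le C_{1}\delta^{n}\|f\|_{H^{\infty}(\Sigma_{\om})}$ with $\delta\le c_{1}\eta^{\,1-\beta/\alpha}$ (the constant $c_{1}$ absorbing the exponential-in-$n$ factors from the Cauchy estimates), so $\delta\to0$ as $\eta\to0$; choosing $\tilde{\eta}$ so that $\delta(\tilde{\eta})<1$ --- with $\tilde{\eta}$ depending on $A$, $\om$, $\alpha$ and the constant $C$ in \ref{it:thm:perturbcalculus2}/\ref{it:thm:perturbcalculus3} --- finishes the proof. I expect this last rewriting, together with the domain bookkeeping and the interpolation step that turns the fixed constant $C$ into a power of $\eta$, to be the technical heart; the rest is routine.
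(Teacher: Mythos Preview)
The paper does not give its own proof of this theorem; it is quoted verbatim from \cite[Theorem~16.2.8]{HNVW24} (with the remark afterwards that the $R$-sectoriality hypothesis on $B$ appearing in that reference is redundant). So there is no argument in the paper to compare your proposal against.

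On your proposal itself: Steps~1 and~2 are correct and standard, and you are right that the entire difficulty lies in Step~3---upgrading the $H^{1}$-bound on each $T_{n}$ to an $H^{\infty}$-bound is precisely the content of the theorem. Your description of Step~3, however, remains a plan rather than a proof, and two points need to be made concrete before it can be considered complete. First, the interpolation you invoke between \ref{it:thm:perturbcalculus1} and \ref{it:thm:perturbcalculus2}/\ref{it:thm:perturbcalculus3} does produce a shift-by-one bound on an intermediate fractional scale with constant of order $\eta^{1-\beta/\alpha}C^{\beta/\alpha}$, but you then have to verify that the remaining geometric factors in the $T_{n}$-estimate do not swamp this smallness; the dependence on $\beta$ and on $n$ must be tracked explicitly. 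Second, and more seriously, the mechanism you sketch---conjugating the $B$'s by fractional powers so that ``the scalar contour integrals can be controlled by the bounded $H^{\infty}$-calculus of $A$, using Cauchy's estimates to pass from $f$ to $z\mapsto z^{k}f^{(k)}(z)$''---is not obviously workable as stated: the $B$ factors do not commute with $R(z,A)$, so after conjugation the integrand is not of the form $g(z)R(z,A)$ for a single holomorphic $g$ to which the calculus of $A$ applies, and integration by parts on the contour (which is what the appearance of $f^{(k)}$ suggests) does not reduce $R(z,A)\bigl(BR(z,A)\bigr)^{n}$ to a pure power $R(z,A)^{n+1}$. The proofs in \cite[Theorem~3.2]{DDHPV04} and \cite[\S16.2]{HNVW24} carry out a more careful rearrangement of the resolvent product that genuinely exploits the fractional shift to gain the missing decay; your sketch has the right ingredients but not yet a mechanism that closes.
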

\begin{remark} Theorem \ref{thm:perturbcalculus} is taken from \cite[Theorem 16.2.8]{HNVW24}, where it should be noted that their condition of $R$-sectoriality on $B$ is redundant, see also \cite{KW01b} and the errata to \cite{HNVW24}. 
A version of Theorem \ref{thm:perturbcalculus} for positive fractional powers also appeared in \cite[Theorem 3.2]{DDHPV04}.
\end{remark}

\subsection{The \texorpdfstring{$\UMD$}{UMD} property}
Throughout this paper, we work mostly with vector-valued Sobolev spaces (although our results are also new for the scalar-valued case), and for this, we need the $\UMD$ property for Banach spaces. We recall that a Banach space $X$ satisfies the condition $\UMD$ (unconditional martingale differences) if and only if the Hilbert transform extends to a bounded operator on $L^p(\RR;X)$. We list the following relevant properties of $\UMD$ spaces, see for instance \cite[Chapter 4 \& 5]{HNVW16}.
\begin{enumerate}[(i)]
\item Hilbert spaces are $\UMD$ Banach spaces. In particular, $\CC$ is a $\UMD$ space.
  \item If $p\in(1,\infty)$, $(S,\Sigma,\mu)$ is a $\sigma$-finite measure space and $X$ is a $\UMD$ Banach space, then $L^p(S;X)$ is a $\UMD$ Banach space.
  \item $\UMD$ Banach spaces are reflexive.
\end{enumerate}
The UMD property is known to be necessary for many results on vector-valued Sobolev spaces (see \cite{ABK}, \cite[Section 5.6]{HNVW16} and \cite[Corollary 13.3.9]{HNVW24}). Moreover, the boundedness of the $H^\infty$-calculus of $-\Delta$ on spaces such as $L^p(\R^d;X)$ also is equivalent to the UMD property (see \cite[Section 10.5]{HNVW17}).

\subsection{Domains}\label{subsec:Ckdomains}
Let $\lambda\in (0,1]$ and let $\OO\subseteq \RR^{d-1}$ be open. A function $h:\OO\to \RR$ is called \emph{uniformly $\lambda$-H\"older continuous on $\OO$} if 
\begin{equation*}
  [h]_{\lambda, \OO}:= \sup_{\substack{x, y\in \OO\\ x\neq y}}\frac{|h(x)-h(y)|}{|x-y|^\lambda}<\infty.
\end{equation*}
In addition, for $\ell\in \NN_0$ we define the space of $\lambda$-H\"older continuous functions by 
\begin{equation*}
  C_{\b}^{\ell,\lambda}(\OO):=\{f\in C_{\b}^{\ell}(\OO): [\d^\alpha h]_{\lambda, \OO}<\infty\text{ for all }|\alpha| = \ell\}.
\end{equation*}
For $\lambda=0$ we write $C_\b^{\ell,0}(\OO):=C^{\ell}_\b(\OO)$. By $\Cc^{\ell,\lambda}(\OO)$ we denote the subset of functions in $C^{\ell,\lambda}(\OO)$ with compact support in $\OO$. Moreover, on $C_\b^{\ell,\lambda}(\OO)$ we define the norm
\begin{equation*}
  \|h\|_{C^{\ell,\lambda}(\OO)}:= \sum_{|\alpha|\leq \ell}\sup_{x\in\OO}|\d^\alpha h(x)|+ \sum_{|\alpha|=\ell}[\d^\alpha h]_{\lambda, \OO}.
\end{equation*}

\begin{definition}\label{def:domains}
  Let $\Dom \subseteq \R^d$ be a domain, i.e., a connected open set. Let $\ell \in \N_0$ and $\lambda\in [0,1]$.
  \begin{enumerate}[(i)]
    \item We call $\mc{O}$ a \emph{special $\Cc^{\ell,\lambda}$-domain} if, after translation and rotation, it is of the form
        \begin{equation}\label{eq:specialdomainh}
          \mc{O} = \cbrace{(x_1,\tilde{x})\in \R^d: x_1>h(\tilde{x})}
        \end{equation}
        for some $h \in \Cc^{\ell,\lambda}(\R^{d-1};\RR)$.
    \item Given a special $\Cc^{\ell,\lambda}$-domain $\mc{O}$, we define
\begin{equation*}
[\mc{O}]_{C^{\ell,\lambda}}:= \nrm{h}_{C^{\ell,\lambda}(\R^{d-1})},
\end{equation*}
where $h\in \Cc^{\ell,\lambda}(\R^{d-1};\RR)$ is such that, after rotation and translation, \eqref{eq:specialdomainh} holds. Note that $[\mc{O}]_{C^{\ell,\lambda}}$ is uniquely defined due to the compact support of $h$.
    \item We call $\mc{O}$ a \emph{$C^{\ell,\lambda}$-domain} if every boundary point $x \in \BDom$ admits an open neighbourhood $V$ such that
        \begin{equation*}
        \mc{O}\cap V = W \cap V \qquad \text{and}\qquad \BDom \cap V = \partial W \cap V
        \end{equation*}
        for some special $\Cc^{\ell,\lambda}$-domain $W$.
  \end{enumerate}
  If $\lambda=0$, then we write $C^\ell$ for $C^{\ell,0}$ in the definitions above.
\end{definition}
For any $\delta>0$ and $C^\ell$-domain $\mc{O}$, the special $\Cc^\ell$-domains $W$ can always be chosen such that $[W]_{C^\ell}<\delta$. If $\lambda\in(0,1]$, $\eps\in (0, \lambda)$ and $\OO$ is a $C^{\ell,\lambda}$-domain, then for any $\delta>0$, the special $\Cc^{\ell,\lambda}$-domains $W$ can be chosen such that $[W]_{C^{\ell,\lambda-\eps}}<\delta$. Indeed, if $h\in \Cc^{\ell,\lambda}(\RR^{d-1};\RR)$ is associated with $W$, then for any $|\alpha|=\ell$, we have
\begin{equation*}
  [\d^\alpha h]_{\lambda-\eps,\OO} = \sup_{\substack{x, y\in \OO\\ x\neq y}}\frac{|\d^\alpha h(x)-\d^\alpha h(y)|}{|x-y|^\lambda}|x-y|^\eps< \delta,
\end{equation*}
whenever $|x-y|^\eps$ is small enough. Note that for $\eps=0$, the quantity $[\d^{\alpha} h]_{\lambda,\OO}$ cannot be made arbitrarily small.

\section{Weighted Sobolev spaces and trace characterisations}\label{sec:weighted_Sob_spaces}
Let $\Dom\subseteq \R^{d}$ be a domain with non-empty boundary $\BDom$. A locally integrable function $w:\OO\to (0,\infty)$ is called a \emph{weight}. For $\gam \in\RR$ we define the spatial power weight $w^{\BDom}_{\gam}$ on $\Dom$ by
\begin{equation*}
w^{\BDom}_{\gam}(x) := \mrm{dist}(x,\BDom)^{\gam}, \qquad x \in \Dom,
\end{equation*}
and denote $w_{\gam} := w_\gam^{\smash{\d\RRdh}}$.

For $p \in [1,\infty)$, $\gam\in \RR$ and $X$ a Banach space we define the weighted Lebesgue space $L^p(\Dom,w^{\BDom}_{\gam};X)$ as the Bochner space consisting of all strongly measurable $f\colon \mc{O}\to X$ such that
\begin{equation*}
\nrm{f}_{L^p(\Dom,w^{\BDom}_{\gam};X)} := \has{\int_{\Dom}\|f(x)\|^p_X\:w^{\BDom}_{\gam}(x)\dd x }^{1/p}<\infty.
\end{equation*}
Let $w_\gam^{\d\OO}$ be such that $(w_\gam^{\d\OO})^{-\frac{1}{p-1}}\in L^1_{\loc}(\OO)$. The $k$-th order weighted Sobolev space for $k \in \N_0$ is defined as
\begin{equation*}
W^{k,p}(\Dom,w^{\BDom}_{\gam};X) := \left\{ f \in \mc{D}'(\Dom;X) : \forall |\alpha| \leq k, \partial^{\alpha}f \in L^p(\Dom,w^{\BDom}_{\gam};X) \right\}
\end{equation*}
equipped with the canonical norm. If $\gam=0$, then we simply write $W^{k,p}(\OO;X)$.
\begin{remark}\label{rem:L1loc}
  The local $L^1$ condition for $(w_\gam^{\d\OO})^{-\frac{1}{p-1}}$ ensures that all the derivatives $\d^{\alpha}f$ are locally integrable in $\OO$. If $\OO$ is the half-space $\RRdh$ or a bounded domain, then this condition holds for all $\gam\in\RR$. For $\OO=\RR^d$ the local $L^1$ condition holds only for weights $w_\gam(x)=|x_1|^\gam$ with $\gam\in(-\infty,p-1)$. For $\gam\geq p-1$, one has to be careful with defining the weighted Sobolev spaces on the full space because functions might not be locally integrable near $x_1=0$, see \cite{KO1984}. This explains why, for example, we cannot employ classical reflection arguments from $\RRdh$ to $\RR^d$ if $\gam>p-1$.
\end{remark}

Let $p\in(1,\infty)$, $k\in\NN_0$, $\gam>-1$ and let $X$ be a Banach space. To impose zero boundary conditions, we define
\begin{equation}\label{eq:Sobolevzerodef}
  \cir{W}_0^{k,p}(\Dom,w^{\BDom}_{\gam};X) := \overline{\Cc^\infty(\Dom;X)}^{W^{k,p}(\Dom,w^{\BDom}_{\gam};X)}.
\end{equation}
Furthermore, to impose Dirichlet and Neumann boundary conditions, we set
\begin{align*}
  C^\infty_{\cDir}(\overline{\mc{O}};X)&:= C^{\infty}(\OO;X)\cap\cbraceb{f \in \Cc(\overline{\mc{O}};X): f|_{\partial\mc{O}}=0},\\
 C^\infty_{\cNeu}(\overline{\mc{O}};X)&:= C^{\infty}(\OO;X)\cap\cbraceb{f \in \Cc^1(\overline{\mc{O}};X): (\d_{\vec{n}} f)|_{\partial\mc{O}}=0},
\end{align*}
which contain functions that are smooth in the interior of $\OO$, satisfy the boundary condition and have compact support at infinity (in the case of unbounded domains). Here, $\vec{n}$ denotes the inward unit normal vector at $\d\OO$ and $\d_{\vec{n}}= \vec{n}\cdot \grad$. We define
\begin{equation}\label{eq:Sobolevdirdef}
\begin{aligned}
  \cir{W}^{k,p}_{\Dir}(\Dom,w^{\BDom}_{\gam};X)& := \overline{C^\infty_{\cDir}(\overline{\Dom};X)}^{W^{k,p}(\Dom,w^{\BDom}_{\gam};X)},\\
  \cir{W}^{k,p}_{\Neu}(\Dom,w^{\BDom}_{\gam};X) &:= \overline{C^\infty_{\cNeu}(\overline{\Dom};X)}^{W^{k,p}(\Dom,w^{\BDom}_{\gam};X)}.
  \end{aligned}
\end{equation}
The notation $\cir{W}_0^{k,p}$, $\cir{W}_\Dir^{k,p}$ and $\cir{W}_\Neu^{k,p}$ as in \eqref{eq:Sobolevzerodef} and \eqref{eq:Sobolevdirdef} will mean that the spaces are defined as the closure of some space of test functions. Alternative characterisations of these spaces with boundary conditions in terms of traces (which will be denoted by $W_0^{k,p}$, $W_\Dir^{k,p}$ and $W_\Neu^{k,p}$) are derived in Sections \ref{subsec:trace-char}, \ref{subsec:tracechar_dom} and \ref{subsec:tracechar_bdd_dom}. The characterisations involving traces are also used in \cite{LLRV24, LV18} to define Sobolev spaces with boundary conditions.\\

We recall from \cite[Lemma 3.1]{LV18} that for $p\in[1,\infty)$, $\gam\in (-\infty,p-1)$ and $X$ a Banach space, we have the Sobolev embedding
\begin{equation*}
  W^{1,p}(\RR_+, w_{\gam}; X)\hookrightarrow C([0,\infty);X).
\end{equation*}
Hardy’s inequality plays a central role in the analysis of weighted Sobolev spaces. We state a version on $\R_+$ from \cite[Lemma 3.2]{LV18}. A version for $\RRdh$ will be given in Corollary \ref{cor:Sob_embRRdh}. For Hardy's inequality on more general domains, the reader is referred to \cite[Section 8.8]{Ku85}.
\begin{lemma}[Hardy's inequality on $\RR_+$]\label{lem:Hardy}
  Let $p\in[1,\infty)$ and let $X$ be a Banach space. Let $u\in W^{1,p}(\RR_+,w_{\gam};X)$ and assume either
\begin{enumerate}[(i)]
\item $\gam<p-1$ and $u(0)=0$, or,
\item $\gam>p-1$.
\end{enumerate}Then
  \begin{equation*}
    \|u\|_{L^p(\RR_+,w_{\gam-p};X)}\leq C_{p,\gam}\|u'\|_{L^p(\RR_+,w_{\gam};X)}.
  \end{equation*}
\end{lemma}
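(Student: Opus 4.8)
The plan is to reduce both cases to the generalised Minkowski integral inequality after a \emph{multiplicative} change of variables, exploiting that $w_{\gam}(t)=t^{\gam}$ is a power weight and therefore transforms cleanly under the scaling $t\mapsto\sigma t$, so that the weighted $L^p$-norm factorises into a one-variable convolution-type bound.

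In case (i) I would first invoke the Sobolev embedding $W^{1,p}(\R_+,w_{\gam};X)\hookrightarrow C([0,\infty);X)$ recalled just above the lemma (valid since $\gam<p-1$) to pick the continuous representative of $u$, so that $u(0)=0$ is meaningful and $u(t)=\int_0^t u'(s)\dd s$. Substituting $s=\sigma t$ rewrites this as $u(t)=t\int_0^1 u'(\sigma t)\dd\sigma$, hence $\|u(t)\|_X\le t\int_0^1\|u'(\sigma t)\|_X\dd\sigma$. Taking the $L^p(\R_+,w_{\gam-p};X)$-norm, applying Minkowski's integral inequality in the variable $\sigma$, and then substituting $\tau=\sigma t$ in the inner integral (which pulls out a factor $\sigma^{-(\gam+1)/p}$ coming from the weight together with the Jacobian) produces
\[
\|u\|_{L^p(\R_+,w_{\gam-p};X)}\le\Big(\int_0^1\sigma^{-(\gam+1)/p}\dd\sigma\Big)\,\|u'\|_{L^p(\R_+,w_{\gam};X)},
\]
and the $\sigma$-integral is finite \emph{precisely} because $\gam<p-1$, with value $\tfrac{p}{p-1-\gam}$. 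If the right-hand side is infinite there is nothing to prove, so all these manipulations with nonnegative integrands are justified by Tonelli's theorem; the case $p=1$ is included, Minkowski's integral inequality then reducing to Tonelli itself.

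In case (ii) there is no boundary value at $0$ available, so the first task is to produce the representation $u(t)=-\int_t^\infty u'(s)\dd s$. Since $t^{\gam}$ is bounded above and below on compact subsets of $(0,\infty)$, the function $u$ has a locally absolutely continuous representative there; Hölder's inequality against $t^{-\gam/(p-1)}$ — which is integrable at infinity exactly when $\gam>p-1$ — shows $u'\in L^1((1,\infty);X)$, so $\lim_{t\to\infty}u(t)$ exists in $X$, and because $u\in L^p(\R_+,w_{\gam};X)$ while $\int^\infty t^{\gam}\dd t=\infty$ this limit is forced to be $0$. With $u(t)=-\int_t^\infty u'(s)\dd s$ in hand, substituting $s=t/\sigma$ for $\sigma\in(0,1)$ gives $\|u(t)\|_X\le t\int_0^1\sigma^{-2}\|u'(t/\sigma)\|_X\dd\sigma$, and rerunning the Minkowski-plus-rescaling step (the substitution $\tau=t/\sigma$ now contributing a factor $\sigma^{(\gam+1)/p}$) yields
\[
\|u\|_{L^p(\R_+,w_{\gam-p};X)}\le\Big(\int_0^1\sigma^{(\gam+1)/p-2}\dd\sigma\Big)\,\|u'\|_{L^p(\R_+,w_{\gam};X)},
\]
where the $\sigma$-integral converges exactly because $\gam>p-1$, with value $\tfrac{p}{\gam+1-p}$.

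The routine parts are the scaling computations and the use of Minkowski's integral inequality, which holds verbatim for Bochner spaces (it is a consequence of the triangle inequality in $X$ together with Fubini--Tonelli). The step I expect to require the most care is the endpoint analysis in case (ii): establishing that $u'$ is integrable near infinity and that $\lim_{t\to\infty}u(t)=0$, so that the fundamental theorem of calculus can be anchored at $+\infty$ — unlike case (i), there is no boundary point at $0$ to anchor it to, and one must instead extract the decay of $u$ from its very membership in $L^p(\R_+,w_{\gam};X)$.
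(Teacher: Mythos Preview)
The paper does not give its own proof of this lemma; it simply cites \cite[Lemma 3.2]{LV18}. Your argument is correct and is the classical route to Hardy's inequality via Minkowski's integral inequality after the multiplicative substitution $s=\sigma t$ (respectively $s=t/\sigma$); the constants you obtain, $C_{p,\gam}=\tfrac{p}{|p-1-\gam|}$, are the sharp ones. One small remark: in case (ii) your phrase ``H\"older's inequality against $t^{-\gam/(p-1)}$'' does not literally parse when $p=1$, but the conclusion $u'\in L^1((1,\infty);X)$ follows there even more directly, since $s^{\gam}\ge 1$ for $s\ge 1$ and $u'\in L^1(\RR_+,w_{\gam};X)$ by hypothesis.
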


\subsection{Trace characterisations for weighted Sobolev spaces on the half-space}\label{subsec:trace-char} In the following three sections, we present characterisations of the spaces in \eqref{eq:Sobolevzerodef} and \eqref{eq:Sobolevdirdef} as closed subspaces of $W^{k,p}(\OO, w^{\d\OO}_{\gam};X)$ with vanishing traces. In this section, we start with the special case $\OO=\RRdh$.\\

For $p\in(1,\infty)$, $k\in\NN_0$, $\gam\in (-1,\infty)\setminus\{jp - 1:j\in\NN_1\}$ and $X$ a Banach space, we define the following spaces with vanishing traces
  \begin{align*}
W^{k,p}_{0}(\RR_+^d, w_{\gam}; X)&:=\left\{f\in W^{k,p}(\RR_+^d,w_{\gam};X): \operatorname{Tr}( \d^{\alpha}f)=0 \text{ if }k-|\alpha|>\tfrac{\gam+1}{p}\right\}, \\
  W^{k,p}_{\Dir}(\RR_+^d, w_{\gam}; X)&:=\left\{f\in W^{k,p}(\RR_+^d,w_{\gam};X): \operatorname{Tr}(f)=0 \text{ if }k>\tfrac{\gam+1}{p}\right\},\\
  W^{k,p}_{\Neu}(\RR_+^d, w_{\gam}; X)&:=\left\{f\in W^{k,p}(\RR_+^d,w_{\gam};X): \operatorname{Tr}(\d_1f)=0 \text{ if }k-1>\tfrac{\gam+1}{p}\right\}.
\end{align*}
All the traces in the above definitions are well defined, see \cite[Section 3.1]{LLRV24}. Although we will not consider weights $w_{\gam}$ with $\gam\leq -1$, we can nonetheless define
\begin{equation*}
  W^{k,p}_\Dir(\RRdh, w_{\gam};X):=W^{k,p}_0(\RRdh, w_{\gam};X):=W^{k,p}(\RRdh,w_{\gam};X),
\end{equation*}
see \cite[Lemma 3.1(2)]{LV18}.\\

In \cite{LV18} the above spaces are also used to define weighted Sobolev spaces on domains. However, since we consider domains with low regularity, we cannot do this, as will be explained in Remark \ref{rem:notPhi}.  Therefore, we first defined the Sobolev spaces as the closure of test functions in \eqref{eq:Sobolevzerodef} and \eqref{eq:Sobolevdirdef}. The following proposition relates the spaces $W^{k,p}_{{\rm BC}}$ and $\cir{W}^{k,p}_{{\rm BC}}$, where $\BC\in \{0, \Dir, \Neu\}$ stands for boundary conditions. That is, we prove that certain classes of test functions are dense in Sobolev spaces with zero trace conditions. 
\begin{proposition}[Trace characterisation on $\RRdh$]\label{prop:tracechar_RRdh}
  Let $p\in(1,\infty)$, $k\in\NN_0$, $\gam\in (-1,\infty)\setminus\{jp - 1:j\in\NN_1\}$ and let $X$ be a Banach space. 
   For $\BC\in \{0, \Dir, \Neu\}$ we have the trace characterisations
 \begin{equation*}
   \cir{W}_{\BC}^{k,p}(\RRdh,w_{\gam};X)= W_{\BC}^{k,p}(\RRdh,w_{\gam};X).
 \end{equation*}
\end{proposition}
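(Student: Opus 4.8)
\emph{Overall strategy.} I would prove the two inclusions separately. The inclusion $\cir{W}^{k,p}_{\BC}\subseteq W^{k,p}_{\BC}$ is soft: the test functions defining $\cir{W}^{k,p}_{\BC}$ — namely $\Cc^\infty(\RRdh;X)$ when $\BC=0$, and $C^\infty_{\cDir}(\overline{\RRdh};X)$ resp.\ $C^\infty_{\cNeu}(\overline{\RRdh};X)$ when $\BC\in\{\Dir,\Neu\}$ — evidently satisfy the vanishing trace conditions that define $W^{k,p}_{\BC}$, and since the trace maps $f\mapsto\Tr(\d^\alpha f)$ appearing in those conditions are bounded on $W^{k,p}(\RRdh,w_\gam;X)$ (by the trace theory of \cite[Section~3.1]{LLRV24}, where the hypothesis $\gam\notin\{jp-1:j\in\NN_1\}$ is used), the space $W^{k,p}_{\BC}(\RRdh,w_\gam;X)$ is closed, hence contains the closure of the test functions. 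The real work is the reverse inclusion: given $f\in W^{k,p}_{\BC}(\RRdh,w_\gam;X)$ I must approximate it by test functions in the $W^{k,p}(\RRdh,w_\gam;X)$-norm.

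\emph{Reduction.} First I would reduce to $f$ with compact support in $\overline{\RRdh}$ that is $C^\infty$ in the tangential variable $\tilde x$: multiplying by $\chi(x_1/R)\chi(\abs{\tilde x}/R)$ with $\chi\in\Cc^\infty([0,\infty))$ equal to $1$ near $0$ and sending $R\to\infty$ (Leibniz' rule and dominated convergence; the $x_1$-cutoff is $1$ near $\d\RRdh$, so the trace conditions are preserved), followed by convolution in $\tilde x$ with a standard mollifier (which commutes with $\d_1$ and with the traces, and again preserves the trace conditions and compact support).

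\emph{The normal direction.} This is the heart of the matter. When $\BC=0$, all relevant normal and mixed-order traces of $f$ vanish, and by iterating Hardy's inequality (Lemma~\ref{lem:Hardy}) in the $x_1$-variable — using precisely these vanishing traces — I expect to obtain $\d^\beta f\in L^p(\RRdh,w_{\gam-jp};X)$ with $\nrm{\d^\beta f}_{L^p(\RRdh,w_{\gam-jp};X)}\lesssim\nrm{f}_{W^{k,p}(\RRdh,w_\gam;X)}$ whenever $\abs{\beta}+j\le k$. Then I would set $f_\eps:=(1-\chi(x_1/\eps))f$; Leibniz' rule expresses $\d^\alpha(f_\eps-f)$ as $-\chi(x_1/\eps)\d^\alpha f$ (which tends to $0$ by dominated convergence) plus terms $\eps^{-j}\chi^{(j)}(x_1/\eps)\,\d^{\alpha-je_1}f$ supported where $x_1\eqsim\eps$, whose $L^p(\RRdh,w_\gam;X)$-norm is $\lesssim\big(\int_{\{x_1\le C\eps\}}\nrm{\d^{\alpha-je_1}f}_X^p\,w_{\gam-jp}\big)^{1/p}$, which tends to $0$ by the preceding bound. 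Hence $f_\eps\to f$ in $W^{k,p}(\RRdh,w_\gam;X)$; as $f_\eps$ is supported where $w_\gam\eqsim 1$, it lies in the unweighted space $W^{k,p}$ with compact support in $\RRdh$, so a final mollification in all variables puts it in $\Cc^\infty(\RRdh;X)$. When $\BC\in\{\Dir,\Neu\}$, I would instead mollify $f$ in $x_1$ by a one-sided mollifier $\rho_\eps\in\Cc^\infty((0,\eps))$ directed into the domain, $f_\eps(x)=\int f(x_1+s,\tilde x)\rho_\eps(s)\ud s$; since translation is continuous in $L^p(\R_+,w_\gam;X)$ for $\gam>-1$, this gives $f_\eps\to f$ in $W^{k,p}(\RRdh,w_\gam;X)$ with $f_\eps$ smooth up to $\d\RRdh$. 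The one boundary constraint of the test-function class ($f|_{\d\RRdh}=0$ for $\Dir$, $(\d_1 f)|_{\d\RRdh}=0$ for $\Neu$) then holds only in the limit, so I would subtract from $f_\eps$ a lift of its smooth boundary value, vanishing to order $0$ (resp.\ $1$) at $\d\RRdh$, obtaining $\tilde f_\eps$ that satisfies the constraint exactly and, after a final cutoff for compact support, lies in $C^\infty_{\cDir}(\overline{\RRdh};X)$ resp.\ $C^\infty_{\cNeu}(\overline{\RRdh};X)$. That this correction tends to $0$ in $W^{k,p}(\RRdh,w_\gam;X)$ I would argue in two cases: if the relevant trace is continuous on $W^{k,p}(\RRdh,w_\gam;X)$ (i.e.\ $k>\frac{\gam+1}{p}$ for $\Dir$, $k-1>\frac{\gam+1}{p}$ for $\Neu$) I use a bounded right inverse of the trace operator (again \cite[Section~3.1]{LLRV24}) together with the fact that the boundary value of $\tilde f_\eps-f_\eps$ equals $\Tr(f_\eps-f)$ resp.\ $\Tr\d_1(f_\eps-f)$, which tends to $0$ because $\Tr f=0$ resp.\ $\Tr\d_1 f=0$; otherwise that trace is discontinuous, and I take instead a lift supported in a thin strip $\{x_1\le\delta_\eps\}$ with $\delta_\eps$ chosen small enough that the lift is small in $W^{k,p}(\RRdh,w_\gam;X)$ despite a possibly large boundary value (the exclusion $\gam\notin\{jp-1:j\in\NN_1\}$ ensures the relevant power of $\delta_\eps$ is positive).

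\emph{Main obstacle.} The cutoffs, mollifications, and limit arguments above are routine. The two points I expect to be delicate are: (i) the iterated Hardy estimate in the $\BC=0$ case — each application of Lemma~\ref{lem:Hardy} shifts the admissible weight exponent by $p$, and one must verify that the traces built into the definition of $W^{k,p}_0$ are exactly what is needed to proceed from one step to the next (this is also where $\gam\ne jp-1$ is essential, so that no intermediate weight is a forbidden exponent); and (ii) in the $\Dir$ and $\Neu$ cases, making the approximants satisfy the boundary constraint \emph{exactly} rather than only asymptotically, which forces the trace-correction step and the continuous-versus-discontinuous-trace dichotomy. I anticipate that the bookkeeping in (i) will be the bulk of the effort.
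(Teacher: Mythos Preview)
Your proposal is correct and essentially reproduces from scratch what the paper obtains by citation. The paper's proof is two lines: for $\BC=0$ it quotes \cite[Proposition~3.8]{LV18} (density of $\Cc^\infty(\RRdh;X)$ in $W^{k,p}_0(\RRdh,w_\gam;X)$), and for $\BC\in\{\Dir,\Neu\}$ it quotes \cite[Proposition~4.8]{Ro25}, then observes that the smooth functions with the relevant vanishing trace from that reference are contained in $C^\infty_{{\rm c},\BC}(\overline{\RRdh};X)$.

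Your iterated-Hardy-plus-cutoff argument for $\BC=0$ is the standard route and is what underlies the cited result from \cite{LV18}; the bookkeeping you flag as ``the bulk of the effort'' does go through, since at the $j$-th step one has $|\beta|+j+1\le k$, and if $\gam-jp<p-1$ then $\frac{\gam+1}{p}<j+1\le k-|\beta|$, which is exactly the trace condition built into $W^{k,p}_0$ that Lemma~\ref{lem:Hardy} requires. Your mollify-then-correct approach for $\Dir$ and $\Neu$ is likewise standard; in the discontinuous-trace regime the thin-strip lift works because after the reduction step the boundary datum $g_\eps$ is a fixed smooth compactly supported function, so $\delta_\eps$ may be chosen \emph{after} $\eps$. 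One shortcut: when the $\Dir$ trace is discontinuous ($k<\frac{\gam+1}{p}$) the spaces $W^{k,p}_0$ and $W^{k,p}_\Dir$ both equal $W^{k,p}$, and since $\Cc^\infty(\RRdh;X)\subseteq C^\infty_{\cDir}(\overline{\RRdh};X)$, the $\BC=0$ case already covers it.

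What each approach buys: the paper's route is economical but opaque, pushing the substance into two external references; yours is self-contained and makes transparent the role of Hardy's inequality and of the excluded exponents $\gam=jp-1$, at the cost of the bookkeeping you identify.
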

\begin{proof}
From \cite[Proposition 3.8]{LV18} we have that $\Cc^{\infty}(\RRdh;X)$ is dense in $W^{k,p}_{0}(\RRdh,w_{\gam};X)$ and therefore the trace characterisation for $\cir{W}^{k,p}_0(\RRdh,w_{\gam};X)$ follows.

Let $({\rm BC},j)\in \{(\Dir,0),(\Neu,1)\}$. Then \cite[Proposition 4.8]{Ro25} implies that 
\begin{equation*}
  \overline{\cbraceb{f \in \Cc^{\infty}(\overline{\RRdh};X): (\d_1^j f)|_{\partial\RRdh}=0}}^{W^{k,p}(\RRdh,w_{\gam};X)} = W^{k,p}_{{\rm BC}}(\RR_+^d, w_{\gam}; X).
\end{equation*}
Since 
\begin{equation*}
  \cbraceb{f \in \Cc^{\infty}(\overline{\RRdh};X): (\d_1^j f)|_{\partial\RRdh}=0}\subseteq C^\infty_{{\rm c, BC}}(\overline{\RRdh};X),
\end{equation*}
the trace characterisations for the Dirichlet and Neumann boundary conditions follow.
\end{proof}

Before we continue with trace characterisations on domains, we record the following Hardy inequalities. As a corollary of Hardy's inequality on $\RR_+$ (Lemma \ref{lem:Hardy}), we have the following Hardy's inequality on $\RRdh$, see also \cite[Corollary 3.4]{LV18}. 
\begin{corollary}[Hardy's inequality on $\RRdh$]\label{cor:Sob_embRRdh}
  Let $p\in(1,\infty)$, $k\in\NN_1$, $\gam\in \RR$ and let $X$ be a Banach space. Then 
        \begin{align*}
     W_0^{k,p}(\RRdh,w_{\gam};X)&\hookrightarrow W^{k-1,p} (\RRdh,w_{\gam-p};X) &&\text{ if }\gam<p-1,\\
     W^{k,p}(\RRdh,w_{\gam};X)&\hookrightarrow W^{k-1,p} (\RRdh,w_{\gam-p};X) &&\text{ if }\gam>p-1,\\
      W_0^{k,p}(\RRdh,w_{\gam};X)&\hookrightarrow W_0^{k-1,p} (\RRdh,w_{\gam-p};X)&&\text{ if }\gam\notin \{jp-1:j\in\NN_1\}.
  \end{align*}
\end{corollary}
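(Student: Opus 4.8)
The plan is to derive all three embeddings from Hardy's inequality on $\RR_+$ (Lemma \ref{lem:Hardy}) applied in the $x_1$-direction, combined with Fubini's theorem and an induction on the derivatives. The scalar-valued case is the model; the vector-valued case is identical since Lemma \ref{lem:Hardy} is stated for Banach-space-valued functions and Fubini holds for the Bochner integral.

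First I would handle the non-endpoint weight assertions. Fix $f$ in the relevant space and a multi-index $\alpha$ with $|\alpha|\leq k-1$. For a.e.\ $\tilde{x}\in\RR^{d-1}$ the function $x_1\mapsto \d^{\alpha}f(x_1,\tilde{x})$ lies in $W^{1,p}(\RR_+,w_{\gam};X)$ (with $x_1$-derivative $\d_1\d^{\alpha}f$), because all derivatives up to order $k$ are in $L^p(\RRdh,w_{\gam};X)$ and one can slice via Fubini. In the case $\gam>p-1$ we may apply Lemma \ref{lem:Hardy}(ii) directly to obtain
\begin{equation*}
\int_{\RR_+}\nrm{\d^{\alpha}f(x_1,\tilde{x})}_X^p w_{\gam-p}(x_1)\dd x_1 \leq C_{p,\gam}\int_{\RR_+}\nrm{\d_1\d^{\alpha}f(x_1,\tilde{x})}_X^p w_{\gam}(x_1)\dd x_1,
\end{equation*}
and integrating over $\tilde{x}\in\RR^{d-1}$ gives $\nrm{\d^{\alpha}f}_{L^p(\RRdh,w_{\gam-p};X)}\lesssim \nrm{f}_{W^{k,p}(\RRdh,w_{\gam};X)}$. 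Summing over $|\alpha|\leq k-1$ yields the second embedding. In the case $\gam<p-1$, for $f\in W^{k,p}_0(\RRdh,w_{\gam};X)$ the trace condition built into that space forces $\Tr(\d^{\alpha}f)=0$ whenever $k-|\alpha|>\tfrac{\gam+1}{p}$; since $\gam<p-1$ means $\tfrac{\gam+1}{p}<1$, the condition $k-|\alpha|\geq 1$ already guarantees $k-|\alpha|>\tfrac{\gam+1}{p}$, so $\Tr(\d^{\alpha}f)=0$ for all $|\alpha|\leq k-1$. Then Lemma \ref{lem:Hardy}(i) applies slicewise and, arguing exactly as before, gives the first embedding. (One should note the trace here is the pointwise value $\d^{\alpha}f(0,\tilde{x})$ furnished by the one-dimensional Sobolev embedding $W^{1,p}(\RR_+,w_{\gam};X)\hookrightarrow C([0,\infty);X)$ recalled from \cite[Lemma 3.1]{LV18}, and that this slicewise trace agrees a.e.\ with the $(d-1)$-dimensional trace operator $\Tr$.)

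For the third, stronger statement I would need not just $\d^{\alpha}f\in L^p(\RRdh,w_{\gam-p};X)$ for $|\alpha|\leq k-1$ but membership in the subspace $W^{k-1,p}_0(\RRdh,w_{\gam-p};X)$, i.e.\ the correct vanishing traces for the new weight $\gam-p$: $\Tr(\d^{\alpha}f)=0$ whenever $(k-1)-|\alpha|>\tfrac{(\gam-p)+1}{p}=\tfrac{\gam+1}{p}-1$. This is equivalent to $k-|\alpha|>\tfrac{\gam+1}{p}$, which is precisely the trace condition defining $W^{k,p}_0(\RRdh,w_{\gam};X)$ — so the required traces vanish by hypothesis. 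The hypothesis $\gam\notin\{jp-1:j\in\NN_1\}$ guarantees that $\gam-p\notin\{jp-1:j\in\NN_0\}$, i.e.\ $\gam-p\neq jp-1$ for all $j\geq -1$, so in particular $\gam-p>-1$ or $\gam-p<-1$; in the former case $W^{k-1,p}_0(\RRdh,w_{\gam-p};X)$ is defined by the displayed trace conditions, in the latter case it equals the full space $W^{k-1,p}(\RRdh,w_{\gam-p};X)$ by the convention recorded before Proposition \ref{prop:tracechar_RRdh}, and the claimed embedding is then automatic from the first two parts. Thus the only real content is matching up the index conditions, which is the bookkeeping sketched above; one also needs that the slicewise traces and the $\Tr$ operator are compatible after passing to the weight $\gam-p$, which follows from the same one-dimensional Sobolev embedding applied with exponent $\gam-p$ (valid since $\gam-p<p-1$ automatically when $\gam<2p-1$, and for larger $\gam$ one uses that no trace condition is imposed).

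The main obstacle is not any single inequality but the careful verification that the one-dimensional slicing is legitimate and that the one-dimensional (pointwise) notion of trace coincides a.e.\ with the genuine $(d-1)$-dimensional trace operator $\Tr$ used in the definitions of the $W^{k,p}_0$-spaces — this is where one must invoke \cite[Section 3.1]{LLRV24} for well-definedness of $\Tr$ and the one-dimensional Sobolev embedding from \cite{LV18}, and check consistency of the two. Once that identification is in place, the proof is a clean induction peeling off one weight power of $p$ at a time via Lemma \ref{lem:Hardy}, with the index arithmetic $k-|\alpha|>\tfrac{\gam+1}{p}\iff (k-1)-|\alpha|>\tfrac{(\gam-p)+1}{p}$ doing all the work of propagating the boundary conditions.
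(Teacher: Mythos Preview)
Your proposal is correct and follows the same approach the paper indicates: the corollary is stated as an immediate consequence of the one-dimensional Hardy inequality (Lemma \ref{lem:Hardy}) applied slicewise in the $x_1$-variable via Fubini, with a reference to \cite[Corollary 3.4]{LV18} for details. Your bookkeeping for the trace conditions in the third embedding (the equivalence $k-|\alpha|>\tfrac{\gam+1}{p}\iff (k-1)-|\alpha|>\tfrac{(\gam-p)+1}{p}$ and the case split $\gam-p>-1$ versus $\gam-p<-1$) is exactly what is needed; note that no induction is actually required, since each embedding only lowers the weight exponent by $p$ once.
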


Moreover, as a consequence of Hardy's inequality above, we obtain the following non-sharp Hardy's inequality. 
\begin{lemma}\label{lem:frac_Hardy2}
  Let $p\in(1,\infty)$, $\gam\in (-1,\infty)\setminus\{jp-1:j\in\NN_1\}$, $s\in [0,\infty)$ such that $\gam>sp-1$ and let $X$ be a Banach space. Then for any integer $k\geq s$ it holds that
  \begin{equation*}
    W^{k,p}(\RRdh, w_{\gam};X)\hookrightarrow L^p(\RRdh, w_{\gam-sp};X).
  \end{equation*}
\end{lemma}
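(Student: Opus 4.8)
The plan is to use the sharp Hardy embeddings of Corollary~\ref{cor:Sob_embRRdh} an integer number of times, and then to close the remaining (possibly fractional) gap by an elementary estimate for power weights together with the one-dimensional boundary-trace embedding. Put $n:=\lfloor s\rfloor$ and $\sigma:=s-n\in[0,1)$. For $1\le j\le n$, the weight exponent occurring before the $j$-th step is $\gam-(j-1)p$, and $\gam-(j-1)p>p-1$, i.e.\ $\gam>jp-1$, holds because $\gam>sp-1$ and $j\le n\le s$; moreover $n\le s\le k$. Hence the second embedding in Corollary~\ref{cor:Sob_embRRdh}, applied $n$ times, gives $W^{k,p}(\RRdh,w_\gam;X)\hookrightarrow W^{k-n,p}(\RRdh,w_{\gam-np};X)$. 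If $\sigma=0$, then $\gam-np=\gam-sp$ and $k-n\ge0$, so the claim follows from $W^{k-n,p}\hookrightarrow L^p$. If $\sigma\in(0,1)$, then $k\ge\lceil s\rceil=n+1$, hence $W^{k-n,p}\hookrightarrow W^{1,p}$, and writing $\beta:=\gam-np$ we have $\beta>\sigma p-1>-1$ and $\beta\ne p-1$ (because $\gam\ne(n+1)p-1$), so the claim reduces to
\[
  W^{1,p}(\RRdh,w_\beta;X)\hookrightarrow L^p(\RRdh,w_{\beta-\sigma p};X),\qquad \sigma\in(0,1),\ \beta>\sigma p-1,\ \beta\ne p-1.
\]

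If $\beta>p-1$, this is easy: Corollary~\ref{cor:Sob_embRRdh} gives $W^{1,p}(\RRdh,w_\beta;X)\hookrightarrow L^p(\RRdh,w_{\beta-p};X)$ and trivially $W^{1,p}(\RRdh,w_\beta;X)\hookrightarrow L^p(\RRdh,w_\beta;X)$, while $x_1^{\beta-\sigma p}\le\sigma x_1^{\beta-p}+(1-\sigma)x_1^{\beta}$ by Young's inequality, so $\nrm{f}_{L^p(w_{\beta-\sigma p})}^p\le\sigma\nrm{f}_{L^p(w_{\beta-p})}^p+(1-\sigma)\nrm{f}_{L^p(w_\beta)}^p$. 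The case $\beta<p-1$ is the heart of the matter: there the sharp Hardy inequality with gain $p$ is \emph{false} (it would require $\beta<\sigma p-1$), and one has to exploit that only a gain $\sigma p<p$ is needed. Since $w_\beta$ depends only on $x_1$, Fubini's theorem reduces the claim to the one-dimensional estimate $W^{1,p}(\RR_+,w_\beta;X)\hookrightarrow L^p(\RR_+,w_{\beta-\sigma p};X)$. On $[1,\infty)$ one has $x_1^{\beta-\sigma p}\le x_1^{\beta}$, so that part is dominated by $\nrm{u}_{L^p(\RR_+,w_\beta)}$. On $(0,1)$, since $\beta<p-1$ the embedding $W^{1,p}(\RR_+,w_\beta;X)\hookrightarrow C([0,\infty);X)$ from \cite[Lemma~3.1]{LV18} yields a boundary value $u(0)$ with $\nrm{u(0)}_X\lesssim\nrm{u}_{W^{1,p}(\RR_+,w_\beta;X)}$; writing $u(x_1)=u(0)+\int_0^{x_1}u'(t)\,\dd t$, the constant term contributes $\nrm{u(0)}_X^p\int_0^1 x_1^{\beta-\sigma p}\,\dd x_1\lesssim\nrm{u}_{W^{1,p}(\RR_+,w_\beta;X)}^p$ (finite since $\beta-\sigma p>-1$), while for the remaining term Hölder's inequality, using $\beta<p-1$, gives $\int_0^{x_1}\nrm{u'(t)}_X\,\dd t\lesssim\nrm{u'}_{L^p(\RR_+,w_\beta)}\,x_1^{(p-1-\beta)/p}$, whence integrating against $x_1^{\beta-\sigma p}\,\dd x_1$ over $(0,1)$ leaves the convergent integral $\int_0^1 x_1^{(1-\sigma)p-1}\,\dd x_1$, convergent precisely because $\sigma<1$. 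Combining the pieces gives $\nrm{u}_{L^p(\RR_+,w_{\beta-\sigma p};X)}\lesssim\nrm{u}_{W^{1,p}(\RR_+,w_\beta;X)}$.

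The genuinely delicate step, and the only place the full hypothesis $\gam>sp-1$ (rather than the weaker-gain condition $\gam>\lceil s\rceil p-1$) is used, is the subcase $\beta\in(\sigma p-1,p-1)$: there the classical Hardy inequality degenerates, and it is replaced by the boundary-value representation of $u$ together with the strict inequality $\sigma<1$; the exclusion $\gam\notin\{jp-1:j\in\NN_1\}$ enters only to guarantee $\beta\ne p-1$ in that step. Everything else, namely the iterated integer Hardy steps, the Fubini reduction, and the Young-type estimate for the weights, is routine. One should merely check the bookkeeping $\lfloor s\rfloor\le k$ and, when $\sigma>0$, $k-\lfloor s\rfloor\ge1$, both of which follow from $k\in\NN_0$ and $k\ge s$.
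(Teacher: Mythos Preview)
Your proof is correct. The paper takes a different and shorter route: instead of separating the integer and fractional parts of $s$, it introduces a smooth partition $\ph_1+\ph_2=1$ on $\RR_+$ with $\ph_1$ supported in $\{x_1\le 2\}$ and $\ph_2$ in $\{x_1\ge 1\}$. On the support of $\ph_1$ one has $w_{\gam+(k-s)p}\lesssim w_{\gam}$ (since $k\ge s$), hence $\|f\ph_1\|_{W^{k,p}(w_{\gam+(k-s)p})}\lesssim\|f\|_{W^{k,p}(w_{\gam})}$; then $k$ iterations of the integer Hardy embedding in Corollary~\ref{cor:Sob_embRRdh} (always in the regime exponent $>p-1$, because $\gam+(k-s)p-(j-1)p>p-1$ for every $j\le k$) bring the weight exponent down exactly to $\gam-sp$. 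On the support of $\ph_2$, trivially $w_{\gam-sp}\le w_{\gam}$. This cutoff trick thus replaces your fractional-gap argument by $k$ integer Hardy steps applied to an artificially inflated weight, and since it never meets the critical exponent $p-1$ it does not use the exclusion $\gam\notin\{jp-1\}$ at all. Your approach, by contrast, isolates the fractional step and handles it via a direct one-dimensional boundary-value representation plus H\"older; this is more hands-on and makes the role of each hypothesis transparent, but it is longer and does rely on $\beta\neq p-1$ to split the two subcases.
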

\begin{proof}
  Let $\ph_1,\ph_2\in C^{\infty}(\RR_+; [0,1])$ such that $\ph_1(x_1)=0$ for $x_1\geq 2$ and $\ph_2(x_1)=0$ for $x_1\leq 1$. In addition, take $\ph_1$ and $\ph_2$ such that $\ph_1+\ph_2=1$. Let $f\in W^{k,p}(\RRdh, w_\gam;X)$, with Hardy's inequality (Corollary \ref{cor:Sob_embRRdh} using that $\gam>sp-1$) we obtain
  \begin{align*}
    \|f\|_{L^p(\RRdh,w_{\gam-sp};X)} & \leq \|f \ph_1\|_{W^{k,p}(\RRdh, w_{\gam + (k-s)p};X)}+ \|f\ph_2\|_{L^p(\RRdh, w_{\gam-sp};X)}\\
    &\lesssim \|f\ph_1\|_{W^{k,p}(\RRdh, w_{\gam};X)} + \|f\ph_2\|_{L^p(\RRdh,w_\gam;X)}\lesssim \|f\|_{W^{k,p}(\RRdh,w_\gam;X)},
  \end{align*}
  where we have used that $w_{\gam+(k-s)p}(x)\lesssim w_{\gam}(x)$ for $x_1\leq 2$ (since $k\geq s$) and $w_{\gam-sp}(x)\lesssim w_{\gam}(x)$ for $x_1\geq 1$.
\end{proof}

Occasionally, we also need a sharp Hardy's inequality with fractional smoothness. We use complex interpolation to deal with spaces with fractional smoothness and weights $w_{\gam+kp}$ with $\gam\in(-1,p-1)$ and $k\in\NN_1$ outside the Muckenhoupt class.
\begin{lemma}\label{lem:frac_Hardy3}
    Let $p\in(1,\infty)$, $k\in\NN_0$, $\gam\in(-1,p-1)$, $s\in [0,1)$ such that $\gam>sp-1$ and let $X$ be a $\UMD$ Banach space. Then
    \begin{equation*}
        [W^{k,p}(\RRdh, w_{\gam+kp};X), W^{k+2,p}(\RRdh, w_{\gam+kp};X)]_{\frac{s+1}{2}} \hookrightarrow W^{k+1,p}(\RRdh, w_{\gam+(k-s)p};X).
    \end{equation*}
\end{lemma}
\begin{proof} For $s=0$ the result follows from \cite[Proposition 6.3]{Ro25}, so from now on we assume $s\in (0,1)$.
    We start with the case $k=0$. Let $|\alpha|\leq 1$, then by \cite[Lemma 3.7]{LMV17} (which also holds on $\RRdh$) and \cite[Propositions 5.5 \& 5.6]{LMV17}, we obtain
    \begin{align*}
        \|\d^\alpha f\|_{L^p(\RRdh, w_{\gam-sp};X)}& \lesssim \|\d^\alpha f\|_{H^{s,p}(\RRdh, w_{\gam};X)}
        \lesssim \|f\|_{H^{s+1,p}(\RRdh, w_{\gam};X)}\\
        &\eqsim \|f\|_{[L^p(\RRdh, w_{\gam};X), W^{2,p}(\RRdh, w_{\gam};X)]_{\frac{s+1}{2}} },
    \end{align*}
    where $H^{s,p}(\RRdh, w_{\gam};X)$ is a weighted Bessel potential space, see \cite[Section 3]{LMV17}.
    For $k\geq 1$, we proceed by induction. Assume that the statement of the lemma holds for some $k\in \NN_0$, then it remains to prove the statement for $k+1$. We recall from \cite[Section 3.2]{LLRV24} that $M$ is the pointwise multiplication operator given by $M u(x) = x_1 u(x)$ for $x\in \RRdh$. Then by \cite[Lemma 3.8]{LLRV24} (using that $\gam>sp-1$) and the induction hypothesis, we obtain
    \begin{align*}
        \|f\|_{W^{k+2,p}(\RRdh,w_{\gam+(k+1-s)p};X)} &\eqsim \sum_{|\beta|\leq 1} \|M \d^\beta f\|_{W^{k+1,p}(\RRdh, w_{\gam+(k-s)p};X)}\\
        &\lesssim \sum_{|\beta|\leq 1}\|M \d^\beta f \|_{ [W^{k,p}(\RRdh, w_{\gam+kp};X), W^{k+2,p}(\RRdh, w_{\gam+kp};X)]_{\frac{s+1}{2}} }\\
        &\lesssim \|f\|_{ [W^{k+1,p}(\RRdh, w_{\gam+(k+1)p};X), W^{k+3,p}(\RRdh, w_{\gam+(k+1)p};X)]_{\frac{s+1}{2}}},
    \end{align*}
    where the last estimate follows from the fact that for $|\beta|\leq 1$ the operators
    \begin{align*}
        M\d^\beta&:W^{k+1,p}(\RRdh, w_{\gam+(k+1)p};X)\to  W^{k,p}(\RRdh, w_{\gam+kp};X) \quad \text{ and }\\
        M\d^\beta&:W^{k+3,p}(\RRdh, w_{\gam+(k+1)p};X)\to  W^{k+2,p}(\RRdh, w_{\gam+kp};X)
    \end{align*}
    are bounded, see \cite[Lemma 3.6]{LLRV24}. 
\end{proof}

\subsection{Trace characterisations for weighted Sobolev spaces on special domains} \label{subsec:tracechar_dom}
For $\OO=\RRdh$ we have shown in Proposition \ref{prop:tracechar_RRdh} that the definition of weighted Sobolev spaces in \eqref{eq:Sobolevzerodef} and \eqref{eq:Sobolevdirdef} is equivalent to setting certain traces to zero. To define Sobolev spaces with vanishing traces for a special $\Cc^{\ell,\lambda}$-domain $\mc{O}$, we will employ the diffeomorphisms $\Phi, \Psi:\OO\to \RRdh$ from Lemmas \ref{lem:localization_weighted_blow-up} and \ref{lem:loc_normal} to construct isomorphisms between Sobolev spaces on $\OO$ and $\RRdh$. Which diffeomorphism we use depends on the boundary conditions. Throughout the rest of this paper, we will always use the diffeomorphism $\Phi$ from Lemma \ref{lem:localization_weighted_blow-up} for Dirichlet boundary conditions and the diffeomorphism $\Psi$ from Lemma \ref{lem:loc_normal} for Neumann boundary conditions. The diffeomorphism $\Phi$ is not applicable for Neumann boundary conditions, since it does not preserve the direction of the normal vector.

\begin{proposition}\label{prop:isomorphisms}
  Let $p \in (1,\infty)$, $\ell\in \NN_1$, $\lambda\in[0,1]$, $ k \in \N_0$ and let $X$ be a Banach space. Let $\gam\in (-1,\infty)\setminus\{jp-1:j\in\NN_1\}$ be such that $\gam > (k-(\ell+\lambda))_+p-1$.

\begin{enumerate}[(i)]
    \item\label{it:prop:isomorphism1}  Let $\OO$ be a special $\Cc^{\ell,\lambda}$-domain with $[\OO]_{C^{\ell,\lambda}}\leq 1$. Let $\Phi\colon \mc{O} \to \R^d_+$ be as in Lemma \ref{lem:localization_weighted_blow-up} and consider the change of coordinates mappings
  \begin{align*}
\Phi_*&\colon W^{k,p}(\Dom,w^{\BDom}_\gam;X) \to W^{k,p}(\R^d_+,w_\gam;X),\\
\Phi_*&\colon \cir{W}_{\BC}^{k,p}(\Dom,w^{\BDom}_\gam;X) \to \cir{W}_{\BC}^{k,p}(\R^d_+,w_\gam;X)\quad \text{ for }\,\BC\in \{0, \Dir\},
\end{align*}
defined by $\Phi_* f := f\circ \Phi^{-1}$.

\item\label{it:prop:isomorphism2} Let $\OO$ be a special $\Cc^{\ell,\lambda}$-domain with $[\OO]_{C^{\ell,\lambda}}\leq \Lambda$, where $\Lambda\in (0,1)$ is as in Lemma \ref{lem:loc_normal}.
Let $\Psi\colon \mc{O} \to \R^d_+$ be as in Lemma \ref{lem:loc_normal} and consider the change of coordinates mappings 
\begin{subequations}
  \begin{align}
\Psi_*&\colon W^{k,p}(\Dom,w^{\BDom}_\gam;X) \to W^{k,p}(\R^d_+,w_\gam;X),\label{eq:isoPsi*1}\\
\Psi_*&\colon \cir{W}_{\BC}^{k,p}(\Dom,w^{\BDom}_\gam;X) \to \cir{W}_{\BC}^{k,p}(\R^d_+,w_\gam;X)\quad \text{ for }\,\BC\in \{0, \Dir, \Neu\},\label{eq:isoPsi*2}
\end{align}
\end{subequations}
defined by $\Psi_* f := f\circ \Psi^{-1}$. 
\end{enumerate}
Then $\Phi_*$ and $\Psi_*$ are isomorphisms of Banach spaces for which $(\Phi^{-1})_{*}$ and $(\Psi^{-1})_{*}$, respectively, act as inverse.
\end{proposition}

\begin{proof} We give the proof of \ref{it:prop:isomorphism2} and the proof of \ref{it:prop:isomorphism1} is similar using Lemma \ref{lem:localization_weighted_blow-up} instead of Lemma \ref{lem:loc_normal}.

\textit{Step 1: proof of \eqref{eq:isoPsi*1}.} We start with some preparations. Let $k\in \NN_1$ and $f\in \Cc^{\ell,\lambda}(\overline{\OO};X)$. Note that by Lemma \ref{lem:loc_normal}
we have that $\Psi_* f\in \Cc^{\ell,\lambda}(\overline{\RRdh};X)$. 
Let $\alpha \in \N^d_0 \setminus \{0\}$ with $|\alpha| \leq k$, then by \cite[Theorem 2.1]{CS96} we have the multivariate Fa\`a di Bruno's formula
$$
\d^\alpha \Psi_* f = \sum_{ 1 \leq |\beta| \leq |\alpha|} (\Psi_* \d^\beta f ) \sum_{s=1}^{|\alpha|}\sum_{p_s(\alpha,\beta)}  \prod_{j=1}^{s}c_{\alpha,\vec{k}_j,\vec{\ell}_j}[\d^{\vec{\ell}_j}\Psi^{-1}]^{\vec{k}_j},
$$
for some constants $c_{\alpha,\vec{k}_j,\vec{\ell}_j}$ and sets $p_s(\alpha,\beta)$ contained in
\begin{align}\label{eq:setp_s1}
 \Big\{ (\vec{k}_1,\ldots,\vec{k}_s;\vec{\ell}_1,\ldots,\vec{\ell}_s) \in (\N^d_0 \setminus \{0\})^{s} \times (\N^d_0 \setminus \{0\})^{s}
 : \sum_{j=1}^s|\vec{k}_j| = |\beta|, \sum_{j=1}^s|\vec{k}_j||\vec{\ell}_j| = |\alpha| \Big\}.
\end{align}
Therefore, we have 
\begin{align}
\|\d^\alpha \Psi_* f\|_{L^p(\RRdh,w_{\gam};X)}
\lesssim&\;  \sum_{ 1 \leq |\beta| \leq |\alpha|}  \sum_{s=1}^{|\alpha|}\sum_{p_s(\alpha,\beta)}  \|(\Psi_* \d^\beta f ) \prod_{j=1}^{s}[\d^{\vec{\ell}_j}\Psi^{-1}]^{\vec{k}_j}\|_{L^p(\RRdh,w_{\gam};X)}  \nonumber \\
\lesssim &\; \sum_{ 1 \leq |\beta| \leq |\alpha|}  \sum_{s=1}^{|\alpha|}\sum_{p_s(\alpha,\beta)}  \|\Psi_* \d^\beta f\|_{L^p(\RRdh,w_{\gam-\sum_{j=1}^s(|\vec{\ell}_j|-(\ell+\lambda))_+|\vec{k}_j|p};X)}   \nonumber \\
&\;\;\;\cdot \prod_{j=1}^{s}\| y \mapsto y_1^{(|\vec{\ell}_j|-(\ell+\lambda))_+} \d^{\vec{\ell}_j}\Psi^{-1}(y)\|_{L^{\infty}(\RRdh; \RRd)}^{|\vec{k}_j|}.
\label{eq:prop:isomorphisms;proof_est1}
\end{align}
From Lemma \ref{lem:loc_normal}\ref{it:lem:loc_normal5} we obtain 
\begin{equation}\label{eq:prop:isomorphisms;proof_est2}
\prod_{j=1}^{s}\| y \mapsto y_1^{(|\vec{\ell}_j|-(\ell+\lambda))_+} \d^{\vec{\ell}_j}\Psi^{-1}(y)\|_{L^{\infty}(\RRdh; \RR^d)}^{|\vec{k}_j|} \lesssim 1.
\end{equation}

\textit{Step 1a: proof of \eqref{eq:isoPsi*1} if $\ell+\lambda\geq k$.} If $k=0$, then \eqref{eq:isoPsi*1} follows immediately from Lemma \ref{lem:loc_normal}. Let $k\in\NN_1$ and note that $|\vec{\ell}_j|\leq |\alpha|\leq k\leq \ell+\lambda$. Therefore, $(|\vec{\ell}_j|-(\ell+\lambda))_+=0$ in \eqref{eq:prop:isomorphisms;proof_est1} and the case $k=0$ implies
\begin{equation}\label{eq:prop:isomorphisms;proof_est4}
  \|\Psi_* \d^\beta f\|_{L^p(\RRdh,w_{\gam};X)} \lesssim \| \d^\beta f\|_{L^p(\OO,w^{\d\OO}_{\gam};X)} \leq \|f\|_{W^{k,p}(\OO, w_{\gam}^{\d\OO};X)},\qquad 1\leq |\beta|\leq|\alpha |,
\end{equation}
and we find
\begin{equation*}
  \|\Psi_* f\|_{W^{k,p}(\RRdh, w_{\gam};X)}\lesssim \|f\|_{W^{k,p}(\OO, w_{\gam}^{\d\OO};X)},\qquad f\in \Cc^{\ell,\lambda}(\overline{\OO};X),
\end{equation*}
and by density the estimate extends to $f\in W^{k,p}(\OO, w_{\gam}^{\d\OO};X)$. Recall from Lemma \ref{lem:loc_normal} that $\Psi$ is invertible and thus $(\Psi^{-1})_*$ is the inverse of $\Psi_*$.
The estimate for the inverse $(\Psi^{-1})_*$ can be shown using similar estimates as in \eqref{eq:prop:isomorphisms;proof_est1}, \eqref{eq:prop:isomorphisms;proof_est2} and \eqref{eq:prop:isomorphisms;proof_est4}. This shows that $\Psi_*$ in \eqref{eq:isoPsi*1} is an isomorphism if $\ell+\lambda\geq k$.

\textit{Step 1b: proof of \eqref{eq:isoPsi*1} if $\ell+\lambda < k$.}
We claim that in \eqref{eq:prop:isomorphisms;proof_est1} we have
\begin{equation}\label{eq:cond_Hardy}
  \gam-\sum_{j=1}^s(|\vec{\ell}_j|-(\ell+\lambda))_+|\vec{k}_j|p > -1.
\end{equation}
Indeed, if $|\vec{\ell}_j| \leq \ell+\lambda$ for all $j\in\{1,\dots,s\}$, then
$$
\gam-\sum_{j=1}^s(|\vec{\ell}_j|-(\ell+\lambda))_+|\vec{k}_j|p = \gam > (k-(\ell+\lambda))p-1 > -1,
$$
and if $|\vec{\ell}_{j_0}| > \ell+\lambda$ for some $j_0 \in \{1,\ldots,s\}$, then with \eqref{eq:setp_s1} we obtain
\begin{align*}
\gam-\sum_{j=1}^s(|\vec{\ell}_j|-(\ell+\lambda))_+|\vec{k}_j|p
&= \gam-\Big(\sum_{\substack{j=1\\j \neq j_0}}^s(|\vec{\ell}_j|-(\ell+\lambda))_+|\vec{k}_j| + (|\vec{\ell}_{j_0}|-(\ell+\lambda))|\vec{k}_{j_0}| \Big)p  \\
&\geq \gam-\Big(\sum_{\substack{j=1\\j \neq j_0}}^s|\vec{\ell}_j||\vec{k}_j| + |\vec{\ell}_{j_0}||\vec{k}_{j_0}| - (\ell+\lambda) \Big)p  \\
&= \gam - (|\alpha|-(\ell+\lambda))p \geq \gam - (k-(\ell+\lambda))p > -1.
\end{align*}
Moreover, again by \eqref{eq:setp_s1} we have
\begin{equation}\label{eq:cond_Hardy2}
  \begin{aligned}
\sum_{j=1}^s(|\vec{\ell}_j|-(\ell+\lambda))_+|\vec{k}_j|
&\leq \sum_{j=1}^s |\vec{\ell}_j||\vec{k}_j|-|\beta| = |\alpha|-|\beta| \leq k-|\beta|.
\end{aligned}
\end{equation}
Therefore, by Lemma \ref{lem:frac_Hardy2} (using \eqref{eq:cond_Hardy} and \eqref{eq:cond_Hardy2}) and Step 1a, we have for $1\leq |\beta|\leq |\alpha|\leq k=\ell+1$ that
\begin{equation}\label{eq:prop:isomorphisms;proof_est3}
  \begin{aligned}
\|\Psi_* \d^\beta f\|_{L^p(\RRdh,w_{\gam-\sum_{j=1}^s(|\vec{\ell}_j|-(\ell+\lambda))_+|\vec{k}_j|p};X)}
&\lesssim \|\Psi_*\d^\beta f\|_{W^{k-|\beta|,p}(\RRdh,w_{\gam};X)} \\
&\lesssim \|\d^\beta f\|_{W^{k-|\beta|,p}(\Dom,w^{\BDom}_{\gam};X)}\\
&\lesssim \|f\|_{W^{k,p}(\Dom,w^{\BDom}_{\gam};X)},\qquad f\in \Cc^{\ell,\lambda}(\overline{\OO};X).
\end{aligned}
\end{equation}
Now, density and \eqref{eq:prop:isomorphisms;proof_est1}, \eqref{eq:prop:isomorphisms;proof_est2} and \eqref{eq:prop:isomorphisms;proof_est3} yield that
\begin{equation}\label{eq:IH_iso}
  \begin{aligned}
  \Psi_*&:W^{k,p}(\mc{O},w_{\gam}^{\d\mc{O}};X)\to W^{k,p}(\RRdh,w_{\gam};X)
\end{aligned}
\end{equation}
is bounded for $k=\ell+1$.

The general case $k\geq \ell+1$ follows by induction on $k$. Assume that \eqref{eq:IH_iso} holds for some $k\geq \ell+1$ and let $1\leq |\beta|\leq |\alpha|\leq k+1$. Using the induction hypothesis instead of Step 1a in \eqref{eq:prop:isomorphisms;proof_est3}, we obtain the estimate
\begin{equation*}
  \|\Psi_* \d^\beta f\|_{L^p(\RRdh,w_{\gam-\sum_{j=1}^s(|\vec{\ell}_j|-(\ell+\lambda))_+|\vec{k}_j|p};X)}
\lesssim \|f\|_{W^{k+1,p}(\Dom,w^{\BDom}_{\gam};X)},
\end{equation*}
which proves \eqref{eq:IH_iso} for $k\geq \ell+1$.

The estimate for the inverse can be shown directly using similar estimates as in \eqref{eq:prop:isomorphisms;proof_est1} and \eqref{eq:prop:isomorphisms;proof_est2}, together with the estimate
\begin{equation*}
  \begin{aligned}
\|(\Psi^{-1})_* \d^\beta f\|_{L^p(\OO,w^{\d\OO}_{\gam-\sum_{j=1}^s(|\vec{\ell}_j|-(\ell+\lambda))_+|\vec{k}_j|p};X)}
&\lesssim \|\d^\beta f\|_{L^p(\RRdh,w_{\gam-\sum_{j=1}^s(|\vec{\ell}_j|-(\ell+\lambda))_+|\vec{k}_j|p};X)} \\
&\lesssim \|\d^\beta f\|_{W^{k-|\beta|,p}(\RRdh,w_{\gam};X)}\\
&\lesssim \|f\|_{W^{k,p}(\RRdh,w_{\gam};X)},\qquad f\in W^{k,p}(\RRdh,w_{\gam};X),
\end{aligned}
\end{equation*}
which follows from Step 1a and Lemma \ref{lem:frac_Hardy2}.
This completes the proof of \eqref{eq:isoPsi*1}.

\textit{Step 2: proof of \eqref{eq:isoPsi*2}.} The proof \eqref{eq:isoPsi*2} is similar to the proof of \eqref{eq:isoPsi*1} if we work with a suitable dense subspace, i.e.,
\begin{itemize}
  \item if $\BC=0$, take $f\in \Cc^{\infty}(\OO;X)$, 
  \item if $\BC\in\{\Dir, \Neu\}$, take $f\in C_{{\rm c}, \BC}^{\infty}(\overline{\OO};X)$,
\end{itemize}
see \eqref{eq:Sobolevzerodef} and \eqref{eq:Sobolevdirdef}. Note that in both cases Lemma \ref{lem:loc_normal} ensures that $\Psi_* f$ is in the respective dense subspace on $\RRdh$. In particular, for the Neumann boundary condition, we have
\begin{align*}
    (\d_{1}\Psi_* f)|_{\d\RRdh} = (\Psi_*(\d_{\nu} f))|_{\d\RRdh}, 
\end{align*}
where $\nu(y)=\nu(\tilde{y})=(1, -\grad_{\tilde{y}}h(\tilde{y}))^\top$ is the inward normal direction. Indeed, this follows from Lemma \ref{lem:loc_normal} since
\begin{align*}
    \d_{y_1}f(\Psi^{-1}(y)) &= (\grad f)(\Psi^{-1}(y))\cdot \d_{y_1}\Psi^{-1}(y)\\
    &= (\grad f)(\Psi^{-1}(y))\cdot \nu(\Psi^{-1}(y)) = (\d_\nu f)(\Psi^{-1}(y)),\qquad y=(0,\tilde{y})\in \d\RRdh,
\end{align*}
where we recall from the construction of $\Psi^{-1}$ in the proof of Lemma \ref{lem:loc_normal} that $\d_{y_1}\Psi^{-1}(y)=(1, -\grad_{\tilde{y}}h(\tilde{y}))^{\top} = \nu(\tilde{y})=\nu(\Psi^{-1}(y))$ if $y=(0,\tilde{y})\in \d\RRdh$. Furthermore, note that the conditions $(\d_{\vec{n}} f)|_{\d\OO}=0$ and $(\d_{\nu} f )|_{\d\OO}=0$ are equivalent.
\end{proof}

\begin{remark} 
By inspection of the proof of Proposition \ref{prop:isomorphisms}, we see that for $\BC=0$ no additional conditions on $\gam$ are necessary since Hardy's inequality always applies in this case. That is, we can allow for any $\gam\in (-1,\infty)\setminus\{jp-1:j\in\NN_1\}$. Furthermore, we expect that for Dirichlet boundary conditions, the range for $\gam$ can also be improved, although we will not need this. 
\end{remark}

We define the following spaces with vanishing traces at the boundary of a special $\Cc^{\ell,\lambda}$-domain. 
\begin{definition}\label{def:spaces_special}
  Let $p \in (1,\infty)$, $\ell\in \NN_1$, $\lambda\in[0,1]$, $ k \in \N_0$ and let $X$ be a Banach space. Let $\gam\in (-1,\infty)\setminus\{jp-1:j\in\NN_1\}$ be such that $\gam > (k-(\ell+\lambda))_+p-1$
  and let $\OO$ be a special $\Cc^{\ell,\lambda}$-domain. If $[\OO]_{C^{\ell,\lambda}}\leq 1$, let $\Phi_*$ be the isomorphism from Proposition \ref{prop:isomorphisms}\ref{it:prop:isomorphism1} and define
     \begin{align*}
  W_{0}^{k,p}(\Dom,w^{\BDom}_{\gam};X)&:= \cbraces{f \in W^{k,p}(\Dom,w^{\BDom}_{\gam};X): \Tr( \d^\alpha(\Phi_* f))=0 \text{ if } k -\abs{\alpha} >\tfrac{\gam+1}{p}},\\
  W^{k,p}_{\Dir}(\Dom,w^{\BDom}_{\gam};X)&:= \cbraces{f \in W^{k,p}(\Dom,w^{\BDom}_{\gam};X): \Tr(\Phi_*f)=0 \text{ if } k >\tfrac{\gam+1}{p}}.\intertext{If $[\OO]_{C^{\ell,\lambda}}\leq \Lambda$, where $\Lambda \in (0,1)$ is as in Lemma \ref{lem:loc_normal}, let $\Psi_*$ be the isomorphism from Proposition \ref{prop:isomorphisms}\ref{it:prop:isomorphism2} and define}
  W^{k,p}_{\Neu}(\Dom,w^{\BDom}_{\gam};X)&:= \cbraces{f \in W^{k,p}(\Dom,w^{\BDom}_{\gam};X): \Tr(\d_1(\Psi_*f))=0 \text{ if } k -1>\tfrac{\gam+1}{p}}.
\end{align*}
\end{definition}
The above spaces are well defined by Proposition \ref{prop:isomorphisms}. Furthermore, by Lemmas \ref{lem:localization_weighted_blow-up} and \ref{lem:loc_normal}, the definitions of the above spaces are consistent in the sense that viewing $\OO$ as either a special $\Cc^{\ell,\lambda}$-domain or a special $\Cc^1$-domain yields the same space. 
Moreover, the condition $\Tr(\d_1(\Psi_*f))=0$ correctly models the Neumann boundary condition, since $\Psi$ leaves the direction of the normal vector invariant, see Lemma \ref{lem:loc_normal}\ref{it:lem:loc_normal4}. Finally, we note that the spaces $W^{k,p}_0$ and $W^{k,p}_\Dir$ could also be defined using $\Psi_*$ instead of $\Phi_*$, yielding an equivalent definition by Proposition \ref{prop:tracechar_dom} below.
\\

Similar to Proposition \ref{prop:tracechar_RRdh} we can now characterise the spaces $\cir{W}^{k,p}_{\BC}(\Dom,w^{\BDom}_{\gam};X)$ in terms of vanishing traces with the aid of the isomorphisms from Proposition \ref{prop:isomorphisms}.

\begin{proposition}[Trace characterisation on special domains]\label{prop:tracechar_dom}
Let $p \in (1,\infty)$, $\ell\in \NN_1$, $\lambda\in[0,1]$, $ k \in \N_0$ and let $X$ be a Banach space. Let $\gam\in (-1,\infty)\setminus\{jp-1:j\in\NN_1\}$ be such that $\gam > (k-(\ell+\lambda))_+p-1$.

\begin{enumerate}[(i)]
    \item\label{it:prop_tracechar_dom1} Let $\OO$ be a special $\Cc^{\ell,\lambda}$-domain with $[\OO]_{C^{\ell,\lambda}}\leq 1$ and let $\Phi_*$ be the isomorphism from Proposition \ref{prop:isomorphisms}. Let $\BC\in \{0, \Dir\}$ and let $W_{\BC}^{k,p}$ be defined using $\Phi_*$. Then we have the trace characterisations
 \begin{equation*}
   \cir{W}_{\BC}^{k,p}(\Dom,w^{\BDom}_{\gam};X)= W_{\BC}^{k,p}(\Dom,w^{\BDom}_{\gam};X).
 \end{equation*}
 \item\label{it:prop_tracechar_dom2} Let $\OO$ be a special $\Cc^{\ell,\lambda}$-domain with $[\OO]_{C^{\ell,\lambda}}\leq \Lambda$, where $\Lambda\in(0,1)$ is as in Lemma \ref{lem:loc_normal}, and let $\Psi_*$ be the isomorphism from Proposition \ref{prop:isomorphisms}. Let $\BC\in \{0, \Dir, \Neu\}$ and let $W_{\BC}^{k,p}$ be defined using $\Psi_*$.Then we have the trace characterisations
 \begin{equation*}
   \cir{W}_{\BC}^{k,p}(\Dom,w^{\BDom}_{\gam};X)= W_{\BC}^{k,p}(\Dom,w^{\BDom}_{\gam};X).
 \end{equation*}
\end{enumerate}

\end{proposition}
\begin{proof} We only prove \ref{it:prop_tracechar_dom2}. The proof of \ref{it:prop_tracechar_dom1} is similar.
Let $\BC\in \{0, \Dir, \Neu\}$ and $f\in \cir{W}_{\BC}^{k,p}(\Dom,w^{\BDom}_{\gam};X)$, then by Propositions  \ref{prop:tracechar_RRdh} and \ref{prop:isomorphisms} we have $\Psi_*f \in \cir{W}^{k,p}_{\BC}(\RRdh, w_{\gam} ;X)=W^{k,p}_{\BC}(\RRdh, w_{\gam} ;X)$. This implies that all the required traces of $\Psi_*f$ are zero. Moreover, since $\Psi_*f\in W^{k,p}(\RRdh, w_{\gam};X)$ it follows by Proposition \ref{prop:isomorphisms} that $f=(\Psi^{-1})_*\Psi_* f\in W^{k,p}(\OO, w_{\gam}^{\d\OO};X)$ as well. 
This proves that $f\in W^{k,p}_{\BC}(\Dom,w^{\BDom}_{\gam};X)$. The other inclusion is similar.
\end{proof}

\begin{remark}\label{rem:notPhi}  If $h\in \Cc^{\ell,\lambda}(\RR^{d-1})$ is associated with the special $\Cc^{\ell,\lambda}$-domain, then the \emph{classical} diffeomorphism $\Phi_{\cl}:\overline{\OO}\to\overline{\RRdh}$ given by
\begin{equation*}
  \Phi_{\cl}(x) = (x_1-h(\tilde{x}),\tilde{x}), \qquad x=(x_1,\tilde{x})
  \in \overline{\mc{O}},
\end{equation*}
defines a $C^{\ell,\lambda}$-diffeomorphism. Moreover, the change of coordinates mapping $(\Phi_{\cl})_*$ becomes an isomorphism between $W_{{\rm BC}}^{k,p}(\Dom,w^{\BDom}_{\gam};X)$
and $W_{{\rm BC}}^{k,p}(\R^d_+,w_{\gam};X)$ for $\ell \geq k$ and $\BC\in \{0, \Dir\}$. In \cite[Section 3.2]{LV18}, this isomorphism is used to define weighted Sobolev spaces on domains. However, for $\ell < k$ or Neumann boundary conditions, this isomorphism is not sufficient, which is why we have employed the diffeomorphisms $\Phi$ and $\Psi$ from Lemma  \ref{lem:localization_weighted_blow-up} and \ref{lem:loc_normal} to define weighted Sobolev spaces with vanishing traces. We elaborate on the construction of the diffeomorphisms in Appendix \ref{sec:appendix_lemma}.
\end{remark}

\subsection{Trace characterisations for weighted Sobolev spaces on bounded domains} \label{subsec:tracechar_bdd_dom}
In this section, we define Sobolev spaces with vanishing traces for bounded domains $\mc{O}$. To this end, we will employ a localisation procedure to relate spaces on bounded domains with spaces on special domains. We start with a lemma containing a decomposition of weighted Sobolev spaces, see also \cite[Section 2.2]{LV18}.

\begin{lemma}\label{lem:decomp}
  Let $\ell\in\NN_1$, $\lambda\in[0,1]$ and let $\OO\subseteq\RR^d$ be a bounded $C^{\ell,\lambda}$-domain. Then for any $\delta>0$, the following statements hold.
\begin{enumerate}[(i)]
  \item\label{it:lem:decomp1} For all $\eps\in (0,\lambda)$ there exists a finite open cover $(V_n)_{n=1}^N$ of $\d \OO$, together with special $\Cc^{\ell,\lambda}$-domains $(\OO_n)_{n=1}^N$ which satisfy $[\OO_n]_{C^{\ell,\lambda-\eps}}<\delta$, such that
      \begin{equation*}
        \mc{O}\cap V_n = \OO_n \cap V_n \qquad \text{and}\qquad \BDom \cap V_n = \partial \OO_n \cap V_n,\quad n\in\{1,\dots, N\}.
        \end{equation*}
        If $\lambda=0$, then the special $\Cc^\ell$-domains $(\OO_n)_{n=1}^N$ can be chosen such that  $[\OO_n]_{C^{\ell}}<\delta$.
  \item\label{it:lem:decomp2} There exist $\eta_0\in \Cc^{\infty}(\OO)$ and $\eta_n \in \Cc^{\infty}(V_n)$ for $n\in\{1,\dots,N\}$ such that $0\leq \eta_n\leq 1$ for $n\in \{0,\dots, N\}$ and $\sum_{n=0}^{N}\eta_n^2=1$ on $\OO$ (partition of unity).
      \item\label{it:lem:decomp3} For $p\in(1,\infty)$, $k\in\NN_0$, $\gam\in\RR$ and $X$ a Banach space, the space $W^{k,p}(\OO, w_{\gam}^{\d\OO};X)$ has the direct sum decomposition
          \begin{equation}\label{eq:Fk}
            \WW^{k,p}_{\gam}:=W^{k,p}(\RRd;X)\oplus \bigoplus_{n=1}^NW^{k,p}(\OO_n,w_{\gam}^{\d\OO_n};X).
          \end{equation}
          Moreover, the mappings
           \begin{align*}
             \II \colon W^{k,p}(\OO, w_{\gam}^{\d\OO};X)\to \WW^{k,p}_{\gam}\quad \text{ and }\quad \PP\colon \WW^{k,p}_{\gam}\to W^{k,p}(\OO, w_{\gam}^{\d\OO};X)
           \end{align*}
           given by
          \begin{equation}\label{eq:retraction}
            \II f := (\eta_n f)_{n=0}^N\quad \text{ and }\quad \PP(f_n)_{n=0}^N:=\sum_{n=0}^N\eta_nf_n,
          \end{equation}
          are continuous and satisfy $\PP\II = \operatorname{id}$. Thus, $\PP$ is a retraction with coretraction $\II$.
\end{enumerate}
\end{lemma}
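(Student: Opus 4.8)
The plan is to build the geometric data first and then read off the functional-analytic decomposition. For part \ref{it:lem:decomp1}, I would start from the definition of a bounded $C^{\ell,\lambda}$-domain: every $x\in\BDom$ has a neighbourhood $V_x$ with $\mc{O}\cap V_x = W_x\cap V_x$ and $\BDom\cap V_x=\partial W_x\cap V_x$ for some special $\Cc^{\ell,\lambda}$-domain $W_x$. By compactness of $\BDom$ extract a finite subcover $(V_n)_{n=1}^N$ with associated special domains $(\OO_n)_{n=1}^N$. To arrange the smallness $[\OO_n]_{C^{\ell,\lambda-\eps}}<\delta$, I would shrink the neighbourhoods: the remark following Definition \ref{def:domains} already records that, for $\eps\in(0,\lambda)$, the defining function $h$ of a special $\Cc^{\ell,\lambda}$-domain can be modified on a sufficiently small neighbourhood of the boundary point so that $[\d^\alpha h]_{\lambda-\eps,\RR^{d-1}}<\delta$ for $|\alpha|=\ell$ (because one gains a factor $|x-y|^\eps$ which is small on small sets), while the lower-order sup-norms are controlled by rescaling; concretely, one composes with a dilation $x\mapsto rx$ which scales $\|\d^\alpha h\|_\infty$ by $r^{|\alpha|-1}$ and multiplies the $(\lambda-\eps)$-seminorm by $r^{\ell-1+\lambda-\eps}$, both $\to 0$ as $r\downarrow 0$ for $\ell\geq1$. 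Choosing $V_n$ inside the region where this works gives the claim; the case $\lambda=0$ is the same rescaling argument without the Hölder seminorm.

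For part \ref{it:lem:decomp2}, set $V_0:=\OO$ (or a slightly shrunk interior open set still covering $\OO\setminus\bigcup_{n\ge1}V_n$, which is compactly contained in $\OO$) so that $(V_n)_{n=0}^N$ is a finite open cover of the compact set $\overline{\OO}$. Take a smooth partition of unity $(\zeta_n)_{n=0}^N$ subordinate to this cover with $0\le\zeta_n\le1$ and $\sum_{n=0}^N\zeta_n=1$ on $\overline{\OO}$, and put $\eta_n:=\zeta_n/\big(\sum_{m=0}^N\zeta_m^2\big)^{1/2}$. Since $\sum_m\zeta_m^2\ge N^{-1}(\sum_m\zeta_m)^2=N^{-1}>0$ on $\overline{\OO}$, the denominator is smooth and bounded below, so $\eta_n\in\Cc^\infty$, $0\le\eta_n\le1$, $\supp\eta_n\subseteq\supp\zeta_n\subseteq V_n$, and $\sum_{n=0}^N\eta_n^2=1$ on $\OO$.

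For part \ref{it:lem:decomp3}, first note each map is well defined and bounded: multiplication by $\eta_n\in\Cc^\infty$ with compact support in $V_n$ maps $W^{k,p}(\OO,w_\gam^{\d\OO};X)$ into $W^{k,p}(\OO_n,w_\gam^{\d\OO_n};X)$ because on $\supp\eta_n$ the domains $\OO$ and $\OO_n$ agree and the weights $w_\gam^{\d\OO}$, $w_\gam^{\d\OO_n}$ are comparable there (the boundaries coincide inside $V_n$, and away from the boundary the weights are bounded above and below), using the Leibniz rule to distribute derivatives onto $f$ and onto the bounded derivatives of $\eta_n$; for $n=0$, $\eta_0 f$ has compact support in $\OO$ where $w_\gam^{\d\OO}$ is bounded above and below, so it extends by zero to an element of $W^{k,p}(\RR^d;X)$. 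Conversely $\PP$ is bounded since each summand is multiplied by a fixed $\Cc^\infty$ cutoff and summed. Then $\PP\II f=\sum_{n=0}^N\eta_n(\eta_n f)=\big(\sum_{n=0}^N\eta_n^2\big)f=f$ by \ref{it:lem:decomp2}, so $\PP\II=\id$, $\PP$ is a retraction and $\II$ a coretraction; this automatically yields that $W^{k,p}(\OO,w_\gam^{\d\OO};X)$ is isomorphic to the complemented subspace $\II\,\PP(\WW^{k,p}_\gam)$ of the direct sum $\WW^{k,p}_\gam$, which is the asserted decomposition.

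I expect the main obstacle to be part \ref{it:lem:decomp1}: making the local charts uniformly small in the $C^{\ell,\lambda-\eps}$-seminorm while keeping them genuine special domains (graphs of globally defined compactly supported functions) requires the combined dilation-and-localisation argument above, and one must be careful that losing $\eps$ of Hölder regularity is genuinely necessary (the sharp seminorm $[\d^\alpha h]_{\lambda,\RR^{d-1}}$ cannot be made small, as the remark after Definition \ref{def:domains} points out). Everything in parts \ref{it:lem:decomp2} and \ref{it:lem:decomp3} is then routine: a standard smooth partition of unity with the quadratic normalisation trick, plus Leibniz-rule bookkeeping and the comparability of the distance weights on the supports of the cutoffs.
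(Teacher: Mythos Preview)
Your proof follows essentially the same route as the paper's: part \ref{it:lem:decomp1} via the discussion after Definition \ref{def:domains} (compactness of $\BDom$, shrinking neighbourhoods, and the $|x-y|^\eps$ gain for the top-order seminorm), part \ref{it:lem:decomp2} as a standard partition of unity (the paper simply cites \cite[Section 8.4]{KrBook08}, while you spell out the quadratic normalisation), and part \ref{it:lem:decomp3} by weight comparability on the cutoff supports and the Leibniz rule. One caveat on \ref{it:lem:decomp1}: the dilation you propose does not work as stated, since rescaling $\OO_n$ would destroy the required identity $\OO\cap V_n=\OO_n\cap V_n$ with the \emph{fixed} domain $\OO$; the operative mechanism is just shrinking $V_n$ so that, after rotating to make the tangent plane $x_1=0$, one has $h(0)=0$ and $\nabla h(0)=0$, whence $\|h\|_\infty$ and $\|\nabla h\|_\infty$ are small by continuity --- the paper's own argument is no more detailed on this point.
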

\begin{proof}
We note that the result in \ref{it:lem:decomp1} follows from the discussion after Definition \ref{def:domains} in Section \ref{subsec:Ckdomains}. The partition of unity in \ref{it:lem:decomp2} is standard, see for instance \cite[Section 8.4]{KrBook08} (noting that a $C^2$-domain is not required for constructing the partition of unity). Finally, using the partition of unity and the (co)retraction in \eqref{eq:retraction}, the direct sum decomposition in \ref{it:lem:decomp3} follows. Indeed, $\eta_0\in \Cc^\infty(\OO)$ and we can extend to the full space $\RRd$ without a weight since there is no boundary. Furthermore, for $n\in\{1,\dots, N\}$ we have $\eta_n\in \Cc^\infty(V_n)$, so the weight $w_\gam^{\d\OO}(x)$ can be replaced by $w_\gam^{\d\OO_n}(x)$ for $x\in \OO_n$.
\end{proof}

With Lemma \ref{lem:decomp} we can now define traces of functions in $W^{k,p}(\OO,w_{\gam}^{\d\OO};X)$ if $\OO$ is a bounded $C^{\ell,\lambda}$-domain. Furthermore, we define the following spaces with vanishing traces at the boundary. 
\begin{definition}\label{def:spaces_bounded}
  Let $p \in (1,\infty)$, $\ell\in \NN_1$, $\lambda\in[0,1]$, $ k \in \N_0$ and let $X$ be a Banach space. Let $\gam\in (-1,\infty)\setminus\{jp-1:j\in\NN_1\}$ be such that $\gam > (k-(\ell+\lambda))_+p-1$.
  Moreover, let $\OO$ be a bounded $C^{\ell,\lambda}$-domain, let $(\OO_n)_{n=1}^N$ be special $\Cc^{\ell,\lambda}$-domains and let $\mc{I}$ be the coretraction from Lemma \ref{lem:decomp}. We define
     \begin{align*}
  W_{0}^{k,p}(\Dom,w^{\BDom}_{\gam};X)&:= \cbraces{f \in W^{k,p}(\Dom,w^{\BDom}_{\gam};X): \mc{I} f\in W^{k,p}(\RRd;X)\oplus \bigoplus_{n=1}^NW_0^{k,p}(\OO_n,w_{\gam}^{\d\OO_n};X) },\\
  W^{k,p}_{\Dir}(\Dom,w^{\BDom}_{\gam};X)&:= \cbraces{f \in W^{k,p}(\Dom,w^{\BDom}_{\gam};X): \mc{I} f\in W^{k,p}(\RRd;X)\oplus \bigoplus_{n=1}^NW_{\Dir}^{k,p}(\OO_n,w_{\gam}^{\d\OO_n};X)},\\
  W^{k,p}_{\Neu}(\Dom,w^{\BDom}_{\gam};X)&:= \cbraces{f \in W^{k,p}(\Dom,w^{\BDom}_{\gam};X): \mc{I} f\in W^{k,p}(\RRd;X)\oplus \bigoplus_{n=1}^NW_{\Neu}^{k,p}(\OO_n,w_{\gam}^{\d\OO_n};X)}.
\end{align*}
\end{definition}
Note that the above spaces are well defined by Lemma \ref{lem:decomp} and Definition \ref{def:spaces_special}. Moreover, the definitions are independent of the chosen covering of $\d\OO$ and the partition of unity in Lemma \ref{lem:decomp}.\\

Similar to Propositions \ref{prop:tracechar_RRdh} and \ref{prop:tracechar_dom} we can now relate the spaces $\cir{W}^{k,p}_{\BC}(\Dom,w^{\BDom}_{\gam};X)$ and $W^{k,p}_{\BC}(\Dom,w^{\BDom}_{\gam};X)$ for bounded domains.
\begin{proposition}[Trace characterisation on bounded domains]\label{prop:tracechar_bdd_dom}
Let $p \in (1,\infty)$, $\ell\in \NN_1$, $\lambda\in[0,1]$, $ k \in \N_0$ and let $X$ be a Banach space. Let $\gam\in (-1,\infty)\setminus\{jp-1:j\in\NN_1\}$ be such that $\gam > (k-(\ell+\lambda))_+p-1$.
Moreover, let $\OO$ be a bounded $C^{\ell,\lambda}$-domain. For $\BC\in \{0, \Dir, \Neu\}$ we have the trace characterisations
 \begin{equation*}
   \cir{W}_{\BC}^{k,p}(\Dom,w^{\BDom}_{\gam};X)= W_{\BC}^{k,p}(\Dom,w^{\BDom}_{\gam};X).
 \end{equation*}
\end{proposition}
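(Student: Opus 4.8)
The argument is entirely analogous to the proof of Proposition~\ref{prop:tracechar_dom}, with the diffeomorphism $\Psi_*$ replaced by the retraction--coretraction pair $(\PP,\II)$ from Lemma~\ref{lem:decomp}, which transports the problem to the special domains $\OO_n$, where Proposition~\ref{prop:tracechar_dom} already applies. Fix $\BC\in\{0,\Dir,\Neu\}$ and choose, as in Lemma~\ref{lem:decomp}, the cover $(V_n)_{n=1}^N$, the special $\Cc^{\ell,\lambda}$-domains $(\OO_n)_{n=1}^N$, the partition of unity $(\eta_n)_{n=0}^N$ and the bounded maps $\II,\PP$ with $\PP\II=\id$. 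Put
\[
Y_{\BC}:=W^{k,p}(\RRd;X)\oplus\bigoplus_{n=1}^N W^{k,p}_{\BC}(\OO_n,w_{\gam}^{\d\OO_n};X),
\]
a closed subspace of $\WW^{k,p}_{\gam}$; by Definition~\ref{def:spaces_bounded} we have $W^{k,p}_{\BC}(\Dom,w^{\BDom}_{\gam};X)=\{f\in W^{k,p}(\Dom,w^{\BDom}_{\gam};X):\II f\in Y_{\BC}\}$, and by Proposition~\ref{prop:tracechar_dom} each summand $W^{k,p}_{\BC}(\OO_n,w_{\gam}^{\d\OO_n};X)$ coincides with $\cir{W}^{k,p}_{\BC}(\OO_n,w_{\gam}^{\d\OO_n};X)$, so membership in $Y_{\BC}$ may be tested either through traces or through approximation by test functions.

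For the inclusion ``$\subseteq$'', take $f\in\cir{W}^{k,p}_{\BC}(\Dom,w^{\BDom}_{\gam};X)$ and a sequence $(f_m)$ of test functions -- from $\Cc^\infty(\Dom;X)$ if $\BC=0$, and from $C^\infty_{{\rm c},\BC}(\overline{\Dom};X)$ otherwise -- with $f_m\to f$ in $W^{k,p}(\Dom,w^{\BDom}_{\gam};X)$. Then $\eta_0 f_m\in\Cc^\infty(\Dom;X)$ extends by zero to an element of $W^{k,p}(\RRd;X)$, and for $n\geq1$ the function $\eta_n f_m$, extended by zero outside $V_n$, is smooth on $\OO_n$, has compact support, and satisfies the boundary condition on $\d\OO_n$: indeed $\eta_n$ vanishes near $\d V_n$, while on $\d\OO_n\cap V_n=\BDom\cap V_n$ the factor $f_m$ already satisfies it. Hence $\eta_n f_m\in\cir{W}^{k,p}_{\BC}(\OO_n,w_{\gam}^{\d\OO_n};X)$, so $\II f_m\in Y_{\BC}$; since $\II$ is bounded and $Y_{\BC}$ is closed, $\II f=\lim_m\II f_m\in Y_{\BC}$, i.e.\ $f\in W^{k,p}_{\BC}(\Dom,w^{\BDom}_{\gam};X)$.

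For ``$\supseteq$'', take $f\in W^{k,p}_{\BC}(\Dom,w^{\BDom}_{\gam};X)$, so that $(\eta_n f)_{n=0}^N=\II f\in Y_{\BC}$. Pick approximating test functions $\phi_{0,m}\in\Cc^\infty(\RRd;X)$ with $\phi_{0,m}\to\eta_0 f$ in $W^{k,p}(\RRd;X)$ (density of smooth compactly supported functions), and, using Proposition~\ref{prop:tracechar_dom}, for $n\geq1$ test functions $\phi_{n,m}$ (in $\Cc^\infty(\OO_n;X)$ if $\BC=0$, in $C^\infty_{{\rm c},\BC}(\overline{\OO_n};X)$ otherwise) with $\phi_{n,m}\to\eta_n f$ in $W^{k,p}(\OO_n,w_{\gam}^{\d\OO_n};X)$. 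By continuity of $\PP$,
\[
g_m:=\PP(\phi_{0,m},\dots,\phi_{N,m})=\sum_{n=0}^N\eta_n\phi_{n,m}\longrightarrow\PP\II f=f
\]
in $W^{k,p}(\Dom,w^{\BDom}_{\gam};X)$. Here $\eta_0\phi_{0,m}\in\Cc^\infty(\Dom;X)$ since $\supp\eta_0$ is a compact subset of $\Dom$, and for $n\geq1$ the product $\eta_n\phi_{n,m}$, extended by zero outside $V_n$, is smooth on $\Dom$, continuous (resp.\ $C^1$) up to $\overline{\Dom}$, of compact support, and satisfies the boundary condition on $\BDom$: on $\BDom\cap V_n=\d\OO_n\cap V_n$ it inherits it from $\phi_{n,m}$, and off $V_n$ it vanishes. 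As $\Dom$ is bounded, $g_m\in\Cc^\infty(\Dom;X)$ when $\BC=0$ and $g_m\in C^\infty_{{\rm c},\BC}(\overline{\Dom};X)$ otherwise, whence $f\in\cir{W}^{k,p}_{\BC}(\Dom,w^{\BDom}_{\gam};X)$.

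The routine but essential points -- the only places where the hypotheses of Lemma~\ref{lem:decomp} are genuinely used -- are the two ``gluing'' verifications: multiplying a global test function on $\Dom$ by $\eta_n$ yields a legitimate test function on $\OO_n$, and multiplying a test function on $\OO_n$ by $\eta_n$ and summing over $n$ yields a legitimate test function on $\Dom$. Both follow from $\supp\eta_n$ being a compact subset of $V_n$ together with the compatibility relations $\Dom\cap V_n=\OO_n\cap V_n$ and $\BDom\cap V_n=\d\OO_n\cap V_n$, which ensure that support, smoothness and the boundary condition are all preserved under the transfer. Everything else is the boundedness of $\II$ and $\PP$, the closedness of $Y_{\BC}$, and the componentwise application of Proposition~\ref{prop:tracechar_dom}.
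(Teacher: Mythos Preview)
Your proof is correct and follows exactly the same localisation strategy as the paper: use the retraction--coretraction pair $(\PP,\II)$ from Lemma~\ref{lem:decomp} to reduce to the special domains $\OO_n$, then invoke Proposition~\ref{prop:tracechar_dom} componentwise. The paper's argument is considerably more terse (it only spells out the inclusion $W^{k,p}_0\subseteq\cir{W}^{k,p}_0$ and declares the remaining cases ``similar''), so your version in fact supplies more of the routine gluing verifications.
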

\begin{proof}
  We only prove the statement for $\BC=0$ since the proof for the other cases is similar. Let $f\in W^{k,p}_{0}(\OO, w_{\gam}^{\d\OO};X)$. Proposition \ref{prop:tracechar_dom} and the fact that $\Cc^\infty(\RRd;X)$ is dense in $W^{k,p}(\RRd;X)$, allows us to approximate $\mc{I}f$ by a sequence $g:=(g_{0,m}, g_{1,m},\dots, g_{N,m})_{m\geq 1}$ where $(g_{0,m})_{m\geq1}\subseteq \Cc^{\infty}(\RRd;X)$ and $(g_{n,m})_{m\geq1}\subseteq \Cc^{\infty}(\OO_n;X)$ for all $n\in\{1,\dots,N\}$. Using Lemma \ref{lem:decomp} we see that $f=\mc{P}\mc{I} f$ can be approximated by the sequence $\mc{P}g\subseteq \Cc^{\infty}(\OO;X)$. 
\end{proof}

\subsection{Complex interpolation of weighted Sobolev spaces} To conclude this section, we recall the following two interpolation results for weighted Sobolev spaces on $\RRdh$ with boundary conditions from \cite{Ro25}, which also hold for special and bounded domains by the results from Sections \ref{subsec:trace-char}, \ref{subsec:tracechar_dom} and \ref{subsec:tracechar_bdd_dom}. 
\begin{proposition}\label{prop:complex_int_W_Dir}
  Let $p\in(1,\infty)$, $k\in\NN_0$,  $\lambda\in[0,1]$, $\gam \in ((1-\lambda)p-1,2p-1)\setminus\{p-1\}$ and let $X$ be a $\UMD$ Banach space. Moreover, let $\OO$ be a special $\Cc^{1,\lambda}$-domain with $[\OO]_{C^{1,\lambda}} \leq 1$ or  a bounded $C^{1,\lambda}$-domain.
    Then
    \begin{equation*}
        [W^{k,p}(\OO, w^{\d\OO}_{\gam+kp};X), W^{k+2,p}_{\Dir}(\OO,w^{\d\OO}_{\gam+kp};X)]_{\half} = W^{k+1,p}_{\Dir}(\OO,w^{\d\OO}_{\gam+kp};X).
    \end{equation*}
\end{proposition}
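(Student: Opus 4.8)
The plan is to reduce everything to the half-space case and invoke the corresponding interpolation result from \cite{Ro25}, transferring it via the isomorphisms constructed in Sections \ref{subsec:trace-char}--\ref{subsec:tracechar_bdd_dom}. First I would settle the case $\OO=\RRdh$: this is exactly the statement quoted from \cite{Ro25}, so there is nothing to prove there. Note that the hypothesis $\gam\in((1-\lambda)p-1,2p-1)\setminus\{p-1\}$ guarantees in particular $\gam>(1-\lambda)p-1\geq (k+2-(1+\lambda))_+p-1$ is \emph{not} quite what is needed; rather, since $\gam<2p-1$ we have $\gam>(k+2-(\ell+\lambda))_+p-1$ with $\ell=1$ precisely when $(k+2-(1+\lambda))_+p-1<2p-1$, i.e. always on the relevant range, so Proposition \ref{prop:isomorphisms} and Proposition \ref{prop:tracechar_dom} apply with $\ell=1$ to the spaces $W^{k,p}(\OO,w^{\d\OO}_{\gam+kp};X)$, $W^{k+1,p}_\Dir$ and $W^{k+2,p}_\Dir$ — here one uses that the weight exponent is $\gam+kp$, $\gam+(k{+}1)p$-type bookkeeping matches the ``$\gam\mapsto\gam+kp$'' shift so that the relevant condition reduces to $\gam+kp>(k+j-(1+\lambda))_+p-1$ for $j\in\{0,1,2\}$, which holds since $\gam>(1-\lambda)p-1$.

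Second, for a special $\Cc^{1,\lambda}$-domain $\OO$ with $[\OO]_{C^{1,\lambda}}\le 1$, I would apply the change-of-coordinates isomorphism $\Psi_*$ from Proposition \ref{prop:isomorphisms} simultaneously at the three smoothness levels $k$, $k+1$, $k+2$. Since complex interpolation is a functor, an isomorphism of compatible couples $\Psi_*\colon (W^{k,p}(\OO,w^{\d\OO}_{\gam+kp};X),\,W^{k+2,p}_\Dir(\OO,w^{\d\OO}_{\gam+kp};X))\to (W^{k,p}(\RRdh,w_{\gam+kp};X),\,W^{k+2,p}_\Dir(\RRdh,w_{\gam+kp};X))$ induces an isomorphism on the $\tfrac12$-interpolation spaces. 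By the half-space case the target interpolation space is $W^{k+1,p}_\Dir(\RRdh,w_{\gam+kp};X)$, and by Proposition \ref{prop:isomorphisms} (applied at level $k+1$) together with Proposition \ref{prop:tracechar_dom}, $(\Psi^{-1})_*$ carries this back isomorphically onto $W^{k+1,p}_\Dir(\OO,w^{\d\OO}_{\gam+kp};X)$. Chasing the identifications — using that $(\Psi^{-1})_*$ is a two-sided inverse — yields the claimed equality of spaces (with equivalent norms).

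Third, for a bounded $C^{1,\lambda}$-domain I would use the retraction--coretraction pair $(\PP,\II)$ from Lemma \ref{lem:decomp}. Since $\II$ maps $W^{k,p}(\OO,w^{\d\OO}_{\gam+kp};X)$ into $W^{k,p}(\RRd;X)\oplus\bigoplus_n W^{k,p}(\OO_n,w^{\d\OO_n}_{\gam+kp};X)$ and, by the trace characterisations of Section \ref{subsec:tracechar_bdd_dom}, maps $W^{k+2,p}_\Dir(\OO)$ into the corresponding direct sum with the Dirichlet spaces on the $\OO_n$ and the full space, the pair $(\PP,\II)$ is a retraction--coretraction for the whole couple. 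Interpolation commutes with complemented subspaces and finite direct sums, so $[\,\cdot\,,\cdot\,]_{1/2}$ of the bounded-domain couple is a complemented subspace (via $\II$, with left inverse $\PP$) of $\bigl[W^{k,p}(\RRd;X),W^{k+2,p}(\RRd;X)\bigr]_{1/2}\oplus\bigoplus_n\bigl[W^{k,p}(\OO_n,w_{\gam+kp}^{\d\OO_n};X),W^{k+2,p}_\Dir(\OO_n,w_{\gam+kp}^{\d\OO_n};X)\bigr]_{1/2}$, which by the special-domain case and the unweighted result $[W^{k,p}(\RRd;X),W^{k+2,p}(\RRd;X)]_{1/2}=W^{k+1,p}(\RRd;X)$ equals $W^{k+1,p}(\RRd;X)\oplus\bigoplus_n W^{k+1,p}_\Dir(\OO_n,w^{\d\OO_n}_{\gam+kp};X)$. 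Applying $\PP$ and using the trace characterisation of $W^{k+1,p}_\Dir(\OO)$ on bounded domains identifies the image with $W^{k+1,p}_\Dir(\OO,w^{\d\OO}_{\gam+kp};X)$.

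The main obstacle is the bookkeeping of admissibility: one must check that all the conditions of Proposition \ref{prop:isomorphisms}, Proposition \ref{prop:tracechar_dom} and Definition \ref{def:spaces_bounded} are met \emph{simultaneously} at smoothness levels $k$, $k+1$ and $k+2$ with the fixed weight exponent $\gam+kp$ and with $\ell=1$, and that the borderline Muckenhoupt/trace restriction $\gam+kp\notin\{jp-1:j\in\NN_1\}$ holds — this is where $\gam\ne p-1$ and $\gam\in((1-\lambda)p-1,2p-1)$ enter, forcing $\gam+kp\in((k+1-\lambda)p-1,(k+2)p-1)$, an interval containing at most the single excluded point $\gam+kp=(k+1)p-1$ (i.e. $\gam=p-1$), which is why that case is omitted. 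A secondary point is to note that $\Psi_*$, being an isomorphism of the \emph{ambient} spaces $W^{k,p}$ which restricts to an isomorphism of the Dirichlet subspaces (Proposition \ref{prop:isomorphisms}, eq.\ \eqref{eq:isoPsi*2}), automatically respects the interpolation couple, so no separate compatibility argument is needed. Everything else is a formal consequence of the functoriality of complex interpolation together with the already-established half-space identity.
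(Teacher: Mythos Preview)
Your proposal is correct and follows essentially the same route as the paper's own proof, which simply invokes Propositions \ref{prop:isomorphisms}, \ref{prop:tracechar_dom} and Lemma \ref{lem:decomp} to reduce to $\OO=\RRdh$ and then cites \cite[Theorem 6.5]{Ro25}. You have merely spelled out the functoriality-of-interpolation and retraction--coretraction details that the paper leaves implicit; the admissibility bookkeeping you sketch (checking $\gam+kp>(k+2-(1+\lambda))_+p-1$ and $\gam+kp\notin\{jp-1:j\in\NN_1\}$) is the right verification, though your first paragraph is a bit garbled in stating it.
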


\begin{proposition}\label{prop:complex_int_W_Neu}
Let $p\in(1,\infty)$, $k\in\NN_0$,  $\lambda\in(0,1]$, $\gam \in ((1-\lambda)p-1,p-1)$, $j\in\{0,1\}$ and let $X$ be a $\UMD$ Banach space. Moreover, let $\OO$ be a special $\Cc^{j+1,\lambda}$-domain with $[\OO]_{C^{j+1,\lambda}} \leq \Lambda$, where $\Lambda\in(0,1)$ is as in Lemma \ref{lem:loc_normal} or a bounded $C^{j+1,\lambda}$-domain.
    Then
    \begin{equation*}
        [W^{k+j,p}(\OO, w^{\d\OO}_{\gam+kp};X), W^{k+2+j,p}_{\Neu}(\OO,w^{\d\OO}_{\gam+kp};X)]_{\half} = W^{k+1+j,p}_{\Neu}(\OO,w^{\d\OO}_{\gam+kp};X).
    \end{equation*}
\end{proposition}
\begin{proof}[Proof of Propositions \ref{prop:complex_int_W_Dir} and \ref{prop:complex_int_W_Neu}]
  By Propositions \ref{prop:isomorphisms}, \ref{prop:tracechar_dom} and Lemma \ref{lem:decomp}, it suffices to prove the statements for $\OO=\RRdh$, which follows from \cite[Theorem 6.5]{Ro25}.
  \end{proof}

We remark that in the above two propositions the conditions on $[\OO]_{C^{j+1,\lambda}}$ can be omitted and in this case the implicit constants will depend on the domain.

\section{Fractional domains of the Laplacian on the half-space}\label{sec:frac_domain}
In this section, we establish properties of the Laplacian on the half-space that are required for Sections \ref{sec:calc_spec_dom} and \ref{sec:calc_dom}. There, we will transfer the $\Hinf$-calculus for the Laplacian from $\RRdh$ to domains using the perturbation results in Section \ref{sec:prelim_func_calc}. The aim of the present section is to recall the bounded $\Hinf$-calculus for the Laplacian on $\RRdh$ from \cite{LLRV24} and to characterise the relevant fractional domains and interpolation spaces. These characterisations are one of the key ingredients in the perturbation theorems in Section \ref{sec:calc_spec_dom}.\\

Throughout this section, the Dirichlet and Neumann Laplacian on $\RRdh$ will be defined as follows.
\begin{definition}\label{def:delRRdh}
Let $p\in(1,\infty)$, $k\in\NN_0$ and let $X$ be a $\UMD$ Banach space.
\begin{enumerate}[(i)]
    \item\label{it:def:delRRdh1} Let $\gam\in(-1,2p-1)\setminus\{p-1\}$. The \emph{Dirichlet Laplacian $\delDir$ on $W^{k,p}(\RRdh,w_{\gam+kp};X)$} is defined by
  \begin{equation*}
    \delDir u := \del u\quad \text{ with }\quad D(\delDir):=W^{k+2,p}_{\Dir}(\RRdh, w_{\gam+kp};X).
  \end{equation*}
    \item\label{it:def:delRRdh2} Let $\gam\in (-1,p-1)$ and $j\in\{0,1\}$. The \emph{Neumann Laplacian $\delNeu$ on $W^{k+j,p} (\RRdh, $ $ w_{\gam+kp}; X)$} is defined by
  \begin{equation*}
    \delNeu u := \del u\quad \text{ with }\quad D(\delNeu):=W^{k+j+2,p}_{\Neu}(\RRdh, w_{\gam+kp};X).
  \end{equation*}
  Note that equivalently we can write $\delNeu$ on $W^{k,p}(\RRdh, w_{\gam+(k-1)p};X)$ where $k\in\NN_0$ and $\gam\in (p-1,2p-1)$, or, $k\in\NN_1$ and $\gam\in(-1,p-1)$. This matches the notation in Theorem \ref{thm:introNeu}.
  \end{enumerate}
\end{definition} 

We recall from \cite{LLRV24} that these Laplace operators admit a bounded $\Hinf$-calculus.
\begin{theorem}[{\cite[Theorem 1.1 \& Remark 1.3(i)]{LLRV24}}]\label{thm:LLRVthm1.1Dir} 
Let $p\in(1,\infty)$, $k\in\NN_0$, $\gam\in(-1,2p-1)\setminus\{p-1\}$ and let $X$ be a $\UMD$ Banach space. Let $\delDir$ on $W^{k,p}(\RRdh, w_{\gam+kp};X)$ be as in Definition \ref{def:delRRdh}\ref{it:def:delRRdh1}. Then for all $\mu>0$ we have that
  \begin{enumerate}[(i)]
    \item $\mu-\delDir$ is sectorial of angle $\om(\mu-\delDir)=0$,
    \item $\mu-\delDir$ has a bounded $\Hinf$-calculus of angle $\om_{\Hinf}(\mu-\delDir)=0$. 
  \end{enumerate}
  Moreover, the statements hold for $\mu=0$ as well if $\gam+kp\in (-1,2p-1)$.
\end{theorem}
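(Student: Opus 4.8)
The plan is to reduce the full statement to a one-dimensional weighted problem in the normal variable together with the classical calculus in the tangential variables. Writing $L^p(\RRdh,w_{\gam};X)=L^p(\RR_+,w_{\gam};L^p(\RR^{d-1};X))$, one decomposes $-\delDir=A_1+A_2$, where $A_1$ is the Dirichlet realisation of $-\d_1^2$ in the normal variable, acting on $L^p(\RR_+,w_{\gam};Y)$ with $Y:=L^p(\RR^{d-1};X)$, and $A_2=-\del_{\tilde{x}}$ in the tangential variables acts on $L^p(\RR^{d-1};Z)$ with $Z:=L^p(\RR_+,w_{\gam};X)$. Both $Y$ and $Z$ are $\UMD$, since an $L^p$-space over a $\UMD$ space is $\UMD$, and the two operators resolvent-commute. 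The tangential operator $A_2$ is the standard negative Laplacian on $\RR^{d-1}$ acting on a $\UMD$-valued $L^p$-space, hence has a bounded $\Hinf(\Sigma_{\om})$-calculus for every $\om>0$, so $\om_{\Hinf}(A_2)=0$. Granting the analogue for $A_1$ (this is the crux, addressed below), the Kalton--Weis theorem on joint functional calculus for resolvent-commuting operators gives that the closed sum $A_1+A_2$ has a bounded $\Hinf(\Sigma_{\om})$-calculus for every $\om>0$; the mixed-derivative theorem identifies its domain with $W^{2,p}_{\Dir}(\RRdh,w_{\gam};X)$, so the closed sum equals $-\delDir$. Shifting by $\mu>0$ is then harmless by Proposition~\ref{prop:calc_pert_Id}.

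The heart of the matter is the one-dimensional Dirichlet Laplacian $A_1=-\d_1^2$ on $L^p(\RR_+,w_{\gam};Y)$ for $\gam\in(-1,2p-1)\setminus\{p-1\}$ and a $\UMD$ space $Y$. I would split into two regimes. For $\gam\in(-1,p-1)$ the odd reflection across $x_1=0$ is an isomorphism of $L^p(\RR_+,w_{\gam};Y)$ onto the odd subspace of $L^p(\RR,|x_1|^{\gam};Y)$ that intertwines $A_1$ with $-\d_1^2$ on the full line; since $|x_1|^{\gam}$ is a Muckenhoupt $A_p(\RR)$-weight precisely for $\gam\in(-1,p-1)$, the angle-zero $\Hinf$-calculus follows from its (known) $A_p$-weighted, $\UMD$-valued counterpart on $\RR$. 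For $\gam\in(p-1,2p-1)$ this reflection is unavailable --- the reflected function need not even be locally integrable across the boundary, cf.\ Remark~\ref{rem:L1loc} --- and instead one works directly with the explicit Dirichlet resolvent, whose kernel is proportional to $\ee^{-\sqrt{z}\,|x_1-y_1|}-\ee^{-\sqrt{z}\,(x_1+y_1)}$; the corresponding integral operators are estimated on $L^p(\RR_+,w_{\gam};Y)$ by weighted Hardy-type and Schur-type inequalities that remain valid in this larger range of $\gam$, and these bounds are upgraded to a bounded $\Hinf$-calculus of angle zero by exploiting the exact homogeneity of $-\d_1^2$ and of $w_{\gam}$ under dilations (equivalently, by passing to an associated one-dimensional Bessel-type operator whose calculus is classical in the relevant parameter range). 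The value $\gam=p-1$ is the borderline of the two regimes and is excluded.

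To pass from $k=0$ to $k\in\NN_1$ I would induct on $k$, re-running the decomposition above on the higher-order scale: the tangential part is again, after conjugation with a tangential Bessel potential, the translation-invariant Laplacian on a $\UMD$-valued space and has angle-zero calculus, while the normal part is the one-dimensional Dirichlet Laplacian on $W^{k,p}(\RR_+,w_{\gam+kp};Y)$. The weight $w_{\gam+kp}$ on the order-$k$ space is chosen exactly so that, on the Dirichlet subspace, Hardy's inequality (Lemma~\ref{lem:Hardy}) makes $\d_1$ a bounded map from the level-$(k,\gam+kp)$ space into the level-$(k-1,\gam+(k-1)p)$ space with a bounded right inverse given by integration; since $-\d_1^2$ commutes with $\d_1$, the level-$k$ operator is thereby identified with the level-$(k-1)$ one modulo lower-order terms, and the $k=0$ calculus propagates. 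Finally, for $\mu=0$ under the sharper hypothesis $\gam+kp\in(-1,2p-1)$, this same range is exactly what makes the dilation (Mellin) symbol of $-\d_1^2$ on $L^p(\RR_+,w_{\gam+kp})$ non-vanishing off $[0,\infty)$, so $-\delDir$ is itself injective with dense range and sectorial of angle zero, and the $\Hinf$-estimates obtained above already hold at $\mu=0$. The principal obstacle throughout is the one-dimensional analysis for $\gam\in(p-1,2p-1)$: there the $A_p$-machinery breaks down, and both the resolvent bounds and the $\Hinf$-estimates must be extracted from the explicit kernel together with sharp weighted Hardy inequalities.
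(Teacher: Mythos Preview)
This theorem is not proved in the present paper: it is quoted verbatim from \cite{LLRV24} and used as a black box, so there is no proof here to compare your proposal against. Your sketch is broadly along the lines one would expect the cited paper to follow (splitting into normal and tangential parts, odd reflection for $\gam\in(-1,p-1)$, and a separate argument for $\gam\in(p-1,2p-1)$ where the $A_p$-machinery fails), and the paper itself alludes to exactly these ingredients in the proofs of Lemmas~\ref{lem:sect_Dir_ WdelDir} and~\ref{lem:sect_Dir_ WdelNeu} and in Remark~\ref{rem:L1loc}.

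That said, your treatment of the hard case $\gam\in(p-1,2p-1)$ is more of a wish list than an argument: ``weighted Hardy-type and Schur-type inequalities'' and ``passing to an associated Bessel-type operator'' name the right neighbourhood but do not indicate how the angle-zero $\Hinf$-bound actually comes out, nor why the domain of the closed sum is exactly $W^{2,p}_{\Dir}$ in this weight range. Likewise, the induction on $k$ via $\d_1$ and Hardy's inequality needs care: one must check that the intertwining with $\d_1$ respects the Dirichlet trace condition and that the ``lower-order terms'' really are lower order in the relevant graph norms, which is not automatic when the weight exponent $\gam+kp$ leaves the $A_p$-range. These are precisely the points where \cite{LLRV24} does the real work, and your sketch would need to be substantially fleshed out there to stand on its own.
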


\begin{theorem}[{\cite[Theorem 1.2 \& Remark 1.3(i)]{LLRV24}}]\label{thm:LLRVthm1.2Neu} Let $p\in(1,\infty)$, $k\in\NN_0$, $\gam\in(-1,p-1)$, $j\in\{0,1\}$ and let $X$ be a $\UMD$ Banach space. Let $\delNeu$ on $W^{k+j,p}(\RRdh, w_{\gam+kp};X)$ be as in Definition \ref{def:delRRdh}\ref{it:def:delRRdh2}. 
  Then for all $\mu>0$ we have that
  \begin{enumerate}[(i)]
    \item $\mu-\delNeu$ is sectorial of angle $\om(\mu-\delNeu)=0$,
    \item $\mu-\delNeu$ has a bounded $\Hinf$-calculus of angle $\om_{\Hinf}(\mu-\delNeu)=0$. 
  \end{enumerate}
  Moreover, the statements hold for $\mu=0$ as well if $k=0$.
\end{theorem}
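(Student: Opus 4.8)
The statement is quoted from \cite{LLRV24}, so the plan is to reduce everything to the already-established half-space results there, but let me indicate how one would organise the argument. The plan is to treat the Neumann Laplacian on $W^{k+j,p}(\RRdh, w_{\gam+kp};X)$ via a change of variables in the weight exponent, exactly as in the remark following Definition \ref{def:delRRdh}: writing $\alpha = \gam+kp$, the operator acts on $W^{m,p}(\RRdh,w_{\beta};X)$ with $m = k+j$ and $\beta = \alpha$, where $\beta$ lies in a range dictated by $\gam\in(-1,p-1)$. The key structural point, which one would isolate first, is that $\delNeu$ on these weighted Sobolev spaces is, up to the (bounded, invertible for $\mu>0$) shift, unitarily equivalent to an operator obtained from the scalar half-line Neumann Laplacian on $W^{m,p}(\RR_+,w_\beta;X)$ tensored with the full Fourier multiplier picture in the tangential variables $\tilde x\in\RR^{d-1}$. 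Concretely, one reduces to the one-dimensional situation by a partial Fourier transform in $\tilde x$ and the operator-valued Mikhlin multiplier theorem (valid on $\UMD$ spaces), which is precisely the route taken in \cite{LLRV24}.

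The steps, in order, would be: (1) record that for $\mu>0$ the resolvent problem $(\mu-\Delta)u=f$ with $\d_1 u|_{\d\RRdh}=0$ is solved by even reflection, reducing it to the full-space heat resolvent on $W^{m,p}(\RR^d, w_\beta;X)$ composed with an extension/restriction pair that is bounded on the weighted Sobolev scale for $\beta$ in the stated range (this is where $\gam\in(-1,p-1)$, equivalently the admissible band for $\beta$, enters — the even extension must map $W^{m,p}_{\Neu}$ isomorphically onto the appropriate weighted Sobolev space on $\RR^d$, which requires the weight to be below the critical threshold $p-1$ in the relevant sense so that no compatibility obstruction appears); (2) invoke the $\Hinf$-calculus for $\mu-\Delta$ on weighted $L^p$ over $\RR^d$, or rather the weighted Sobolev version, as established in the cited reference, and transfer it through the reflection isomorphism; (3) sectoriality of angle $0$ and the $\Hinf$-bound of angle $0$ are preserved under this similarity transform since it is an isomorphism of Banach spaces intertwining the operators, giving both (i) and (ii); (4) for the endpoint $\mu=0$ when $k=0$: here $m=j\le 1$ and $\beta=\gam\in(-1,p-1)$, and one must check that $\delNeu$ remains injective with dense range and the resolvent bound survives down to the boundary of the sector at $0$ — this uses the scaling/dilation invariance of the homogeneous problem on $\RRdh$ together with the fact that for $\beta\in(-1,p-1)$ there is no kernel coming from constants obstructing injectivity in this weighted class (constants are not in the space), so the homogeneous estimate is uniform in $\mu\ge 0$.

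The main obstacle is step (1): making the even-reflection reduction work at the level of the \emph{weighted Sobolev spaces with vanishing Neumann trace} $W^{k+j+2,p}_{\Neu}$, rather than merely at the $L^p$ level. One has to verify that even extension $E_{\mathrm{even}}$ maps $W^{m,p}_{\Neu}(\RRdh,w_\beta;X)$ boundedly and isomorphically onto the symmetric subspace of $W^{m,p}(\RR^d,w_\beta^{\mathrm{sym}};X)$ — and this is exactly the kind of trace-compatibility statement that can fail if the weight exponent is too large, which is why the hypothesis is confined to $\gam\in(-1,p-1)$ (so that $\beta<p-1$ and the one-sided Hardy inequality, Lemma \ref{lem:Hardy}(i), and the embedding $W^{1,p}(\RR_+,w_\beta;X)\hookrightarrow C([0,\infty);X)$ are available to control the reflected derivatives across $x_1=0$). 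Once the reflection isomorphism is in place on the correct domain, the remaining items are formal consequences of the corresponding properties of $\mu-\Delta$ on $\RR^d$ and the general functoriality of the $\Hinf$-calculus under Banach space isomorphisms; since all of this is carried out in detail in \cite[Theorems 1.1 and 1.2]{LLRV24}, for the present paper it suffices to cite those results, which is what the statement does.
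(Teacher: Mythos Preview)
The paper gives no proof of this statement --- it is a direct citation of \cite[Theorem 1.2 \& Remark 1.3(i)]{LLRV24} --- and you correctly identify this at both the beginning and the end of your sketch.

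Your intermediate sketch, however, contains a genuine gap in step (1). You propose that even reflection yields an isomorphism from $W^{m,p}_{\Neu}(\RRdh,w_\beta;X)$ onto the symmetric part of $W^{m,p}(\RR^d,w_\beta^{\mathrm{sym}};X)$, and justify this by saying the weight lies ``below the critical threshold $p-1$ in the relevant sense''. But the weight exponent is $\beta=\gam+kp$, and for $k\geq 1$ one has $\beta>p-1$ (since $\gam>-1$). As Remark~\ref{rem:L1loc} makes explicit, for weight exponents $\geq p-1$ the space $W^{m,p}(\RR^d,|x_1|^\beta;X)$ is not well defined in the usual distributional sense, because $(|x_1|^\beta)^{-1/(p-1)}\notin L^1_{\loc}(\RR^d)$; the paper states outright that ``we cannot employ classical reflection arguments from $\RRdh$ to $\RR^d$ if $\gam>p-1$''. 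So the isomorphism you describe simply does not exist at the level of the full weighted Sobolev spaces once $k\geq 1$.

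What \cite{LLRV24} actually does (as one can infer from the way \cite[Lemmas~5.3 \& 5.5 and Propositions~5.4 \& 5.6]{LLRV24} are invoked in the proofs of Lemmas~\ref{lem:sect_Dir_ WdelDir} and~\ref{lem:sect_Dir_ WdelNeu} of the present paper) is to perform the even/odd reflection only on a dense class of smooth functions --- Schwartz functions, or $\Cc^\infty$ functions with the appropriate vanishing at the boundary --- where the full-space problem is unambiguous, derive the resolvent and calculus estimates there, and then extend by density. The reflection is a tool for producing a priori estimates, not a Banach-space isomorphism on the domain of the operator. Your hedge ``in the relevant sense'' does not cover this distinction.
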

\begin{remark}\label{rem:domains}
  The domain $D(A)$ of an operator $A$ on a Banach space $Y$ is endowed with the graph norm $\|u\|_Y +\|Au\|_Y$ for $u\in D(A)$. It follows from Theorems \ref{thm:LLRVthm1.1Dir} and \ref{thm:LLRVthm1.2Neu} that the graph norm is equivalent to the norm of the domain in Definition \ref{def:delRRdh}. Under the conditions of Theorem \ref{thm:LLRVthm1.1Dir}, we have for the Dirichlet Laplacian that
  \begin{align*}
    \|u\|_{W^{k+2,p}(\RRdh, w_{\gam+kp};X)} & \eqsim_{p, k, \gam, \mu, X} \|u\|_{W^{k,p}(\RRdh, w_{\gam+kp};X)}+\|(\mu-\delDir)u\|_{W^{k,p}(\RRdh, w_{\gam+kp};X)}\\
    &\eqsim_{p, k, \gam, \mu, X} \|(\mu-\delDir)u\|_{W^{k,p}(\RRdh, w_{\gam+kp};X)}, \quad u\in W_\Dir^{k+2,p}(\RRdh, w_{\gam+kp};X),
  \end{align*}
  where the latter identity only holds for $\mu>0$. A similar norm equivalence holds for the Neumann Laplacian.
\end{remark}
To transfer the $\Hinf$-calculus for the Laplacian from $\RRdh$ to domains, we need to identify certain fractional domains and interpolation spaces. This will be done in Section \ref{sec:frac_dom_RRdh_Dir} and \ref{sec:frac_dom_RRdh_Neu} for the Dirichlet and Neumann Laplacian, respectively. We additionally define for $\gam\in(-1,\infty)\setminus\{jp-1:j\in\NN_1\}$ and $k\in\NN_0$ the following weighted Sobolev spaces with boundary conditions (cf. \cite[Section 6.3]{LV18})
\begin{align*}
   W^{k,p}_{\del, \Dir}(\RRdh, w_{\gam};X)&:=\Big\{u\in W^{k,p}(\RRdh, w_{\gam};X):  \Tr (\del ^j u)=0, \forall j<\tfrac{1}{2}\big(k-\tfrac{\gam+1}{p}\big)\Big\},\\
   W^{k,p}_{\del, \Neu}(\RRdh, w_{\gam};X)&:=\Big\{u\in W^{k,p}(\RRdh, w_{\gam};X):  \Tr (\del ^j \d_1u)=0, \forall j<\tfrac{1}{2}\big(k-1-\tfrac{\gam+1}{p}\big)\Big\}.
\end{align*}

\subsection{Fractional domains for the Dirichlet Laplacian}\label{sec:frac_dom_RRdh_Dir}
We begin with an elliptic regularity result for the shifted Dirichlet Laplacian on spaces with additional boundary conditions.
\begin{lemma}\label{lem:sect_Dir_ WdelDir}
  Let $p\in(1,\infty)$, $k\in\NN_0$, $\gam\in(-1,2p-1)\setminus\{p-1\}$, $\mu>0$ and let $X$ be a $\UMD$ Banach space. Then for all $f\in W^{k+1,p}_{\del, \Dir}(\RRdh, w_{\gam+kp};X)$ there exists a unique $u\in W^{k+3,p}_{\del,\Dir}(\RRdh, w_{\gam+kp};X)$ such that $\mu u -\del u = f$. Moreover, this solution satisfies
  \begin{equation*}
    \|u\|_{W^{k+3,p}(\RRdh, w_{\gam+kp};X)}\leq C\|f\|_{W^{k+1,p}(\RRdh, w_{\gam+kp};X)},
  \end{equation*}
  where the constant $C>0$ only depends on $p,k, \gam, \mu, d$ and $X$.
\end{lemma}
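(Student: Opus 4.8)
The plan is to bootstrap the $\Hinf$-calculus (hence well-posedness) for $\mu-\delDir$ on the \emph{bigger} space $W^{k+1,p}(\RRdh,w_{\gam+kp};X)$, and then upgrade the regularity of the solution from $W^{k+3,p}$ down to the subspace with the extra vanishing traces. First, note that $w_{\gam+kp}=w_{\gam'+(k+1)p}$ with $\gam'=\gam-p$, but $\gam'$ may leave the admissible range $(-1,2p-1)\setminus\{p-1\}$; so instead I would apply Theorem \ref{thm:LLRVthm1.1Dir} directly on $W^{k+1,p}(\RRdh,w_{\gam+(k+1)p-p};X)$ only after checking the weight exponent $\gam+kp$ lies in $(p-1,3p-1)\setminus\{2p-1\}$ — equivalently $\gam\in(p-1,3p-1)\setminus\{2p-1\}$ after shifting — which need not hold for all $\gam\in(-1,2p-1)$. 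The cleaner route: apply Theorem \ref{thm:LLRVthm1.1Dir} with the \emph{same} smoothness parameter replaced by $k+1$ and weight parameter $\gam$, i.e. work on $W^{k+1,p}(\RRdh,w_{\gam+(k+1)p};X)$ — but that is the wrong weight.

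So the actual argument I would run is the following. For $f\in W^{k+1,p}_{\del,\Dir}(\RRdh,w_{\gam+kp};X)\subseteq W^{k+1,p}(\RRdh,w_{\gam+kp};X)$, first solve $\mu u-\del u=f$ in $W^{k,p}(\RRdh,w_{\gam+kp};X)$ using Theorem \ref{thm:LLRVthm1.1Dir}: this gives a unique $u\in W^{k+2,p}_{\Dir}(\RRdh,w_{\gam+kp};X)$ with $\|u\|_{W^{k+2,p}}\lesssim\|f\|_{W^{k,p}}$. Next, observe $f\in W^{k+1,p}$, and the Dirichlet Laplacian with smoothness index $k+1$ and weight exponent $\gam+kp=\gam''+(k+1)p$ with $\gam''=\gam-p$: if $\gam''\in(-1,2p-1)\setminus\{p-1\}$, i.e. $\gam\in(p-1,3p-1)\setminus\{2p-1\}$, apply Theorem \ref{thm:LLRVthm1.1Dir} again to get $u\in W^{k+3,p}_{\Dir}(\RRdh,w_{\gam+kp};X)$ with $\|u\|_{W^{k+3,p}}\lesssim\|f\|_{W^{k+1,p}}$; otherwise ($\gam\le p-1$) the tangential derivatives $\d^\beta_{\tilde x}f$ with $|\beta|\le1$ still lie in $W^{k,p}$ with the right vanishing traces, so one differentiates the equation tangentially and reapplies the $k$-level result, while the normal second derivative $\d_1^2 u=\mu u-f-\Delta_{\tilde x}u$ is controlled directly, and Hardy's inequality (Lemma \ref{lem:Hardy}, Corollary \ref{cor:Sob_embRRdh}) handles the weight mismatch — this is where the hypothesis $f\in W^{k+1,p}_{\del,\Dir}$, not merely $W^{k+1,p}$, is essential, because it guarantees $\Delta_{\tilde x}u$ and hence $\d_1^2 u$ inherit the vanishing traces needed to iterate. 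Uniqueness is immediate from uniqueness at the $k$-level.

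It remains to promote $u$ from $W^{k+3,p}_{\Dir}$ to $W^{k+3,p}_{\del,\Dir}$. By definition of $W^{k+1,p}_{\del,\Dir}$ we have $\Tr(\del^j f)=0$ for all $j<\tfrac12(k+1-\tfrac{\gam+1}{p})$; since $\del^j u$ solves $\mu\,\del^j u-\del(\del^j u)=\del^j f$ and $\del^j u\in W^{k+3-2j,p}_{\Dir}$, the trace $\Tr(\del^{j+1}u)=\mu\Tr(\del^j u)-\Tr(\del^j f)$; using $\Tr(\del^j u)=0$ (which holds for $j<\tfrac12(k+3-\tfrac{\gam+1}{p})$ because $u\in W^{k+3,p}_{\Dir}$ means $\Tr(\del^j u)=0$ for $2j<k+3-\tfrac{\gam+1}{p}$, noting $\del^j u\in W^{k+3-2j,p}$ and the trace vanishes once the Sobolev index exceeds $\tfrac{\gam+1}{p}$) and $\Tr(\del^j f)=0$, one gets $\Tr(\del^{j+1}u)=0$ in the required range $j+1<\tfrac12(k+3-\tfrac{\gam+1}{p})$, i.e. $u\in W^{k+3,p}_{\del,\Dir}$. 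The norm estimate is the one already obtained.

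The main obstacle is the weight-range issue: the naive reapplication of Theorem \ref{thm:LLRVthm1.1Dir} at smoothness $k+1$ requires the shifted exponent $\gam-p$ to stay admissible, which fails for small $\gam$; handling that case forces the tangential-differentiation-plus-Hardy argument, and it is precisely there that one must carefully track which traces of $\del u$, $\Delta_{\tilde x}u$, $\d_1^2 u$ vanish — the bookkeeping of vanishing traces under $\del^j$ and under tangential derivatives is the delicate part, and it is what the space $W^{k+1,p}_{\del,\Dir}$ (rather than $W^{k+1,p}_{\Dir}$) is designed to make work.
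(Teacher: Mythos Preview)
Your strategy is sound, and for $\gamma\in(p-1,2p-1)$ it coincides exactly with the paper's Step~2: observe that $W^{k+1,p}_{\del,\Dir}(\RRdh,w_{\gamma+kp};X)=W^{k+1,p}(\RRdh,w_{\gamma-p+(k+1)p};X)$ (no trace condition is active in this range) and apply Theorem~\ref{thm:LLRVthm1.1Dir} with parameters $(k+1,\gamma-p)$.

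For $\gamma\in(-1,p-1)$ your route differs from the paper's. The paper takes $f\in \Cc^\infty(\RRdh;X)$, produces the solution via an odd extension to $\RRd$ so that $\Tr u=\Tr(\del u)=0$ come for free, and then differentiates in \emph{all} coordinate directions: $\partial_j u$ for $j\geq 2$ satisfies a Dirichlet problem, while $\partial_1 u$ satisfies a \emph{Neumann} problem (because $\Tr(\partial_1^2 u)=\Tr(\del u)-\Tr(\del_{\tilde x}u)=0$), and the paper invokes the Neumann estimate \cite[Proposition~5.6]{LLRV24} for that piece. Your tangential differentiation together with the algebraic identity $\partial_1^2 u=\mu u-f-\del_{\tilde x}u$ avoids the Neumann estimate entirely, which is a legitimate and slightly more elementary alternative. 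Note, however, that Hardy's inequality plays no role in that step --- there is no weight mismatch in the tangential argument.

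Two concrete errors in your writeup. First, the trace thresholds are all off by $k$: with weight $w_{\gamma+kp}$ the defining condition for $W^{m,p}_{\del,\Dir}(\RRdh,w_{\gamma+kp};X)$ is $\Tr(\del^j\,\cdot\,)=0$ for $j<\tfrac12\big(m-k-\tfrac{\gamma+1}{p}\big)$, so for $f$ only $j=0$ ever appears and for $u$ only $j\in\{0,1\}$. Second, the parenthetical claim that $u\in W^{k+3,p}_{\Dir}$ already yields $\Tr(\del^j u)=0$ for $j\geq 1$ is false --- the space $W_{\Dir}$ imposes only $\Tr u=0$. The correct argument is simply the induction you already set up: $\Tr u=0$ from the $k$-level solution, and then $\Tr(\del u)=\mu\,\Tr u-\Tr f=0$; nothing further is needed.
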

\begin{proof}
  \textit{Step 1: the case $\gam\in(-1,p-1)$.} Let $\gam\in (-1,p-1)$ and note that $$W^{k+1,p}_{\del, \Dir}(\RRdh, w_{\gam+kp};X)= W^{k+1,p}_{\Dir}(\RRdh, w_{\gam+kp};X) =W^{k+1,p}_{0}(\RRdh, w_{\gam+kp};X),$$
  which has $\Cc^{\infty}(\RRdh;X)$ as a dense subspace, see Proposition \ref{prop:tracechar_RRdh}. We claim that for $f\in \Cc^{\infty}(\RRdh;X)$ there exists a unique solution $u\in \SS(\RRdh;X)$ to $\mu u -\del u = f $ on $\RRdh$ that satisfies $u(0, \cdot)=(\del u)(0,\cdot)=0$. Indeed, by the proof of \cite[Lemma 5.3]{LLRV24} we obtain an odd function $\overline{u}\in \SS(\RR^d;X)$ which solves $\mu \overline{u}-\del \overline{u}= f_{\odd}\in \SS(\RRd;X)$ on $\RRd$. We recall from \cite{LLRV24} that $f_{\odd}(x)=\operatorname{sign}(x_1)f(|x_1|,\tilde{x})$ for $x\in \RRd$ is the odd extension of $f$ with respect to $x_1=0$. Since $\overline{u}$ is odd, it follows that $\del \overline{u}$ is odd as well. Then $u:=\overline{u}|_{\RRdh}\in \SS(\RRdh;X)$ is a solution to $\mu u -\del u = f $ on $\RRdh$ and satisfies $u(0, \cdot)=(\del u)(0,\cdot)=0$. The uniqueness follows from \cite[Corollary 4.3]{LV18}. This proves the claim.
  
  Let $f\in \Cc^{\infty}(\RRdh;X)$ and let $u\in \SS(\RRdh;X)$ be the solution to $\mu u -\del u =f$ as follows from the claim. In particular, we have that $\Tr (\d_1^2 u) = 0$. We define $v_0:= u$ and $v_j := \d_j u$ for $j\in\{1,\dots, d\}$. These functions satisfy the equations
  \begin{equation*}
    \begin{split}
    \mu v_0 - \del v_0 &= f\\
       \mu v_1 -\del v_1 &= \d_1 f \\
        \mu v_j -\del v_j &=\d_j f 
    \end{split}
    \qquad 
    \begin{split}
    v_0(0,\cdot)&=u(0,\cdot)=0,\\
       (\d_1v_1)(0,\cdot)&=(\d_1^2 u)(0,\cdot)=0,\\
       v_j(0,\cdot)&=0,\quad j\in\{2,\dots, d\}.
    \end{split}
  \end{equation*}
  Therefore, by \cite[Propositions 5.4 \& 5.6]{LLRV24} we have for $j\in \{0,\dots, d\}$ the estimates
  \begin{equation*}
    \|v_j\|_{W^{k+2,p}(\RRdh, w_{\gam+kp};X)}   \leq C \|f\|_{W^{k+1,p}(\RRdh, w_{\gam+kp};X)},
  \end{equation*}
  where the constant $C>0$ only depends on $p,k, \gam, \mu, d$ and $X$. This implies that
  \begin{align*}
    \|u\|_{W^{k+3,p}(\RRdh, w_{\gam+kp};X)} & \eqsim \sum_{j=0}^d\|v_j\|_{W^{k+2,p}(\RRdh, w_{\gam+kp};X)} 
     \lesssim  \|f\|_{W^{k+1,p}(\RRdh, w_{\gam+kp};X)},
  \end{align*}
  where the constant $C>0$
  only depends on $p,k, \gam, \mu, d$ and $X$. A density argument, similar to the proof of \cite[Proposition 5.4]{LLRV24}, yields the desired result for the case $\gam\in(-1,p-1)$. Note that the uniqueness of $u\in W^{k+3,p}_{\del, \Dir}(\RRdh, w_{\gam+kp};X)\hookrightarrow W^{k+2,p}_{\Dir}(\RRdh, w_{\gam+kp};X)$ follows from \cite[Proposition 5.4]{LLRV24}.
  
  \textit{Step 2: the case $\gam\in (p-1, 2p-1)$. }Note that for $\gam\in(p-1, 2p-1)$ we have 
$$W^{k+1,p}_{\del, \Dir}(\RRdh, w_{\gam+kp};X)= W^{k+1,p}(\RRdh, w_{\gam-p+(k+1)p};X).$$ Since $\gam-p\in (-1,p-1)$ and
$$ W^{k+3,p}_{\del,\Dir}(\RRdh, w_{\gam+kp};X)= W^{(k+1)+2,p}_{\Dir}(\RRdh, w_{\gam-p+(k+1)p};X),$$
the result follows from Theorem \ref{thm:LLRVthm1.1Dir} (see also \cite[Proposition 5.4]{LLRV24}).
  \end{proof}

We can now proceed with characterising fractional domains of the Dirichlet Laplacian.
\begin{proposition}\label{prop:domain_char_RRdh}
   Let $p\in(1,\infty)$, $k\in\NN_0$, $\gam\in(-1,2p-1)\setminus\{p-1\}$, $\mu>0$ and let $X$ be a $\UMD$ Banach space.  Let $\delDir$ on $ W^{k,p}(\RRdh, w_{\gam+kp};X)$ as in Definition \ref{def:delRRdh}. Then
   \begin{align*}
     D\big((\mu-\delDir)^{\half}\big)&= W^{k+1,p}_{\Dir}(\RRdh, w_{\gam+kp};X),\\ D\big((\mu-\delDir)^{\frac{3}{2}}\big)&=W^{k+3,p}_{\del, \Dir}(\RRdh, w_{\gam+kp};X).
   \end{align*}
\end{proposition}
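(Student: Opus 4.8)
The plan is to deduce both identities from Proposition \ref{prop:frac_domain} (interpolation of fractional domains) together with Theorem \ref{thm:LLRVthm1.1Dir}, the elliptic regularity Lemma \ref{lem:sect_Dir_ WdelDir}, and the complex interpolation result for weighted Sobolev spaces with Dirichlet conditions in Proposition \ref{prop:complex_int_W_Dir}. Write $A := \mu - \delDir$ on $W^{k,p}(\RRdh, w_{\gam+kp};X)$; by Theorem \ref{thm:LLRVthm1.1Dir} this operator is sectorial of angle $0$, has a bounded $\Hinf$-calculus, and hence has $\BIP$, so Proposition \ref{prop:frac_domain} applies. Moreover $0 \in \rho(A)$ since $\mu > 0$, so by Remark \ref{rem:domains} the graph norm of $D(A) = W^{k+2,p}_{\Dir}(\RRdh, w_{\gam+kp};X)$ is equivalent to $\|A \cdot\|$, and similarly the graph norm of $D(A^{3/2})$ is equivalent to $\|A^{3/2}\cdot\| = \|A^{1/2}(A u)\|$.

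First I would establish the $\tfrac12$-identity. By Proposition \ref{prop:frac_domain} with $\alpha = 0$, $\beta = 1$, $\theta = \tfrac12$, we have $D(A^{1/2}) = [X_0, D(A)]_{1/2}$ where $X_0 = W^{k,p}(\RRdh, w_{\gam+kp};X)$ and $D(A) = W^{k+2,p}_{\Dir}(\RRdh, w_{\gam+kp};X)$. Now invoke Proposition \ref{prop:complex_int_W_Dir} (applicable since $X$ is $\UMD$, and writing $\gam+kp$ in the form required there: with the convention $\gam \in (-1, 2p-1)\setminus\{p-1\}$ one is in the range $((1-\lambda)p-1, 2p-1)\setminus\{p-1\}$ for $\lambda$ close to $1$, and the statement for $\RRdh$ holds with no domain-regularity constraint) to get $[X_0, W^{k+2,p}_{\Dir}]_{1/2} = W^{k+1,p}_{\Dir}(\RRdh, w_{\gam+kp};X)$. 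Combining gives the first identity.

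For the $\tfrac32$-identity I would use Proposition \ref{prop:frac_domain} with $\alpha = \tfrac12$, $\beta = \tfrac52$, $\theta = \tfrac12$, so that $D(A^{3/2}) = [D(A^{1/2}), D(A^{5/2})]_{1/2}$. By the first identity $D(A^{1/2}) = W^{k+1,p}_{\Dir}$, and applying the first identity one level up (replacing $k$ by $k+2$, which is legitimate since the hypotheses on $\gam$ in Definition \ref{def:delRRdh} and Theorem \ref{thm:LLRVthm1.1Dir} are the same for every $k$, noting $w_{\gam+kp} = w_{(\gam + 2p) + kp - 2p}$ — here one must be a little careful and instead argue directly that $D(A^{5/2}) = D(A \cdot A^{3/2}) $ consists of those $u \in D(A)$ with $Au \in D(A^{3/2})$) one identifies $D(A^{5/2})$. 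The cleaner route, which I would actually take, is: since $A : D(A^{3/2}) \to D(A^{1/2})$ is an isomorphism, $u \in D(A^{3/2})$ iff $u \in D(A) = W^{k+2,p}_{\Dir}$ and $Au = \mu u - \del u \in D(A^{1/2}) = W^{k+1,p}_{\Dir}$. Then Lemma \ref{lem:sect_Dir_ WdelDir} is exactly the statement that for $f \in W^{k+1,p}_{\del,\Dir}(\RRdh, w_{\gam+kp};X)$ the solution $u$ of $\mu u - \del u = f$ lies in $W^{k+3,p}_{\del,\Dir}$ with the corresponding estimate; since $W^{k+1,p}_{\del,\Dir} = W^{k+1,p}_{\Dir}$ for $k+1 - 1 \le (\gam+1)/p$ — and in general one checks the trace conditions match — this gives $D(A^{3/2}) = W^{k+3,p}_{\del,\Dir}(\RRdh, w_{\gam+kp};X)$ with equivalence of norms via Remark \ref{rem:domains} and the open mapping theorem.

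The main obstacle I expect is the bookkeeping of the boundary (trace) conditions: one must verify that the spaces $W^{k+1,p}_{\del,\Dir}$ and $W^{k+1,p}_{\Dir}$ (resp. $W^{k+3,p}_{\del,\Dir}$ inside $W^{k+3,p}$) carry exactly the traces that are forced by membership in $D(A^{1/2})$ resp. $D(A^{3/2})$, across the two regimes $\gam \in (-1,p-1)$ and $\gam \in (p-1, 2p-1)$ separately (as in the two-step structure of Lemma \ref{lem:sect_Dir_ WdelDir}), and that no spurious extra trace conditions are introduced by the interpolation identity. Once the trace conditions are matched, the norm equivalences are immediate from Remark \ref{rem:domains}, the bounded $\Hinf$-calculus, and the open mapping theorem, and the functional-calculus input (Propositions \ref{prop:frac_domain}, \ref{prop:complex_int_W_Dir}) does the rest.
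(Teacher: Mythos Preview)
Your proposal is correct and follows essentially the same route as the paper. For $D(A^{1/2})$ you use $\BIP$ (from the bounded $\Hinf$-calculus in Theorem~\ref{thm:LLRVthm1.1Dir}) together with Proposition~\ref{prop:frac_domain} and Proposition~\ref{prop:complex_int_W_Dir}, which is exactly the paper's argument. For $D(A^{3/2})$ you correctly abandon the $[D(A^{1/2}),D(A^{5/2})]_{1/2}$ idea in favour of the characterisation $D(A^{3/2})=\{u\in D(A): Au\in D(A^{1/2})\}$ (this is \cite[Theorem 15.2.5]{HNVW24} in the paper's proof) and then invoke Lemma~\ref{lem:sect_Dir_ WdelDir} for the converse inclusion---again precisely the paper's argument.

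One small correction: your condition ``$k+1-1\le(\gam+1)/p$'' for the identity $W^{k+1,p}_{\del,\Dir}(\RRdh,w_{\gam+kp};X)=W^{k+1,p}_{\Dir}(\RRdh,w_{\gam+kp};X)$ is not the right one. With weight $w_{\gam+kp}$ the $\del$-trace condition reads $\Tr(\del^j f)=0$ for $j<\tfrac12\big(1-\tfrac{\gam+1}{p}\big)$, which for $\gam\in(-1,2p-1)\setminus\{p-1\}$ involves only $j=0$ (and only when $\gam<p-1$); this matches exactly the Dirichlet trace condition in $W^{k+1,p}_{\Dir}$. So the two spaces coincide for \emph{all} $\gam$ in the stated range, and your hedged ``in general one checks the trace conditions match'' is indeed what settles it.
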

\begin{proof} We write $A_\Dir:=\mu-\delDir$. 
For $\gam\in (-1,2p-1)\setminus\{p-1\}$ it holds that $A_\Dir$ has $\BIP$ by Theorem \ref{thm:LLRVthm1.1Dir}, so Propositions \ref{prop:frac_domain} and \ref{prop:complex_int_W_Dir} imply
\begin{equation*}
  D(A_\Dir^{\half}) = [W^{k,p}(\RRdh, w_{\gam+kp};X), W^{k+2,p}_{\Dir}(\RRdh, w_{\gam+kp};X)]_{\half} = W^{k+1,p}_{\Dir}(\RRdh, w_{\gam+kp};X).
\end{equation*}
By \cite[Theorem 15.2.5]{HNVW24} and the characterisation of $D(A_\Dir^{\half})$ we find
\begin{equation}\label{eq:DA32}
  \begin{aligned}
  D(A_\Dir^{\frac{3}{2}}) & =\{u\in D(A_\Dir): A_\Dir u \in D(A_\Dir^{\half})\} \\
  &= \{u\in W^{k+2,p}_{\Dir}(\RRdh, w_{\gam+kp};X): A_\Dir u\in W^{k+1,p}_{\Dir}(\RRdh, w_{\gam+kp};X) \}.
\end{aligned}
\end{equation}
It is straightforward to check that the embedding $W^{k+3,p}_{\del, \Dir}(\RRdh, w_{\gam+kp};X)\hookrightarrow D(A_\Dir^{\frac{3}{2}})$ holds. The converse embedding follows from \eqref{eq:DA32} and Lemma \ref{lem:sect_Dir_ WdelDir}.
\end{proof}

As a consequence of Proposition \ref{prop:domain_char_RRdh}, we can characterise the fractional domains as complex interpolation spaces as well.
\begin{corollary}\label{cor:frac_domain_Dir}
  Let $p\in(1,\infty)$, $k\in\NN_0$, $k_0,k_1\in \{0,1,2,3\}$, $\theta\in(0,1)$ and let $X$ be a $\UMD$ Banach space. For $\mu>0$ and $\delDir$ on $ W^{k,p}(\RRdh, w_{\gam+kp};X)$ be as in Definition \ref{def:delRRdh}.
  \begin{enumerate}[(i)]
    \item If $\gam\in(-1,p-1)$, then
      \begin{equation*}
    D\big((\mu-\delDir)^{\frac{(1-\theta)k_0+\theta k_1}{2}}\big)=\big[W^{k+k_0,p}_{\del, \Dir}(\RRdh,w_{\gam+kp};X), W^{k+k_1,p}_{\del,\Dir}(\RRdh,w_{\gam+kp};X)\big]_{\theta}.
  \end{equation*}
    \item If $\gam\in(p-1,2p-1)$, then
      \begin{equation*}
    D\big((\mu-\delDir)^{\frac{(1-\theta)k_0+\theta k_1}{2}}\big)=\big[W^{k+k_0,p}_{\Dir}(\RRdh,w_{\gam+kp};X), W^{k+k_1,p}_{\Dir}(\RRdh,w_{\gam+kp};X)\big]_{\theta}.
  \end{equation*}
  \end{enumerate}
\end{corollary}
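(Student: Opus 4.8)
The plan is to build both statements entirely from Proposition \ref{prop:domain_char_RRdh}, Proposition \ref{prop:frac_domain}, and the reiteration theorem for complex interpolation, without revisiting any of the machinery on the half-space. Write $A_\Dir := \mu - \delDir$, which has $\BIP$ (hence satisfies the hypothesis of Proposition \ref{prop:frac_domain}) by Theorem \ref{thm:LLRVthm1.1Dir}. First I would record, for each of the two weight ranges, the explicit identification of the four reference spaces $D(A_\Dir^{k_i/2})$ for $k_i \in \{0,1,2,3\}$. For $\gam \in (p-1,2p-1)$ one has $W^{k,p}_{\del,\Dir} = W^{k,p}_{\Dir}$ at every relevant order since no trace of $\del^j(\cdot)$ or $\del^j\d_1(\cdot)$ is actually enforced in that range (the threshold $\tfrac12(k - \tfrac{\gam+1}{p})$ never forces $j \geq 1$ when $\gam > p-1$ and $k \leq 3$), so the relevant identities are $D(A_\Dir^0) = W^{k,p}$, $D(A_\Dir^{1/2}) = W^{k+1,p}_{\Dir}$, $D(A_\Dir) = W^{k+2,p}_{\Dir}$, $D(A_\Dir^{3/2}) = W^{k+3,p}_{\Dir}$, the middle two being Proposition \ref{prop:domain_char_RRdh} and the last one its reformulation in this weight range (as noted in the proof of that proposition, $D(A_\Dir^{3/2}) = W^{k+3,p}_{\del,\Dir} = W^{k+3,p}_{\Dir}$ here). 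For $\gam \in (-1,p-1)$ one keeps the $W^{k+3,p}_{\del,\Dir}$ space at order $3$ (now $\tfrac12(k+3 - \tfrac{\gam+1}{p})$ can exceed $1$, so the trace $\Tr(\d_1^2 u) = 0$ is genuinely imposed) but at orders $0,1,2$ the spaces $W^{k+j,p}_{\del,\Dir}$ again collapse to the ordinary (resp. zero-trace) spaces, matching $D(A_\Dir^0), D(A_\Dir^{1/2}), D(A_\Dir)$ exactly.

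With these identifications in hand, the core of the argument is a single application of Proposition \ref{prop:frac_domain}: since $A_\Dir$ has $\BIP$, for any $0 \leq \tfrac{k_0}{2} < \tfrac{k_1}{2}$ and $\theta \in (0,1)$,
\[
  [D(A_\Dir^{k_0/2}), D(A_\Dir^{k_1/2})]_{\theta} = D\big(A_\Dir^{(1-\theta)k_0/2 + \theta k_1/2}\big).
\]
Substituting the explicit descriptions of $D(A_\Dir^{k_i/2})$ from the previous step into the left-hand side, and using $D((\mu + A)^{\alpha}) = D(A^{\alpha})$ for the shift (cited right after Proposition \ref{prop:frac_domain}), yields precisely the asserted interpolation identities in cases (i) and (ii). The case $k_0 = k_1$ is trivial (both sides equal the same space), and the case $k_0 > k_1$ follows by the symmetry $[X,Y]_\theta = [Y,X]_{1-\theta}$, so there is no loss in assuming $k_0 < k_1$.

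I expect the only genuine subtlety — and the step I would be most careful with — is verifying that the four reference spaces $W^{k+k_i,p}_{\del,\Dir}$ (in case (i)) or $W^{k+k_i,p}_{\Dir}$ (in case (ii)) truly coincide with $D(A_\Dir^{k_i/2})$ for $k_i = 0$ and $k_i = 2$ as well, not just for the endpoint halves already covered by Proposition \ref{prop:domain_char_RRdh}; this amounts to checking the trace-threshold inequalities $\tfrac12(k+k_i - \tfrac{\gam+1}{p}) \leq 1$ for the appropriate $(k_i,\gam)$ and confirming that $D(A_\Dir) = W^{k+2,p}_{\Dir}$ is indeed the domain in Definition \ref{def:delRRdh}, which is immediate. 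Everything else is bookkeeping: once the four corners are pinned down, Proposition \ref{prop:frac_domain} does all the work, and no perturbation theory or half-space analysis enters.
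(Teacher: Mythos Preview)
Your approach is correct and is essentially the same as the paper's: identify $D(A_\Dir^{k_i/2})$ for $k_i\in\{0,1,2,3\}$ via Proposition~\ref{prop:domain_char_RRdh} (together with Definition~\ref{def:delRRdh} for $k_i=0,2$), then invoke Proposition~\ref{prop:frac_domain} using $\BIP$ from Theorem~\ref{thm:LLRVthm1.1Dir}. Two small slips to clean up: the trace threshold for $W^{k+k_i,p}_{\del,\Dir}(\RRdh,w_{\gam+kp};X)$ is $\tfrac12\big(k_i-\tfrac{\gam+1}{p}\big)$, not $\tfrac12\big(k+k_i-\tfrac{\gam+1}{p}\big)$ (the $k$ cancels against the weight exponent $\gam+kp$); and at order $k_i=3$ the additional condition is $\Tr(\Delta u)=0$, not $\Tr(\d_1^2 u)=0$.
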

\begin{proof}
   The fractional domains of the shifted Dirichlet Laplacian on $W^{k,p}(\R^d_+,w_{\gam+kp};X)$ form a complex interpolation scale by Proposition \ref{prop:frac_domain} and Theorem \ref{thm:LLRVthm1.1Dir}, so the statements are a direct consequence of Proposition \ref{prop:domain_char_RRdh}.
\end{proof}

We close this section about the Dirichlet Laplacian with a complex interpolation identification, which follows from reiteration and the work of {\v{S}}ne\u{\i}berg \cite{Sn73, Sn74} on the openness of the set of $\theta\in (0,1)$ for which a bounded operator $T\colon [X_0,X_1]_\theta \to [Y_0,Y_1]_\theta$ is invertible.
\begin{proposition}\label{prop:Sneiberg}
  Let $p\in (1,\infty)$, $k\in \NN_0$, $k_0\in \{0,1,2\}$, $\gam\in (p-1,2p-1)$ and let $X$ be a $\UMD$ Banach space. Then there exists an $\eps>0$ such that for all $\theta\in \big(0, \frac{2-k_0}{3-k_0}+\eps\big)$ we have
  \begin{align*}
    \big[W^{k+k_0,p}_{\Dir}(\RRdh&, w_{\gam+kp};X),W^{k+3,p}_{\Dir}(\RRdh, w_{\gam+kp};X) \big]_{\theta}\\&=\big[W^{k+k_0,p}_{0}(\RRdh, w_{\gam+kp};X), W^{k+3,p}_{0}(\RRdh, w_{\gam+kp};X)\big]_{\theta}.
  \end{align*}
\end{proposition}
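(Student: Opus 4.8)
The plan is to run a {\v{S}}ne\u{\i}berg stability argument on the identity map between the two interpolation scales, together with reiteration to reach the endpoint $\theta_0:=\tfrac{2-k_0}{3-k_0}$. Throughout, suppress the arguments $(\RRdh,w_{\gam+kp};X)$ from the notation of the weighted Sobolev spaces. Since $\gam\in(p-1,2p-1)$ forces $\tfrac{\gam+kp+1}{p}=k+\tfrac{\gam+1}{p}\in(k+1,k+2)$, the trace conditions cutting out the zero-trace spaces are inactive up to order $k+2$: one has $W^{k+k_0,p}_{0}=W^{k+k_0,p}_{\Dir}$ for $k_0\in\{0,1,2\}$ and $W^{k+2,p}_{0}=W^{k+2,p}_{\Dir}$, while $W^{k+3,p}_{0}$ is a proper closed subspace of $W^{k+3,p}_{\Dir}$. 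Hence the identity is a bounded morphism of couples $(W^{k+k_0,p}_{0},W^{k+3,p}_{0})\to(W^{k+k_0,p}_{\Dir},W^{k+3,p}_{\Dir})$, inducing bounded maps $T_\theta\colon[W^{k+k_0,p}_{0},W^{k+3,p}_{0}]_\theta\to[W^{k+k_0,p}_{\Dir},W^{k+3,p}_{\Dir}]_\theta$ for every $\theta\in[0,1]$, with $T_0$ the identity between equal spaces. The proposition asserts that $T_\theta$ is an isomorphism for all $\theta$ in a right-neighbourhood of $[0,\theta_0]$.

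First I would check that $G:=\{\theta\in[0,1):T_\theta\text{ is an isomorphism}\}$ is an interval. It contains $0$ since $T_0$ is an isomorphism, and it is relatively open in $[0,1)$ by {\v{S}}ne\u{\i}berg's theorem \cite{Sn73,Sn74}. For convexity, let $\theta_1<\theta_2$ lie in $G$ and $\theta=(1-\eta)\theta_1+\eta\theta_2$; since $\Cc^\infty(\RRdh;X)$ is dense in each $W^{m,p}_{0}$ and $\Cc^\infty_{\cDir}(\overline{\RRdh};X)$ is dense in each $W^{m,p}_{\Dir}$ by Proposition~\ref{prop:tracechar_RRdh}, reiteration for the complex method \cite[Theorem~4.6.1]{BL76} yields
\begin{align*}
[W^{k+k_0,p}_{0},W^{k+3,p}_{0}]_\theta
&=\bigl[[W^{k+k_0,p}_{0},W^{k+3,p}_{0}]_{\theta_1},\,[W^{k+k_0,p}_{0},W^{k+3,p}_{0}]_{\theta_2}\bigr]_\eta\\
&=\bigl[[W^{k+k_0,p}_{\Dir},W^{k+3,p}_{\Dir}]_{\theta_1},\,[W^{k+k_0,p}_{\Dir},W^{k+3,p}_{\Dir}]_{\theta_2}\bigr]_\eta
=[W^{k+k_0,p}_{\Dir},W^{k+3,p}_{\Dir}]_\theta,
\end{align*}
where the middle equality uses $\theta_1,\theta_2\in G$. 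Thus $\theta\in G$, so $G=[0,\Theta)$ for some $\Theta\in(0,1]$, and it remains to prove $\theta_0\in G$. For $k_0=2$ this is immediate since $\theta_0=0$; assume $k_0\in\{0,1\}$.

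Since $\tfrac{(1-\theta_0)k_0+3\theta_0}{2}=1$, Corollary~\ref{cor:frac_domain_Dir}(ii) (with $k_1=3$) identifies $[W^{k+k_0,p}_{\Dir},W^{k+3,p}_{\Dir}]_{\theta_0}=D((\mu-\delDir)^1)=D(\mu-\delDir)=W^{k+2,p}_{\Dir}=W^{k+2,p}_{0}$. As $[W^{k+k_0,p}_{0},W^{k+3,p}_{0}]_{\theta_0}\embed[W^{k+k_0,p}_{\Dir},W^{k+3,p}_{\Dir}]_{\theta_0}$ by functoriality of the complex method, it suffices to establish the reverse inclusion $W^{k+2,p}_{0}\embed[W^{k+k_0,p}_{0},W^{k+3,p}_{0}]_{\theta_0}$. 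Here I would use a bounded projection $P_0=\id-Q$ of $W^{k+3,p}$ onto $W^{k+3,p}_{0}$, where $Q$ is the composition of the boundary traces cutting out $W^{k+3,p}_{0}$ with an injective extension operator, as furnished by the trace and extension calculus of \cite{LLRV24,Ro25}. Such a $Q$ is bounded on $W^{m,p}$ for every $m\le k+3$ and vanishes on $\Cc^\infty(\RRdh;X)$, hence vanishes on each zero-trace space $W^{m,p}_{0}$ with $m\le k+3$ by density; that is, $P_0=\id$ on all these spaces. Now take $u\in W^{k+2,p}_{0}=W^{k+2,p}_{\Dir}=[W^{k+k_0,p}_{\Dir},W^{k+3,p}_{\Dir}]_{\theta_0}$. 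The operator $P_0$ maps the couple $(W^{k+k_0,p}_{\Dir},W^{k+3,p}_{\Dir})$ boundedly into $(W^{k+k_0,p}_{0},W^{k+3,p}_{0})$ (it is the identity on $W^{k+k_0,p}_{\Dir}=W^{k+k_0,p}_{0}$ and maps $W^{k+3,p}_{\Dir}\subseteq W^{k+3,p}$ boundedly into $W^{k+3,p}_{0}$), so $P_0u\in[W^{k+k_0,p}_{0},W^{k+3,p}_{0}]_{\theta_0}$; and $P_0u=u$ because $u\in W^{k+2,p}_{0}$. Hence $\theta_0\in G$, so $\Theta>\theta_0$, and the proposition follows with $\eps:=\Theta-\theta_0>0$.

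The delicate step is precisely the inclusion $W^{k+2,p}_{0}\embed[W^{k+k_0,p}_{0},W^{k+3,p}_{0}]_{\theta_0}$. For $\gam>p-1$ the space $W^{k+3,p}_{0}$ is strictly smaller than $W^{k+3,p}_{\Dir}$ and is not a fractional domain of $\delDir$ (nor of $\delNeu$), so it is out of reach of the resolvent and semigroup machinery underlying Corollary~\ref{cor:frac_domain_Dir}; the boundary projection $P_0$ is what bridges the gap, and verifying that it acts boundedly on the whole relevant Sobolev scale while restricting to the identity on the zero-trace subspaces — exactly the point where the trace and extension results of \cite{LLRV24,Ro25} are needed — is the technical core of the argument.
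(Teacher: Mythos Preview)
Your overall architecture—the identity as a bounded morphism of couples, {\v{S}}ne\u{\i}berg openness, and reiteration to propagate invertibility along the interval—matches the paper's proof. The gap is the projection $P_0$. For $P_0=\id-Q$ to be a morphism of the couple $(W^{k+k_0,p}_{\Dir},W^{k+3,p}_{\Dir})$ into $(W^{k+k_0,p}_{0},W^{k+3,p}_{0})$, it must be a single operator acting consistently on the intersection of the endpoints. But this couple is ordered: $W^{k+3,p}_{\Dir}\hookrightarrow W^{k+k_0,p}_{\Dir}$. Your claim that $Q$ vanishes on $W^{k+k_0,p}_{\Dir}=W^{k+k_0,p}_{0}$ therefore forces $Q=0$ on the subspace $W^{k+3,p}_{\Dir}$ as well, i.e.\ $P_0=\id$ there, contradicting the requirement that $P_0$ map $W^{k+3,p}_{\Dir}$ into its \emph{proper} subspace $W^{k+3,p}_{0}$. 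No such morphism of couples can exist. The underlying analytic obstruction is that the trace $\Tr(\partial_1 f)$ entering your $Q$ is not defined on $W^{k+k_0,p}$ for $k_0\le 2$: one would need $k_0-1>\tfrac{\gam+1}{p}>1$, so the assertion that ``$Q$ is bounded on $W^{m,p}$ for every $m\le k+3$'' already fails at the lower endpoint.

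The paper closes the argument at $\theta_0$ differently: it invokes \cite[Proposition~6.2]{Ro25}, which identifies the zero-trace scale as a complex interpolation scale and in particular gives $[W^{k,p},W^{k+3,p}_{0}]_{2/3}=W^{k+2,p}_{0}$ directly, then combines this with Corollary~\ref{cor:frac_domain_Dir} for the Dirichlet scale; the cases $k_0\in\{1,2\}$ are reduced to $k_0=0$ by a further reiteration, again using \cite[Proposition~6.2]{Ro25}. That external interpolation result is not proved via a retraction from the Dirichlet couple onto the zero-trace couple—the obstruction above shows such a retraction cannot exist—so your projection device cannot substitute for it.
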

\begin{proof} Let $\mu>0$ and define $A_\Dir:=\mu-\delDir$ on $W^{k,p}(\RRdh, w_{\gam+kp};X)$ as in Definition \ref{def:delRRdh}.
  First consider the case $k_0=0$ and $\theta = \frac23$, in which case we have by Corollary \ref{cor:frac_domain_Dir} and \cite[Proposition 6.2]{Ro25}
  \begin{equation}\label{eq:WDirW0interp}
      \begin{aligned}
     [W^{k,p}(\R^d_+&,w_{\gam+kp};X),W^{k+3,p}_{\Dir}(\R^d_+,w_{\gam+kp};X)]_{\frac23}=D(A_\Dir) \\&=
      W^{k+2,p}_{\Dir}(\R^d_+,w_{\gam+kp};X) = W_0^{k+2,p}(\R^d_+,w_{\gam+kp};X) \\&= [W^{k,p}(\R^d_+,w_{\gam+kp};X),W_0^{k+3,p}(\R^d_+,w_{\gam+kp};X)]_{\frac23}.
  \end{aligned}
  \end{equation}
   Next, for $\theta \in (0,\frac23)$, we set $\tilde{\theta} = \theta \cdot \frac32 \in (0,1)$. Then, by reiteration for the complex interpolation method (see \cite[Theorem 4.6.1]{BL76}) and \eqref{eq:WDirW0interp} we have
\begin{align*}
     [W^{k,p}(\R^d_+&,w_{\gam+kp};X),W^{k+3,p}_{\Dir}(\R^d_+,w_{\gam+kp};X)]_{\theta}\\ &= \bracb{W^{k,p}(\R^d_+,w_{\gam+kp};X) ,[W^{k,p}(\R^d_+,w_{\gam+kp};X),W^{k+3,p}_{\Dir}(\R^d_+,w_{\gam+kp};X)]_{\frac23}}_{\tilde{\theta}}\\
     &= \bracb{W^{k,p}(\R^d_+,w_{\gam+kp};X) ,[W^{k,p}(\R^d_+,w_{\gam+kp};X),W^{k+3,p}_{0}(\R^d_+,w_{\gam+kp};X)]_{\frac23}}_{\tilde{\theta}} \\&= [W^{k,p}(\R^d_+,w_{\gam+kp};X),W_0^{k+3,p}(\R^d_+,w_{\gam+kp};X)]_{\theta}.
\end{align*}
Note that the identity mapping is bounded on $W^{k,p}(\R^d_+,w_{\gam+kp};X)$ and $$\id:W^{k+3,p}_{0}(\R^d_+,w_{\gam+kp};X)\to W_{\Dir}^{k+3,p}(\R^d_+,w_{\gam+kp};X)\quad \text{is bounded.}$$
Moreover, we have proved that it is invertible as a mapping
\begin{align*}
\id:[W^{k,p}(\R^d_+&,w_{\gam+kp};X),W^{k+3,p}_{0}(\R^d_+,w_{\gam+kp};X)]_{\theta}\\
 &\to [W^{k,p}(\R^d_+,w_{\gam+kp};X),W^{k+3,p}_{\Dir}(\R^d_+,w_{\gam+kp};X)]_{\theta}
\end{align*}
for $\theta\in (0, \frac23]$. Since the collection of $\theta \in (0,1)$ for which this mapping is invertible is open (see \cite[Theorem 1.3.24]{Eg15}), the proposition in the case $k_0=0$ follows.

Finally, for $k_0 \in \cbrace{1,2}$, let $\varepsilon>0$ be such that the proposition holds for $k_0=0$ and fix $\theta\in\big(0,\frac{2-k_0}{3-k_0}+\varepsilon\big)$. Then we have
\begin{align*}
  (1-\theta)\tfrac{k_0}{3}+ \theta = \tfrac{k_0}{3} + (\tfrac{3-k_0}{3})\theta <\tfrac{k_0}{3} + \tfrac{2-k_0}{3} +\varepsilon =\tfrac23 +\varepsilon.
\end{align*}
Therefore, using \cite[Proposition 6.2]{Ro25}, reiteration for the complex interpolation method and the case $k_0=0$, we obtain
\begin{align*}
  [W^{k+k_0,p}_0(\R^d_+&,w_{\gam+kp};X),W_0^{k+3,p}(\R^d_+,w_{\gam+kp};X)]_{\theta} \\&= \bracb{[W^{k,p}_0(\R^d_+,w_{\gam+kp};X),W_0^{k+3,p}(\R^d_+,w_{\gam+kp};X)]_{\frac{k_0}{3}},W_0^{k+3,p}(\R^d_+,w_{\gam+kp};X)}
  _{\theta}\\
  &= [W^{k,p}_0(\R^d_+,w_{\gam+kp};X),W_0^{k+3,p}(\R^d_+,w_{\gam+kp};X)]_{(1-\theta)\frac{k_0}{3}+ \theta}\\
  &= [W^{k,p}_\Dir(\R^d_+,w_{\gam+kp};X),W^{k+3,p}_\Dir(\R^d_+,w_{\gam+kp};X)]_{(1-\theta)\frac{k_0}{3}+ \theta}.
\end{align*}
Using Corollary \ref{cor:frac_domain_Dir} two more times, we have
\begin{align*}
[W^{k,p}_{\Dir}(\R^d_+,w_{\gam+kp};X)&,W^{k+3,p}_\Dir(\R^d_+,w_{\gam+kp};X)]_{(1-\theta)\frac{k_0}{3}+ \theta} \\ &= D(A_\Dir^{(1-\theta)\frac{k_0}{2}+\frac32\theta})\\
&= [W_{\Dir}^{k+k_0,p}(\R^d_+,w_{\gam+kp};X),W^{k+3,p}_{\Dir}(\R^d_+,w_{\gam+kp};X)]_{\theta},
\end{align*}
proving the proposition.
\end{proof}

\begin{remark}\label{rem:interpolation}
  We conjecture that, e.g., in the case $k=k_0=0$, there is actually the equality of complex interpolation spaces
\begin{equation}\label{eq:rmk:lem:pert_half-space_g-sectorial:int}
[L^p(\R^d_+,w_\gam;X),W^{3,p}_{\Dir}(\R^d_+,w_\gam;X)]_{\theta} = [L^p(\R^d_+,w_\gam;X),W_0^{3,p}(\R^d_+,w_\gam;X)]_{\theta}
\end{equation}
for all $\theta \in \big(0, \frac{1}{3}(1+\frac{\gam+1}{p})\big)$, which is suggested by results on interpolation with boundary conditions as studied in \cite{LMV17, Ro25}.
However, at the moment, the case $\gam \in (p-1,2p-1)$ of \eqref{eq:rmk:lem:pert_half-space_g-sectorial:int} for the parameter range $ \theta \in\big(\tfrac{2}{3} +\varepsilon, \tfrac{1}{3}\hab{1+\tfrac{\gam+1}{p}}\big)$ is an interesting open problem that seems to require a novel approach to interpolation with boundary conditions.
\end{remark}

\subsection{Fractional domains for the Neumann Laplacian}\label{sec:frac_dom_RRdh_Neu}
Similar to the Dirichlet Laplacian above, we now characterise fractional domains for the Neumann Laplacian. The proofs are similar to those in Section \ref{sec:frac_dom_RRdh_Dir}, but for the convenience of the reader, we provide the details.
\begin{lemma}\label{lem:sect_Dir_ WdelNeu}
  Let $p\in(1,\infty)$, $k\in\NN_0\cup\{-1\}$, $\gam\in(-1,2p-1)\setminus\{p-1\}$ such that $\gam+kp>-1$, $\mu>0$ and let $X$ be a $\UMD$ Banach space. Then for all $f\in W^{k+2,p}_{\del, \Neu}(\RRdh, w_{\gam+kp};X)$ there exists a unique $u\in W^{k+4,p}_{\del,\Neu}(\RRdh, w_{\gam+kp};X)$ such that $\mu u -\delNeu u = f$. Moreover, this solution satisfies
  \begin{equation*}
    \|u\|_{W^{k+4,p}(\RRdh, w_{\gam+kp};X)}\leq C\|f\|_{W^{k+2,p}(\RRdh, w_{\gam+kp};X)},
  \end{equation*}
  where the constant $C>0$ only depends on $p,k, \gam, \mu, d$ and $X$.
\end{lemma}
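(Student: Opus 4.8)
The strategy mirrors the proof of Lemma~\ref{lem:sect_Dir_ WdelDir}, reducing the higher-order regularity estimate to the known $\Hinf$-calculus (hence elliptic regularity) for the Neumann Laplacian on $\RRdh$ from Theorem~\ref{thm:LLRVthm1.2Neu}, combined with the boundary-condition-respecting spaces $W^{k,p}_{\del,\Neu}$. The key point is that differentiating the resolvent equation $\mu u - \del u = f$ produces again a resolvent equation, but the boundary conditions transform according to which variable one differentiates in: $\d_1$ turns a Neumann condition $\Tr(\d_1 u)=0$ into a Dirichlet-type condition on $\d_1^2 u$, while $\d_j$ for $j\ge 2$ preserves the Neumann structure. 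One must keep track of how the trace conditions built into $W^{k+2,p}_{\del,\Neu}$ and $W^{k+4,p}_{\del,\Neu}$ match up with these differentiated equations.

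\emph{Step 1: the case $\gam\in(-1,p-1)$.} Here $W^{k+2,p}_{\del,\Neu}(\RRdh,w_{\gam+kp};X)$ coincides with $W^{k+2,p}_{\Neu}(\RRdh,w_{\gam+kp};X)$ or with a space with more vanishing traces depending on $k$, and $\Cc^\infty_{\cNeu}(\overline{\RRdh};X)$ is a dense subspace by Proposition~\ref{prop:tracechar_RRdh}. For $f$ in this dense class, I would first produce a Schwartz-class solution $u\in\SS(\RRdh;X)$ of $\mu u - \del u = f$ satisfying $(\d_1 u)(0,\cdot)=0$ and $(\d_1\del u)(0,\cdot)=0$: take the \emph{even} extension $f_{\even}(x)=f(|x_1|,\tilde x)$ of $f$ (in analogy with the odd extension used in Lemma~\ref{lem:sect_Dir_ WdelDir}), solve $\mu\overline u - \del\overline u = f_{\even}$ on $\RRd$ by \cite[Lemma 5.3]{LLRV24}, note $\overline u$ and $\del\overline u$ are even, hence their $\d_1$-derivatives vanish on $\{x_1=0\}$, and set $u:=\overline u|_{\RRdh}$; uniqueness follows from \cite[Corollary 4.3]{LV18}. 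Then set $v_0 := u$ and $v_j := \d_j u$ for $j\in\{1,\dots,d\}$, which solve
\begin{equation*}
  \mu v_0 - \del v_0 = f, \qquad \mu v_1 - \del v_1 = \d_1 f, \qquad \mu v_j - \del v_j = \d_j f \ \ (j\ge 2),
\end{equation*}
with boundary conditions $(\d_1 v_0)(0,\cdot)=0$, $v_1(0,\cdot) = (\d_1 u)(0,\cdot)=0$, and $(\d_1 v_j)(0,\cdot)=0$ for $j\ge 2$. Thus $v_0, v_2,\dots,v_d$ solve Neumann problems with data in $W^{k+1,p}_{\Neu}$-type spaces (one checks $\d_j f$ retains the required vanishing traces since $f\in W^{k+2,p}_{\del,\Neu}$), while $v_1$ solves a Dirichlet problem with data $\d_1 f$. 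Applying the elliptic estimates of \cite[Propositions 5.4 \& 5.6]{LLRV24} (Neumann case) and the Dirichlet analogue to each $v_j$ yields
\begin{equation*}
  \|v_j\|_{W^{k+3,p}(\RRdh, w_{\gam+kp};X)} \leq C\|f\|_{W^{k+2,p}(\RRdh, w_{\gam+kp};X)},
\end{equation*}
and summing over $j$ gives $\|u\|_{W^{k+4,p}} \lesssim \|f\|_{W^{k+2,p}}$; the vanishing traces defining $W^{k+4,p}_{\del,\Neu}$ are then checked directly from $(\d_1 u)(0,\cdot)=0$, $(\d_1\del u)(0,\cdot)=0$ and the equation. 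A density argument as in \cite[Proposition 5.4]{LLRV24} extends this to all $f\in W^{k+2,p}_{\del,\Neu}$, and uniqueness passes to the limit via the embedding $W^{k+4,p}_{\del,\Neu}\hookrightarrow W^{k+2,p}_{\Neu} = D(\mu-\delNeu)$.

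\emph{Step 2: the case $\gam\in(p-1,2p-1)$.} Here Hardy's inequality (Corollary~\ref{cor:Sob_embRRdh}, the unweighted-trace branch $\gam>p-1$) lets one absorb two derivatives into a shift of the weight exponent by $-p$: one has the identifications $W^{k+2,p}_{\del,\Neu}(\RRdh,w_{\gam+kp};X) = W^{k+2,p}_{\Neu}(\RRdh, w_{(\gam-p)+(k+1)p};X)$ and $W^{k+4,p}_{\del,\Neu}(\RRdh,w_{\gam+kp};X) = W^{(k+1)+2+1,p}_{\Neu}(\RRdh, w_{(\gam-p)+(k+1)p};X)$, with $\gam-p\in(-1,p-1)$, so that the statement reduces either to Step~1 (with $k$ replaced by $k+1$ and $j=1$) or directly to Theorem~\ref{thm:LLRVthm1.2Neu} together with the elliptic estimate of \cite[Proposition 5.4]{LLRV24}. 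The bookkeeping of which trace conditions survive the reindexing is routine but must be done carefully.

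\emph{Main obstacle.} The delicate point is Step~1: correctly identifying, for each derivative $v_j = \d_j u$, exactly which boundary condition is inherited and hence which of the (Dirichlet- or Neumann-type) elliptic estimates from \cite{LLRV24} applies, and verifying that the differentiated data $\d_j f$ still lies in the space with the vanishing traces required by those estimates. In particular one needs $f\in W^{k+2,p}_{\del,\Neu}$ (rather than merely $W^{k+2,p}_{\Neu}$) precisely so that $\d_j f$ has enough vanishing traces; tracking the threshold inequalities $j<\tfrac12(k+2-1-\tfrac{\gam+1}{p})$ that define these spaces, and checking they are preserved under the relevant differentiation and reindexing, is where the argument requires genuine care. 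The even-extension construction and the resulting regularity of $\overline u$ at $\{x_1=0\}$ is the second technical ingredient, entirely parallel to the odd-extension argument in Lemma~\ref{lem:sect_Dir_ WdelDir} but with the parity reversed.
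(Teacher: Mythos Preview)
Your strategy matches the paper's: even extension, differentiate once, and bound each $v_j=\d_j u$ via known elliptic estimates. Two points deserve correction. First, the paper treats the cases in the opposite order: it proves $\gamma\in(p-1,2p-1)$ first (Step~1), because there $W^{k+2,p}_{\del,\Neu}(\RRdh,w_{\gam+kp};X)=W^{k+2,p}(\RRdh,w_{(\gam-p)+(k+1)p};X)$ and $W^{k+4,p}_{\del,\Neu}=W^{k+4,p}_{\Neu}$, so the statement is \emph{exactly} Theorem~\ref{thm:LLRVthm1.2Neu} with $j=1$, $k$ replaced by $k+1$, $\gamma$ replaced by $\gamma-p$. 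This established case is then invoked in Step~2 ($\gamma\in(-1,p-1)$) for the Neumann-type components $v_0,v_2,\dots,v_d$ via the shift $(k,\gamma)\mapsto(k-1,\gamma+p)$. Your direct appeal to \cite[Proposition~5.6]{LLRV24} for these $v_j$'s is fine (it covers the $j=1$ case), so the reordering is cosmetic. Second, and more substantively: for $v_1$ you need $\|v_1\|_{W^{k+3,p}(\RRdh,w_{\gam+kp};X)}\lesssim\|\d_1 f\|_{W^{k+1,p}(\RRdh,w_{\gam+kp};X)}$ with $\gamma\in(-1,p-1)$. This is \emph{not} \cite[Proposition~5.4]{LLRV24} --- the basic Dirichlet result gives $W^{k,p}\to W^{k+2,p}_\Dir$ with weight $w_{\gam+kp}$, and reindexing to hit $W^{k+1,p}\to W^{k+3,p}$ would force the weight to $w_{\gam+(k+1)p}$. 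The correct reference is Lemma~\ref{lem:sect_Dir_ WdelDir} itself (already proved), which the paper cites explicitly, using that $(\d_1 f)|_{\d\RRdh}=0$. Finally, the dense class the paper uses is $C^\infty_{{\rm c},1}(\overline{\RRdh};X)=\{f\in\Cc^\infty(\overline{\RRdh};X):\d_1 f\in\Cc^\infty(\RRdh;X)\}$ (density from \cite[Proposition~4.9]{Ro25}), not $\Cc^\infty_{\cNeu}$; the stronger vanishing is needed so the even extension is genuinely Schwartz.
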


\begin{proof}
  \textit{Step 1: the case $\gam\in(p-1,2p-1)$ and $k\geq -1$.} Note that for $\gam\in(p-1, 2p-1)$ we have 
$$W^{k+2,p}_{\del, \Neu}(\RRdh, w_{\gam+kp};X)= W^{k+2,p}(\RRdh, w_{\gam-p+(k+1)p};X).$$ Since $\gam-p\in (-1,p-1)$ and
$$ W^{k+4,p}_{\del,\Neu}(\RRdh, w_{\gam+kp};X)= W^{(k+2)+2,p}_{\Neu}(\RRdh, w_{\gam-p+(k+1)p};X),$$
the result follows from Theorem \ref{thm:LLRVthm1.2Neu} (see also \cite[Proposition 5.6]{LLRV24}).

  \textit{Step 2: the case $\gam\in(-1,p-1)$ and $k\geq 0$.} Note that for $\gam\in(-1,p-1)$ we have 
  \begin{equation*}
    W^{k+2,p}_{\del, \Neu}(\RRdh, w_{\gam+kp};X) = W^{k+2,p}_{\Neu}(\RRdh, w_{\gam+kp};X),
  \end{equation*}
  which has 
  \begin{equation*}
      C^{\infty}_{{\rm c},1}(\overline{\RRdh};X):=\{f\in \Cc^{\infty}(\overline{\RRdh};X): \d_1 f\in \Cc^{\infty}(\RRdh;X)\}
  \end{equation*}
  as a dense subspace, see \cite[Proposition 4.9]{Ro25}. For $f\in C^{\infty}_{{\rm c},1}(\overline{\RRdh};X)$ there exists a unique solution $u\in \SS(\RRdh;X)$ to $\mu u -\delNeu u =f$ on $\RRdh$ that satisfies $(\d_1 u)(0,\cdot)=(\del \d_1 u)(0,\cdot)=0$. This can be proved similarly as in Lemma \ref{lem:sect_Dir_ WdelDir} now using an even extension (cf. \cite[Lemma 5.5]{LLRV24}).
  
   Take $f\in C^{\infty}_{{\rm c},1}(\overline{\RRdh};X)$ and let $u\in \SS(\RRdh;X)$ be the solution to $\mu u -\delNeu u =f$ as above. We define $v_0:= u$ and $v_j:=\d_j u$ for $j\in\{1,\dots, d\}$. These functions satisfy the estimates
     \begin{equation*}
    \begin{split}
    \mu v_0 - \del v_0 &= f\\
       \mu v_1 -\del v_1 &= \d_1 f \\
        \mu v_j -\del v_j &=\d_j f 
    \end{split}
    \qquad 
    \begin{split}
    (\d_1v_0)(0,\cdot)&=0,\\
       v_1(0,\cdot)&=0,\\
       (\d_1v_j)(0,\cdot)&=0,\quad j\in\{2,\dots, d\}.
    \end{split}
  \end{equation*}
  If $j=1$, then by Lemma \ref{lem:sect_Dir_ WdelDir} (using that $(\d_1 f)|_{\d\RRdh}=0$) we have the estimate
  \begin{equation}\label{eq:sect_est_Neu1}
    \|v_1\|_{W^{k+3,p}(\RRdh, w_{\gam+kp};X)}\leq C\|\d_1 f\|_{W^{k+1,p}(\RRdh, w_{\gam+kp};X)}.
  \end{equation}
  If $j\in \{2,\dots, d\}$, then applying Step 1 with $k-1$ and $\gam+p\in (p-1,2p-1)$, yields
  \begin{equation}\label{eq:sect_est_Neu2}
  \begin{aligned}
  \|v_j\|_{W^{k+3,p}(\RRdh, w_{\gam+kp};X)}&= \|v_j\|_{W^{(k-1)+4,p}(\RRdh, w_{\gam+p+(k-1)p};X)}\\
  &\leq C\|\d_j f\|_{W^{k+1,p}(\RRdh, w_{\gam+kp};X)},
  \end{aligned}
  \end{equation} 
  and similarly for $j=0$ we obtain
  \begin{equation}\label{eq:sect_est_Neu3}
    \|v_j\|_{W^{k+3,p}(\RRdh, w_{\gam+kp};X)}\leq  C \|f\|_{W^{k+1,p}(\RRdh, w_{\gam+kp};X)}.
  \end{equation}
  The estimates \eqref{eq:sect_est_Neu1}, \eqref{eq:sect_est_Neu2} and \eqref{eq:sect_est_Neu3} imply that
  \begin{align*}
    \|u\|_{W^{k+4,p}(\RRdh, w_{\gam+kp};X)} & \eqsim \sum_{j=0}^d\|v_j\|_{W^{k+3,p}(\RRdh, w_{\gam+kp};X)} \\
     & \lesssim\|f\|_{W^{k+1,p}(\RRdh, w_{\gam+kp};X)} + \sum_{j=1}^d \|\d_j f\|_{W^{k+1,p}(\RRdh, w_{\gam+kp};X)} \\ &\lesssim \|f\|_{W^{k+2,p}(\RRdh, w_{\gam+kp};X)},
  \end{align*}
  where the constant only depends on $p,k, \gam, \mu, d$ and $X$.
  A density argument, similar to the proof of \cite[Proposition 5.4]{LLRV24}, yields the result. Note that the uniqueness of $u\in W^{k+4,p}_{\del, \Neu}(\RRdh, w_{\gam+kp};X)\hookrightarrow W^{k+3,p}_{\Neu}(\RRdh, w_{\gam+kp};X)$ follows from \cite[Proposition 5.6]{LLRV24}.
\end{proof}

We continue with the characterisation of fractional domains of the Neumann Laplacian.
\begin{proposition}\label{prop:domain_char_RRdh_Neu}
   Let $p\in(1,\infty)$, $k\in\NN_0\cup\{-1\}$, $\gam\in(-1,2p-1)\setminus\{p-1\}$ such that $\gam+kp>-1$, $\mu>0$ and let $X$ be a $\UMD$ Banach space.  Let $\delNeu$ on $ W^{k+1,p}(\RRdh, w_{\gam+kp};X)$ as in Definition \ref{def:delRRdh}. Then
   \begin{align*}
     D\big((\mu-\delNeu)^{\half}\big)&= W^{k+2,p}_{\Neu}(\RRdh, w_{\gam+kp};X),\\ D\big((\mu-\delNeu)^{\frac{3}{2}}\big)&=W^{k+4,p}_{\del, \Neu}(\RRdh, w_{\gam+kp};X).
   \end{align*}
\end{proposition}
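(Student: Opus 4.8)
The plan is to follow the proof of Proposition~\ref{prop:domain_char_RRdh} line by line, substituting the Neumann ingredients for the Dirichlet ones. Write $A_\Neu := \mu - \delNeu$ acting on $W^{k+1,p}(\RRdh, w_{\gam+kp};X)$, so that $D(A_\Neu) = W^{k+3,p}_\Neu(\RRdh, w_{\gam+kp};X)$. First I would record that $A_\Neu$ has $\BIP$: this is Theorem~\ref{thm:LLRVthm1.2Neu}, applied directly when $\gam \in (-1,p-1)$ and, when $\gam \in (p-1,2p-1)$, after the reparametrisation $\gam \mapsto \gam - p$, $k \mapsto k+1$ recorded in Definition~\ref{def:delRRdh}\ref{it:def:delRRdh2}. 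Combining $\BIP$ with Proposition~\ref{prop:frac_domain} then gives $D(A_\Neu^{1/2}) = [W^{k+1,p}(\RRdh, w_{\gam+kp};X), W^{k+3,p}_\Neu(\RRdh, w_{\gam+kp};X)]_{1/2}$, and the right-hand side equals $W^{k+2,p}_\Neu(\RRdh, w_{\gam+kp};X)$ by Proposition~\ref{prop:complex_int_W_Neu} (with $j=1$ when $\gam \in (-1,p-1)$, and with $j=0$ after the same reparametrisation when $\gam \in (p-1,2p-1)$; on $\RRdh$ the smoothness restriction on $\OO$ is vacuous). This yields the first displayed identity.

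For the $3/2$-power I would combine \cite[Theorem 15.2.5]{HNVW24} with the description of $D(A_\Neu^{1/2})$ to see that $D(A_\Neu^{3/2})$ consists of those $u \in W^{k+3,p}_\Neu(\RRdh, w_{\gam+kp};X)$ for which $A_\Neu u \in W^{k+2,p}_\Neu(\RRdh, w_{\gam+kp};X)$. The inclusion $W^{k+4,p}_{\del,\Neu}(\RRdh, w_{\gam+kp};X) \hookrightarrow D(A_\Neu^{3/2})$ should follow directly by tracking which boundary traces survive in $\mu u - \del u$ and in its normal derivative $\mu\,\d_1 u - \del(\d_1 u)$. For the converse inclusion I would first observe that, for the weight exponent $\gam+kp$, the defining trace conditions of $W^{k+2,p}_{\del,\Neu}$ and of $W^{k+2,p}_\Neu$ coincide (the bound $j < \tfrac12(1-\tfrac{\gam+1}{p})$ admits only $j=0$ when $\gam<p-1$, and no condition at all when $\gam>p-1$), so that $f := A_\Neu u$ lies in $W^{k+2,p}_{\del,\Neu}(\RRdh, w_{\gam+kp};X)$ for every $u \in D(A_\Neu^{3/2})$. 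Then Lemma~\ref{lem:sect_Dir_ WdelNeu} provides the unique $\tilde u \in W^{k+4,p}_{\del,\Neu}(\RRdh, w_{\gam+kp};X)$ solving $\mu\tilde u - \delNeu\tilde u = f$, and uniqueness of solutions in $D(A_\Neu)$ --- a consequence of the sectoriality of $A_\Neu$ in Theorem~\ref{thm:LLRVthm1.2Neu} --- forces $u = \tilde u$, which gives the second displayed identity.

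All the genuine content sits in Lemma~\ref{lem:sect_Dir_ WdelNeu} (elliptic regularity for the shifted Neumann Laplacian on the spaces carrying extra $\del$-traces) and in the complex interpolation identity of Proposition~\ref{prop:complex_int_W_Neu}; both have already been established. I therefore do not expect a serious obstacle in the proof itself. The only points that need care are keeping the two equivalent parametrisations of $\delNeu$ consistent throughout the case distinction $\gam<p-1$ versus $\gam>p-1$, and the small trace bookkeeping that underlies both the identity $W^{k+2,p}_{\del,\Neu} = W^{k+2,p}_\Neu$ (for the relevant weights) and the embedding $W^{k+4,p}_{\del,\Neu} \hookrightarrow D(A_\Neu^{3/2})$.
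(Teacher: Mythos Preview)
Your proposal is correct and follows essentially the same approach as the paper's own proof: BIP from Theorem~\ref{thm:LLRVthm1.2Neu} combined with Propositions~\ref{prop:frac_domain} and~\ref{prop:complex_int_W_Neu} for the $\tfrac12$-power, then \cite[Theorem 15.2.5]{HNVW24} plus Lemma~\ref{lem:sect_Dir_ WdelNeu} for the $\tfrac32$-power. You add a bit more explicit detail on the reparametrisation for $\gam>p-1$ and the trace bookkeeping identifying $W^{k+2,p}_{\del,\Neu}$ with $W^{k+2,p}_\Neu$, which the paper leaves implicit.
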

\begin{proof} We write $A_\Neu:=\mu-\delNeu$. 
For $\gam\in (-1,2p-1)\setminus\{p-1\}$ it holds that $A_\Neu$ has $\BIP$ by Theorem \ref{thm:LLRVthm1.2Neu}, so Propositions \ref{prop:frac_domain} and \ref{prop:complex_int_W_Neu} imply
\begin{equation*}
  D(A_\Neu^{\half}) = [W^{k+1,p}(\RRdh, w_{\gam+kp};X), W^{k+3,p}_{\Neu}(\RRdh, w_{\gam+kp};X)]_{\half} = W^{k+2,p}_{\Neu}(\RRdh, w_{\gam+kp};X).
\end{equation*}
By \cite[Theorem 15.2.5]{HNVW24} and the characterisation of $D(A_\Neu^{\half})$ we find
\begin{equation*}
  \begin{aligned}
  D(A_\Neu^{\frac{3}{2}}) & =\{u\in D(A_\Neu): A_\Neu u \in D(A_\Neu^{\half})\} \\
  &= \{u\in W^{k+3,p}_{\Neu}(\RRdh, w_{\gam+kp};X): A_\Neu u\in W^{k+2,p}_{\Neu}(\RRdh, w_{\gam+kp};X) \}.
\end{aligned}
\end{equation*}
From this, the embedding $W^{k+4,p}_{\del, \Neu}(\RRdh, w_{\gam+kp};X)\hookrightarrow D(A_\Neu^{\frac{3}{2}})$ is straightforward and the converse embedding follows from Lemma \ref{lem:sect_Dir_ WdelNeu}.
\end{proof}

In contrast to the Dirichlet case, we do not need a version of Proposition \ref{prop:Sneiberg} for the Neumann Laplacian. This is simply due to the fact that we cannot consider the Neumann Laplacian on $W^{k,p}(\RRdh, w_{\gam+kp};X)$ with $\gam>p-1$, see Theorem \ref{thm:LLRVthm1.2Neu}.

\section{Functional calculus for the Laplacian on special domains}\label{sec:calc_spec_dom}
To derive the $\Hinf$-calculus for the Dirichlet and Neumann Laplacian on bounded domains, we will proceed in two steps:
\begin{enumerate}
  \item\label{it:Step1} Use the $\Hinf$-calculus for the Laplacian on the half-space (Theorems \ref{thm:LLRVthm1.1Dir} and \ref{thm:LLRVthm1.2Neu}) and known perturbation theorems  for the $\Hinf$-calculus (Section \ref{sec:prelim_func_calc}) to obtain the $\Hinf$-calculus for the Laplacian on special domains of the form $\OO:= \{x\in \RR^d: x_1>h(\tilde{x})\}$ for a compactly supported function $h$ on $\RR^{d-1}$ (see Definition \ref{def:domains}).
  \item\label{it:Step2} Perform a localisation procedure to transfer the $\Hinf$-calculus for the Laplacian on special domains to bounded domains.
\end{enumerate}
In this section, we will perform Step \ref{it:Step1}, while Step \ref{it:Step2} is postponed to Section \ref{sec:calc_dom}. 
While localisation procedures are standard in the literature (see, e.g., \cite{DHP03, Ev10, KrBook08}), the low regularity of the domains considered here leads to perturbation terms that, in some cases, are of the same order as the Laplacian. Therefore, we employ a localisation procedure that is different from the standard procedure as in the aforementioned literature. This leads to a far-reaching generalisation of the results in \cite[Theorem 6.1]{LV18} where exclusively bounded $C^2$-domains are considered for only the $L^p$-case (i.e., $k=0$).\\

We begin by defining the Laplacian on special domains. Recall that weighted Sobolev spaces on special domains with vanishing boundary conditions are defined in Definition \ref{def:spaces_special}.
\begin{definition}\label{def:delDirOspecial}
  Let $p\in(1,\infty)$, $k\in\NN_0$, $\lambda\in[0,1]$ and let $X$ be a $\UMD$ Banach space. 
   \begin{enumerate}[(i)]
    \item\label{it:def:delO1special} Let $\gam\in ((1-\lambda)p-1, 2p-1)\setminus\{p-1\}$ and $\OO$ a special $\Cc^{1,\lambda}$-domain with $[\OO]_{C^{1,\lambda}}\leq 1$.     
    The \emph{Dirichlet Laplacian $\delDir$ on $W^{k,p}(\OO,w^{\d\OO}_{\gam+kp};X)$} is defined by
  \begin{equation*}
    \delDir u := \del u\quad \text{ with }\quad D(\delDir):=W^{k+2,p}_{\Dir}(\OO, w^{\d\OO}_{\gam+kp};X).
  \end{equation*}
    \item\label{it:def:delO2special} Let $\gam\in ((1-\lambda)p-1, p-1)$, $j\in\{0,1\}$ and $\OO$ a special $\Cc^{j+1,\lambda}$-domain with $[\OO]_{C^{j+1,\lambda}}\leq \Lambda$, where $\Lambda\in(0,1)$ is as in Lemma \ref{lem:loc_normal}. The \emph{Neumann Laplacian $\delNeu$ on $W^{k+j,p}(\OO,w^{\d\OO}_{\gam+kp};X)$} is defined by
  \begin{equation*}
    \delNeu u := \del u\quad \text{ with }\quad D(\delNeu):=W^{k+j+2,p}_{\Neu}(\OO, w^{\d\OO}_{\gam+kp};X).
  \end{equation*}
  Moreover, the Dirichlet and Neumann Laplacian on $\RRdh$ as in Definition \ref{def:delRRdh} will be denoted by $\delDir^{\RRdh}$ and $\delNeu^{\RRdh}$, respectively.
  \end{enumerate}
\end{definition}

The main results from this section on the $\Hinf$-calculus for the Laplacian on special domains are summarised in the following two theorems.

\begin{theorem}[$\Hinf$-calculus for $\mu-\delDir$ on special domains]\label{thm:Dirichlet_Laplacian_special}
Let $p\in(1,\infty)$, $k\in\NN_0$, $\lambda\in[0,1]$, $\gam\in((1-\lambda)p-1, 2p-1)\setminus\{p-1\}$, $\mu>0$ and let $X$ be a $\UMD$ Banach space. Moreover, assume that $\OO$ is a special $\Cc^{1,\lambda}$-domain. Then there exists a $\delta\in(0,1)$ such that if $[\OO]_{C^{1,\lambda}}<\delta$, then $\mu-\DD$ on $W^{k,p}(\Dom,w^{\BDom}_{\gamma+kp};X)$ as in Definition \ref{def:delDirOspecial} has a bounded $H^\infty$-calculus with $\omega_{H^\infty}(\mu-\DD) =0$.
\end{theorem}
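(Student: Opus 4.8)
The plan is to transfer the bounded $\Hinf$-calculus of the Dirichlet Laplacian on $\RRdh$ (Theorem \ref{thm:LLRVthm1.1Dir}) to the special domain $\OO$ through the Dahlberg--Kenig--Stein pullback $\Psi\colon\OO\to\RRdh$ of Lemma \ref{lem:localization_weighted_blow-up}, by regarding the resulting variable-coefficient operator on $\RRdh$ as a perturbation of $-\del$ and invoking the perturbation results of Section \ref{subsec:calc_pert}. Fix $\mu_0>0$ and put $A:=\mu_0-\delDir^{\RRdh}$ on $W^{k,p}(\RRdh,w_{\gam+kp};X)$; by Theorem \ref{thm:LLRVthm1.1Dir} the operator $A$ is sectorial of angle $0$, has a bounded $\Hinf$-calculus of angle $0$, and, since $\mu_0>0$, it is boundedly invertible (Remark \ref{rem:domains} gives $\|Au\|\eqsim\|u\|_{W^{k+2,p}}$), so $0\in\rho(A)$. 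By Proposition \ref{prop:isomorphisms} (whose hypothesis $\gam+kp>((k+2)-(1+\lambda))_+p-1$ is exactly $\gam>(1-\lambda)p-1$, i.e.\ our standing assumption) together with Proposition \ref{prop:tracechar_dom}, the change of coordinates $\Psi_*$ is an isomorphism $W^{k,p}(\OO,w^{\d\OO}_{\gam+kp};X)\to W^{k,p}(\RRdh,w_{\gam+kp};X)$ which restricts to an isomorphism $W^{k+2,p}_{\Dir}(\OO,w^{\d\OO}_{\gam+kp};X)\to W^{k+2,p}_{\Dir}(\RRdh,w_{\gam+kp};X)=D(A)$. Hence $\mu-\delDir$ on $W^{k,p}(\OO,w^{\d\OO}_{\gam+kp};X)$ is similar, via $\Psi_*$, to $\mu-\mathcal{L}$ on $W^{k,p}(\RRdh,w_{\gam+kp};X)$, where $\mathcal{L}:=\Psi_*\del\Psi_*^{-1}$, and it suffices to treat the latter.

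Next I would compute $\mathcal{L}$ by the chain rule, exactly as in the (second-order truncation of the) Fa\`a di Bruno computation in the proof of Proposition \ref{prop:isomorphisms}: writing $\Psi_1=x_1-h_1$ and $\Psi_m=x_m$ for $m\ge2$, one gets $\mathcal{L}=\del+B$ with $Bu=\sum_{m,n}(a_{mn}-\delta_{mn})\partial_m\partial_n u+\sum_m b_m\partial_m u$, where $a_{mn}-\delta_{mn}$ only involves $\partial h_1$ and $(\partial h_1)^2$, and $b_m$ only involves $\del h_1$ (indeed $b_m=0$ for $m\ge 2$ and $b_1=-(\del h_1)\circ\Psi^{-1}$). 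Combining Lemma \ref{lem:localization_weighted_blow-up}\ref{it:lem:localization_weighted_blow-up;est}, the distance preservation \ref{it:lem:localization_weighted_blow-up;dist_preserving}, and the assumption $[\OO]_{C^{1,\lambda}}<\delta\le1$, this yields pointwise bounds of the schematic form $|\partial^\beta(a_{mn}-\delta_{mn})(y)|\lesssim[\OO]_{C^{1,\lambda}}\,y_1^{-(|\beta|-\lambda)_+}$ and $|\partial^\beta b_m(y)|\lesssim[\OO]_{C^{1,\lambda}}\,y_1^{-(|\beta|+1-\lambda)_+}$: the coefficients are uniformly of size $[\OO]_{C^{1,\lambda}}$, but their derivatives blow up near $\d\RRdh$, and this blow-up must be absorbed by the weights. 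To verify condition \ref{it:thm:perturbcalculus1} of Theorem \ref{thm:perturbcalculus} for $A$ and the perturbation $-B$, I would expand $\partial^\sigma(Bu)$ by Leibniz for $u\in D(A)=W^{k+2,p}_{\Dir}$, distribute the coefficient bounds against the derivatives of $u$, and absorb each factor $y_1^{-s}$ using the non-sharp weighted Hardy inequality of Lemma \ref{lem:frac_Hardy2}, whose hypothesis $\gam+kp>sp-1$ holds in the worst case precisely because $\gam>(1-\lambda)p-1$; together with Remark \ref{rem:domains} this gives $\|Bu\|_{W^{k,p}(\RRdh,w_{\gam+kp};X)}\le C[\OO]_{C^{1,\lambda}}\|Au\|_{W^{k,p}(\RRdh,w_{\gam+kp};X)}$, so \ref{it:thm:perturbcalculus1} holds with $\eta=C[\OO]_{C^{1,\lambda}}$, which is below any prescribed threshold once $\delta$ is chosen small.

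The heart of the proof is the fractional relative bound \ref{it:thm:perturbcalculus2} with $\alpha=\tfrac12$: by Proposition \ref{prop:domain_char_RRdh}, $D(A^{3/2})=W^{k+3,p}_{\del,\Dir}(\RRdh,w_{\gam+kp};X)$ and $D(A^{1/2})=W^{k+1,p}_{\Dir}(\RRdh,w_{\gam+kp};X)$, so one must show $B$ maps the former boundedly into the latter. The norm estimate is again Leibniz bookkeeping one order higher, and membership in the space with the Dirichlet trace uses the vanishing traces $\Tr u=\Tr\del u=0$ built into $W^{k+3,p}_{\del,\Dir}$. The delicate point is the range of $\gam$: handled naively, the most singular contribution $(\del^{k+1}h_1)\,\partial_1 u$ would require $\gam>(2-\lambda)p-1$, which is strictly stronger than our hypothesis. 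To cover the whole interval $\gam\in((1-\lambda)p-1,2p-1)\setminus\{p-1\}$ I would argue by interpolation between endpoints: $B$ is bounded $D(A)\to W^{k,p}(\RRdh,w_{\gam+kp};X)$ (previous paragraph), while at the top endpoint, for $\gam\in(p-1,2p-1)$ one uses Proposition \ref{prop:Sneiberg} to replace the fractional-domain interpolation scale $[W^{k+k_0,p}_{\Dir},W^{k+3,p}_{\Dir}]_\theta$ by $[W^{k+k_0,p}_{0},W^{k+3,p}_{0}]_\theta$, i.e.\ by spaces in which additional derivatives of $u$ vanish at the boundary; exploiting this extra vanishing via the sharp Hardy inequalities (Corollary \ref{cor:Sob_embRRdh}) gains the missing weight, and for $\gam\in((1-\lambda)p-1,p-1)$ the vanishing trace $\Tr u=0$ already suffices through Corollary \ref{cor:Sob_embRRdh}. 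Reassembling with the complex interpolation identification of the fractional domains (Corollary \ref{cor:frac_domain_Dir}) then yields \ref{it:thm:perturbcalculus2}.

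With \ref{it:thm:perturbcalculus1} and \ref{it:thm:perturbcalculus2} verified, Theorem \ref{thm:perturbcalculus} gives that $\mu_0-\mathcal{L}=A-B$ with $D(\mu_0-\mathcal{L})=D(A)=W^{k+2,p}_{\Dir}$ has a bounded $\Hinf(\Sigma_{\omega})$-calculus once $[\OO]_{C^{1,\lambda}}$ is small; running this for every $\omega\in(0,\pi)$ — the fractional relative bounds being $\omega$-independent, and $-B$ being a small relative perturbation of the angle-zero sectorial operator $A$, so that $\mu_0-\mathcal{L}$ stays sectorial of angle $0$ — produces a $\delta$ for which $\omega_{\Hinf}(\mu_0-\mathcal{L})=0$. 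Proposition \ref{prop:calc_pert_Id}\ref{it:prop:calc_pert_Id2} upgrades this to all $\mu>0$, and transporting back along the isomorphism $\Psi_*$ gives the bounded $\Hinf$-calculus of angle zero for $\mu-\delDir$ on $W^{k,p}(\OO,w^{\d\OO}_{\gam+kp};X)$ as in Definition \ref{def:delDirOspecial}. The main obstacle, as indicated, is condition \ref{it:thm:perturbcalculus2}: because the coefficients of $B$ are only H\"older and their derivatives blow up like $y_1^{-(1-\lambda)}$ (worse under further differentiation), the plain estimate loses too much weight, and one genuinely needs the interplay of the boundary conditions encoded in $D(A^{j/2})$, the two sharp Hardy inequalities, and the {\v S}ne\u{\i}berg-type identity of Proposition \ref{prop:Sneiberg} to close the estimate on the full parameter range $\gam>(1-\lambda)p-1$ rather than only for large $\gam$.
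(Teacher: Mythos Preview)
Your overall strategy and the treatment of $\gam\in(p-1,2p-1)$ are essentially the paper's approach. One imprecision: Proposition \ref{prop:Sneiberg} only identifies $[W^{k+k_0,p}_{\Dir},W^{k+3,p}_{\Dir}]_\theta=[W^{k+k_0,p}_0,W^{k+3,p}_0]_\theta$ for \emph{small} $\theta$ (for $k_0=2$ the admissible range is merely $(0,\varepsilon)$), so in condition \ref{it:thm:perturbcalculus2} one must take $\alpha=\theta$ small rather than $\alpha=\tfrac12$. The paper does precisely this: it interpolates between $B\colon W^{k+2,p}\to W^{k,p}$ and $B\colon W^{k+3,p}_0\to W^{k+1,p}$ (the latter from Lemmas \ref{lem:estB1B2}\ref{it:est1} and \ref{lem:estB3}\ref{it:est1B3} with $n=1$, which genuinely requires $u\in W^{k+3,p}_0$), and then invokes Proposition \ref{prop:Sneiberg} to identify the source with $D(A^{1+\theta})$.

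For $\gam\in((1-\lambda)p-1,p-1)$ there is a genuine gap. Your assertion that ``$\Tr u=0$ already suffices through Corollary \ref{cor:Sob_embRRdh}'' does not close condition \ref{it:thm:perturbcalculus2}. Estimating $\|B_3u\|_{W^{k+1,p}(w_{\gam+kp})}$ via Leibniz produces, in the critical case $\alpha=\beta$ with $|\alpha|=k+1$, the quantity $\|\partial_1 u\|_{L^p(w_{\gam-(2-\lambda)p})}$ with weight exponent strictly below $-1$; Hardy's inequality then requires $\Tr(\partial_1 u)=0$, not $\Tr u=0$ (this is exactly why Lemma \ref{lem:estB3}\ref{it:est2B3} with $n=1$ is stated for $u\in W^{k+3,p}_{\Neu}$). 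But $D(A^{3/2})=W^{k+3,p}_{\Delta,\Dir}$ carries only $\Tr u=\Tr\Delta u=0$, hence $\Tr\partial_1^2u=0$, yet \emph{not} $\Tr\partial_1 u=0$; the non-sharp Hardy inequality (Lemma \ref{lem:frac_Hardy2}) fails here as well, since it would need $\gam>(2-\lambda)p-1$. The paper therefore abandons condition \ref{it:thm:perturbcalculus2} for this range and verifies condition \ref{it:thm:perturbcalculus3} instead: for $k=0$ via a duality argument using the adjoint characterisation (Proposition \ref{prop:adjoint}) and extrapolation scales, packaged as Lemma \ref{lem:lebesB}; for $k\ge1$ by observing that $W^{k,p}(\RRdh,w_{\gam+kp})=D(\tilde A_{\Dir}^{1/2})$ for the Dirichlet Laplacian $\tilde A_{\Dir}$ on $W^{k-1,p}(\RRdh,w_{(\gam+p)+(k-1)p})$, where $\gam+p\in(p-1,2p-1)$, and then applying the already-established large-$\gam$ estimates through consistency of the fractional powers.
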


\begin{theorem}[$\Hinf$-calculus for $\mu-\delNeu$ on special domains]\label{thm:Neumann_Laplacian_special}
  Let $p\in(1,\infty)$, $k\in\NN_0$, $\lambda\in(0,1]$, $\gam\in ((1-\lambda)p-1, p-1)$, $j\in \{0,1\}$, $\mu>0$ and let $X$ be a $\UMD$ Banach space. Moreover, assume that $\OO$ is a special $\Cc^{j+1,\lambda}$-domain. Then there exists a $\delta\in(0,1)$  such that if $[\OO]_{C^{j+1,\lambda}}<\delta$, then $\mu-\delNeu$ on $W^{k+j,p}(\Dom,w^{\BDom}_{\gamma+kp};X)$ as in Definition \ref{def:delDirOspecial}  has a bounded $H^\infty$-calculus with $\omega_{H^\infty}(\mu-\delNeu) =0$. 
\end{theorem}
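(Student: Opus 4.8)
The plan is to follow the same perturbation strategy used for the Dirichlet case (Theorem \ref{thm:Dirichlet_Laplacian_special}), transferring the $H^\infty$-calculus of $\delNeu^{\RRdh}$ on the half-space to a special domain via the Dahlberg--Kenig--Stein pullback $\Psi$ from Lemma \ref{lem:localization_weighted_blow-up}. First I would fix the parameters: since $\lambda>2-\frac{\gam+1}{p}$ is \emph{not} assumed here (unlike in Theorem \ref{thm:introNeu}), but only $\gam\in((1-\lambda)p-1,p-1)$, i.e.\ $\lambda>1-\frac{\gam+1}{p}$, the relevant smoothness of the domain is $C^{j+1,\lambda}$, and the Sobolev order of the domain of $\delNeu$ is $k+j+2$. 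Using the isomorphism $\Psi_*$ from Proposition \ref{prop:isomorphisms} (applicable because $\gam+kp>(k+j+2-(j+1+\lambda))_+p-1$ reduces, on the relevant range, to the hypothesis $\gam>(1-\lambda)p-1$, together with Hardy's inequality which is automatic for $\BC=\Neu$ as noted after Proposition \ref{prop:isomorphisms}), one pulls back $\delNeu$ on $\OO$ to an operator on $W^{k+j,p}(\RRdh,w_{\gam+kp};X)$ of the form $\Psi_*\delNeu\Psi_*^{-1}=\delNeu^{\RRdh}+B$, where $B$ collects the error terms coming from the non-flat change of coordinates. By the chain rule (multivariate Faà di Bruno, as in the proof of Proposition \ref{prop:isomorphisms}) $B$ is a second-order differential operator whose coefficients involve $\d^\beta h_1,\d^\beta h_2$ with $|\beta|\le 2$; by Lemma \ref{lem:localization_weighted_blow-up}\ref{it:lem:localization_weighted_blow-up;est} (with $\ell=j+1$, $\lambda_0=\lambda$) these coefficients are bounded by $C[\OO]_{C^{j+1,\lambda}}\cdot \mathrm{dist}(\cdot,\d\RRdh)^{-(2-(j+1)-\lambda)_+}$, and the weight absorbs precisely this blow-up on the range of $\gam$ considered.

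The core of the argument is then to verify the hypotheses of the perturbation theorem \ref{thm:perturbcalculus} for $A=\mu-\delNeu^{\RRdh}$ (which has $0\in\rho(A)$ for $\mu>0$ and a bounded $H^\infty(\Sigma_\om)$-calculus of angle $0$ by Theorem \ref{thm:LLRVthm1.2Neu}) and $B$ as above. Condition \ref{it:thm:perturbcalculus1}, the relative boundedness $\|Bu\|\le\eta\|Au\|$, follows from the coefficient bounds together with Hardy's inequality (Corollary \ref{cor:Sob_embRRdh}) to handle the negative-order weight shift $\mathrm{dist}^{-(1-j-\lambda)_+p}$ appearing in front of the second derivatives, the point being that the smallness factor $[\OO]_{C^{j+1,\lambda}}<\delta$ makes $\eta$ as small as needed. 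For the refined bound I would use \ref{it:thm:perturbcalculus3} with a suitable $\alpha\in(0,1)$: one must show $\|A^{-\alpha}Bu\|\lesssim\|A^{1-\alpha}u\|$, i.e.\ $B$ maps $D(A^{1-\alpha})$ into $D(A^{-\alpha})$ with norm control. Here one uses the characterisation of fractional domains of $\mu-\delNeu^{\RRdh}$ from Proposition \ref{prop:domain_char_RRdh_Neu} and Corollary-type interpolation identities: $D(A^{1-\alpha})$ is a weighted Sobolev space of order $k+j+2-2\alpha$ (with Neumann-type traces), while $A^{-\alpha}$ gains $2\alpha$ derivatives (or, dually, allows $2\alpha$ fewer), so that $B$ — which costs two derivatives but comes with a favourable weight and the small constant — lands in the right space. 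Choosing $\alpha$ so that $2\alpha$ compensates the weight deficit $(1-j-\lambda)_+p$ appropriately (e.g.\ $2\alpha$ slightly larger than $(1-j-\lambda)_+$, which is admissible since $1-j-\lambda<1$ on the relevant range, using $\gam>(1-\lambda)p-1$) makes the estimate go through. Having verified these, Theorem \ref{thm:perturbcalculus} yields $\eta<\tilde\eta$ for which $A+B=\mu-\delNeu^{\RRdh}+B$ has a bounded $H^\infty(\Sigma_\om)$-calculus; transporting back via the isomorphism $\Psi_*$ gives the bounded $H^\infty$-calculus of $\mu-\delNeu$ on $W^{k+j,p}(\OO,w^{\d\OO}_{\gam+kp};X)$ for all sufficiently small $[\OO]_{C^{j+1,\lambda}}$.

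It remains to upgrade the angle to $\om_{H^\infty}(\mu-\delNeu)=0$ and to cover all $\mu>0$. For the angle: the perturbation theorem a priori preserves a fixed sector $\Sigma_\om$ with $\om>\om(A)=0$ arbitrary, but $\tilde\eta$ (hence $\delta$) may depend on $\om$; one fixes a decreasing sequence $\om_n\downarrow 0$ and, since for each $n$ there is $\delta_n$ with the calculus on $\Sigma_{\om_n}$, a diagonal/exhaustion argument on the special domain (or simply noting that for the \emph{given} $\OO$ with $[\OO]_{C^{j+1,\lambda}}<\delta$ one already gets some $\om<\om_n$) yields angle zero; alternatively one invokes that a sectorial operator with bounded $H^\infty(\Sigma_\om)$-calculus for every $\om>0$ has $\om_{H^\infty}=0$ by definition, and the construction can be run for each $\om$. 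For the $\mu$-dependence: once $\mu_0-\delNeu$ has a bounded calculus for one $\mu_0>0$, Proposition \ref{prop:calc_pert_Id}\ref{it:prop:calc_pert_Id2} gives it for all $\mu>0$; alternatively, the perturbation estimates are uniform in $\mu\ge\mu_0>0$ so one can run the argument directly. The main obstacle I expect is the verification of the fractional relative bound \ref{it:thm:perturbcalculus3}: this requires the precise identification of the fractional domains $D(A^{1-\alpha})$ and $D(A^{-\alpha})$ as weighted Sobolev spaces with the correct (Neumann) boundary conditions and weight exponents — this is exactly why Section \ref{sec:frac_dom_RRdh_Neu} (Proposition \ref{prop:domain_char_RRdh_Neu} and the interpolation machinery behind it) was set up — and the somewhat delicate bookkeeping to check that the weight gain from Lemma \ref{lem:localization_weighted_blow-up}\ref{it:lem:localization_weighted_blow-up;est} outweighs the two-derivative cost of $B$ uniformly down to the endpoint $\gam\to(1-\lambda)p-1$, where Hardy's inequality is used at criticality.
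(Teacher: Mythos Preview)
Your overall strategy—pull back via $\Psi_*$, write $\delNeu^\Psi=\delNeu^{\RRdh}+B$, verify condition \ref{it:thm:perturbcalculus1} of Theorem \ref{thm:perturbcalculus} using the smallness $[\OO]_{C^{j+1,\lambda}}<\delta$—matches the paper's. The gap is in the fractional-power condition, and in not distinguishing $j=0$ from $j=1$.

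You propose to verify condition \ref{it:thm:perturbcalculus3}, i.e.\ $\|A_\Neu^{-\alpha}Bu\|\lesssim\|A_\Neu^{1-\alpha}u\|$. This requires controlling $A_\Neu^{-\alpha}$ on the range of $B$, which amounts to identifying the extrapolation space $E_{-\alpha,A_\Neu}$ (equivalently, the adjoint fractional domain). The paper never does this for the Neumann Laplacian: Proposition \ref{prop:adjoint} and Lemma \ref{lem:lebesB} are Dirichlet-only, and the reference you give, Proposition \ref{prop:domain_char_RRdh_Neu}, yields only the \emph{positive} fractional domains $D(A_\Neu^{1/2})$ and $D(A_\Neu^{3/2})$. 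So your plan as written is not supported by the available machinery, and you do not sketch how to supply it.

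The paper instead verifies condition \ref{it:thm:perturbcalculus2} with $\alpha=\tfrac12$, which needs only $D(A_\Neu^{1/2})$ and $D(A_\Neu^{3/2})$—both available from Proposition \ref{prop:domain_char_RRdh_Neu}. For $j=0$ this goes through directly using Lemmas \ref{lem:estB1B2}\ref{it:est2} and \ref{lem:estB3}\ref{it:est2B3} with $n=0,1$. For $j=1$ there is a structural point you miss: the paper first applies Theorem \ref{thm:perturbcalculus} to $B_1+B_2$ only (condition \ref{it:thm:perturbcalculus2} via Lemma \ref{lem:estB1B2}\ref{it:est2} with $j=1$, $n=1$), and then treats $B_3$ separately as a genuine \emph{lower-order} perturbation via Theorem \ref{thm:lowe_ord_pert}, using Lemma \ref{lem:estB3}\ref{it:est3B3} (on $C^{2,\lambda}$-domains $B_3$ loses only one derivative). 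This split is forced: Lemma \ref{lem:estB3} gives no $W^{k+2,p}$-estimate for $B_3$, so condition \ref{it:thm:perturbcalculus2} fails for the full $B$ in the $j=1$ setting.
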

\begin{remark}Similar to Theorems \ref{thm:LLRVthm1.1Dir} and \ref{thm:LLRVthm1.2Neu}, we expect that Theorems \ref{thm:Dirichlet_Laplacian_special} and \ref{thm:Neumann_Laplacian_special} also hold for $\mu=0$ if $\gam+kp$ is small. We will not consider this minor improvement of the theorems here, since in Section \ref{sec:calc_dom} we consider bounded domains and use properties of the spectrum to obtain the $\Hinf$-calculus with $\mu=0$.
\end{remark}

The proofs of Theorems \ref{thm:Dirichlet_Laplacian_special} and \ref{thm:Neumann_Laplacian_special} are given in Section \ref{sec:proof_special} after having established some preliminary estimates in Section \ref{sec:prelim_pert}.

\subsection{Preliminary estimates}\label{sec:prelim_pert}
In the proofs of Theorems \ref{thm:Dirichlet_Laplacian_special} and \ref{thm:Neumann_Laplacian_special}, we derive the $\Hinf$-calculus on special domains by perturbing the corresponding calculus for the Laplacian on the half-space. To relate the Laplacian on special domains and the half-space, we use the diffeomorphisms $\Phi$ and $\Psi$ from Lemmas \ref{lem:localization_weighted_blow-up} and \ref{lem:loc_normal} for the Dirichlet and Neumann Laplacian, respectively.
The diffeomorphism $\Phi$ is easier to deal with, but it does not suffice for the Neumann Laplacian since it does not preserve the direction of the normal vector at the boundary, see Appendix \ref{sec:appendix_lemma}. \\

First, consider the case of Dirichlet boundary conditions. Let $\OO$ be a special $\Cc^1$-domain and recall that $\Phi_* f=f\circ \Phi^{-1}$ for $f\in L^1_{\loc}(\OO;X)$. Define $\Delta^{\Phi}: W^{2,1}_{\loc}(\RRdh;X)\to L^1_{\loc}(\RRdh; X)$ by 
\begin{equation*}
\Delta^{\Phi} := \Phi_* \circ \Delta\circ (\Phi^{-1})_{*}.
\end{equation*}
Making use of the explicit form of the diffeomorphism $\Phi(x)=(x_1-h_1(x), \tilde{x})$ for $x\in \OO$ (see Lemma \ref{lem:localization_weighted_blow-up}), an elementary computation involving the chain rule shows that $\del^\Phi = \del +B$, where the perturbation $B$ is given by
\begin{equation}\label{eq:pertBDIR}
\begin{aligned}
  B =  - 2((\nabla h_1) \circ \Phi^{-1}) \cdot \nabla \partial_1 + |(\nabla h_1) \circ \Phi^{-1}|^2\,\partial_1^2 - ((\Delta h_1) \circ \Phi^{-1})\partial_1.
\end{aligned}
\end{equation}

Note that the first two perturbation terms in \eqref{eq:pertBDIR} are second-order differential operators since $(\grad h_1)\circ\Phi^{-1}$ is bounded on $\RRdh$ if $\OO$ is a special $\Cc^1$-domain, see Lemma \ref{lem:localization_weighted_blow-up}. The order of the latter perturbation term in \eqref{eq:pertBDIR} depends on the smoothness of the domain.
\begin{itemize}
  \item If $\OO$ is a special $\Cc^2$-domain, then $(\del h_1)\circ \Phi^{-1}$ is bounded on $\RRdh$ and  thus the last term in \eqref{eq:pertBDIR} is a lower-order perturbation term.
  \item If $\OO$ is a special $\Cc^1$-domain, then $(\del h_1)(\Phi^{-1}(y))$ blows up like $y_1^{-1}$ in the neighbourhood of $y_1=0$, see Lemma \ref{lem:localization_weighted_blow-up}. Therefore, estimating, say, the $L^p(\RRdh, w_{\gam})$-norm of $((\del h_1)\circ \Phi^{-1})\d_1$ gives that 
  the weight exponent effectively decreases. However, this loss can be compensated by applying Hardy's inequality, which allows us to recover the original weight $w_{\gam}$. In this way, we also obtain an additional derivative from Hardy's inequality, meaning that all three perturbation terms in \eqref{eq:pertBDIR} have the same order. 
\end{itemize}
This demonstrates that if the smoothness of the domain is low, then the last perturbation term in \eqref{eq:pertBDIR} is more difficult to deal with.\\

Similarly, for Neumann boundary conditions, let $\Psi$ be the diffeomorphism from Lemma \ref{lem:loc_normal} and define $\del^{\Psi}: W^{2,1}_\loc(\RRdh;X)\to L^1_\loc(\RRdh;X)$ by 
\begin{equation*}
    \del^\Psi:= \Psi_* \circ \del \circ (\Psi^{-1})_*.
\end{equation*}
Recall that $\Psi(x)=(x_1 - h_1(x), \tilde{x}-\tilde{h}_1(\tilde{x}))$ for $x\in \OO$ and we write $h = (h_1, \tilde{h}_1) =: (h_1, h_2,\dots, h_d)$. Another tedious, but elementary calculation with the chain rule shows that $\del^{\Psi} = \del + B$ with
\begin{align*}
        B=\sum_{i,j=1}^d \big[\big(-H_{i,j}-H_{j,i}+(HH^\top)_{i,j}\big)\circ \Psi^{-1}\big]\d_{ij}^2 - \sum_{j=1}^d \big[\del h_j\circ \Psi^{-1}\big]\d_j,
    \end{align*}
    where $H:=D h$ is the Jacobi matrix of $h$. Compared to the case of Dirichlet boundary conditions in \eqref{eq:pertBDIR}, we have more perturbation terms, since the diffeomorphism $\Psi$ is more involved. To simplify the estimates later on, we simply note that $B$ is a linear combination of terms of the following forms
\begin{equation}\label{eq:pertBNEU}
    \begin{aligned}
    \big[(\d^{\nu_1} h_{i_1})\circ \Psi^{-1}\big]\d^{\mu}, \quad &&|\nu_1|=1, \,|\mu|=2,\\
    \big[(\d^{\nu_1} h_{i_1})\circ \Psi^{-1}\big]\big[(\d^{\nu_2} h_{i_2})\circ \Psi^{-1}\big]\d^{\mu},\quad &&|\nu_1|=|\nu_2|=1,\, |\mu|=2,\\
    \big[(\d^{\nu_1} h_{i_1})\circ \Psi^{-1}\big]\d^{\mu},\quad&& |\nu_1|=2,\, |\mu|=1,
\end{aligned}
\end{equation}
where $i_1,i_2\in\{1,\dots, d\}$.
Note that the perturbation terms in \eqref{eq:pertBDIR} can also be written as in \eqref{eq:pertBNEU} with $i_1=i_2=1$ and $\Phi$ instead of $\Psi$.\\

In the following lemmas we provide precise estimates for the perturbation terms. We note that these estimates work for both diffeomorphisms $\Phi$ and $\Psi$ since they have the same regularity properties. Throughout the rest of this subsection, we let $\mc{O}$ be a special $C^1_c$-domain and take the following standing assumptions:
\begin{itemize}
    \item In the case of $\Phi$: $[\mc{O}]_{C^1}\leq 1$ and take $h_1$ as in Lemma \ref{lem:localization_weighted_blow-up}.
    \item In the case of $\Psi$: $[\mc{O}]_{C^1}\leq \Lambda$, where $\Lambda \in (0,1)$ as in Lemma \ref{lem:loc_normal}, take $h_1$ and $\tilde{h}_1$ as Lemma \ref{lem:loc_normal} and set $(h_1, h_2,\dots, h_d):=(h_1, \tilde{h}_1)$. 
\end{itemize}
We start with the estimates for the perturbation terms with $|\mu|=2$.

\begin{lemma}[Estimates for $|\mu|=2$]\label{lem:estB1B2}
  Let $p\in(1,\infty)$, $k\in\NN_0$ and let $X$ be a Banach space. Take $|\mu|=2$, $|\nu_1|=|\nu_2|=1$ and $i_1,i_2\in\{1,\dots, d\}$. Let $\Upsilon\in\{\Phi, \Psi\}$ and define
\begin{align*}
    P_1&:=\big[(\d^{\nu_1} h_{i_1})\circ \Upsilon^{-1}\big]\d^{\mu},\\
    P_2&:=\big[(\d^{\nu_1} h_{i_1})\circ \Upsilon^{-1}\big]\big[(\d^{\nu_2} h_{i_2})\circ \Upsilon^{-1}\big]\d^{\mu}.
\end{align*}
  \begin{enumerate}[(i)]
    \item\label{it:est1} If $\gam\in (p-1,2p-1)$, then for $n\in\{0,1\}$ and $u\in W^{k+2+n,p}(\RRdh, w_{\gam+kp};X)$ it holds that
        \begin{equation*}
          \|P_mu\|_{W^{k+n,p}(\RRdh, w_{\gam+kp};X)}\leq C\cdot[\OO]_{C^1}\cdot\|u\|_{W^{k+2+n,p}(\RRdh, w_{\gam+kp};X)}, \qquad m\in\{1,2\}.
        \end{equation*}
    \item\label{it:est2} If $\lambda\in (0,1]$, $\gam\in ((1-\lambda)p-1, p-1)$, $j\in\{0,1\}$ and $\OO$ is a special $\Cc^{j+1, \lambda}$-domain, then for $n\in\{0,1\}$ and $u\in W^{k+2+j+n, p}(\RRdh, w_{\gam+kp};X)$ it holds that 
        \begin{equation*}
          \qquad\|P_m u\|_{W^{k+j+n,p}(\RRdh, w_{\gam+kp};X)}\leq C\cdot[\OO]_{C^{j+1,\lambda}}\cdot\|u\|_{W^{k+2+j+n,p}(\RRdh, w_{\gam+kp};X)}, \qquad m\in\{1,2\}.
        \end{equation*}
  \end{enumerate}
  In all cases, the constant $C>0$ only depends on $p,k,\gam, \lambda, d $ and $X$.
\end{lemma}

\begin{proof}
  For notational convenience we write $W^{k,p}(w_{\gam}):=W^{k,p}(\RRdh,w_{\gam};X)$. 
  
  \textit{Step 1: preparations.}
We prove the estimates for $P_1$, where, from now on, we omit the subscripts from $\nu$ and $i$. The estimates for $P_2$ are derived in a similar way.

For $\alpha\in \NN_0^d$ and some regular enough $u$ we obtain with the product rule that 
  \begin{equation}\label{eq:est_prodrule}
\begin{aligned}
  \|\d^{\alpha}[((\d^{\nu}h_i)\circ & \Upsilon^{-1}) \d^{\mu} u]\|_{L^p(w_{\gam+kp})}\\&\lesssim \sum_{\beta\leq \alpha}\big\| [\d^\beta ((\d^\nu h_i)\circ \Upsilon^{-1})][\d^{\alpha-\beta}\d^\mu u]\big\|_{L^p(w_{\gam+kp})}.
\end{aligned}
\end{equation}
In the case that $|\alpha|,|\beta|\geq 1$ and $y\in \RRdh$, the multivariate Fa\`a di Bruno's formula \cite[Theorem 2.1]{CS96} implies
\begin{equation}\label{eq:Faa}
  |\d_y^\beta(\d^\nu h_i)(\Upsilon^{-1}(y))|\lesssim \sum_{1\leq |\delta|\leq |\beta|}|(\d^\delta\d^\nu h_i)(\Upsilon^{-1}(y))|\sum_{s=1}^{|\beta|}\sum_{p_s(\beta,\delta)}\prod_{m=1}^s|\d^{\vec{\ell}_m}\Upsilon^{-1}(y)|^{\vec{k}_m},
\end{equation}
where the sets $p_s(\beta,\delta)$ are contained in
\begin{align}\label{eq:setp_s}
\!\!\!\! \Big\{ (\vec{k}_1,\ldots,\vec{k}_s;\vec{\ell}_1,\ldots,\vec{\ell}_s) \in (\N^d_0 \setminus \{0\})^{s} \times (\N^d_0 \setminus \{0\})^{s}
 : \sum_{m=1}^s|\vec{k}_m| = |\delta|, \sum_{m=1}^s|\vec{k}_m||\vec{\ell}_m| = |\beta| \Big\}.
\end{align}
By Lemma \ref{lem:localization_weighted_blow-up}\ref{it:lem:localization_weighted_blow-up;dist_preserving}+\ref{it:lem:localization_weighted_blow-up;est}  for $\Upsilon=\Phi$ and Lemma \ref{lem:loc_normal}\ref{it:lem:loc_normal2}+\ref{it:lem:loc_normal5} for $\Upsilon=\Psi$, we have the estimate
\begin{equation}\label{eq:grad_del h_1_grad}
  |(\d^{\delta}\d^\nu  h_i)(\Upsilon^{-1}(y))| \lesssim \frac{[\OO]_{C^{j+1,\lambda}}}{\mrm{dist}(\Upsilon^{-1}(y),\BDom)^{( |\delta|-j-\lambda)_+}}
\lesssim \frac{[\OO]_{C^{j+1,\lambda}}}{y_1^{( |\delta|-j-\lambda)_+}},
\end{equation}
for all $\lambda\in[0,1]$, $j\in\{0,1\}$, $\delta\in \NN_0^d$, $|\nu|=1$ and $y \in \R^d_+$. 
Moreover, by the same lemmas we also have the (non-optimal) estimate
\begin{equation}\label{eq:grad_del h_2_grad}
  |\d^{\vec{\ell}}\Upsilon^{-1}(y)|\lesssim \frac{[\OO]_{C^{j+1}}}{y_1^{(|\vec{\ell}|-j-1)_+}},
\end{equation}
for all $j\in\{0,1\}$, $\vec{\ell}\in \NN_0^d$ and $y \in \R^d_+$.

\textit{Step 2: proof of \ref{it:est1}}. Let $\gam\in(p-1, 2p-1)$, $n\in\{0,1\}$ and $\OO$ a special $\Cc^1$-domain. To prove \ref{it:est1} we need to consider \eqref{eq:est_prodrule} with $|\alpha|\leq k+n$. If $\beta=0$ in \eqref{eq:est_prodrule}, then it follows from \eqref{eq:grad_del h_1_grad} that 
\begin{equation*}\label{eq:est_beta=0}
  \|((\d^\nu h_i)\circ \Upsilon^{-1})(\d^\alpha\d^\mu u)\|_{L^p(w_{\gam+kp})}\lesssim [\OO]_{C^{1}}\|u\|_{W^{k+2+n,p}(w_{\gam+kp})}.
\end{equation*}
By \eqref{eq:Faa}, \eqref{eq:grad_del h_1_grad} and \eqref{eq:grad_del h_2_grad}, we have for $\beta\leq \alpha$ with $|\alpha|, |\beta|\geq 1$ that \eqref{eq:est_prodrule} can be further estimated as
\begin{align*}
  &\big\| [\d^\beta ((\d^\nu h_i)\circ \Upsilon^{-1})][\d^{\alpha-\beta}\d^\mu u]\big\|_{L^p(w_{\gam+kp})}  \\
  & \lesssim [\OO]_{C^1}\sum_{1\leq |\delta|\leq |\beta|}\sum_{s=1}^{|\beta|}\sum_{p_s(\beta,\delta)}\|\d^{\alpha-\beta}\d^\mu u\|_{L^p(w_{\gam+kp-|\delta|p-\sum _{m=1}^s(|\vec{\ell}_m|-1)|\vec{k}_m |p})}\\
  &\lesssim [\OO]_{C^1}\|\d^{\alpha-\beta}\d^\mu u\|_{W^{|\beta|,p}(w_{\gam+kp})}\lesssim [\OO]_{C^1}\|u\|_{W^{k+2+n,p}(w_{\gam+kp})},
\end{align*}
where we have applied Hardy's inequality (Corollary \ref{cor:Sob_embRRdh}) $|\beta|$ times using that 
\begin{equation*}
  \gam+kp-|\delta|p-\sum _{m=1}^s(|\vec{\ell}_m|-1)|\vec{k}_m |p \stackrel{\eqref{eq:setp_s}}= \gam+kp -|\beta|p > (1-n)p-1\geq-1,
\end{equation*}
since $\gam>p-1$, $|\beta|\leq k+n$ and $n\in\{0,1\}$. This completes the proof of \ref{it:est1}.

\textit{Step 3: proof of \ref{it:est2}}. Let $\lambda\in(0,1]$, $\gam\in((1-\lambda)p-1, p-1)$, $n\in\{0,1\}$, $j\in\{0,1\}$ and $\OO$ a special $\Cc^{j+1,\lambda}$-domain.
Consider \eqref{eq:est_prodrule} with $|\alpha|\leq k+j+n$. In the case that $\beta=0$ it follows from \eqref{eq:grad_del h_1_grad} that
\begin{equation*}
  \|((\d^\nu h_i)\circ \Upsilon^{-1})(\d^\alpha\d^\mu u)\|_{L^p(w_{\gam+kp})}\lesssim [\OO]_{C^{j+1, \lambda}}\|u\|_{W^{k+2+j+n,p}(w_{\gam+kp})}.
\end{equation*}
By \eqref{eq:Faa}, \eqref{eq:grad_del h_1_grad} and \eqref{eq:grad_del h_2_grad}, we have for $\beta\leq \alpha$ with $|\alpha|, |\beta|\geq 1$ that \eqref{eq:est_prodrule} can be further estimated as
\begin{equation}\label{eq:split_delta}
  \begin{aligned}
  \big\| &[\d^\beta ((\d^\nu h_i)\circ \Upsilon^{-1})][\d^{\alpha-\beta}\d^\mu  u]\big\|_{L^p(w_{\gam+kp})}\\
    \lesssim&\; [\OO]_{C^{j+1,\lambda}} \sum_{1\leq |\delta|\leq j}\sum_{s=1}^{|\beta|}\sum_{p_s(\beta,\delta)}\|\d^{\alpha-\beta}\d^\mu  u\|_{L^p(w_{\gam+kp -\sum_{m=1}^s (|\vec{\ell}_m|-(j+1))_+|\vec{k}_m|p})}  \\
  &+[\OO]_{C^{j+1,\lambda}}\sum_{j+1\leq |\delta|\leq |\beta|}\sum_{s=1}^{|\beta|}\sum_{p_s(\beta,\delta)} \|\d^{\alpha-\beta}\d^\mu  u\|_{L^p(w_{\gam+kp-(|\delta|-j-\lambda)p-\sum_{m=1}^s (|\vec{\ell}_m|-1)|\vec{k}_m|p})},
\end{aligned}
\end{equation}
where the sum over $1\leq |\delta|\leq j$ is only present if $j=1$ and in this case we have $(|\delta|-j-\lambda)_+=0$.
We first consider the case $j+1\leq |\delta|\leq |\beta|$ for $j\in\{0,1\}$. Note that by \eqref{eq:setp_s} we have
\begin{align*}
\gam+kp-(|\delta|-j-\lambda)p-\sum_{m=1}^s (|\vec{\ell}_m|-1)|\vec{k}_m|p &=\gam+kp-\big(|\beta|-j- \lambda\big)p\\&> (1-n)p-1\geq -1.
\end{align*}
Therefore, Lemma \ref{lem:frac_Hardy2} applied with $s=|\beta|-j-\lambda\leq |\beta|$ yields
\begin{align*}
  \|\d^{\alpha-\beta}\d^\mu  u\|_{L^p(w_{\gam+kp-(|\beta|-j- \lambda)p})}  \lesssim \|\d^{\alpha-\beta}\d^\mu u\|_{W^{|\beta|,p}(w_{\gam+kp})}
  \leq \|u\|_{W^{k+2+j+n,p}(w_{\gam+kp})}. 
\end{align*}
In the case that $j=1$, we additionally estimate the sum over $|\delta|=1$ in \eqref{eq:split_delta}. In the case that $|\vec{\ell}_m|\leq j+1=2$ for all $m\in\{1, \dots, s\}$, we have $(|\vec{\ell}_m|-2)_+=0$ and
\begin{equation*}
  \|\d^{\alpha-\beta}\d^\mu  u\|_{L^p(w_{\gam+kp})}\lesssim \|u\|_{W^{k+3+n,p}(w_{\gam+kp})}.
\end{equation*}
If there exists an $m_0\in \{1,\dots, s\}$ such that $|\vec{\ell}_{m_0}|>2$, then it follows from \eqref{eq:setp_s} and $|\beta|\leq k+1+n$ that
\begin{align*}
  \gam+kp-\sum_{m=1}^s(|\vec{\ell}_m|-2)_+|\vec{k}_m|p &= \gam+kp-\Big(\sum_{\substack{m=1\\m\neq m_0}}^s(|\vec{\ell}_m|-2)_+|\vec{k}_m| +(|\vec{\ell}_{m_0}|-2)|\vec{k}_{m_0}|\Big) p \\
  &\geq \gam +kp -\Big(\sum_{\substack{m=1\\m\neq m_0}}^s|\vec{\ell}_m||\vec{k}_m| +|\vec{\ell}_{m_0}||\vec{k}_{m_0}| - 2|\vec{k}_{m_0}|\Big)p\\
 & \geq \gam+kp - |\beta|p + 2p > (2-n-\lambda)p-1\geq-1.
\end{align*}
Therefore, Lemma \ref{lem:frac_Hardy2} (applied with $s$ replaced by $\sum_{m=1}^s(|\vec{\ell}_m|-2)_+|\vec{k}_m|\leq |\beta|$), yields
\begin{align*}
  \|\d^{\alpha-\beta}\d^\mu  u\|_{L^p(w_{\gam+kp-\sum_{m=1}^s(|\vec{\ell}_m|-2)_+|\vec{k}_m|p})}  \lesssim \|\d^{\alpha-\beta}\d^\mu  u\|_{W^{|\beta|,p}(w_{\gam+kp})}
  \leq \|u\|_{W^{k+3+n,p}(w_{\gam+kp})}. 
\end{align*}
This finishes the proof of \ref{it:est2}.
\end{proof}

We continue with some preliminary estimates for the perturbation term with $|\mu|=1$.

\begin{lemma}[Estimates for $|\mu|=1$]\label{lem:estB3}
  Let $p\in(1,\infty)$, $k\in\NN_0$ and let $X$ be a $\UMD$ Banach space. Take $|\mu|=1$, $|\nu|=2$ and $i\in\{1,\dots, d\}$. Let $\Upsilon\in\{\Phi, \Psi\}$ and define
\begin{equation*}
    P:=\big[(\d^{\nu} h_{i})\circ \Upsilon^{-1}\big]\d^{\mu},
\end{equation*}
  \begin{enumerate}[(i)]
    \item\label{it:est1B3} If $\gam\in (p-1,2p-1)$, then for $n\in\{0,1\}$ it holds that
        \begin{equation*}
          \|P  u\|_{W^{k+n,p}(\RRdh, w_{\gam+kp};X)}\leq C\cdot[\OO]_{C^1}\cdot\|u\|_{W^{k+2+n,p}(\RRdh, w_{\gam+kp};X)},
        \end{equation*}
            for
    \begin{equation*}
      u\in \begin{cases}
             W^{k+2,p}(\RRdh, w_{\gam+kp};X) & \mbox{if } n=0,\\
            W^{k+3,p}_0(\RRdh, w_{\gam+kp};X) & \mbox{if } n=1.
           \end{cases}
    \end{equation*}
    \item\label{it:est2B3} If $\lambda\in (0,1]$, $\gam\in ((1-\lambda)p-1, p-1)$ and $\OO$ is a special $\Cc^{1, \lambda}$-domain, then for $u\in [W^{k,p}(\RRdh, w_{\gam+kp};X), W^{k+2, p}(\RRdh, w_{\gam+kp};X)]_{1-\frac{\lambda}{2}}$ it holds that
      \begin{equation*}
           \qquad\|P u\|_{W^{k,p}(\RRdh, w_{\gam+kp};X)}\leq C\cdot [\OO]_{C^{1,\lambda}}\cdot \|u\|_{[W^{k,p}(\RRdh, w_{\gam+kp};X), W^{k+2, p}(\RRdh, w_{\gam+kp};X)]_{1-\frac{\lambda}{2}}}.
      \end{equation*}
      In particular, for $u\in W^{k+2,p}(\RRdh, w_{\gam+kp};X) $ it holds that 
        \begin{equation*}
          \qquad\|P u\|_{W^{k,p}(\RRdh, w_{\gam+kp};X)}\leq C\cdot [\OO]_{C^{1,\lambda}}\cdot\|u\|_{W^{k+2,p}(\RRdh, w_{\gam+kp};X)}.
        \end{equation*}
      \item\label{it:est3B3} If $\lambda\in (0,1]$, $\gam\in ((1-\lambda)p-1, p-1)$ and $\OO$ is a special $\Cc^{2, \lambda}$-domain, then for $u\in W^{k+2,p}(\RRdh, w_{\gam+kp};X)$ it holds that
        \begin{equation*}
          \qquad\|P u\|_{W^{k+1,p}(\RRdh, w_{\gam+kp};X)}\leq C\cdot [\OO]_{C^{2,\lambda}}\cdot\|u\|_{W^{k+2,p}(\RRdh, w_{\gam+kp};X)}.
        \end{equation*}
  \end{enumerate}
  In all cases, the constant $C>0$ only depends on $p,k,\gam,\lambda, d $ and $X$.
\end{lemma}

Note that in Lemma \ref{lem:estB3}\ref{it:est1B3} with $n=1$, we need two traces of $u$ to be zero. This will not be a problem later on, since the Neumann trace will disappear in the complex interpolation space, see Step 1 in the proof of Theorem \ref{thm:Dirichlet_Laplacian_special}.
\begin{proof}
  For notational convenience we write $W^{k,p}(w_{\gam}):=W^{k,p}(\RRdh,w_{\gam};X)$. 
  
  \textit{Step 1: preparations.} For $\alpha\in \NN_0^d$ and some regular enough $u$ we obtain with the product rule that 
  \begin{equation}\label{eq:est_prodruleB3}
\begin{aligned}
  \|\d^{\alpha}[((\d^{\nu} h_i)\circ & \Upsilon^{-1}) \d^\mu u]\|_{L^p(w_{\gam+kp})}\\&\lesssim \sum_{\beta\leq \alpha}\big\| [\d^\beta ((\d^\nu h_i)\circ \Upsilon^{-1})][\d^{\alpha-\beta}\d^\mu u]\big\|_{L^p(w_{\gam+kp})}.
\end{aligned}
\end{equation}
In the case that $|\alpha|,|\beta|\geq 1$ and $y\in \RRdh$, the multivariate Fa\`a di Bruno's formula \cite[Theorem 2.1]{CS96} implies
\begin{equation}\label{eq:FaaB3}
  |\d_y^\beta(\d^\nu h_i)(\Upsilon^{-1}(y))|\lesssim \sum_{1\leq |\delta|\leq |\beta|}|(\d^\delta\d^\nu h_i)(\Upsilon^{-1}(y))|\sum_{s=1}^{|\beta|}\sum_{p_s(\beta,\delta)}\prod_{m=1}^s|\d^{\vec{\ell}_m}\Upsilon^{-1}(y)|^{\vec{k}_m},
\end{equation}
where the sets $p_s(\beta,\delta)$ are given as in \eqref{eq:setp_s}.
By Lemma \ref{lem:localization_weighted_blow-up}\ref{it:lem:localization_weighted_blow-up;dist_preserving}+\ref{it:lem:localization_weighted_blow-up;est} for $\Upsilon=\Phi$ and Lemma \ref{lem:loc_normal}\ref{it:lem:loc_normal2}+\ref{it:lem:loc_normal5} for $\Upsilon=\Psi$, we have the estimate
\begin{equation}\label{eq:grad_del h_1_del}
  |(\d^{\delta}\d^\nu h_i)(\Upsilon^{-1}(y))| \lesssim\frac{ [\OO]_{C^{j+1,\lambda}}}{\mrm{dist}(\Upsilon^{-1}(y),\BDom)^{( |\delta|+1-j-\lambda)_+}}
\lesssim \frac{[\OO]_{C^{j+1,\lambda}}}{y_1^{( |\delta|+1-j-\lambda)_+}},
\end{equation}
for all $\lambda\in[0,1]$, $j\in\{0,1\}$, $\delta\in \NN_0^d$, $|\nu|=2$ and $y \in \R^d_+$. Moreover, by the same lemmas we also have the (non-optimal) estimate
\begin{equation}\label{eq:grad_del h_2_del}
  |\d^{\vec{\ell}}\Upsilon^{-1}(y)|\lesssim \frac{[\OO]_{C^{1}}}{y_1^{(|\vec{\ell}|-1)_+}},
\end{equation}
for all $\vec{\ell}\in \NN_0^d$ and $y \in \R^d_+$.

\textit{Step 2: proof of \ref{it:est1B3}}.  Let $\gam\in(p-1, 2p-1)$, $n\in\{0,1\}$ and $\OO$ a special $\Cc^1$-domain. To prove \ref{it:est1B3} we need to consider \eqref{eq:est_prodruleB3} with $|\alpha|\leq k+n$. If $\beta=0$ in \eqref{eq:est_prodruleB3}, then it follows from \eqref{eq:grad_del h_1_del} and Hardy's inequality (Corollary \ref{cor:Sob_embRRdh}, using that $\gam+(k-1)p>-1$) that 
\begin{equation*}
  \|((\d^\nu h_i)\circ \Upsilon^{-1})(\d^\alpha\d^\mu u)\|_{L^p(w_{\gam+kp})}\lesssim [\OO]_{C^{1}}\|\d^\alpha\d^\mu u\|_{L^p(w_{\gam+(k-1)p})} \lesssim [\OO]_{C^{1}}\|u\|_{W^{k+2+n,p}(w_{\gam+kp})}.
\end{equation*}
By \eqref{eq:FaaB3}, \eqref{eq:grad_del h_1_del} and \eqref{eq:grad_del h_2_del}, we have for $\beta\leq \alpha$ with $|\alpha|, |\beta|\geq 1$ that \eqref{eq:est_prodruleB3} can be further estimated as
\begin{align*}
  &\big\| [\d^\beta ((\d^\nu h_i)\circ \Upsilon^{-1})][\d^{\alpha-\beta}\d^\mu u]\big\|_{L^p(w_{\gam+kp})}  \\
  & \lesssim [\OO]_{C^1}\sum_{1\leq |\delta|\leq |\beta|}\sum_{s=1}^{|\beta|}\sum_{p_s(\beta,\delta)}\|\d^{\alpha-\beta}\d^\mu u\|_{L^p(w_{\gam+kp-(|\delta|+1)p-\sum _{m=1}^s(|\vec{\ell}_m|-1)|\vec{k}_m |p})}\\
  &\lesssim [\OO]_{C^1}\|\d^{\alpha-\beta}\d^\mu u\|_{W^{|\beta|+1,p}(w_{\gam+kp})}\lesssim [\OO]_{C^1} \|u\|_{W^{k+2+n,p}(w_{\gam+kp})},
\end{align*}
where we have applied Hardy's inequality $|\beta|+1$ times using that 
\begin{equation*}
  \gam+kp-(|\delta|+1)p-\sum _{m=1}^s(|\vec{\ell}_m|-1)|\vec{k}_m |p \stackrel{\eqref{eq:setp_s}}= \gam+kp -(|\beta|+1)p > - np-1,
\end{equation*}
since $\gam>p-1$, $|\beta|\leq k+n$ and $n\in\{0,1\}$. This shows that for $n=1$ we need to take $u\in W_0^{k+3,p}(w_{\gam+kp})$ by Hardy's inequality. This completes the proof of \ref{it:est1B3}.

\textit{Step 3: proof of \ref{it:est2B3}}.
Let $\lambda\in(0,1]$, $\gam\in((1-\lambda)p-1, p-1)$ and $\OO$ a special $\Cc^{1,\lambda}$-domain.
Consider \eqref{eq:est_prodruleB3} with $|\alpha|\leq k$. If $\beta=0$ in \eqref{eq:est_prodruleB3}, then it follows from \eqref{eq:grad_del h_1_del} and Lemma \ref{lem:frac_Hardy3} (applied to $s=1-\lambda$ and using that $X$ is $\UMD$) that
\begin{align*}
  \|((\d^\nu h_i)\circ \Upsilon^{-1})(\d^\alpha\d^\mu u)\|_{L^p(w_{\gam+kp})}&\lesssim [\OO]_{C^{1, \lambda}}\|\d^\alpha\d^\mu u\|_{L^p(w_{\gam+kp-(1-\lambda)p})}\\
   &\lesssim [\OO]_{C^{1, \lambda}}\| u\|_{W^{k+1,p}(w_{\gam+kp-(1-\lambda)p})}\\ &\lesssim [\OO]_{C^{1, \lambda}}\|u\|_{[W^{k,p}(\RRdh, w_{\gam+kp};X), W^{k+2, p}(\RRdh, w_{\gam+kp};X)]_{1-\frac{\lambda}{2}}}.
\end{align*}
By \eqref{eq:FaaB3}, \eqref{eq:grad_del h_1_del} and \eqref{eq:grad_del h_2_del}, we have for $\beta\leq \alpha$ with $|\alpha|, |\beta|\geq 1$ that \eqref{eq:est_prodruleB3} can be further estimated as
\begin{align*}
  &\big\| [\d^\beta ((\d^\nu h_i)\circ \Upsilon^{-1})][\d^{\alpha-\beta}\d^\mu  u]\big\|_{L^p(w_{\gam+kp})}  \\
  & \lesssim [\OO]_{C^{1,\lambda}}\sum_{1\leq |\delta|\leq |\beta|}\sum_{s=1}^{|\beta|}\sum_{p_s(\beta,\delta)}\|\d^{\alpha-\beta}\d^\mu u\|_{L^p(w_{\gam+kp-(|\delta|+1-\lambda)p-\sum _{m=1}^s(|\vec{\ell}_m|-1)|\vec{k}_m |p})}.
\end{align*}
By \eqref{eq:setp_s} and the Hardy inequalities from Lemmas \ref{lem:Hardy} (using $\gam+kp-(|\beta|+1-\lambda)p > -1$) and \ref{lem:frac_Hardy3}, we obtain
\begin{align*}
  \|\d^{\alpha-\beta}\d^\mu u\|_{L^p(w_{\gam+kp-(|\delta|+1-\lambda)p-\sum _{m=1}^s(|\vec{\ell}_m|-1)|\vec{k}_m |p})}&=\|\d^{\alpha-\beta}\d^\mu u\|_{L^p(w_{\gam+kp-(|\beta|+1-\lambda)p})}\\
  &\lesssim \|u\|_{W^{k+1,p}(w_{\gam+kp-(1-\lambda)p})}\\
  &\lesssim \|u\|_{[W^{k,p}(\RRdh, w_{\gam+kp};X), W^{k+2, p}(\RRdh, w_{\gam+kp};X)]_{1-\frac{\lambda}{2}}}.
\end{align*}
This completes the proof of \ref{it:est2B3}.

\textit{Step 4: proof of \ref{it:est3B3}}. Let $\lambda\in(0,1]$, $\gam\in((1-\lambda)p-1, p-1)$ and $\OO$ a special $ \Cc^{2,\lambda}$-domain. Consider \eqref{eq:est_prodruleB3} with $|\alpha|\leq k+1$. If $\beta=0$ in \eqref{eq:est_prodruleB3}, then it follows from \eqref{eq:grad_del h_1_del} that
\begin{equation*}
  \|((\d^\nu h_i)\circ \Upsilon^{-1})(\d^\alpha\d^\mu u)\|_{L^p(w_{\gam+kp})}\lesssim [\OO]_{C^{2, \lambda}}\|\d^\alpha\d^\mu u\|_{L^p(w_{\gam+kp})} \lesssim [\OO]_{C^{2, \lambda}}\|u\|_{W^{k+2,p}(w_{\gam+kp})}.
\end{equation*}
By \eqref{eq:FaaB3}, \eqref{eq:grad_del h_1_del} and \eqref{eq:grad_del h_2_del}, we have for $\beta\leq \alpha$ with $|\alpha|, |\beta|\geq 1$ that \eqref{eq:est_prodruleB3} can be further estimated as
\begin{align*}
  &\big\| [\d^\beta ((\d^\nu h_i)\circ \Upsilon^{-1})][\d^{\alpha-\beta}\d^\mu u]\big\|_{L^p(w_{\gam+kp})}  \\
  & \lesssim [\OO]_{C^{2,\lambda}}\sum_{1\leq |\delta|\leq |\beta|}\sum_{s=1}^{|\beta|}\sum_{p_s(\beta,\delta)}\|\d^{\alpha-\beta}\d^\mu u\|_{L^p(w_{\gam+kp-(|\delta|-\lambda)p-\sum _{m=1}^s(|\vec{\ell}_m|-1)|\vec{k}_m |p})}\\
  &\lesssim [\OO]_{C^{2,\lambda}}\|\d^{\alpha-\beta}\d^\mu u\|_{W^{|\beta|,p}(w_{\gam+kp})}\lesssim [\OO]_{C^{2,\lambda}} \|u\|_{W^{k+2,p}(w_{\gam+kp})},
\end{align*}
where we have used Lemma \ref{lem:frac_Hardy2} with $s$ replaced by $|\beta|-\lambda$ and that
\begin{equation*}
  \gam+kp-(|\delta|-\lambda)p-\sum _{m=1}^s(|\vec{\ell}_m|-1)|\vec{k}_m |p = \gam+kp - |\beta|p+\lambda p>-1.
\end{equation*}
This finishes the proof of \ref{it:est3B3}.
\end{proof}

The fact that we need boundary conditions in the spaces in parts of Lemma \ref{lem:estB3} will complicate the proof of perturbing the $\Hinf$-calculus in Section \ref{sec:proof_special}. In particular, for the Dirichlet Laplacian on special $\Cc^{1,\lambda}$-domains, we need an additional estimate, which we obtain via extrapolation spaces and the adjoint operator.\\

Let $p\in(1,\infty)$, $\gam\in \RR$, $\OO\subseteq \RR^d$ open and let $X$ be a reflexive Banach space (which is implied by the $\UMD$ condition). Then $L^p(\OO, w^{\d\OO}_{\gam};X)$ is reflexive and with the unweighted pairing 
\begin{equation*}
  \langle f,g\rangle_{L^p(\OO, w^{\d\OO}_{\gam};X)\times (L^p(\OO, w^{\d\OO}_{\gam};X))'} = \int_\OO \langle f(x), g(x) \rangle_{X\times X'} \dd x,
\end{equation*}
its dual space is 
\begin{equation*}
  (L^p(\OO, w^{\d\OO}_{\gam};X))' = L^{p'}(\OO, w^{\d\OO}_{\gam'}; X'),
\end{equation*}
where $p'=p/(p-1)$ and $\gam' = -\gam/(p-1)$. Note that if $\gam\in (-1,p-1)$, then $\gam'\in (-1, p'-1)$.\\

We have the following characterisation of the adjoint operator of the Dirichlet Laplacian. We note that for $\gam\in(p-1,2p-1)$ the characterisation of the domain of the adjoint is more sophisticated, see \cite[Proposition 6.6]{LV18}.  
\begin{proposition}[{\cite[Proposition 6.5]{LV18}}]\label{prop:adjoint}
  Let $p\in(1,\infty)$, $\gam\in(-1,p-1)$ and let $X$ be a $\UMD$ Banach space. Let $A_{p,\gam,X}:=\delDir^{\RRdh}$ on $L^p(\RRdh, w_{\gam};X)$ be the Dirichlet Laplacian as in Definition \ref{def:delRRdh}. Then the adjoint operator is $(A_{p,\gam,X})'= A_{p', \gam', X'}$.
\end{proposition}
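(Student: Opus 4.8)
The plan is to identify the adjoint of the Dirichlet Laplacian $A_{p,\gam,X} = \delDir^{\RRdh}$ on $L^p(\RRdh, w_{\gam};X)$ with the Dirichlet Laplacian $A_{p',\gam',X'} = \delDir^{\RRdh}$ on the dual space $L^{p'}(\RRdh, w_{\gam'};X')$, where $\gam' = -\gam/(p-1) \in (-1,p'-1)$. The strategy is the standard two-sided argument: first show $A_{p',\gam',X'} \subseteq (A_{p,\gam,X})'$ by verifying the adjoint pairing identity via integration by parts, then show the reverse inclusion $(A_{p,\gam,X})' \subseteq A_{p',\gam',X'}$ using that both operators are invertible after a shift (or that $A_{p',\gam',X'}$ is already maximal), so that no proper extension can exist.

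First I would establish the inclusion $A_{p',\gam',X'} \subseteq (A_{p,\gam,X})'$. Take $u \in D(A_{p,\gam,X}) = W^{2,p}_{\Dir}(\RRdh,w_{\gam};X)$ and $v \in D(A_{p',\gam',X'}) = W^{2,p'}_{\Dir}(\RRdh,w_{\gam'};X')$. Using the trace characterisation from Proposition~\ref{prop:tracechar_RRdh}, since $\gam,\gam' \in (-1,p-1)$ respectively $(-1,p'-1)$, both spaces coincide with $\cir W^{2,p}_{\Dir}$, so $u$ and $v$ can be approximated by functions in $C^\infty_{\cDir}(\overline{\RRdh};X)$ and $C^\infty_{\cDir}(\overline{\RRdh};X')$. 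For such smooth, rapidly decaying functions, Green's formula gives $\int_{\RRdh} \langle \del u, v\rangle \dd x = \int_{\RRdh}\langle u, \del v\rangle \dd x$ with all boundary terms vanishing because both $u$ and $v$ vanish on $\d\RRdh$. The Sobolev embedding $W^{1,p}(\RR_+,w_{\gam};X) \embed C([0,\infty);X)$ and Hardy's inequality (Corollary~\ref{cor:Sob_embRRdh}) control the cross terms $w_{\gam}^{1/p}$ versus $w_{\gam'}^{1/p'}$ — note $w_{\gam}^{1/p} w_{\gam'}^{1/p'} = 1$ — so that the bilinear forms $\langle u, \del v\rangle$ and $\langle \del u, v\rangle$ are both integrable, and the identity extends to all $u \in D(A_{p,\gam,X})$, $v \in D(A_{p',\gam',X'})$ by density. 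This shows $v \in D((A_{p,\gam,X})')$ with $(A_{p,\gam,X})' v = \del v = A_{p',\gam',X'}v$.

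For the reverse inclusion, I would use that by Theorem~\ref{thm:LLRVthm1.1Dir} both $A_{p,\gam,X}$ and $A_{p',\gam',X'}$ are sectorial and (since $\gam, \gam' \in (-1,2p-1)$ with strict inequalities, in fact $\gam \in (-1,p-1)$) the shifted operators are invertible; more to the point, for $\mu > 0$ the operator $\mu + A_{p,\gam,X}$ is boundedly invertible on $L^p(\RRdh,w_\gam;X)$, hence so is its adjoint $(\mu + A_{p,\gam,X})' = \mu + (A_{p,\gam,X})'$. Since $\mu + A_{p',\gam',X'}$ is also boundedly invertible and we have just shown $\mu + A_{p',\gam',X'} \subseteq \mu + (A_{p,\gam,X})'$, an operator that is bijective cannot be a proper restriction of another operator (applying $(\mu + (A_{p,\gam,X})')$ to $(\mu + A_{p',\gam',X'})^{-1}$ forces equality of domains). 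Therefore $(A_{p,\gam,X})' = A_{p',\gam',X'}$.

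The main obstacle is the first step: making the integration-by-parts/Green's formula argument rigorous at the level of the weighted spaces, i.e.\ justifying that the pairing $\int_{\RRdh}\langle u, \del v\rangle\dd x$ is absolutely convergent and that the density approximations (in the respective weighted norms) converge in the pairing. This requires carefully matching the weight $w_\gam$ on the $u$-side against $w_{\gam'}$ on the $v$-side — using $w_\gam^{1/p}\cdot w_{\gam'}^{1/p'} \equiv 1$ together with Hardy's inequality to absorb the loss of one power of the distance when integrating by parts, and invoking the Sobolev trace embedding to see the boundary terms genuinely vanish rather than merely being formally zero. Once the pairing is shown to be well-defined and continuous in both arguments, the density reduction and the invertibility argument are routine.
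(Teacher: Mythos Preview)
The paper does not give its own proof of this proposition; it simply quotes the result from \cite[Proposition 6.5]{LV18}. Your argument is the standard one and is correct: the inclusion $A_{p',\gam',X'}\subseteq (A_{p,\gam,X})'$ via Green's formula on a dense class, followed by the no-proper-extension argument using invertibility of $\mu+A_{p,\gam,X}$ and $\mu+A_{p',\gam',X'}$ for $\mu>0$ (available from Theorem~\ref{thm:LLRVthm1.1Dir}).

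One remark on what you flag as the ``main obstacle'': the unweighted pairing $\int_{\RRdh}\langle f,g\rangle\dd x$ is \emph{exactly} the duality between $L^p(\RRdh,w_\gam;X)$ and $L^{p'}(\RRdh,w_{\gam'};X')$ (this is how the dual is identified just before the proposition), so H\"older gives $|\int\langle f,g\rangle\dd x|\le\|f\|_{L^p(w_\gam;X)}\|g\|_{L^{p'}(w_{\gam'};X')}$ directly. Hence both $(u,v)\mapsto\int\langle\Delta u,v\rangle\dd x$ and $(u,v)\mapsto\int\langle u,\Delta v\rangle\dd x$ are jointly continuous on $W^{2,p}_\Dir(\RRdh,w_\gam;X)\times W^{2,p'}_\Dir(\RRdh,w_{\gam'};X')$, and the density reduction via Proposition~\ref{prop:tracechar_RRdh} is immediate. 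No Hardy inequality or Sobolev trace embedding is actually needed for this step; the boundary terms in Green's formula vanish already for the approximants in $C^\infty_{\cDir}$, and the limit passage is purely an $L^p$--$L^{p'}$ matter.
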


To continue, we briefly recall the extrapolation scales, see \cite[Appendix E]{KW04} or \cite[Chapter 5]{Am95} for more details. Let $A$ be a sectorial operator on a Banach space $Y$ such that $0\in\rho(A)$.  Then for any $\alpha\in \RR$, we can define the scale of extrapolation spaces
\begin{equation*}
  (E_{\alpha,A}, \|\cdot\|_{E_{\alpha,A}}) = \begin{cases}
                                               (D(A^{\alpha}), \|A^{\alpha}\cdot\|_Y) & \mbox{if }\alpha\geq 0,\\
                                               (Y, \|A^{\alpha}\cdot\|_Y)^{\sim} & \mbox{if }\alpha<0.
                                             \end{cases}
\end{equation*}
where $\sim$ denotes the completion of the space. Let $A'$ denote the adjoint of $A$. In the case that $Y$ is reflexive and $\alpha\in\RR$, the extrapolation scale satisfies
\begin{equation}\label{eq:extrapol-scale}
  E_{-\alpha,A} = (E_{\alpha, A'})',
\end{equation}
with respect to the duality  $\langle Y, Y'\rangle$.\\

With the extrapolation scale and the characterisation of the adjoint, we can prove the following estimate for the perturbation terms coming from the Dirichlet Laplacian in \eqref{eq:pertBDIR} on weighted Lebesgue spaces.

\begin{lemma}\label{lem:lebesB} Let $p\in(1,\infty)$, $\lambda\in (0,1]$, $\gam\in ((1-\lambda)p-1, p-1)$ and let $X$ be a $\UMD$ Banach space. Let $\delDir^{\RRdh}$ on $L^p(\RRdh, w_{\gam};X)$ be the Dirichlet Laplacian as in Definition \ref{def:delRRdh}.
Assume that $\OO$ is a special $\Cc^{1,\lambda}$-domain with $[\OO]_{C^{1,\lambda}}\leq 1$.  Then the perturbation $B$ in \eqref{eq:pertBDIR} satisfies 
  \begin{equation*}
    \|(\mu-\delDir^{\RRdh})^{-\half} B u\|_{L^p(\RRdh, w_{\gam};X)}\leq C\cdot[\OO]_{C^{1, \lambda}}\cdot\|(\mu-\delDir^{\RRdh})^{\half}u \|_{L^p(\RRdh, w_{\gam};X)},
  \end{equation*}
  for all $\mu>0$ and $u\in W^{1,p}_\Dir(\RRdh, w_{\gam};X)$.
\end{lemma}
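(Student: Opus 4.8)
The plan is to verify hypothesis \ref{it:thm:perturbcalculus3} of Theorem \ref{thm:perturbcalculus} for $\alpha=\tfrac12$ by a duality argument, using that $D\bigl((\mu-\delDir^{\RRdh})^{1/2}\bigr)=W^{1,p}_\Dir(\RRdh,w_\gam;X)$ by Proposition \ref{prop:domain_char_RRdh}. Write $A:=\mu-\delDir^{\RRdh}$ on $L^p(\RRdh,w_\gam;X)$; since $\mu>0$ we have $0\in\rho(A)$, so $A^{-1/2}$ is bounded. By standard properties of adjoints and fractional powers together with Proposition \ref{prop:adjoint}, the adjoint of $A$ with respect to the unweighted pairing is $A'=\mu-\delDir^{\RRdh}$ on $L^{p'}(\RRdh,w_{\gam'};X')$ with $\gam'=-\gam/(p-1)\in(-1,p'-1)$, and $(A^{-1/2})'=(A')^{-1/2}$. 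By Theorem \ref{thm:LLRVthm1.1Dir} (applied with $p',\gam',X'$) and Proposition \ref{prop:domain_char_RRdh}, $(A')^{-1/2}$ maps $L^{p'}(\RRdh,w_{\gam'};X')$ isomorphically onto $D\bigl((A')^{1/2}\bigr)=W^{1,p'}_\Dir(\RRdh,w_{\gam'};X')$, and likewise $\|A^{1/2}u\|_{L^p(\RRdh,w_\gam;X)}\eqsim\|u\|_{W^{1,p}(\RRdh,w_\gam;X)}$ on $D(A^{1/2})$. Hence it suffices to prove the bilinear estimate
\begin{equation*}
|\langle Bu,g\rangle|\le C\,[\OO]_{C^{1,\lambda}}\,\|u\|_{W^{1,p}(\RRdh,w_\gam;X)}\,\|g\|_{W^{1,p'}(\RRdh,w_{\gam'};X')},\qquad g\in W^{1,p'}_\Dir(\RRdh,w_{\gam'};X'),
\end{equation*}
and by Proposition \ref{prop:tracechar_RRdh} and density we may assume $u\in\Cc^\infty(\RRdh;X)$, which makes every boundary integral below vanish trivially.

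Next I would rewrite the perturbation. By \eqref{eq:pertB}, $B_1$ and $B_2$ are second-order operators whose coefficients are (products of) $(\grad h_1)\circ\Psi^{-1}$ and $(\d_j h_1)\circ\Psi^{-1}$, which by Lemma \ref{lem:localization_weighted_blow-up}\ref{it:lem:localization_weighted_blow-up;dist_preserving}+\ref{it:lem:localization_weighted_blow-up;est} are bounded on $\RRdh$ with norm $\lesssim[\OO]_{C^{1,\lambda}}$, while $B_3=c_1\d_1$ with $c_1:=-(\del h_1)\circ\Psi^{-1}$, for which the same lemma gives $|c_1(y)|\lesssim[\OO]_{C^{1,\lambda}}\,y_1^{-(1-\lambda)}$ and $|\d_j c_1(y)|\lesssim[\OO]_{C^{1,\lambda}}\,y_1^{-(2-\lambda)}$. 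Integrating $\langle(B_1+B_2)u,g\rangle$ by parts once produces a ``good'' term $\sum_{i,j}\int\beta_{ij}(\d_i u)(\d_j g)$ with bounded coefficients $|\beta_{ij}|\lesssim[\OO]_{C^{1,\lambda}}$, together with first-order terms whose coefficients are derivatives of $(\grad h_1)\circ\Psi^{-1}$; combining these with $c_1\delta_{j1}$ gives coefficients $r_j$ with $|r_j(y)|\lesssim[\OO]_{C^{1,\lambda}}\,y_1^{-(1-\lambda)}$ and $|\d_j r_j(y)|\lesssim[\OO]_{C^{1,\lambda}}\,y_1^{-(2-\lambda)}$ (by Lemma \ref{lem:localization_weighted_blow-up}\ref{it:lem:localization_weighted_blow-up;est}; here $[\OO]_{C^{1,\lambda}}\le1$ is used to absorb products). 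Thus
\begin{equation*}
\langle Bu,g\rangle=\sum_{i,j}\int_{\RRdh}\beta_{ij}\,(\d_i u)\,(\d_j g)\ud y+\sum_j\int_{\RRdh}r_j\,(\d_j u)\,g\ud y .
\end{equation*}

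The first sum is estimated directly by Hölder's inequality (using $(L^p(\RRdh,w_\gam;X))'=L^{p'}(\RRdh,w_{\gam'};X')$) by $\lesssim[\OO]_{C^{1,\lambda}}\|\grad u\|_{L^p(w_\gam)}\|\grad g\|_{L^{p'}(w_{\gam'})}$. For the second sum, integrate by parts once more in the $y_j$-direction; since $u$ (and $g$) vanish at $\d\RRdh$ there is no boundary contribution and one gets $-\int u\,(\d_j r_j)\,g-\int u\,r_j\,(\d_j g)$. In $\int u\,r_j\,(\d_j g)$ put the factor $y_1^{-(1-\lambda)}$ on $u$: by Hölder and the non-sharp Hardy inequality Lemma \ref{lem:frac_Hardy2} with $s=1-\lambda$ — admissible precisely because $\gam>(1-\lambda)p-1$ and $1-\lambda\le1$ — one has $\|u\|_{L^p(w_{\gam-(1-\lambda)p})}\lesssim\|u\|_{W^{1,p}(w_\gam)}$, so this term is $\lesssim[\OO]_{C^{1,\lambda}}\|u\|_{W^{1,p}(w_\gam)}\|\grad g\|_{L^{p'}(w_{\gam'})}$. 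In the remaining term $\int u\,(\d_j r_j)\,g$ split $y_1^{-(2-\lambda)}=y_1^{-(1-\lambda)}\cdot y_1^{-1}$, putting $y_1^{-(1-\lambda)}$ on $u$ as before and $y_1^{-1}$ on $g$: since $g\in W^{1,p'}_\Dir(\RRdh,w_{\gam'};X')$ has vanishing trace and $\gam'<p'-1$, the sharp Hardy inequality (Lemma \ref{lem:Hardy}, equivalently Corollary \ref{cor:Sob_embRRdh}) gives $\|g\|_{L^{p'}(w_{\gam'-p'})}\lesssim\|\d_1 g\|_{L^{p'}(w_{\gam'})}$, whence this term is likewise $\lesssim[\OO]_{C^{1,\lambda}}\|u\|_{W^{1,p}(w_\gam)}\|g\|_{W^{1,p'}(w_{\gam'})}$. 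Summing the contributions yields the bilinear estimate, and undoing the reduction proves the lemma.

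The main obstacle — and the reason the argument genuinely differs from Lemmas \ref{lem:estB1B2} and \ref{lem:estB3} — is the first-order perturbation $B_3=c_1\d_1$ (and the first-order terms produced when $B_1,B_2$ are integrated by parts): its coefficient blows up like $\dist(\cdot,\d\RRdh)^{-(1-\lambda)}$, yet here only a single derivative of $u$, and of the test function $g$, is available, so one cannot apply Hardy's inequality to $\d u$ as in Lemma \ref{lem:estB3}. The key point is the extra integration by parts onto $g$, legitimate because $g$ vanishes on $\d\RRdh$, which trades the derivative on $u$ for the worse boundary blow-up $\dist(\cdot,\d\RRdh)^{-(2-\lambda)}$ on a product $u\cdot g$; this is then split between the non-sharp Hardy inequality for $u$ — exactly where $\gam>(1-\lambda)p-1$ enters — and the sharp Hardy inequality for the trace-vanishing $g$ — where $\gam'<p'-1$ enters. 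The bookkeeping showing that all boundary integrals vanish is the main technical chore.
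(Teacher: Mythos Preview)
Your proof is correct and shares the paper's overall strategy: pass to the dual via the extrapolation scale/adjoint, identify $D((A')^{1/2})=W^{1,p'}_\Dir(\RRdh,w_{\gam'};X')$, and reduce to the bilinear estimate $|\langle Bu,g\rangle|\lesssim[\OO]_{C^{1,\lambda}}\|u\|_{W^{1,p}(w_\gam)}\|g\|_{W^{1,p'}(w_{\gam'})}$.

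The difference lies in how the singular first-order terms $\int r_j(\d_j u)g$ with $|r_j|\lesssim[\OO]_{C^{1,\lambda}}y_1^{-(1-\lambda)}$ are handled. The paper does \emph{not} integrate by parts a second time: it applies H\"older directly, placing the fractional weight on the test function, so that one needs $\|g\|_{L^{p'}(w_{\gam'-(1-\lambda)p'})}\lesssim\|g\|_{W^{1,p'}(w_{\gam'})}$. This follows from a cutoff at $y_1=1$ together with the trace-zero Hardy inequality from Corollary~\ref{cor:Sob_embRRdh} (note that Lemma~\ref{lem:frac_Hardy2} as stated need not apply to $g$, since $\gam'>(1-\lambda)p'-1$ can fail; the vanishing trace of $g$ is essential). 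Your route---integrating by parts once more and splitting $y_1^{-(2-\lambda)}=y_1^{-(1-\lambda)}\cdot y_1^{-1}$ between $u$ (Lemma~\ref{lem:frac_Hardy2}, using $\gam>(1-\lambda)p-1$) and $g$ (sharp Hardy, using $\Tr g=0$ and $\gam'<p'-1$)---is equally valid and makes the use of both weight conditions very transparent, at the cost of one more integration by parts and the extra coefficient estimate $|\d_j r_j|\lesssim[\OO]_{C^{1,\lambda}}y_1^{-(2-\lambda)}$. Your remark that ``one cannot apply Hardy to $\d u$'' is correct, but the paper sidesteps this by applying Hardy to $g$ instead, which is why the second integration by parts is not actually needed.
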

\begin{proof}
  We write $A:=\mu-\delDir^{\RRdh}$. Note that \eqref{eq:extrapol-scale}, Propositions \ref{prop:adjoint} and \ref{prop:domain_char_RRdh} imply that
  \begin{equation*}
    \|A^{-\half} B u \|_{L^p(\RRdh, w_{\gam};X)}\eqsim \sup \big|\langle Bu, v\rangle_{L^p(\RRdh, w_{\gam};X)\times L^{p'}(\RRdh, w_{\gam'};X')}\big|,
  \end{equation*}
  where the supremum is taken over all $v\in E_{\half,A'}=D((A')^\half)=W^{1,p'}_\Dir(\RRdh, w_{\gam'};X')$ with $\|v\|_{W^{1,p'}(\RRdh, w_{\gam'};X')}\leq 1$. Fix such a $v\in W^{1,p'}_\Dir(\RRdh, w_{\gam'};X')$. Recall from \eqref{eq:pertBDIR} that the perturbation terms in $B$ are of the form $((\d^{\nu}h_1)\circ\Phi^{-1})^\kappa\d^\mu\d_1 $ with $|\mu|=|\nu|=1$ and $\kappa\in\{1,2\}$, or, with $|\nu|=2$, $|\mu|=0$ and $\kappa=1$. Therefore, by Lemma \ref{lem:localization_weighted_blow-up}\ref{it:lem:localization_weighted_blow-up;est}, integration by parts, H\"older's inequality, Lemma \ref{lem:frac_Hardy2} and Proposition \ref{prop:domain_char_RRdh}, we obtain
  \begin{align*}
    \big|\langle& Bu, v\rangle_{L^p(\RRdh, w_{\gam};X)\times L^{p'}(\RRdh, w_{\gam'};X')}\big|\\
     & \lesssim[\OO]_{C^{1,\lambda}}\Big( \sum_{|\mu|=1} \int_{\RRdh}|\langle \d^{\mu}\d_1 u, v\rangle_{X\times X'}|\dd x + \int_{\RRdh}x_1^{-(1-\lambda)}|\langle \d_1 u, v\rangle_{X\times X'}|\dd x\Big) \\
     & \leq [\OO]_{C^{1,\lambda}} \Big(\int_{\RRdh}x_1^{\gam}\|\d_1 u\|^p_X\dd x\Big)^{\frac{1}{p}}\\
     &\qquad \cdot\Big[\sum_{|\mu|=1}\Big(\int_{\RRdh}x_1^{\gam'}\|\d^\mu v\|^{p'}_{X'}\dd x\Big)^{\frac{1}{p'}}+ \Big(\int_{\RRdh}x_1^{\gam'-(1-\lambda)p'}\|v\|^{p'}_{X'}\dd x\Big)^{\frac{1}{p'}}\Big]\\
     &\lesssim[\OO]_{C^{1,\lambda}} \|u\|_{W^{1,p}(\RRdh, w_{\gam};X)}\|v\|_{W^{1,p'}(\RRdh, w_{\gam'};X')}\lesssim [\OO]_{C^{1,\lambda}}\|A^{\half} u\|_{L^p(\RRdh, w_{\gam};X)}.
  \end{align*}
  This proves the desired estimate.
\end{proof}

\subsection{The proofs of Theorems \ref{thm:Dirichlet_Laplacian_special} and \ref{thm:Neumann_Laplacian_special}}\label{sec:proof_special}
With the preliminary estimates on the perturbation terms, we can now continue with proving the boundedness of the $\Hinf$-calculus for the Laplacian on special domains. We start with the proof of Theorem \ref{thm:Dirichlet_Laplacian_special} for the Dirichlet Laplacian.

\begin{proof}[Proof of Theorem \ref{thm:Dirichlet_Laplacian_special}] Let $\OO$ be a special domain as specified in the theorem, which is of the form
\begin{equation*}
    \OO = \{x\in \RRd: x_1>h(\tilde{x})\},
\end{equation*}
and let $h_1$ and $\Phi$ be as in Lemma \ref{lem:localization_weighted_blow-up}.
Recall that we introduced $\Delta^{\Phi}: W^{2,1}_{\loc}(\RRdh;X)\to L^1_{\loc}(\RRdh; X)$ given by 
\begin{equation*}
\begin{aligned}
  \Delta^{\Phi} :\!\!&= \Phi_* \circ \Delta\circ (\Phi^{-1})_{*}\\
  &= \Delta - 2((\nabla h_1) \circ \Phi^{-1}) \cdot \nabla \partial_1 + |(\nabla h_1) \circ \Phi^{-1}|^2\,\partial_1^2  - ((\Delta h_1) \circ \Phi^{-1})\partial_1\\
  &=:\del + B_1+B_2+B_3.
\end{aligned}
\end{equation*}
  Let $-\delDir^\Phi$ denote the realisation of $\del^\Phi$ in $W^{k,p}(\RRdh, w_{\gam+kp};X)$ with domain $D(-\delDir^\Phi)=W^{k+2,p}_\Dir(\RRdh, w_{\gam+kp};X)$. Due to the isomorphisms in Proposition \ref{prop:isomorphisms}, the trace characterisation in Proposition \ref{prop:tracechar_dom} and standard properties of the $\Hinf$-calculus, the desired statements in Theorem \ref{thm:Dirichlet_Laplacian_special} for $-\delDir$ on $W^{k,p}(\OO, w_{\gam+kp}^{\d\OO};X)$ are equivalent to the corresponding statements for $-\delDir^\Phi$ on $W^{k,p}(\RRdh, w_{\gam+kp};X)$. We will apply the perturbation theorems from Section \ref{subsec:calc_pert} to show that the $\Hinf$-calculus for the Laplacian on the half-space is preserved under the perturbation $B:=B_1+B_2+B_3$. We note that the estimates for $B_1$ and $B_2$ are covered in Lemma \ref{lem:estB1B2} and for $B_3$ in Lemma \ref{lem:estB3}.
  
  \textit{Step 1: the case $\gam\in(p-1,2p-1)$.} Let $\gam\in(p-1,2p-1)$ and let $\OO$ be a special $\Cc^1$-domain. Let $\mu>0$ and we write $A_\Dir := \mu-\delDir^{\RRdh}$. We apply Theorem \ref{thm:perturbcalculus} to show that $\mu-(\delDir^{\RRdh}+B)$ has a bounded $\Hinf$-calculus. Let $u\in D(A_\Dir)=W^{k+2,p}_\Dir(\RRdh, w_{\gam+kp};X)$, then by  Lemma \ref{lem:estB1B2}\ref{it:est1}, Lemma \ref{lem:estB3}\ref{it:est1B3} and Remark \ref{rem:domains}, we have
\begin{align*}
  \|Bu \|_{W^{k,p}(\RRdh, w_{\gam+kp};X)} & \lesssim [\OO]_{C^1}\|u\|_{W^{k+2,p}(\RRdh, w_{\gam+kp};X)} \\
   & \eqsim [\OO]_{C^1}\|A_{\Dir}u\|_{W^{k,p}(\RRdh, w_{\gam+kp};X)},
\end{align*}
which shows condition \ref{it:thm:perturbcalculus1} of Theorem \ref{thm:perturbcalculus}. To show that condition \ref{it:thm:perturbcalculus2} of Theorem \ref{thm:perturbcalculus} holds, note that by Lemma \ref{lem:estB1B2}\ref{it:est1} and Lemma \ref{lem:estB3}\ref{it:est1B3} we have that
\begin{equation}\label{eq:Bbounded}
  \begin{aligned}
  B&\colon W^{k+2,p}(\R^d_+,w_{\gam+kp};X) \to W^{k,p}(\R^d_+,w_{\gam+kp};X) \quad \text{ and } \\
  B&\colon W_0^{k+3,p}(\R^d_+,w_{\gam+kp};X) \to W^{k+1,p}(\R^d_+,w_{\gam+kp};X)
\end{aligned}
\end{equation}
are bounded operators. Take $\theta\in (0,\half)$ such that Proposition \ref{prop:Sneiberg} for $k_0=2$ holds and let $u \in D(A_\Dir^{1+\theta})$. Then, by Corollary \ref{cor:frac_domain_Dir} twice, properties of the complex interpolation method using \eqref{eq:Bbounded},  Proposition \ref{prop:Sneiberg} and the invertibility of $A_\Dir$ we have 
\begin{align*}
  \nrm{A_\Dir^\theta Bu}_{W^{k,p}(\R^d_+,w_{\gam+kp};X)} \leq \nrm{Bu}_{D(A_\Dir^\theta)}&\eqsim \nrm{Bu}_{[W^{k,p}(\R^d_+,w_{\gam+kp};X), W^{k+1,p}(\R^d_+,w_{\gam+kp};X)]_{2\theta}}\\
  &\lesssim \nrm{u}_{[W^{k+2,p}(\R^d_+,w_{\gam+kp};X), W_0^{k+3,p}(\R^d_+,w_{\gam+kp};X)]_{2\theta}}\\
  &\lesssim \nrm{u}_{[W_0^{k+2,p}(\R^d_+,w_{\gam+kp};X), W_0^{k+3,p}(\R^d_+,w_{\gam+kp};X)]_{2\theta}}\\
  &\eqsim \nrm{u}_{[W^{k+2,p}_{\Dir}(\R^d_+,w_{\gam+kp};X), W^{k+3,p}_{\Dir}(\R^d_+,w_{\gam+kp};X)]_{2\theta}}\\
  &\eqsim \nrm{u}_{D(A_{\Dir}^{1+\theta})} \eqsim \nrm{A_\Dir^{1+\theta}u}_{W^{k,p}(\R^d_+,w_{\gam+kp};X)}.
\end{align*}
This shows condition \ref{it:thm:perturbcalculus2} of Theorem \ref{thm:perturbcalculus}. Therefore, Theorems \ref{thm:LLRVthm1.1Dir} and  \ref{thm:perturbcalculus} give that $\mu-\delDir^\Phi$ has a bounded $\Hinf$-calculus of angle zero if $[\OO]_{C^1}$ is small enough.
  
  \textit{Step 2: the case $\gam\in((1-\lambda)p-1,p-1)$.} Let $\lambda\in(0,1]$, $\gam\in((1-\lambda)p-1,p-1)$ and let $\OO$ be a special $\Cc^{1,\lambda}$-domain. We apply Theorem \ref{thm:perturbcalculus} to show that $\mu-(\delDir^{\RRdh}+B)$ has a bounded $\Hinf$-calculus. Let $u\in D(A_\Dir)=W^{k+2,p}_\Dir(\RRdh, w_{\gam+kp};X)$.
  Then by Lemma \ref{lem:estB1B2}\ref{it:est2}, Lemma \ref{lem:estB3}\ref{it:est2B3} and Remark \ref{rem:domains}, we have
\begin{align*}
  \|Bu\|_{W^{k,p}(\RRdh, w_{\gam+kp};X)} & \lesssim [\OO]_{C^{1, \lambda}}\|u\|_{W^{k+2,p}(\RRdh, w_{\gam+kp};X)} \\
  & \eqsim [\OO]_{C^{1,\lambda}}\|A_\Dir u\|_{W^{k,p}(\RRdh, w_{\gam+kp})}.
\end{align*}
Thus, condition \ref{it:thm:perturbcalculus1} of Theorem \ref{thm:perturbcalculus} is satisfied. To continue, we verify condition \ref{it:thm:perturbcalculus3} of Theorem \ref{thm:perturbcalculus} for $\alpha=\half$. If $k=0$, then the required estimate follows from Lemma \ref{lem:lebesB}. If $k\in \NN_1$, then by Proposition \ref{prop:complex_int_W_Dir} and Corollary \ref{cor:frac_domain_Dir}, we have
\begin{align*}
  W^{k,p}(\RRdh, w_{\gam+kp};X)&=[W^{k-1,p}(\RRdh, w_{\gam+p +(k-1)p};X), W^{k+1,p}_\Dir(\RRdh, w_{\gam+p +(k-1)p};X)]_\half\\
  & = D(\tilde{A}_\Dir^\half),
\end{align*}
where $\tilde{A}_\Dir:=\mu-\delDir^{\RRdh}$ on $W^{k-1,p}(\RRdh, w_{\gam+p +(k-1)p};X)$. Moreover, note that by definition of the fractional powers and \cite[Lemma 6.4]{LLRV24}, it follows that the fractional powers of $A_\Dir$ and $\tilde{A}_\Dir$ are consistent. Therefore, together with Lemma \ref{lem:estB1B2}\ref{it:est1}, Lemma \ref{lem:estB3}\ref{it:est1B3} and Remark \ref{rem:domains}, we obtain
\begin{align*}
  \|A_\Dir^{-\half}Bu\|_{W^{k,p}(\RRdh, w_{\gam+kp};X)} & \eqsim \|\tilde{A}_\Dir^\half A_\Dir^{-\half}Bu\|_{W^{k-1,p}(\RRdh, w_{\gam+p+(k-1)p};X)} \\
  & = \|Bu\|_{W^{k-1,p}(\RRdh, w_{\gam+p+(k-1)p};X)} \\
   & \lesssim \|u\|_{W^{k+1,p}(\RRdh, w_{\gam+kp};X)}\eqsim \|A_\Dir^{\half}u\|_{W^{k,p}(\RRdh, w_{\gam+kp};X)},
\end{align*}
for $u\in D(A_\Dir^\half)= W^{k+1,p}_\Dir(\RRdh, w_{\gam+kp};X)$. Therefore, Theorems \ref{thm:LLRVthm1.1Dir} and \ref{thm:perturbcalculus} give that $\mu-\delDir^\Phi$ has a bounded $\Hinf$-calculus of angle zero if $[\OO]_{C^{1,\lambda}}$ is small enough.
\end{proof}

We conclude this section with the proof of Theorem \ref{thm:Neumann_Laplacian_special} about the $\Hinf$-calculus for the Neumann Laplacian.
\begin{proof}[Proof of Theorem \ref{thm:Neumann_Laplacian_special}]
   Let $\OO$ be a special $\Cc^{j+1,\lambda}$-domain with $[\OO]_{C^{j+1,\lambda}}\leq \Lambda$, where $\Lambda\in(0,1)$ is as in Lemma \ref{lem:loc_normal}. Let $\Psi$ be the diffeomorphism from Lemma \ref{lem:loc_normal} and define $\del^{\Psi}: W^{2,1}_\loc(\RRdh;X)\to L^1_\loc(\RRdh;X)$ by $\del^\Psi:= \Psi_*\circ \del \circ(\Psi^{-1})_*$. Recall that $\del^\Psi=\del + B$, where the perturbation term $B$ is a linear combination of terms of the form \eqref{eq:pertBNEU}. In particular, we define $B_1,B_2$ and $B_3$ as, respectively, all the perturbation terms in $B$ of the form
   \begin{equation*}
    \begin{aligned}
    \big[(\d^{\nu_1} h_{i_1})\circ \Psi^{-1}\big]\d^{\mu}, \quad &&|\nu_1|=1, \,|\mu|=2,\\
    \big[(\d^{\nu_1} h_{i_1})\circ \Psi^{-1}\big]\big[(\d^{\nu_2} h_{i_2})\circ \Psi^{-1}\big]\d^{\mu},\quad &&|\nu_1|=|\nu_2|=1,\, |\mu|=2,\\
    \big[(\d^{\nu_1} h_{i_1})\circ \Psi^{-1}\big]\d^{\mu},\quad&& |\nu_1|=2,\, |\mu|=1,
\end{aligned}
\end{equation*}
where $i_1,i_2\in\{1,\dots, d\}$.
   
   For $j\in\{0,1\}$ let $-\delNeu^\Psi$ denote the realisation of $-\del^\Psi$ in $W^{k+j,p}(\RRdh, w_{\gam+kp};X)$ with domain $D(\delNeu^\Psi)=W^{k+2+j,p}_\Neu(\RRdh, w_{\gam+kp};X)$. Due to the isomorphisms in Proposition \ref{prop:isomorphisms}, the trace characterisation in Proposition \ref{prop:tracechar_dom} and standard properties of the $\Hinf$-calculus, the desired statements in Theorem \ref{thm:Neumann_Laplacian_special} for $-\delNeu$ on $W^{k+j,p}(\OO, w_{\gam+kp}^{\d\OO};X)$ are equivalent to the corresponding statements for $-\delNeu^\Psi$ on $W^{k+j,p}(\RRdh, w_{\gam+kp};X)$. We will apply the perturbation theorems from Section \ref{subsec:calc_pert} to show that the $\Hinf$-calculus for the Laplacian on the half-space is preserved under the perturbation $B=B_1+B_2+B_3$. We note that the estimates for $B_1$ and $B_2$ are covered in Lemma \ref{lem:estB1B2} and for $B_3$ in Lemma \ref{lem:estB3}.

Let $\mu>0$ and we write $A_\Neu := \mu-\delNeu^{\RRdh}$. We first apply Theorem \ref{thm:perturbcalculus} to show that $\mu-(\delNeu^{\RRdh}+B_1+B_2)$ has a bounded $\Hinf$-calculus on $W^{k+j,p}(\RRdh, w_{\gam+kp};X)$ for $k\in \NN_0$. Let $u\in D(A_\Neu)=W^{k+j+2,p}_{\Neu}(\RRdh, w_{\gam+kp};X)$.
 Then by Lemma \ref{lem:estB1B2}\ref{it:est2} and Remark \ref{rem:domains}, we have
 \begin{align*}
  \|B_1u+B_2 u \|_{W^{k+j,p}(\RRdh, w_{\gam+kp};X)} & \lesssim [\OO]_{C^{j+1, \lambda}}\|u\|_{W^{k+j+2,p}(\RRdh, w_{\gam+kp};X)} \\
   & \eqsim [\OO]_{C^{j+1,\lambda}}\|A_{\Neu}u\|_{W^{k+j,p}(\RRdh, w_{\gam+kp};X)},
\end{align*}
which shows condition \ref{it:thm:perturbcalculus1} of Theorem \ref{thm:perturbcalculus}. Next, we verify condition \ref{it:thm:perturbcalculus2} of Theorem \ref{thm:perturbcalculus} for $\alpha=\half$. Let $u\in D(A_\Neu^{\frac{3}{2}})=W^{k+j+3,p}_{\del,\Neu}(\RRdh, w_{\gam+kp};X)$, then by Proposition \ref{prop:domain_char_RRdh_Neu}, Lemma \ref{lem:estB1B2}\ref{it:est2} and the invertibility of $A_\Neu$, we have
\begin{align*}
  \|A_\Neu^{\half}(B_1+B_2) u \|_{W^{k+j,p}(\RRdh,w_{\gam+kp};X)}&\leq \|B_1u+B_2u\|_{D(A_\Neu^{\half})}
  \lesssim \|u\|_{W^{k+j+3,p}(\RRdh, w_{\gam+kp};X)}\\
  &\eqsim \|u\|_{D(A_\Neu^{\frac{3}{2}})}\eqsim \|A_\Neu^{\frac{3}{2}}u\|_{W^{k+j,p}(\RRdh, w_{\gam+kp};X)}.
\end{align*}
Therefore, Theorems \ref{thm:LLRVthm1.2Neu} and \ref{thm:perturbcalculus} give that $\mu-(\delNeu^{\RRdh}+B_1+B_2)$ has a bounded $\Hinf$-calculus of angle zero if $[\OO]_{C^{j+1,\lambda}}$ is small enough. 

To obtain that $\mu-\delNeu^{\Psi}$ has a bounded $\Hinf$-calculus, it remains to apply Theorem \ref{thm:lowe_ord_pert} to the lower-order perturbation $B_3$. First, for $j=0$ we apply Theorem \ref{thm:lowe_ord_pert} with $\alpha= 1-\frac{\lambda}{2}\in [\half, 1)$. For $u\in D(\mu - (\delNeu^{\RRdh}+B_1+B_2))=D(A_\Neu)$ we obtain with Lemma \ref{lem:estB3}\ref{it:est2B3} and Proposition \ref{prop:frac_domain} (using Theorem \ref{thm:LLRVthm1.2Neu}) that
\begin{align*}
  \|B_3 u\|_{W^{k,p}(\RRdh, w_{\gam+kp};X)}&\lesssim \|u\|_{[W^{k,p}(\RRdh, w_{\gam+kp};X), W^{k+2,p}(\RRdh, w_{\gam+kp};X)]_{1-\frac{\lambda}{2}}}
  \lesssim  \|u\|_{D(A_\Neu^{1-\frac{\lambda}{2}})}.
\end{align*}
For $j=1$ we apply Theorem \ref{thm:lowe_ord_pert} with $\alpha=\half$. For $u\in D(\mu-(\delNeu^{\RRdh}+B_1+B_2)) = D(A_\Neu)$ we obtain with Lemma \ref{lem:estB3}\ref{it:est3B3} that
\begin{align*}
  \|B_3 u\|_{W^{k+1,p}(\RRdh, w_{\gam+kp};X)}&\lesssim  \|u\|_{W^{k+2,p}(\RRdh, w_{\gam+kp};X)}.
\end{align*}
Observe that by Proposition \ref{prop:complex_int_W_Neu}, the bounded $\Hinf$-calculus for $\mu-(\delNeu^{\RRdh}+B_1+B_2)$ and Proposition \ref{prop:frac_domain}, we obtain
\begin{align*}
  W^{k+2,p}_\Neu(\RRdh, w_{\gam+kp};X) & =[W^{k+1,p}(\RRdh, w_{\gam+kp};X), W^{k+3,p}_\Neu(\RRdh, w_{\gam+kp};X)]_\half \\
 & = [W^{k+1,p}(\RRdh, w_{\gam+kp};X), D(\mu-(\delNeu^{\RRdh}+B_1+B_2))]_\half\\
 & = D\big((\mu-(\delNeu^{\RRdh}+B_1+B_2))^\half\big).
\end{align*}
This shows the required estimate \eqref{eq:lower_ord} for both $j=0$ and $j=1$. Therefore, the bounded $\Hinf$-calculus for $\mu-(\delNeu^{\RRdh}+B_1+B_2)$, Theorem \ref{thm:lowe_ord_pert} and Proposition \ref{prop:calc_pert_Id}\ref{it:prop:calc_pert_Id2}, show that $\mu-\delNeu^{\Psi}$ has a bounded $\Hinf$-calculus of angle zero if $[\OO]_{C^{j+1,\lambda}}$ is small enough. Note that the application of Proposition \ref{prop:calc_pert_Id}\ref{it:prop:calc_pert_Id2} requires sectoriality of $\mu-\delNeu^{\Psi}$ for all $\mu>0$, which can be obtained from \cite[Theorem 16.2.3(2)]{HNVW24} applied to the operator $A=\mu-\delNeu^{\smash{\RRdh}}$, provided that $[\OO]_{C^{j+1,\lambda}}$ is small enough.
\end{proof}

\section{Functional calculus for the Laplacian on bounded domains}\label{sec:calc_dom}

In this section, we establish our main results concerning the $\Hinf$-calculus for the Laplacian on bounded domains. We begin by recalling the definition of the Laplacian in this setting. The relevant weighted Sobolev spaces with vanishing boundary conditions were introduced in Definition \ref{def:spaces_bounded}.

\begin{definition}\label{def:delDirObounded}
Let $p\in(1,\infty)$, $k\in\NN_0$, $\lambda\in[0,1]$ and let $X$ be a $\UMD$ Banach space. 
   \begin{enumerate}[(i)]
    \item\label{it:def:delO1bounded} Let $\gam\in ((1-\lambda)p-1, 2p-1)\setminus\{p-1\}$ and $\OO$ a bounded $C^{1,\lambda}$-domain.     
    The \emph{Dirichlet Laplacian $\delDir$ on $W^{k,p}(\OO,w^{\d\OO}_{\gam+kp};X)$} with $k\in \NN_0$ is defined by
  \begin{equation*}
    \delDir u := \del u\quad \text{ with }\quad D(\delDir):=W^{k+2,p}_{\Dir}(\OO, w^{\d\OO}_{\gam+kp};X).
  \end{equation*}
    \item\label{it:def:delO2bounded} Let $\gam\in ((1-\lambda)p-1, p-1)$, $j\in\{0,1\}$ and $\OO$ a bounded $C^{j+1,\lambda}$-domain. The \emph{Neumann Laplacian $\delNeu$ on $W^{k+j,p}(\OO,w^{\d\OO}_{\gam+kp};X)$} is defined by
  \begin{equation*}
    \delNeu u := \del u\quad \text{ with }\quad D(\delNeu):=W^{k+j+2,p}_{\Neu}(\OO, w^{\d\OO}_{\gam+kp};X).
  \end{equation*}
  \item \label{it:def:delO3bounded} Let $\gam\in ((1-\lambda)p-1, p-1)$, $j\in\{0,1\}$ and $\OO$ a bounded $C^{j+1,\lambda}$-domain. The \emph{Neumann Laplacian $\delNeu$ on the quotient space}
            \begin{equation*}
      W^{k+j,p}(\OO,w^{\d\OO}_{\gam+kp};X)/\{c\ind_\OO: c\in X\}\
    \end{equation*}
    is defined by $\delNeu u:=\del u $ with 
      \begin{equation*}
    D(\delNeu):=W^{k+j+2,p}_\Neu(\OO,w^{\d\OO}_{\gam+kp};X)/\{c\ind_\OO: c\in X\}.
  \end{equation*}
    \end{enumerate}   
\end{definition}

We now state the main results of this paper about the $\Hinf$-calculus for the Laplacian on bounded domains. The proofs of the theorems below are given in Sections \ref{sec:proof23} and \ref{sec:cor_X=C}.
\begin{theorem}[$\Hinf$-calculus for $\mu-\delDir$ on domains]\label{thm:Dirichlet_Laplacian}
   Let $p\in(1,\infty)$, $k\in\NN_0$, $\lambda\in[0,1]$, $\gam\in((1-\lambda)p-1, 2p-1)\setminus\{p-1\}$, $\sigma\in(0,\pi)$ and let $X$ be a $\UMD$ Banach space. Moreover, assume that $\OO$ is a bounded $C^{1,\lambda}$-domain.
    Let $\delDir$ on $W^{k,p}(\Dom,w^{\BDom}_{\gam+kp};X)$ be the Dirichlet Laplacian as in Definition \ref{def:delDirObounded}. Then there exists a $\tilde{\mu}>0$ such that for all $\mu>\tilde{\mu}$ the operator $\mu-\delDir$ has a bounded $\Hinf$-calculus with $\om_{\Hinf}(\mu-\delDir)\leq \sigma$.
\end{theorem}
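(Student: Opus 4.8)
The plan is to reduce the bounded domain to flat charts by a boundary--straightening localisation, to invoke Theorem~\ref{thm:Dirichlet_Laplacian_special} on these charts, and to absorb the localisation errors as lower-order perturbations by means of Theorem~\ref{thm:lowe_ord_pert}. Fix $\sigma\in(0,\pi)$ and let $\delta\in(0,1)$ be the constant from Theorem~\ref{thm:Dirichlet_Laplacian_special} for the data $p,k,\lambda,\gam,X$. First I would apply Lemma~\ref{lem:decomp}: choosing, when $\lambda>0$, an auxiliary $\eps\in(0,\lambda)$ small enough that $\gam>(1-(\lambda-\eps))p-1$ still holds (and $\eps=0$ when $\lambda=0$), it produces a finite cover $(V_n)_{n=1}^N$ of $\BDom$, special $\Cc^{1,\lambda-\eps}$-domains $\OO_1,\dots,\OO_N$ with $[\OO_n]_{C^{1,\lambda-\eps}}<\delta$, a quadratic partition of unity $(\eta_n)_{n=0}^N$, and a retraction $\PP\colon\WW^{k,p}_{\gam+kp}\to W^{k,p}(\Dom,w^{\BDom}_{\gam+kp};X)$ with coretraction $\II f=(\eta_n f)_{n=0}^N$ satisfying $\PP\II=\mathrm{id}$, where $\WW^{k,p}_{\gam+kp}=W^{k,p}(\RRd;X)\oplus\bigoplus_{n=1}^N W^{k,p}(\OO_n,w^{\d\OO_n}_{\gam+kp};X)$.

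On $\WW^{k,p}_{\gam+kp}$ I would then consider, for $\mu>0$, the block-diagonal operator $\mathcal A_\mu:=(\mu-\del_{\RRd})\oplus\bigoplus_{n=1}^N(\mu-\delDir^{\OO_n})$ with its natural domain $W^{k+2,p}(\RRd;X)\oplus\bigoplus_{n=1}^N W^{k+2,p}_{\Dir}(\OO_n,w^{\d\OO_n}_{\gam+kp};X)$. On $\RRd$ the operator $\mu-\del$ has a bounded $\Hinf(\Sigma_\sigma)$-calculus of angle $0$ on $W^{k,p}(\RRd;X)$ (conjugating by $(1-\del)^{k/2}$ and using the $L^p$-result for $\UMD$ spaces, see \cite[Section~10.5]{HNVW17}), and each chart operator $\mu-\delDir^{\OO_n}$ has one by Theorem~\ref{thm:Dirichlet_Laplacian_special}, since $[\OO_n]_{C^{1,\lambda-\eps}}<\delta$ and $\gam$ lies in the admissible range for the exponent $\lambda-\eps$. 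Hence $\mathcal A_\mu$ has a bounded $\Hinf(\Sigma_\sigma)$-calculus of angle $0$, in particular $\BIP$; combining this with Propositions~\ref{prop:frac_domain} and \ref{prop:complex_int_W_Dir} chart by chart identifies $D(\mathcal A_\mu^{1/2})=W^{k+1,p}(\RRd;X)\oplus\bigoplus_{n=1}^N W^{k+1,p}_{\Dir}(\OO_n,w^{\d\OO_n}_{\gam+kp};X)$ with norm equivalent to the $W^{k+1,p}$-norm.

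Next, the identity $[\del,\eta_n]=2\grad\eta_n\cdot\grad+(\del\eta_n)$ yields, for $u\in W^{k+2,p}_{\Dir}(\Dom,w^{\BDom}_{\gam+kp};X)$,
\[
\mathcal A_\mu\II u=\II\bigl((\mu-\delDir)u\bigr)+\mathcal C u,\qquad \mathcal C u:=\bigl(-[\del,\eta_n]u\bigr)_{n=0}^N,
\]
where $[\del,\eta_n]u$ is supported in $V_n$, viewed as a function on $\OO_n$ (and extended by $0$ to $\RRd$ for $n=0$). Setting $\mathcal B:=-\mathcal C\PP$, this is a first-order differential operator in each block composed with the bounded map $\PP$, so the previous step gives $D(\mathcal B)\supseteq D(\mathcal A_\mu^{1/2})$ and $\|\mathcal Bg\|_{\WW^{k,p}_{\gam+kp}}\lesssim\|\PP g\|_{W^{k+1,p}(\Dom,w^{\BDom}_{\gam+kp};X)}\lesssim\|\mathcal A_\mu^{1/2}g\|_{\WW^{k,p}_{\gam+kp}}$. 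By Theorem~\ref{thm:lowe_ord_pert} there is $\mu_0\geq0$ such that $\mathcal L:=\mu_0+\mathcal A_\mu+\mathcal B$, with $D(\mathcal L)=D(\mathcal A_\mu)$, has a bounded $\Hinf(\Sigma_\sigma)$-calculus (the angle stays $\leq\sigma$ because $\mathcal A_\mu$ has angle $0$). Since $\mathcal B\II=-\mathcal C$, one obtains the intertwining $\mathcal L\II u=\II\bigl((\mu_0+\mu-\delDir)u\bigr)$ on $D(\delDir)$, and $\mathcal L$ maps $\II(W^{k,p}(\Dom,w^{\BDom}_{\gam+kp};X))$ into itself. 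A retraction argument based on this intertwining and $\PP\II=\mathrm{id}$ transfers the bounded $\Hinf(\Sigma_\sigma)$-calculus to $\wt\mu-\delDir$ with $\wt\mu:=\mu_0+\mu$, via $f(\wt\mu-\delDir)=\PP f(\mathcal L)\II$. Finally, applying Proposition~\ref{prop:calc_pert_Id} to the sectorial operator $\wt\mu-\delDir$ yields a bounded $\Hinf(\Sigma_\sigma)$-calculus for $\mu-\delDir=(\mu-\wt\mu)+(\wt\mu-\delDir)$ for every $\mu>\wt\mu$, proving the claim.

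The main work sits in two places. First, the chart operators $\delDir^{\OO_n}$ must already carry the bounded $\Hinf$-calculus despite the low boundary regularity; this is exactly Theorem~\ref{thm:Dirichlet_Laplacian_special}, where the genuinely second-order (curvature) perturbation has been absorbed, so that only the first-order cutoff commutators $[\del,\eta_n]$ survive the localisation and qualify as lower-order perturbations of $\mathcal A_\mu$ --- which requires the fractional-domain identification $D(\mathcal A_\mu^{1/2})=W^{k+1,p}_{\Dir}$ per chart, resting on the $\BIP$ from Theorem~\ref{thm:Dirichlet_Laplacian_special} and the interpolation identity of Proposition~\ref{prop:complex_int_W_Dir}. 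Second, the transfer of the functional calculus through $\PP,\II$ needs care because $\II\PP$ is a bounded projection but not the identity, so $\mathcal L$ must first be arranged (by the choice $\mathcal B=-\mathcal C\PP$) to leave the range of $\II$ invariant; I expect this bookkeeping, together with the choice of $\eps$ ensuring the charts are flat enough for Theorem~\ref{thm:Dirichlet_Laplacian_special}, to be the delicate part.
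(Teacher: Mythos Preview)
Your overall strategy---localise via Lemma~\ref{lem:decomp}, apply Theorem~\ref{thm:Dirichlet_Laplacian_special} on each chart, identify $D(\mathcal A_\mu^{1/2})$ via Proposition~\ref{prop:complex_int_W_Dir}, and treat the cutoff commutators as lower-order---is precisely the paper's. The difference is that the paper packages the transfer step into the abstract Lemma~\ref{lem:LV6.11}, whereas you attempt to do it by hand via a single perturbation $\mathcal B=-\mathcal C\PP$ and a retraction formula. This is where a gap appears.

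The one-sided intertwining $\mathcal L\,\II=\II(\wt\mu-\delDir)$ that you derive shows only that $\mathcal L$ maps $\II(D(\delDir))$ into $\mathrm{ran}(\II)$. It does \emph{not} show that the resolvent $R(z,\mathcal L)$ preserves $\mathrm{ran}(\II)$, and it is the latter that you need for $f(\wt\mu-\delDir)=\PP f(\mathcal L)\II$. Indeed, from $\mathcal L\,\II=\II(\wt\mu-\delDir)$ you get $(z-\mathcal L)\II v=\II(z-(\wt\mu-\delDir))v$, and hence $R(z,\mathcal L)\II=\II\,R(z,\wt\mu-\delDir)$ \emph{only once you already know} that $z-(\wt\mu-\delDir)$ is surjective. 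But sectoriality of $\wt\mu-\delDir$ is not established anywhere in your argument; it is part of what must be proved. Concretely, for $y\in\mathrm{ran}(\II)$ one has $(1-\II\PP)(z-\mathcal L)R(z,\mathcal L)y=0$, but since $\mathcal L$ does not commute with the projection $\II\PP$ (you acknowledge $\II\PP\neq\mathrm{id}$), this does not force $(1-\II\PP)R(z,\mathcal L)y=0$.

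This is exactly why Lemma~\ref{lem:LV6.11} imposes \emph{two} commutator conditions in its hypothesis~(iii): both $(\II A-\tilde A\II)\PP$ and $\II(A\PP-\PP\tilde A)$ must be lower-order. Your $\mathcal B=-\mathcal C\PP$ encodes the first; the second, namely $\II C$ with $C\tilde u=\sum_n[\del,\eta_n]\tilde u_n$, is the missing ingredient that controls $\PP\mathcal A_\mu-(\mu-\delDir)\PP$ and is needed to obtain sectoriality (hence resolvent invariance) by a parallel perturbation argument. The paper verifies both conditions and then invokes Lemma~\ref{lem:LV6.11}, whose proof absorbs precisely this bookkeeping. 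Your proposal can be repaired by either appealing to Lemma~\ref{lem:LV6.11} directly (checking both commutator bounds, which you essentially already have), or by adding a separate argument that $\wt\mu-\delDir$ is sectorial before attempting the transfer.
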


\begin{theorem}[$\Hinf$-calculus for $\mu-\delNeu$ on domains]\label{thm:Neumann_Laplacian}
   Let $p\in(1,\infty)$, $k\in\NN_0$, $\lambda\in(0,1]$, $\gam\in((1-\lambda)p-1, p-1)$, $j\in\{0,1\}$, $\sigma\in(0,\pi)$ and let $X$ be a $\UMD$ Banach space. Moreover, assume that $\OO$ is a bounded $C^{j+1,\lambda}$-domain.
    Let $\delNeu$ on $W^{k+j,p}(\Dom,w^{\BDom}_{\gam+kp};X)$ or $W^{k+j,p}(\Dom,w^{\BDom}_{\gam+kp};X)/\{c\ind_\OO: c\in X\}$ be the Neumann Laplacian as in Definition \ref{def:delDirObounded}\ref{it:def:delO2bounded} or \ref{it:def:delO3bounded}, respectively.
Then there exists a $\tilde{\mu}>0$ such that for all $\mu>\tilde{\mu}$ the operator $\mu-\delNeu$ has a bounded $\Hinf$-calculus with $\om_{\Hinf}(\mu-\delNeu)\leq \sigma$.
\end{theorem}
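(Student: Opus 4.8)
The strategy is to transfer the bounded $\Hinf$-calculus from special domains (Theorems~\ref{thm:Dirichlet_Laplacian_special} and~\ref{thm:Neumann_Laplacian_special}) to bounded domains via the localisation machinery of Lemma~\ref{lem:decomp}. Fix $\sigma\in(0,\pi)$ and $\eps\in(0,\lambda)$. First I would invoke Lemma~\ref{lem:decomp}\ref{it:lem:decomp1}–\ref{it:lem:decomp2}: choosing $\delta>0$ smaller than the threshold supplied by Theorem~\ref{thm:Dirichlet_Laplacian_special} (resp.\ Theorem~\ref{thm:Neumann_Laplacian_special}), obtain a finite cover $(V_n)_{n=1}^N$, special $\Cc^{1,\lambda-\eps}$-domains (resp.\ $\Cc^{j+1,\lambda-\eps}$-domains) $(\OO_n)_{n=1}^N$ with $[\OO_n]_{C^{1,\lambda-\eps}}<\delta$, and a partition of unity $\sum_{n=0}^N\eta_n^2=1$. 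Since $\lambda-\eps>0$ still satisfies $\gam>(1-(\lambda-\eps))p-1$ once $\eps$ is small (as $\gam>(1-\lambda)p-1$ is a strict inequality), the hypotheses of the special-domain theorems are met on each patch. Set $Y:=W^{k,p}(\OO,w^{\d\OO}_{\gam+kp};X)$ and form the product operator on $\WW^{k,p}_{\gam+kp}$ (the decomposition in~\eqref{eq:Fk}), namely the diagonal operator $\mu-\Delta_0$ acting as $\mu+(-\Delta)$ on $W^{k,p}(\RR^d;X)$ and as $\mu-\delDir$ (resp.\ $\mu-\delNeu$) on each $W^{k,p}(\OO_n,w^{\d\OO_n}_{\gam+kp};X)$. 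A finite direct sum of operators with bounded $\Hinf$-calculus of angle $\le\sigma$ again has a bounded $\Hinf$-calculus of angle $\le\sigma$ (by the standard fact that $H^\infty$-bounds pass to $\ell^\infty$-sums of Banach spaces), using that $-\Delta$ on $W^{k,p}(\RR^d;X)$ has a bounded $\Hinf$-calculus of angle zero on the $\UMD$ space $X$ (the shift is harmless here, or one treats the full-space part unshifted since $0\notin\sigma$ after adding $\mu$).

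Next I would relate this product operator to $\mu-\delDir$ (resp.\ $\mu-\delNeu$) on $Y$ through the retraction/coretraction pair $(\PP,\II)$ from Lemma~\ref{lem:decomp}\ref{it:lem:decomp3}, which satisfy $\PP\II=\id_Y$. The key computation is the commutator identity: for smooth cut-offs one has $(-\Delta)(\eta_n f)=\eta_n(-\Delta f)-2\nabla\eta_n\cdot\nabla f-(\Delta\eta_n)f$, so that $\II\circ(\mu-\Delta)-(\mu-\Delta_0)\circ\II$ is a \emph{first-order} differential operator with bounded $\Cc^\infty$-coefficients supported in the $V_n$, i.e.\ a lower-order perturbation relative to the second-order operator $\mu-\Delta_0$. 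Concretely, $\mu-\delDir$ on $Y$ is similar (via $\PP,\II$) to the compression $\II(\mu-\delDir)\PP$ of $(\mu-\Delta_0)+R$ on the complemented subspace $\II Y\subseteq\WW^{k,p}_{\gam+kp}$, where $R:=\II\,\delDir\,\PP-(-\Delta_0)\,\II\PP$ is a bounded operator from $D((\mu-\Delta_0)^{\alpha})$ to $\WW^{k,p}_{\gam+kp}$ for some $\alpha\in(0,1)$ — indeed, being of order one, $\|R\vec{f}\|\lesssim\|\vec{f}\|_{W^{k+1,p}}\eqsim\|(\mu-\Delta_0)^{1/2}\vec{f}\|$, so the lower-order perturbation Theorem~\ref{thm:lowe_ord_pert} applies with exponent $\alpha=1/2$, yielding a bounded $\Hinf(\Sigma_\sigma)$-calculus for $(\mu-\Delta_0)+R+\tilde\mu$ for some $\tilde\mu\ge0$. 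Since bounded $\Hinf$-calculus is inherited by compressions to complemented subspaces under a retraction (here $\II Y$ is invariant under $(\mu-\Delta_0)+R$ by the commutator identity, as $\II$ intertwines), and is stable under the similarity transform $\PP(\cdot)\II$, one concludes $\mu-\delDir$ on $Y$ has a bounded $\Hinf(\Sigma_\sigma)$-calculus for all $\mu>\tilde\mu$. The Neumann case is identical on $W^{k+j,p}(\OO,w^{\d\OO}_{\gam+kp};X)$; for the quotient-space statement one checks that the constant functions form a one-dimensional reducing subspace and passes to the quotient, noting $0$ remains in the resolvent set of the quotient operator so that Proposition~\ref{prop:calc_pert_Id} gives the result for all $\mu\ge0$ there; for the non-quotient Neumann operator on a bounded domain one uses that $\mu-\delNeu$ is sectorial and invertible for $\mu>0$ large, hence the $\Hinf$-calculus persists.

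The \textbf{main obstacle} is the bookkeeping around the commutator/perturbation $R$: one must verify that $\II$ genuinely maps $D(\mu-\delDir)=W^{k+2,p}_\Dir(\OO,w^{\d\OO}_{\gam+kp};X)$ into the domain of the product operator and that the boundary conditions match on each patch — this uses the consistency of the trace definitions (Definition~\ref{def:spaces_bounded} and Proposition~\ref{prop:tracechar_bdd_dom}), i.e.\ that $\eta_n f$ restricted to $\OO_n$ lies in $W^{k+2,p}_\Dir(\OO_n,w^{\d\OO_n}_{\gam+kp};X)$ because $\eta_n$ is supported in $V_n$ and multiplication by a smooth compactly supported function preserves the vanishing-trace condition. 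One must also confirm that the first-order commutator term, which involves $\nabla\eta_n\cdot\nabla f$, is indeed controlled by $\|(\mu-\Delta_0)^{1/2}\vec f\|$ uniformly, i.e.\ that $D((\mu-\Delta_0)^{1/2})=W^{k+1,p}$ with the appropriate boundary conditions on each piece — which is exactly Proposition~\ref{prop:domain_char_RRdh} (transferred to special domains via Proposition~\ref{prop:isomorphisms}) together with the full-space identity $D((\mu-\Delta)^{1/2})=W^{k+1,p}(\RR^d;X)$. A secondary technical point: the shift $\tilde\mu$ produced by Theorem~\ref{thm:lowe_ord_pert} is only shown to exist (not quantified), which is why the statement is phrased for $\mu>\tilde\mu$ rather than all $\mu>0$; sharpening this to $\mu\ge0$ in the Dirichlet and non-quotient Neumann cases would require additional spectral information about $\delDir$ on the bounded domain, which I would defer to the separate scalar-valued theorems (Theorems~\ref{thm:scalarDir} and~\ref{thm:scalarNeu}) or handle via a compactness/positivity argument showing $\sigma(\delDir)\subseteq(-\infty,0)$.
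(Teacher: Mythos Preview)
Your overall strategy—localise via Lemma~\ref{lem:decomp}, form the direct-sum operator on $\WW^{k,p}_{\gam+kp}$, identify the commutator $[\Delta,\eta_n]$ as a first-order operator, and invoke a lower-order perturbation theorem—matches the paper's proof exactly. The paper, however, packages the retraction/coretraction step into the abstract Lemma~\ref{lem:LV6.11} (taken from \cite{LV18}), which requires checking that \emph{both} $(\II A-\tilde{A}\II)\PP$ and $\II(A\PP-\PP\tilde{A})$ extend boundedly from $[\tilde{Y},D(\tilde{A})]_{1/2}$ to $\tilde{Y}$. The interpolation space is identified via Proposition~\ref{prop:complex_int_W_Neu} on each special-domain summand (not by transferring Proposition~\ref{prop:domain_char_RRdh} as you suggest), and then Lemma~\ref{lem:LV6.11} delivers the conclusion directly.

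Your inline substitute for Lemma~\ref{lem:LV6.11} has a gap. You assert that $\II Y$ is invariant under $(\mu-\Delta_0)+R$ ``by the commutator identity, as $\II$ intertwines'', but with your definition $R=\II\,\delNeu\,\PP-(-\Delta_0)\II\PP$ this is not automatic: the operator $(\mu-\Delta_0)+R$ acts on all of $\WW^{k,p}_{\gam+kp}$, and for a general element $\vec{f}\notin\II Y$ the perturbation $R\vec{f}$ involves $\PP\vec{f}$, which loses information. Even restricted to $\II Y$ the bookkeeping is delicate (one needs both commutators precisely to close the loop), which is why the paper isolates this in an abstract lemma rather than arguing via invariant subspaces. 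Your compression-and-similarity narrative would need to be replaced by a careful verification of the two conditions in Lemma~\ref{lem:LV6.11}\ref{it:lemLV6.11_3}.

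For the quotient space, the paper does \emph{not} argue via a reducing subspace or spectral information: it shows directly, using Cauchy's integral formula, that $\ph(\mu-\delNeu)c=\ph(\mu)c$ for every constant $c$ and every $\ph\in H^1\cap H^\infty$, and then takes the infimum over $c\in K$ in the quotient norm. Note also that Theorem~\ref{thm:Neumann_Laplacian} only asserts the calculus for $\mu>\tilde{\mu}$ on the quotient; your claim that $0\in\rho$ on the quotient (hence the result for all $\mu\ge0$) is not available at this stage for general $\UMD$ spaces $X$—that refinement is established only in the scalar case in Theorem~\ref{thm:scalarNeu}.
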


For $X=\CC$ we obtain that the spectrum of the Laplacian is independent of the involved parameters. Hence, for the Dirichlet Laplacian we also obtain the $\Hinf$-calculus with $\mu=0$ since zero is not contained in the spectrum. 

\begin{theorem}\label{thm:scalarDir}
Suppose that the assumptions of Theorem \ref{thm:Dirichlet_Laplacian} hold with $X=\CC$. Then the following assertions hold.
  \begin{enumerate}[(i)]
     \item \label{it:thmscalar1} The spectrum $\sigma(-\delDir)$ is discrete, contained in $(0,\infty)$ and is independent of $p\in (1,\infty)$, $k\in\NN_0$ and $\gam\in ((1-\lambda)p-1,2p-1)\setminus\{p-1\}$.
     \item \label{it:thmscalar2} Let $\widetilde{\mu }>0$ be the smallest eigenvalue of $-\delDir$. For all
$\mu >-\widetilde{\mu }$ the operator $\mu -\Delta _{{\mathrm{Dir}}}$ has a
bounded $H^{\infty}$-calculus with
$\omega _{H^{\infty}}(\mu -\Delta _{{\mathrm{Dir}}}) = 0$. 
   \end{enumerate}
\end{theorem}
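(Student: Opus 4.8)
The plan is to deduce both assertions from Theorem \ref{thm:Dirichlet_Laplacian} together with spectral theory on $L^2$. First I would establish \ref{it:thmscalar1}. The key point is the \emph{consistency} of the operators $-\delDir$ on the various spaces $W^{k,p}(\OO,w^{\d\OO}_{\gam+kp})$: since $\OO$ is bounded and $\Cc^\infty(\OO)$ is a common dense core on which all these operators act as the classical Laplacian, the resolvents agree on the intersection of any two such spaces, and hence (by density and the boundedness of the resolvents from Theorem \ref{thm:Dirichlet_Laplacian}) the spectra coincide wherever the resolvent sets overlap. It therefore suffices to identify $\sigma(-\delDir)$ for one convenient choice of parameters, say $p=2$, $k=0$, $\gam=0$, i.e.\ the classical Dirichlet Laplacian on $L^2(\OO)$. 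For a bounded $C^{1,\lambda}$-domain this operator is self-adjoint, positive, with compact resolvent (the embedding $H^1_0(\OO)\embed L^2(\OO)$ is compact by Rellich--Kondrachov, which holds for any bounded domain), so its spectrum is a discrete sequence of positive eigenvalues $0<\lambda_1\le\lambda_2\le\cdots\to\infty$; positivity (rather than merely nonnegativity) follows from the Poincaré inequality on the bounded domain, which excludes $0$. By the consistency argument, for all admissible $(p,k,\gam)$ we get $\sigma(-\delDir)=\{\lambda_n\}_{n\ge1}\subseteq(0,\infty)$, discrete and parameter-independent. A small technical point to check here is that the resolvent of the $L^p$-realisation indeed restricts to that of the $L^2$-realisation on the dense subspace — this is where I would invoke the uniqueness statements already used throughout Section \ref{sec:frac_domain} (e.g.\ \cite[Corollary 4.3]{LV18}) to pin down that the two solutions of the resolvent equation coincide.

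For \ref{it:thmscalar2}, I would set $\tilde\mu:=\lambda_1>0$, the bottom of the spectrum. Theorem \ref{thm:Dirichlet_Laplacian} gives a $\mu_0>0$ such that $\mu_0-\delDir$ has a bounded $\Hinf$-calculus with $\om_{\Hinf}(\mu_0-\delDir)\le\sigma$ for every $\sigma\in(0,\pi)$; in particular $\mu_0-\delDir$ is sectorial of angle $0$, and hence so is $\mu-\delDir$ for every $\mu\ge\mu_0$ by Proposition \ref{prop:calc_pert_Id}(i). To push $\mu$ down to $(-\tilde\mu,\mu_0]$, note that for such $\mu$ the operator $\mu-\delDir$ has spectrum $\{\mu+\lambda_n\}_n\subseteq(\mu+\lambda_1,\infty)\subseteq(0,\infty)$, so $0\in\rho(\mu-\delDir)$ and more generally the operator is invertible and sectorial of some angle less than $\pi$; actually, since the spectrum is a discrete subset of $(0,\infty)$ and the resolvent decays appropriately (again from the $\Hinf$-bound for the shifted operator, transferred via the bounded operator $R(0,\mu-\delDir)$), one gets sectoriality of angle $0$. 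Then one applies the shifting result Proposition \ref{prop:calc_pert_Id}(ii): since $\mu_0-\delDir = (\mu-\delDir) + (\mu_0-\mu)$ has a bounded $\Hinf(\Sigma_\sigma)$-calculus and $\mu-\delDir$ is invertible and sectorial, writing $\nu-\delDir = (\mu-\delDir)+(\nu-\mu)$ for $\nu>\mu$ and using part (ii) of that proposition (with the roles arranged so that the operator with a known calculus is a positive shift of the target) yields a bounded $\Hinf(\Sigma_\sigma)$-calculus for $\nu-\delDir$ for all $\nu>\mu$; letting $\mu\downarrow-\tilde\mu$ covers the whole range $\mu>-\tilde\mu$. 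Taking the infimum over admissible $\sigma$ gives $\om_{\Hinf}(\mu-\delDir)=0$.

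The main obstacle I anticipate is the bookkeeping around \emph{angle zero} for $\mu-\delDir$ when $\mu\le 0$ is allowed, i.e.\ when the shift is not positive: Proposition \ref{prop:calc_pert_Id} is phrased for positive shifts $\mu>0$ of an operator that already has the calculus, so one must be careful to apply it in the direction "$\,\nu-\delDir$ is a positive shift of $\mu-\delDir$\," rather than the reverse, and to first independently secure that $\mu-\delDir$ is sectorial of angle $0$ with $0$ in the resolvent set. Establishing that sectoriality directly for $\mu\in(-\tilde\mu,0]$ is where the discreteness and location of the spectrum in $(0,\infty)$, plus the uniform resolvent estimates inherited from the $\Hinf$-calculus of a positive shift, must be combined; once $\mu-\delDir$ is known to be a sectorial operator of angle $0$ with $0\in\rho(\mu-\delDir)$, part (ii) of Proposition \ref{prop:calc_pert_Id} (together with part (i)) closes the argument cleanly. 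The consistency-of-spectra step in \ref{it:thmscalar1} is conceptually routine but deserves a careful sentence or two, since it underpins the parameter-independence claim.
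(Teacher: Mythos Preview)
Your overall strategy for both parts matches the paper's: compare with the $L^2$ Dirichlet Laplacian for \ref{it:thmscalar1}, then shift the $\Hinf$-calculus down using Proposition \ref{prop:calc_pert_Id} for \ref{it:thmscalar2}. However, there are two genuine gaps in your plan for \ref{it:thmscalar1}.

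\textbf{First gap: the unweighted $L^2$ operator is not in the family.} You propose to take $(p,k,\gam)=(2,0,0)$ as the reference. But admissibility requires $\gam\in((1-\lambda)p-1,2p-1)\setminus\{p-1\}$, which for $p=2$ forces $\gam>1-2\lambda$; hence $\gam=0$ is \emph{not} allowed when $\lambda\le\tfrac12$ (in particular for $C^1$-domains). The paper sidesteps this by comparing not with a member of the weighted family but with the form-defined operator $A_{\Dir,2}$ on $L^2(\OO)$, whose domain is $\{f\in W^{1,2}_0(\OO)\cap W^{2,2}_\loc(\OO):\del f\in L^2(\OO)\}$; they explicitly note that this domain need not equal $W^{2,2}_\Dir(\OO)$ at the regularity considered. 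Consistency of resolvents between $A_{p,k,\gam}$ and $A_{\Dir,2}$ is then proved in Lemma \ref{lem:consis_resol_domain} by embedding \emph{both} into a third space $L^q(\OO,w^{\d\OO}_\kappa)$ with $q<\min\{p,2\}$ and an appropriately chosen $\kappa$, and invoking uniqueness there. Your ``common dense core $\Cc^\infty(\OO)$'' sketch does not supply this, since neither $L^2(\OO)$ nor $W^{k,p}(\OO,w^{\d\OO}_{\gam+kp})$ need be contained in the other.

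\textbf{Second gap: discreteness on the weighted side.} The spectral-independence argument (following \cite{Ar94}) needs the spectrum of \emph{each} $A_{p,k,\gam}$ to be discrete: given $\mu\in\rho(-A_{\Dir,2})$ one must find a small circle around $\mu$ lying in $\rho(-A_{\Dir,2})\cap\rho(-A_{p,k,\gam})$, which uses that $\sigma(-A_{p,k,\gam})$ has no accumulation points. You only argue discreteness on $L^2$. The paper establishes it for every admissible $(p,k,\gam)$ via the compact embedding $W^{1,p}(\OO,w^{\d\OO}_{\gam+kp})\hookrightarrow L^p(\OO,w^{\d\OO}_{\gam+kp})$ from \cite{GO89}, so that $(\mu-\delDir)^{-1}$ is compact and Riesz--Schauder applies.

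For \ref{it:thmscalar2} your idea is correct. The paper's execution is slightly cleaner than your shifting gymnastics: once $\sigma(-\delDir)\subseteq[\tilde\mu,\infty)$ is known, analyticity of $z\mapsto(z-\delDir)^{-1}$ on $\CC\setminus(-\infty,-\tilde\mu]$ combined with the sectoriality of $\mu_0-\delDir$ for large $\mu_0$ (angle $\le\sigma$, any $\sigma$) gives directly that $\mu-\delDir$ is sectorial for every $\mu>-\tilde\mu$. Then, given such $\mu$, pick $\eps>0$ with $\mu-\eps>-\tilde\mu$, set $A:=(\mu-\eps)-\delDir$, and apply Proposition \ref{prop:calc_pert_Id}\ref{it:prop:calc_pert_Id2} with $\mu_0-(\mu-\eps)>0$ to get the calculus for $\eps+A=\mu-\delDir$; letting $\sigma\downarrow0$ gives angle zero.
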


The spectrum of the Neumann Laplacian on bounded domains contains the eigenvalue zero so we cannot allow for $\mu=0$ unless the constant functions are removed from the spaces.

\begin{theorem}\label{thm:scalarNeu} Let $p\in(1,\infty)$, $k\in\NN_0$, $\lambda\in(0,1]$, $\gam\in((1-\lambda)p-1, p-1)$ and $j\in\{0,1\}$. Moreover, assume that $\OO$ is a bounded $C^{j+1,\lambda}$-domain. 
    If $\delNeu$ is the Neumann Laplacian on $W^{k+j,p}(\Dom,w^{\BDom}_{\gam+kp})$ as in Definition \ref{def:delDirObounded}\ref{it:def:delO2bounded} with $X=\CC$, then the following assertions hold.
  \begin{enumerate}[(i)]
     \item \label{it:thmscalarNeu1} The spectrum $\sigma(-\delNeu)$ is discrete, contained in $[0,\infty)$ and is independent of $p\in (1,\infty)$, $k\in\NN_0$, $\gam\in((1-\lambda)p-1, p-1)$ and $j\in\{0,1\}$.
     \item \label{it:thmscalarNeu2} For all $\mu>0$ the operator $\mu-\delNeu$ has a bounded $H^\infty$-calculus with $\omega_{H^\infty}(\mu-\delNeu) = 0$.
   \end{enumerate}
Moreover, if $\delNeu$ is the Neumann Laplacian on $W^{k+j,p}(\Dom,w^{\BDom}_{\gam+kp})/\{c\ind_\OO: c\in \CC\}$ as in Definition \ref{def:delDirObounded}\ref{it:def:delO3bounded} with $X=\CC$, then the following assertion holds.
  \begin{enumerate}[resume*]
     \item \label{it:thmscalarNeu4} Let $\widetilde{\mu }>0$ be the smallest eigenvalue of $-\delNeu$. For all
$\mu >-\widetilde{\mu }$ the operator
$\mu -\Delta _{\operatorname{Neu}}$ has a bounded $H^{\infty}$-calculus
with $\omega _{H^{\infty}}(\mu -\Delta _{\operatorname{Neu}}) = 0$.
   \end{enumerate}
\end{theorem}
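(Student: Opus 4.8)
The plan is to deduce all three assertions from Theorem~\ref{thm:Neumann_Laplacian} (which already yields the bounded $\Hinf$-calculus for large $\mu$), combined with the spectral theory of operators with compact resolvent. \emph{Discreteness.} For $\mu$ large, $\mu-\delNeu$ is invertible by Theorem~\ref{thm:Neumann_Laplacian}, and its resolvent factors through the embedding $W^{k+j+2,p}(\OO,w^{\d\OO}_{\gam+kp})\hookrightarrow W^{k+j,p}(\OO,w^{\d\OO}_{\gam+kp})$, which is compact since $\OO$ is bounded: via the localisation of Lemma~\ref{lem:decomp} this reduces to the classical Rellich--Kondrachov embedding on the interior piece and to a weighted Rellich embedding of compactly supported functions on the half-space (the map $x\mapsto x_1^{\gam+kp}$ being locally a Muckenhoupt weight). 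Hence $\mu-\delNeu$ has compact resolvent, on the full space and, by the same argument, on the quotient space, so $\sigma(-\delNeu)$ consists of isolated eigenvalues of finite algebraic multiplicity that accumulate only at $\infty$.

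\emph{Location and $p,k,\gam,j$-independence of the spectrum.} All realisations of $-\delNeu$ invert $\mu-\del$ subject to the same (weighted) Neumann condition, so their resolvents are consistent. Suppose $\nu$ lies in the resolvent set of one realisation but, by the previous step, is an eigenvalue of another, with eigenfunction $u$ solving $-\del u=\nu u$ together with the Neumann boundary condition. Interior elliptic regularity gives $u\in C^{\infty}(\OO)$, and near $\d\OO$ the Dahlberg--Kenig--Stein pullback of Lemma~\ref{lem:localization_weighted_blow-up} turns the equation into a perturbed Helmholtz problem on $\RRdh$ with Neumann boundary condition whose coefficients obey the blow-up bounds of that lemma; bootstrapping with the half-space fractional-domain and mapping properties of Section~\ref{sec:frac_domain} shows that $u$ in fact belongs to the domain $W^{k'+j'+2,p'}_{\Neu}(\OO,w^{\d\OO}_{\gam'+k'p'})$ of \emph{every} admissible realisation — here the constraint $\gam'>(1-\lambda)p'-1$ (together with the requirement that $\OO$ be $C^{2,\lambda}$ when $j'=1$) is exactly what makes the relevant weighted $L^{p'}$-norms of $\d^{\alpha}u$ finite. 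This contradicts $\nu\in\rho(-\delNeu)$ for the first realisation, so $\sigma(-\delNeu)$ is independent of the admissible parameters. Running the same comparison against the self-adjoint $L^2(\OO)$-realisation of the Neumann Laplacian (whose spectrum lies in $[0,\infty)$ with $0$-eigenspace the constants, $\OO$ being connected) gives $\sigma(-\delNeu)\subseteq[0,\infty)$ and, via the Riesz projection at $0$, that the generalised $0$-eigenspace equals $\{c\ind_{\OO}:c\in\CC\}$; consequently $0$ is removed on passing to the quotient, so $\mu_1:=\min\sigma(-\delNeu)>0$ there. This proves part~\ref{it:thmscalarNeu1}.

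\emph{Extending the calculus to small and negative shifts.} Fix $\tilde\mu_0>0$ with $\mu-\delNeu$ enjoying a bounded $\Hinf(\Sigma_{\sigma})$-calculus for all $\mu>\tilde\mu_0$ and all $\sigma\in(0,\pi)$ (Theorem~\ref{thm:Neumann_Laplacian}), and put $B:=(\tilde\mu_0+1)-\delNeu$; by the spectral step $\sigma(B)\subseteq[a,\infty)$ with $a=\tilde\mu_0+1$ on the full space and $a=\tilde\mu_0+1+\mu_1$ on the quotient, and $B$ is sectorial of angle $0$ with a bounded $\Hinf$-calculus. An elementary resolvent estimate shows that if $B$ is sectorial of angle $\theta$ with $\sigma(B)\subseteq[a,\infty)$, then $B-c$ is sectorial of angle $\theta$ for every $c\in[0,a)$; applied with $c=(\tilde\mu_0+1)-\mu$ this makes $\mu-\delNeu=B-c$ sectorial of angle $0$ for all $\mu\in(0,\tilde\mu_0+1]$ on the full space, resp.\ all $\mu\in(-\mu_1,\tilde\mu_0+1]$ on the quotient, the range $\mu>\tilde\mu_0+1$ being covered directly by Theorem~\ref{thm:Neumann_Laplacian}. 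Finally, given $\mu_{*}$ in the required range, choose $\mu_{**}$ strictly between the lower endpoint and $\mu_{*}$: then $\mu_{**}-\delNeu$ is sectorial of angle $0$, $\mu_0+(\mu_{**}-\delNeu)=(\mu_0+\mu_{**})-\delNeu$ has a bounded $\Hinf(\Sigma_{\sigma})$-calculus for $\mu_0$ large, and Proposition~\ref{prop:calc_pert_Id}\ref{it:prop:calc_pert_Id2} yields a bounded $\Hinf(\Sigma_{\sigma})$-calculus for $(\nu+\mu_{**})-\delNeu$ for all $\nu>0$, in particular for $\mu_{*}-\delNeu$. Since $\sigma\in(0,\pi)$ was arbitrary, $\omega_{\Hinf}(\mu_{*}-\delNeu)=0$. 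Taking any $\tilde\mu\in(0,\mu_1)$ in the quotient case, this establishes parts~\ref{it:thmscalarNeu2} and~\ref{it:thmscalarNeu4}.

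\emph{Main obstacle.} The technical heart is the eigenfunction transfer in the spectral step: one must propagate the limited boundary regularity of a solution of $-\del u=\nu u$ on a merely $C^{1,\lambda}$- or $C^{2,\lambda}$-domain through the entire weighted Sobolev scale, and it is precisely here that the blow-up estimates of Lemma~\ref{lem:localization_weighted_blow-up} and the fractional-domain identifications of Section~\ref{sec:frac_domain} are indispensable; once this is in hand, the discreteness and the shift arguments are routine.
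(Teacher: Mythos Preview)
Your overall architecture matches the paper's: compact resolvent gives a discrete spectrum, comparison with the self-adjoint $L^{2}$-realisation locates it in $[0,\infty)$ (with $0$ removed on the quotient), and a shift argument via Proposition~\ref{prop:calc_pert_Id}\ref{it:prop:calc_pert_Id2} extends the $\Hinf$-calculus to all required~$\mu$. The calculus-extension step is essentially the paper's.

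Where you diverge is the spectral-independence step, and this is also where your argument is incomplete. You propose to bootstrap a single eigenfunction $u$ of one realisation into the domain $W^{k'+j'+2,p'}_{\Neu}(\OO,w^{\d\OO}_{\gam'+k'p'})$ of \emph{every} other admissible realisation, using interior regularity plus the pullback estimates. But this transfer across integrability exponents is not carried out: interior smoothness says nothing near $\d\OO$, and the fractional-domain identifications of Section~\ref{sec:frac_domain} improve smoothness only within a fixed $p$ and weight scale; going from $p$ to some $p'>p$ near the boundary would require weighted Sobolev embeddings that you do not supply. You yourself flag this as the ``main obstacle'', and as written it is a genuine gap. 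Relatedly, the sentence ``their resolvents are consistent'' is asserted rather than proved.

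The paper avoids this entirely by a much cheaper route (Lemma~\ref{lem:consis_resol_domainNeu}): pick $q<\min\{p,2\}$ and $\kappa$ with $\kappa>q(\gam+1)/p-1>(1-\lambda)q-1$, so that by H\"older both $W^{k+j,p}(\OO,w^{\d\OO}_{\gam+kp})$ and $L^{2}(\OO)$ embed into $L^{q}(\OO,w^{\d\OO}_{\kappa})$; uniqueness for the resolvent equation in the latter space then forces the two resolvents to agree on the intersection. Having resolvent consistency with the $L^{2}$-realisation, a contour-integral argument (as in Arendt) shows $\sigma(-A_{p,k,j,\gam})=\sigma(-A_{\Neu,2})$, which yields both the location and the parameter-independence at once, without any eigenfunction bootstrapping. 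This is the step you should replace.
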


\begin{remark}\label{rem:XvaluedLp}\hspace{2em}
\begin{enumerate}[(i)]
  \item\label{rem:XvaluedLp1} It is an open question whether Theorems \ref{thm:Dirichlet_Laplacian} and \ref{thm:Neumann_Laplacian} (in the case where $\delNeu$ is defined as in Definition \ref{def:delDirObounded}\ref{it:def:delO3bounded}) with general $\UMD$ Banach spaces $X$ also hold for $\mu=0$. In the following special cases, one can actually conclude the result of Theorems \ref{thm:Dirichlet_Laplacian} and \ref{thm:Neumann_Laplacian} with $\mu=0$.
      \begin{itemize}
        \item If $X$ is a Hilbert space or isomorphic to a closed subspace of an $L^p$-space, then by redoing the proofs of \cite[Proposition 2.1.2 \& Theorem 2.1.9]{HNVW16} for Sobolev spaces, one sees that the results in the scalar case with $\mu=0$ (Theorems \ref{thm:scalarDir} and \ref{thm:scalarNeu}) extend to the vector-valued case.
        \item If $X$ is a $\UMD$ Banach space and $k=0$, then using \cite[Theorem 2.1.3]{HNVW16} and that the semigroup corresponding to the Laplacian is positive and uniformly exponentially stable, we can obtain the bounded $\Hinf$-calculus with $\mu=0$. The proof of this special case for the Dirichlet Laplacian is provided in Corollary \ref{cor:mu=0X} below. However, the proof does not extend to $k\geq 1$. 
      \end{itemize}
     For the general case ($k\geq 0$ and $X$ a $\UMD$ Banach space) we expect that one can show uniform exponential stability for the semigroup corresponding to the Laplacian via (weighted) kernel bounds for the scalar-valued case. Using a tensor extension and consistency, one could also obtain the required kernel bounds for the vector-valued case. 
  \item The $p$-independence of the spectra of the Laplacian on $L^p$-spaces is well-studied. Moreover, in  \cite{Da95,Ku99} it is proved that on certain weighted $L^p$-spaces the spectrum is independent of the weight. 
  However, the power weights $w_{\gam}^{\d\OO}$ that we use do not fit into their settings. Instead, we will use compactness and consistency of the resolvent to obtain the spectral independence in Theorems \ref{thm:scalarDir} and \ref{thm:scalarNeu}.
\end{enumerate}
\end{remark}

\subsection{Consequences of the bounded $\Hinf$-calculus}\label{subsec:MR_Riesz} In this section, we discuss two consequences of the bounded $\Hinf$-calculus for the Laplacian: maximal regularity and boundedness of the Riesz transform.

\subsubsection{Maximal \texorpdfstring{$L^q$}{Lq}-regularity} Let $T\in(0,\infty]$. We study the time-dependent heat equation on $I:=(0,T)$  given by
\begin{equation*}
  \d_t u(t)-\del u(t) =f(t),\qquad t\in I,
\end{equation*}
on a bounded domain $\OO$ with Dirichlet or Neumann boundary conditions and zero initial condition. Furthermore, we consider this in a setting with temporal weights, where we denote by $A_q(I)$ the class of Muckenhoupt weights. For an extensive introduction to maximal regularity, the reader is referred to \cite[Chapter 17]{HNVW24}. 

The following two corollaries on maximal regularity for the heat equation follow immediately from Theorems \ref{thm:Dirichlet_Laplacian}, \ref{thm:Neumann_Laplacian}, \ref{thm:scalarDir}, \ref{thm:scalarNeu} and \cite[Theorems 17.3.18, 17.2.39 \& Proposition 17.2.7]{HNVW24}. 
\begin{corollary}[Maximal regularity for $-\delDir$]\label{cor:MR_domain}
  Assume that the conditions from Theorem \ref{thm:Dirichlet_Laplacian} hold. In addition, let $q\in(1,\infty)$, $T\in(0,\infty)$ and  $v\in A_q(I)$. Then $-\delDir$ on $W^{k,p}(\OO, w^{\d\OO}_{\gam+kp};X)$ has maximal $L^q(v)$-regularity on $I$, i.e., for all $$f\in L^q(I,v; W^{k,p}(\OO, w^{\d\OO}_{\gam+kp};X))$$ there exists a unique $$u\in W^{1,q}(I,v; W^{k,p}(\OO, w^{\d\OO}_{\gam+kp};X))\cap L^q(I,v; W^{k+2,p}_{\Dir}(\OO, w^{\d\OO}_{\gam+kp};X))$$
  such that $\d_t u-\delDir u =f $ with $u(0)=0$ and
  \begin{equation*}
    \|u\|_{W^{1,q}(I,v; W^{k,p}(\OO, w^{\d\OO}_{\gam+kp};X))} + \| u\|_{L^q(I,v; W^{k+2,p}_\Dir(\OO, w^{\d\OO}_{\gam+kp};X))}\lesssim \|f\|_{L^q(I,v; W^{k,p}(\OO, w^{\d\OO}_{\gam+kp};X))},
  \end{equation*}
  where the constant only depends on $p,q,k,\gam,v,T,d$ and $X$.
  Moreover, if $X=\CC$, then the above statement holds for $I=\RR_+$ as well.
\end{corollary}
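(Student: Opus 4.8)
The plan is to obtain the statement purely by combining Theorem \ref{thm:Dirichlet_Laplacian} with the general principle that a bounded $\Hinf$-calculus of angle strictly less than $\pi/2$ implies weighted maximal regularity. First I would fix the ground space $Y := W^{k,p}(\OO, w^{\d\OO}_{\gam+kp};X)$ and observe that it is a $\UMD$ Banach space: by Lemma \ref{lem:decomp} it is isomorphic to a complemented subspace of $W^{k,p}(\RR^d;X)\oplus\bigoplus_{n=1}^N W^{k,p}(\OO_n,w^{\d\OO_n}_{\gam+kp};X)$, and each summand is, via Proposition \ref{prop:isomorphisms}, isomorphic to a closed subspace of a finite sum of weighted Bochner spaces $L^p(\cdot;X)$, which are $\UMD$ whenever $X$ is. Then, choosing $\sigma\in(0,\pi/2)$ in Theorem \ref{thm:Dirichlet_Laplacian}, there is a $\tilde\mu>0$ such that for $\mu>\tilde\mu$ the operator $\mu-\delDir$ has a bounded $\Hinf(\Sigma_\sigma)$-calculus on $Y$; in particular it is sectorial of angle $\le\sigma<\pi/2$. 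By \cite[Theorem 17.3.18]{HNVW24} this yields maximal $L^q(v)$-regularity of $\mu-\delDir$ on $\RR_+$, and hence on $I=(0,T)$, for every $q\in(1,\infty)$ and every $v\in A_q(I)$.

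Next I would remove the artificial shift on the finite interval. On $I=(0,T)$ the multiplication operators $u\mapsto e^{\pm\mu t}u$ are isomorphisms of both $L^q(I,v;Y)$ and $W^{1,q}(I,v;Y)$ onto themselves, and they transform solutions of $\d_t w+(\mu-\delDir)w=g$, $w(0)=0$, into solutions of $\d_t u-\delDir u=f$, $u(0)=0$, with $f=e^{\mu t}g$. Therefore maximal $L^q(v)$-regularity of $\mu-\delDir$ on $I$ transfers to $-\delDir$ on $I$; this is exactly the content of \cite[Proposition 17.2.7]{HNVW24}. Unwinding the definition of maximal $L^q(v)$-regularity with $D(\delDir)=W^{k+2,p}_{\Dir}(\OO,w^{\d\OO}_{\gam+kp};X)$ then gives existence and uniqueness of $u$ in the asserted intersection space together with the a priori estimate, with the constant depending only on $p,q,k,\gam,v,T,d$ and $X$.

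Finally, for the last sentence I would use the extra spectral information available when $X=\CC$. By Theorem \ref{thm:scalarDir}\ref{it:thmscalar1} the spectrum $\sigma(-\delDir)$ is discrete and contained in $(0,\infty)$, so $0\in\rho(-\delDir)$ and there is a spectral gap; thus $\delDir$ generates an exponentially stable analytic semigroup on $W^{k,p}(\OO,w^{\d\OO}_{\gam+kp})$. Combined with Theorem \ref{thm:scalarDir}\ref{it:thmscalar2}, which gives $-\delDir$ itself a bounded $\Hinf$-calculus of angle $0$, the hypotheses of \cite[Theorems 17.3.18 \& 17.2.39]{HNVW24} (bounded $\Hinf$-calculus of angle $<\pi/2$ together with $0\in\rho$) are met, so $-\delDir$ has maximal $L^q(v)$-regularity on $I=\RR_+$ as well.

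Since the corollary is assembled entirely from cited results and Theorem \ref{thm:Dirichlet_Laplacian}, there is no substantive obstacle; the only points demanding care are that \cite[Theorem 17.3.18]{HNVW24} indeed accommodates an arbitrary Muckenhoupt weight $v\in A_q(I)$, that the scalar shift argument in the passage $\mu-\delDir\rightsquigarrow-\delDir$ is applied only on the bounded interval $(0,T)$ (where $e^{\pm\mu t}$ is two-sided bounded), and that in the scalar half-line case the conclusion of Theorem \ref{thm:scalarDir} is strong enough to place $0$ in the resolvent set with a genuine spectral gap, which it is because the spectrum is discrete and positive.
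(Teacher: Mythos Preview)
Your proposal is correct and follows essentially the same route as the paper, which simply states that the corollary is an immediate consequence of Theorems \ref{thm:Dirichlet_Laplacian} and \ref{thm:scalarDir} together with \cite[Theorems 17.3.18, 17.2.39 \& Proposition 17.2.7]{HNVW24}. Your additional justification that $Y=W^{k,p}(\OO,w^{\d\OO}_{\gam+kp};X)$ is $\UMD$ (needed to invoke \cite[Theorem 17.3.18]{HNVW24}) and your explicit unpacking of the shift-removal argument via \cite[Proposition 17.2.7]{HNVW24} on the finite interval are exactly the details the paper leaves implicit.
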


\begin{corollary}[Maximal regularity for $-\delNeu$]\label{cor:MR_domain_Neu}
  Assume that the conditions from Theorem \ref{thm:Neumann_Laplacian} hold. In addition, let $q\in(1,\infty)$, $T\in(0,\infty)$ and $v\in A_q(I)$. Then $-\delNeu$ on $W^{k+j,p}(\OO, w^{\d\OO}_{\gam+kp};X)$ has maximal $L^q(v)$-regularity on $I$, i.e., for all $$f\in L^q(I,v; W^{k+j,p}(\OO, w^{\d\OO}_{\gam+kp};X))$$ there exists a unique $$u\in W^{1,q}(I,v; W^{k+j,p}(\OO, w^{\d\OO}_{\gam+kp};X))\cap L^q(I,v; W^{k+2+j,p}_{\Neu}(\OO, w^{\d\OO}_{\gam+kp};X))$$
  such that $\d_t u-\delNeu u =f $ with $u(0)=0$ and
  \begin{equation*}
    \|u\|_{W^{1,q}(I,v; W^{k+j,p}(\OO, w^{\d\OO}_{\gam+kp};X))} + \| u\|_{L^q(I,v; W^{k+2+j,p}_\Neu(\OO, w^{\d\OO}_{\gam+kp};X))}\lesssim \|f\|_{L^q(I,v; W^{k+j,p}(\OO, w^{\d\OO}_{\gam+kp};X))},
  \end{equation*}
  where the constant only depends on $p,q,k,\gam,j,v,T,d$ and $X$.  Moreover, the above statement also holds if we consider $\delNeu$ on the spaces without constant functions as in Definition \ref{def:delDirObounded}\ref{it:def:delO3bounded}. In this case, if additionally $X=\CC$, then the statement also holds for $I=\RR_+$.
\end{corollary}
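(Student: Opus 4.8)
The statement to be established is Corollary~\ref{cor:MR_domain_Neu} on maximal $L^q(v)$-regularity for the Neumann Laplacian $-\delNeu$ on a bounded $C^{j+1,\lambda}$-domain, including the version on the quotient space modulo constants and, when $X=\CC$, the half-line case $I=\RR_+$. The strategy is entirely to feed the bounded $\Hinf$-calculus already proved in Theorem~\ref{thm:Neumann_Laplacian} (and Theorem~\ref{thm:scalarNeu} in the scalar case) into the abstract maximal-regularity machinery of \cite[Chapter~17]{HNVW24}. The key point is that a sectorial operator with angle $<\pi/2$ that has a bounded $\Hinf$-calculus automatically has maximal $L^q(v)$-regularity on bounded intervals for every $q\in(1,\infty)$ and every Muckenhoupt weight $v\in A_q(I)$; this is exactly the content of \cite[Theorem~17.3.18]{HNVW24} combined with \cite[Proposition~17.2.7]{HNVW24}, which transfers the statement from $\RR_+$ (or from the generator of an analytic semigroup) to a finite interval $I=(0,T)$.

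First I would fix the conditions of Theorem~\ref{thm:Neumann_Laplacian}, so that there is $\tilde\mu>0$ with $\mu-\delNeu$ having a bounded $\Hinf$-calculus on $W^{k+j,p}(\OO,w^{\d\OO}_{\gam+kp};X)$ with $\om_{\Hinf}(\mu-\delNeu)\le\sigma$ for all $\mu>\tilde\mu$; choosing $\sigma<\pi/2$ gives in particular that $\mu-\delNeu$ generates a bounded analytic $C_0$-semigroup. Since maximal $L^q(v)$-regularity on a bounded interval $I$ is insensitive to a shift of the operator by a scalar (the shift contributes a bounded-below-order term that is absorbed; see \cite[Proposition~17.2.7]{HNVW24}), the operator $-\delNeu$ itself has maximal $L^q(v)$-regularity on $I=(0,T)$ for every $q\in(1,\infty)$ and every $v\in A_q(I)$. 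Applying the abstract characterisation then yields: for each $f\in L^q(I,v;W^{k+j,p}(\OO,w^{\d\OO}_{\gam+kp};X))$ there is a unique $u$ in the stated intersection space solving $\d_t u-\delNeu u=f$ with $u(0)=0$, together with the a priori estimate, the implicit constant depending only on $p,q,k,\gam,j,v,T,d,X$ (the dependence on $[\OO]_{C^{j+1,\lambda}}$ and the covering data from the localisation is subsumed in these through the construction in Section~\ref{sec:calc_dom}). The quotient-space version is handled identically, now invoking Theorem~\ref{thm:Neumann_Laplacian} for the operator of Definition~\ref{def:delDirObounded}\ref{it:def:delO3bounded}; nothing changes since the $\Hinf$-calculus is available on that space as well.

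For the half-line statement with $X=\CC$, I would instead invoke Theorem~\ref{thm:scalarNeu}: there $0$ is an eigenvalue of $-\delNeu$ on the full space $W^{k+j,p}(\OO,w^{\d\OO}_{\gam+kp})$, so one cannot have maximal regularity on $\RR_+$ for the full-space operator, but on the quotient space modulo constants Theorem~\ref{thm:scalarNeu}\ref{it:thmscalarNeu4} gives a bounded $\Hinf$-calculus for $\mu-\delNeu$ with $\om_{\Hinf}=0$ for all $\mu>-\tilde\mu$ with some $\tilde\mu>0$; in particular $-\delNeu$ on the quotient space is sectorial with $0\in\rho(-\delNeu)$ (equivalently, the semigroup is uniformly exponentially stable since its generator has spectrum in a left half-plane strictly away from $0$, by discreteness of the spectrum in $[0,\infty)\setminus\{0\}$ after removing the constants). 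An invertible sectorial operator of angle $<\pi/2$ with bounded $\Hinf$-calculus has maximal $L^q(v)$-regularity on $\RR_+$ by \cite[Theorem~17.3.18]{HNVW24} (there is no need for a shift here), which gives the claimed $I=\RR_+$ statement for the scalar quotient case; the corresponding claim for $-\delNeu$ on the full scalar space with $I=\RR_+$ is false and is correctly excluded in the statement.

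\textbf{Main obstacle.} There is no deep obstacle: the entire corollary is a bookkeeping exercise once Theorems~\ref{thm:Neumann_Laplacian} and \ref{thm:scalarNeu} are in hand. The only points requiring a little care are (a) checking that the cited abstract results \cite[Theorems~17.3.18, 17.2.39 \& Proposition~17.2.7]{HNVW24} apply in the $\UMD$-valued weighted-$L^q$ setting with $v\in A_q(I)$ — they do, since $W^{k+j,p}(\OO,w^{\d\OO}_{\gam+kp};X)$ is a $\UMD$ space as a closed subspace of a direct sum of weighted Bochner spaces over $\UMD$ $X$ (Lemma~\ref{lem:decomp} together with the $\UMD$ permanence properties recalled in Section~2.4), and maximal regularity for operators with bounded $\Hinf$-calculus on $\UMD$ spaces transfers to $A_q$-weighted time-integrability; (b) distinguishing cleanly between the full-space operator (no maximal regularity on $\RR_+$ when $0\in\sigma$) and the quotient operator (exponentially stable, hence maximal regularity on $\RR_+$), which is exactly the role of Definition~\ref{def:delDirObounded}\ref{it:def:delO3bounded} in the statement; and (c) noting that the shift-insensitivity of maximal regularity on bounded intervals, \cite[Proposition~17.2.7]{HNVW24}, is what lets us pass from $\mu-\delNeu$ (for large $\mu$) back to $-\delNeu$ on $I=(0,T)$.
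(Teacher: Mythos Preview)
Your proposal is correct and follows essentially the same route as the paper: the corollary is stated as an immediate consequence of Theorems~\ref{thm:Neumann_Laplacian} and~\ref{thm:scalarNeu} together with \cite[Theorems~17.3.18, 17.2.39 \& Proposition~17.2.7]{HNVW24}, and your argument unpacks exactly this combination, including the shift-insensitivity on bounded intervals and the invertibility on the quotient space for the $I=\RR_+$ case.
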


\begin{remark}\hspace{2em}
  \begin{enumerate}[(i)]
  \item Similar results as in Corollaries \ref{cor:MR_domain} and \ref{cor:MR_domain_Neu} for $\OO=\RRdh$ are obtained in \cite[Section 8]{LLRV24}. 
    \item Corollaries \ref{cor:MR_domain} and \ref{cor:MR_domain_Neu} concern the heat equation with zero initial data. Well-posedness for the heat equation with non-zero initial data can be obtained as a consequence, see \cite[Section 4.4]{GV17} and \cite[Section 17.2.b]{HNVW24}.
  \end{enumerate}
\end{remark}

We connect the above results to the existing literature about PDEs on homogeneous weighted Sobolev spaces, see \cite{Kr99c, Kr99b, Lo00}. For $p\in(1,\infty)$, $k\in\NN_0$, $\theta\in \RR$ and $\OO\subseteq \RR^d$ a bounded $C^1$-domain, the homogeneous Sobolev spaces are given by
\begin{equation*}
  H^{k}_{p,\theta}(\OO)=\Big\{f\in \mc{D}'(\OO): \forall |\alpha|\leq k, \d^{\alpha}f\in L^p(\OO,w^{\d\OO}_{\theta-d+|\alpha|p})\Big\},
\end{equation*}
see for instance \cite[Proposition 2.2]{Lo00}. Note that $L^p(\OO, w^{\d\OO}_{\gam})=H^{0}_{p,\gam+d}(\OO)$. In the setting for the Dirichlet Laplacian with $\gam\in(p-1,2p-1)$ we have the following relation between the involved homogeneous and inhomogeneous spaces:
\begin{align*}
  W^{k,p}(\OO, w^{\d\OO}_{\gam+kp}) & = H^{k}_{p,\gam+d}(\OO), \\
  W^{k+2,p}_\Dir(\OO, w^{\d\OO}_{\gam+kp})&= H^{k+2}_{p, \gam+d-2p}(\OO).
\end{align*}
The first characterisation follows from the fact that $\OO$ is bounded and Hardy's inequality using that $\gam+kp>-1$. The second characterisation follows similarly using that $ W^{k+2,p}_\Dir(\OO, w^{\d\OO}_{\gam+kp})= W^{k+2,p}_0(\OO, w^{\d\OO}_{\gam+kp})$ for $\gam\in(p-1,2p-1)$. Note that we have used that the domain is bounded, for unbounded domains the homogeneous and inhomogeneous spaces cannot be compared. 

In \cite{KK04}, the authors use homogeneous spaces to study spatial regularity for boundary value problems with Dirichlet boundary conditions on bounded 
$C^1$-domains. There, the boundary condition is encoded implicitly within the function space. In contrast, our approach imposes boundary conditions explicitly, allowing greater flexibility -- particularly when extending to more regular domains or handling smaller weight exponents and Neumann boundary conditions.
In the homogeneous setting, some results for the Neumann Laplacian on the half-space (in the special case $k=0$) are contained in \cite{DK18, DKZ16}, but a general study on bounded domains seems to be unavailable.

Finally, we remark that maximal $L^q$-regularity for the Dirichlet Laplacian on $L^p(\OO, w^{\d\OO}_{\gam})$ is also obtained in \cite{KN14}. Here they treat bounded $C^{1,\lambda}$-domains with $\gam\in((1-\lambda)p-1,2p-1)$ which corresponds to our result in Corollary \ref{cor:MR_domain} with $k=0$.

\subsubsection{Riesz transforms} In this section, we discuss the boundedness of the Riesz transform associated with the Dirichlet Laplacian on the half-space and bounded domains. For an elaborate study of Riesz transforms associated with the Laplacian on the half-space, the reader is referred to \cite{DHLWY19}.\\

We start with an extension of the $\Hinf$-calculus of $-\delDir$ from scalar-valued Lebesgue spaces to vector-valued Lebesgue spaces, see also Remark \ref{rem:XvaluedLp}. This extends the result in \cite[Theorem 6.1 \& Corollary 6.2]{LV18}.
\begin{corollary}[$\Hinf$-calculus for $-\delDir$ on $L^p(\OO, w^{\d\OO}_{\gam};X)$]\label{cor:mu=0X} Let $p\in(1,\infty)$, $\lambda\in [0,1]$, $\gam\in ((1-\lambda)p-1, 2p-1)\setminus\{p-1\}$ and let $X$ be a $\UMD$ Banach space. Let $\delDir$ on $L^p(\OO, w^{\d\OO}_{\gam};X)$ be as in Definition \ref{def:delDirObounded}. Then the operator $-\delDir$ has a bounded $\Hinf$-calculus with $\om_{\Hinf}(-\delDir)=0$.
\end{corollary}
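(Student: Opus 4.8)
The plan is to reduce the vector-valued, bounded-domain statement to the results already established in the paper, exploiting that $\mu=0$ is allowed here only because we are working with a Lebesgue space ($k=0$) and can invoke positivity and exponential stability of the heat semigroup. First I would recall from Theorem \ref{thm:Dirichlet_Laplacian} (with $k=0$) that there exists $\tilde\mu>0$ such that $\mu-\delDir$ on $L^p(\OO,w^{\d\OO}_\gam;X)$ has a bounded $\Hinf$-calculus with $\om_{\Hinf}(\mu-\delDir)\le\sigma$ for all $\mu>\tilde\mu$ and all $\sigma\in(0,\pi)$; in particular $-\delDir$ generates an analytic $C_0$-semigroup $(e^{t\delDir})_{t\ge0}$ on $L^p(\OO,w^{\d\OO}_\gam;X)$. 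The strategy is then: (1) show this semigroup is uniformly exponentially stable, i.e. $\|e^{t\delDir}\|\le Ce^{-\e t}$ for some $\e>0$; (2) invoke the abstract principle that an operator whose shift by $\tilde\mu$ has a bounded $\Hinf(\Sigma_\sigma)$-calculus and whose semigroup is exponentially stable itself has a bounded $\Hinf(\Sigma_\sigma)$-calculus (this is \cite[Proposition 16.2.6]{HNVW24}, Proposition \ref{prop:calc_pert_Id} here, combined with a spectral-shift argument using that $0\in\rho(-\delDir)$), and let $\sigma\downarrow0$.

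For step (1), the cleanest route is to first treat the scalar case $X=\CC$ and then tensor up. In the scalar case, Theorem \ref{thm:scalarDir}\ref{it:thmscalar1} already gives that $\sigma(-\delDir)$ is discrete, contained in $(0,\infty)$, and independent of $p,k,\gam$; discreteness plus $\sigma(-\delDir)\subseteq(0,\infty)$ forces a spectral gap $\inf\sigma(-\delDir)=:\e_0>0$, and since $-\delDir$ has a bounded $\Hinf$-calculus (Theorem \ref{thm:scalarDir}\ref{it:thmscalar2}, after shifting) it is in particular sectorial with good resolvent bounds, so the Gearhart--Prüss-type argument — or more simply analyticity together with $s(-\delDir)=\e_0>0$ and sectoriality — yields $\|e^{t\delDir}\|_{\mc L(L^p(\OO,w^{\d\OO}_\gam))}\le C e^{-\e t}$ for any $\e<\e_0$. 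To pass from $X=\CC$ to a general $\UMD$ space $X$, I would use that the operator $\del$ acts componentwise and the $L^p(\OO,w^{\d\OO}_\gam;X)$-realisation is the tensor extension of the scalar one; by consistency the semigroups agree on $L^p\otimes X$, and \cite[Theorem 2.1.3]{HNVW16} (positivity of the scalar semigroup implies the $X$-valued extension is bounded with the same norm) transfers the exponential decay bound verbatim to the vector-valued setting. This is exactly the mechanism indicated in Remark \ref{rem:XvaluedLp}\ref{rem:XvaluedLp1}.

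For step (2), once we know $-\delDir$ on $L^p(\OO,w^{\d\OO}_\gam;X)$ is sectorial with $0\in\rho(-\delDir)$ and that $\tilde\mu-\delDir$ has a bounded $\Hinf(\Sigma_\sigma)$-calculus, I would argue that $-\delDir$ itself has a bounded $\Hinf(\Sigma_\sigma)$-calculus: on the one hand $(\tilde\mu-\delDir)$ having bounded $\Hinf$-calculus of angle $\le\sigma$ together with the fact that $-\delDir=(\tilde\mu-\delDir)-\tilde\mu$ differs from it by a bounded operator which is a genuine \emph{resolvent shift} (so one can use the composition rule of the $\Hinf$-calculus applied to the holomorphic map $z\mapsto z-\tilde\mu$ on a slightly larger sector, valid precisely because $0\in\rho(-\delDir)$ and the semigroup of $-\delDir$ is bounded analytic); alternatively and more robustly, invoke \cite[Theorem 16.2.6 or the exponential-stability version]{HNVW24} which states that if $A+\tilde\mu$ has a bounded $\Hinf(\Sigma_\sigma)$-calculus and $-A$ generates an exponentially stable analytic semigroup, then $A$ has a bounded $\Hinf(\Sigma_\sigma)$-calculus with the same angle. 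Taking the infimum over admissible $\sigma$ and using $\om_{\Hinf}(\tilde\mu-\delDir)=0$ from Theorem \ref{thm:Dirichlet_Laplacian} gives $\om_{\Hinf}(-\delDir)=0$. I expect the main obstacle to be pinning down the precise abstract statement that converts "bounded $\Hinf$-calculus after a shift" plus "exponential stability" into "bounded $\Hinf$-calculus at zero" with the \emph{same} angle — one must be careful that exponential stability (not just sectoriality with $0\in\rho$) is what kills the growth of $z\mapsto f(z-\tilde\mu)$ near the origin of the sector, so the exponential-decay bound from step (1) is genuinely needed and is the crux of the argument; the vector-valued tensor extension in step (1), while requiring the $\UMD$ hypothesis for the shifted calculus to exist in the first place, is then routine given \cite[Theorem 2.1.3]{HNVW16}.
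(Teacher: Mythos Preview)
Your proposal is correct and follows essentially the same route as the paper: establish exponential stability of the scalar semigroup from the spectral gap in Theorem \ref{thm:scalarDir}, transfer this to the $X$-valued setting via positivity and \cite[Theorem 2.1.3]{HNVW16}, and then combine sectoriality of $-\delDir^X$ with the shifted $\Hinf$-calculus from Theorem \ref{thm:Dirichlet_Laplacian} through Proposition \ref{prop:calc_pert_Id}. The one point the paper makes explicit which you take for granted is \emph{why} the scalar semigroup is positive: the paper deduces this from positivity of the resolvent on $L^2(\OO)$ together with the consistency established in Lemma \ref{lem:consis_resol_domain}, then invokes \cite[Theorem VI.1.8]{EN00}.
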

\begin{proof}
  We define the operators
  \begin{align*}
      \delDir^\CC&:=\delDir \quad\text{ on } L^p(\Dom,w^{\BDom}_{\gam})\quad \text{ and }\\
      \delDir^X&:=\delDir \quad\text{ on } L^p(\Dom,w^{\BDom}_{\gam};X)
    \end{align*}
    as in Definition \ref{def:delDirObounded}. Theorem \ref{thm:scalarDir} implies that $0\in \rho(-\delDir^\CC)$  and it follows from \cite[Proposition K.2.3]{HNVW24} that the analytic semigroup $S_t$ generated by $\delDir^\CC$ is uniformly exponentially stable. Moreover, the resolvent $R(\lambda, \delDir^\CC)$ is positive for $\lambda>0$ (this follows from the $L^2$-case and consistency in Lemma \ref{lem:consis_resol_domain}) and \cite[Theorem VI.1.8]{EN00} yields that $S_t$ is positive. Therefore, by \cite[Theorem 2.1.3]{HNVW16} the operator $S_t\otimes \id_X$ defined by
    \begin{equation*}
      (S_t\otimes \id_X)(f\otimes x):= S_t f\otimes x, \qquad f\in L^p(\OO, w_{\gam}^{\d\OO}),\;x\in X,
    \end{equation*}
    extends to a bounded operator on $L^p(\OO, w_{\gam}^{\d\OO};X)$ with equal operator norm. It is straightforward to verify that $S_t\otimes \id_X$ is generated by $\delDir^X$ and that $R(\lambda, \delDir^{X})(f\otimes x)=(R(\lambda, \delDir^{\CC})f) \otimes x$ for $f\in L^p(\OO, w_{\gam}^{\d\OO})$, $x\in X$ and $\lambda\in \rho(\delDir^\CC)\cap\rho(\delDir^X)$. The semigroup $S_t\otimes \id_X$ is also uniformly exponentially stable, which shows that $-\delDir^X$ is sectorial. Proposition \ref{prop:calc_pert_Id} and Theorem \ref{thm:Dirichlet_Laplacian} now give the desired result.
\end{proof}

We have the following result for the Riesz transform associated with the Dirichlet Laplacian. 
\begin{corollary}[Riesz transform associated with $-\delDir$]\label{cor:Riesz_Dir}
  Let $p\in(1,\infty)$, $\lambda\in[0,1]$ and let $X$ be a $\UMD$ Banach space. Assume that either
  \begin{enumerate}[(i)]
    \item $\OO=\RRdh$, $k=0$, $\gam\in (-1,2p-1)\setminus\{p-1\}$ and $X$ is a $\UMD$ Banach space, or, 
    \item\label{it:corRiesz} $\OO$ is a bounded $C^{1,\lambda}$-domain, $k=0$, $\gam\in((1-\lambda)p-1, 2p-1)\setminus\{p-1\}$ and $X$ is a $\UMD$ Banach space, or,
     \item\label{cor:Riesz_Dir(iii)} $\OO$ is a bounded $C^{1,\lambda}$-domain, $k\in\NN_0$, $\gam\in((1-\lambda)p-1, 2p-1)\setminus\{p-1\}$ and $X=\CC$.
  \end{enumerate}
  Let $\delDir$ on $W^{k,p}(\OO, w^{\d\OO}_{\gam+kp};X)$ be as in Definition \ref{def:delRRdh} or \ref{def:delDirObounded}. Then
  \begin{equation*}
    \|\grad (-\delDir)^{-\half} f\|_{W^{k,p}(\OO, w^{\d\OO}_{\gam+kp};X)}\leq C \|f\|_{W^{k,p}(\OO, w^{\d\OO}_{\gam+kp};X)},\qquad f\in W^{k,p}(\OO, w^{\d\OO}_{\gam+kp};X),
  \end{equation*}
  for some $C>0$ which only depends on $p, k,\gam, \OO$ and $X$.
\end{corollary}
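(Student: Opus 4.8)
The plan is to deduce the Riesz transform bound from the boundedness of the $\Hinf$-calculus for $-\delDir$ established in the preceding theorems, together with the fractional domain characterisations of Section~\ref{sec:frac_domain}. The key observation is that $\grad(-\delDir)^{-1/2}$ is bounded on $W^{k,p}(\OO,w^{\d\OO}_{\gam+kp};X)$ precisely when $D((-\delDir)^{1/2})$ embeds (continuously) into the space $W^{k+1,p}(\OO,w^{\d\OO}_{\gam+kp};X)$ with control of the full gradient, i.e.\ when $\nrm{\grad u}_{W^{k,p}} \lesssim \nrm{(-\delDir)^{1/2}u}_{W^{k,p}}$ for $u$ in the domain. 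So the proof reduces to identifying $D((-\delDir)^{1/2})$ and checking that its norm dominates the $W^{k+1,p}$-seminorm.

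First I would treat the half-space case (i). Here, by Theorem~\ref{thm:LLRVthm1.1Dir}, $-\delDir$ has $\BIP$ (in fact a bounded $\Hinf$-calculus) for $\gam\in(-1,2p-1)\setminus\{p-1\}$; strictly speaking these theorems are stated for $\mu-\delDir$ with $\mu>0$, but for $k=0$ one has $\gam=\gam+kp\in(-1,2p-1)$, so the $\mu=0$ case is included. Then Proposition~\ref{prop:domain_char_RRdh} (with $\mu$ replaced by $0$, using $D((\mu-\delDir)^{1/2})=D((-\delDir)^{1/2})$ via \cite[Proposition 15.2.12]{HNVW24} when $0\in\rho$, or directly from the $\mu=0$ version of the proposition) gives $D((-\delDir)^{1/2}) = W^{1,p}_{\Dir}(\RRdh,w_\gam;X)$ with equivalence of norms $\nrm{(-\delDir)^{1/2}u}_{L^p(w_\gam;X)} \eqsim \nrm{u}_{W^{1,p}(w_\gam;X)} \geq \nrm{\grad u}_{L^p(w_\gam;X)}$; applying this to $u=(-\delDir)^{-1/2}f$ yields the claim. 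For the bounded-domain cases, one needs $0\in\rho(-\delDir)$ so that $(-\delDir)^{-1/2}$ makes sense and $D((-\delDir)^{1/2})=D((\mu-\delDir)^{1/2})$: in case (ii) this follows from Corollary~\ref{cor:mu=0X} (which gives the bounded $\Hinf$-calculus of $-\delDir$ on $L^p(\OO,w^{\d\OO}_\gam;X)$, hence $\BIP$ and $0\in\rho$), and in case (iii) from Theorem~\ref{thm:scalarDir}\ref{it:thmscalar1}--\ref{it:thmscalar2} (the spectrum is in $(0,\infty)$ and $-\delDir$ has a bounded $\Hinf$-calculus for all $k$ when $X=\CC$).

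The remaining work for the bounded-domain cases is to identify $D((-\delDir)^{1/2})$ with $W^{k+1,p}_{\Dir}(\OO,w^{\d\OO}_{\gam+kp};X)$ and to control $\nrm{\grad u}_{W^{k,p}}$ by the graph norm of $(-\delDir)^{1/2}$. Since $-\delDir$ has $\BIP$ (indeed a bounded $\Hinf$-calculus), Proposition~\ref{prop:frac_domain} gives $D((-\delDir)^{1/2}) = [W^{k,p}(\OO,w^{\d\OO}_{\gam+kp};X), W^{k+2,p}_{\Dir}(\OO,w^{\d\OO}_{\gam+kp};X)]_{1/2}$, and by Proposition~\ref{prop:complex_int_W_Dir} this complex interpolation space equals $W^{k+1,p}_{\Dir}(\OO,w^{\d\OO}_{\gam+kp};X)$, whose norm visibly controls $\nrm{\grad u}_{W^{k,p}}$. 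Then, because $0\in\rho(-\delDir)$, the norms $\nrm{u}_{W^{k+1,p}}$ and $\nrm{(-\delDir)^{1/2}u}_{W^{k,p}}$ are equivalent, and substituting $u=(-\delDir)^{-1/2}f$ (valid for $f$ in a dense subspace, then extending by density) finishes the proof. The main obstacle — really the only nontrivial point beyond bookkeeping — is ensuring $0$ lies in the resolvent set and that the fractional-power and interpolation identifications are available with $\mu=0$ rather than $\mu>0$; this is exactly what Corollary~\ref{cor:mu=0X}, Theorem~\ref{thm:scalarDir} and \cite[Proposition 15.2.12]{HNVW24} are invoked for, and the cases are split in the statement precisely so that one of these applies.
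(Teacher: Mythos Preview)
Your proof is correct and takes essentially the same approach as the paper: identify $D((-\delDir)^{1/2})=W^{k+1,p}_{\Dir}$ via $\BIP$ together with Propositions~\ref{prop:frac_domain} and~\ref{prop:complex_int_W_Dir}, then use invertibility of $-\delDir$ (supplied by Theorem~\ref{thm:LLRVthm1.1Dir}, Theorem~\ref{thm:scalarDir} or Corollary~\ref{cor:mu=0X}, according to the case) to conclude. The only difference is presentational: the paper first packages the boundedness of $(-\delDir)^{-1/2}\colon W^{k,p}\to W^{k+1,p}_\Dir$ as a Stein interpolation argument (citing~\cite{Vo92}) before invoking the same domain identification, whereas you go there directly.
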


\begin{proof}
First, we claim that
\begin{equation}\label{eq:map_Dir_frac_resol}
  (-\delDir)^{-\half}: W^{k,p}(\OO, w^{\d\OO}_{\gam+kp};X) \to W^{k+1,p}_\Dir(\OO, w^{\d\OO}_{\gam+kp};X)
\end{equation} 
is bounded. Indeed, since $$(-\delDir)^{-1}:W^{k,p}(\OO, w^{\d\OO}_{\gam+kp};X)\to W^{k+2,p}_\Dir(\OO, w^{\d\OO}_{\gam+kp};X)$$ is bounded (see Theorems \ref{thm:LLRVthm1.1Dir}, \ref{thm:scalarDir} and Corollary \ref{cor:mu=0X}) and the identity operator is bounded on $W^{k,p}(\OO, w^{\d\OO}_{\gam+kp};X)$, it holds by Stein interpolation \cite[Theorem 2.1]{Vo92} that 
\begin{equation*}
  (-\delDir)^{-\half}: W^{k,p}(\OO, w^{\d\OO}_{\gam+kp};X) \to [W^{k,p}(\OO, w^{\d\OO}_{\gam+kp};X), W^{k+2,p}_\Dir(\OO, w^{\d\OO}_{\gam+kp};X)]_\half
\end{equation*}
is bounded. To verify the conditions for Stein interpolation, one uses that $-\delDir$ has $\BIP$, which follows again from the bounded $\Hinf$-calculus in Theorem \ref{thm:LLRVthm1.1Dir}, Theorem \ref{thm:scalarDir} and Corollary \ref{cor:mu=0X}. The claim \eqref{eq:map_Dir_frac_resol} now follows from Proposition \ref{prop:complex_int_W_Dir}.

Therefore, \eqref{eq:map_Dir_frac_resol}, Proposition \ref{prop:complex_int_W_Dir} and Proposition \ref{prop:frac_domain} (using that $-\delDir$ has $\BIP$), imply
\begin{align*}
 \|\grad (-\delDir)^{-\half} f\|_{W^{k,p}(\OO, w^{\d\OO}_{\gam+kp};X)} & \leq \| (-\delDir)^{-\half} f\|_{W^{k+1,p}_\Dir(\OO, w^{\d\OO}_{\gam+kp};X)}\\
 &\eqsim \| (-\delDir)^{-\half} f\|_{[W^{k,p}(\OO, w^{\d\OO}_{\gam+kp};X), W^{k+2,p}_\Dir(\OO, w^{\d\OO}_{\gam+kp};X)]_\half}\\
 &\eqsim \| (-\delDir)^{-\half} f\|_{D((-\delDir)^\frac{1}{2})}\lesssim \|f\|_{W^{k,p}(\OO, w^{\d\OO}_{\gam+kp};X)}.
\end{align*}
This completes the proof.
\end{proof}

\begin{remark}\hspace{2em}
\begin{enumerate}[(i)]
\item Boundedness of the Riesz transforms on $L^p(\RR^d, w;X)$ holds if and only if $w\in A_p(\RRd)$, see \cite[Sections 7.4.3 \& 7.4.4]{Gr14_classical_3rd}. Corollary \ref{cor:Riesz_Dir} also allows for weights outside the class of Muckenhoupt weights. On the other hand, we are restricted to power weights since the interpolation results from Proposition \ref{prop:complex_int_W_Dir} are only available for this type of weights.
  \item With the same proof as in Corollary \ref{cor:Riesz_Dir} and using Theorems \ref{thm:Dirichlet_Laplacian} and \ref{thm:Neumann_Laplacian} it follows that the Riesz transforms associated with $\mu-\delDir$ and $\mu-\delNeu$ are bounded on weighted vector-valued Sobolev spaces for $\mu$ large enough. 
      Following the proof of Corollary \ref{cor:mu=0X}, we could also obtain the bounded $\Hinf$-calculus for $-\delNeu$ on $L^p(\OO, w_{\gam}^{\d\OO};X)/\{c\ind_\OO: c\in X\}$. 
      \item In view of Remark \ref{rem:XvaluedLp}\ref{rem:XvaluedLp1}, the condition in Corollary \ref{cor:Riesz_Dir}\ref{cor:Riesz_Dir(iii)} on the space $X$ can be weakened to $X$ being a Hilbert space or being isomorphic to a closed subspace of an $L^p$-space.
\end{enumerate}

\end{remark}

\subsection{The proofs of Theorems \ref{thm:Dirichlet_Laplacian} and \ref{thm:Neumann_Laplacian}}\label{sec:proof23}
To transfer the $\Hinf$-calculus on special domains (Section \ref{sec:calc_spec_dom}) to bounded domains, we employ a localisation procedure based on the decomposition of weighted Sobolev spaces as in Lemma \ref{lem:decomp}.
For this localisation of the $\Hinf$-calculus, we need the following abstract lemma, which follows from lower order perturbation results.
\begin{lemma}[{\cite[Lemma 6.11]{LV18}}]\label{lem:LV6.11}
  Let $A$ be a linear operator on a Banach space $Y$ and let $\tilde{A}$ be a sectorial operator on a Banach space $\tilde{Y}$ with a bounded $\Hinf$-calculus. Assume that there exist bounded linear mappings $\II\colon Y\to \tilde{Y}$ and $\PP\colon \tilde{Y}\to Y$ satisfying
  \begin{enumerate}[(i)]
    \item \label{it:lemLV6.11_1} $\PP \II = \id$,
    \item \label{it:lemLV6.11_2} $\II D(A)\subseteq D(\tilde{A})$ and $\PP D(\tilde{A})\subseteq D(A)$,
    \item \label{it:lemLV6.11_3} $(\II A - \tilde{A}\II)\PP\colon D(\tilde{A})\to \tilde{Y}$ and $\II(A\PP-\PP\tilde{A})\colon D(\tilde{A})\to \tilde{Y}$ extend to bounded linear operators $[\tilde{Y},D(\tilde{A})]_{\theta}\to \tilde{Y}$ for some $\theta\in (0,1)$.
  \end{enumerate}
  Then $A$ is a closed and densely defined operator and for every $\sigma>\om_{\Hinf}(\tilde{A})$ there exists a $\mu>0$ such that $\mu+A$ has a bounded $\Hinf$-calculus with $\om_{\Hinf}(\mu+A)\leq \sigma$.
\end{lemma}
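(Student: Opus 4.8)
The plan is to move the entire problem to $\tilde Y$ by means of the bounded idempotent $Q:=\II\PP$, which is idempotent since $\PP\II=\id$. First I would record the elementary facts that $\II Y=Q\tilde Y$, that $\II\colon Y\to Q\tilde Y$ is a Banach space isomorphism with inverse $\PP|_{Q\tilde Y}$, and — using \ref{it:lemLV6.11_2} — that $Q$ maps $D(\tilde A)$ into itself with $QD(\tilde A)=\II D(A)$. Abbreviating the two operators in \ref{it:lemLV6.11_3} by $B_1:=(\II A-\tilde A\II)\PP$ and $B_2:=\II(A\PP-\PP\tilde A)$, which are bounded $[\tilde Y,D(\tilde A)]_\theta\to\tilde Y$, a direct computation on $D(\tilde A)$ yields the commutator identity $\tilde AQ-Q\tilde A=B_2-B_1$. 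This is the decisive observation: $\tilde A$ fails to commute with $Q$ only by a term that is of lower order relative to $\tilde A$.

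Next I would introduce the auxiliary operator on $\tilde Y$, with domain $D(\tilde A)$, given by
\[
  \mathbb A:=Q(\tilde A+B_1)Q+(\id-Q)\tilde A(\id-Q).
\]
Writing $\tilde Av$ in block form and using $\tilde AQ=Q\tilde A+(B_2-B_1)$, one obtains $\mathbb A-\tilde A=QB_1Q+Q(B_2-B_1)-(\id-Q)(B_2-B_1)$, which is a bounded operator $[\tilde Y,D(\tilde A)]_\theta\to\tilde Y$. Since $\tilde A$ has a bounded $\Hinf$-calculus it has $\BIP$, so Proposition~\ref{prop:frac_domain} together with $D((\mu_0+\tilde A)^\theta)=D(\tilde A^\theta)$ (cf.\ \cite[Proposition~15.2.12]{HNVW24}) identifies $[\tilde Y,D(\tilde A)]_\theta$ with $D((\mu_0+\tilde A)^\theta)$, with equivalent norms, for any $\mu_0>0$. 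Thus $\mathbb A-\tilde A$ is a lower-order perturbation of $\mu_0+\tilde A$; since $\mu_0+\tilde A$ has a bounded $\Hinf(\Sigma_\omega)$-calculus for $\omega\in(\om_{\Hinf}(\tilde A),\sigma)$ by Proposition~\ref{prop:calc_pert_Id}, Theorem~\ref{thm:lowe_ord_pert} produces a $\mu>0$ for which $\mu+\mathbb A$ is sectorial with a bounded $\Hinf(\Sigma_\sigma)$-calculus and $\om_{\Hinf}(\mu+\mathbb A)\le\sigma$.

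Then I would exploit the block structure of $\mathbb A$. For $v\in D(\tilde A)$, putting $u:=\PP v\in D(A)$, one has $(\tilde A+B_1)Qv=(\tilde A+B_1)\II u=\tilde A\II u+(\II Au-\tilde A\II u)=\II Au\in\II Y=Q\tilde Y$; together with $(\id-Q)Q=0$ this gives $Q\mathbb A=\mathbb AQ=Q(\tilde A+B_1)Q$ on $D(\tilde A)$. Hence $Q$ commutes with $\mu+\mathbb A$, with its resolvent, and with $f(\mu+\mathbb A)$ for $f\in\Hinf$, so $Q\tilde Y$ is a reducing subspace and the part $(\mu+\mathbb A)|_{Q\tilde Y}$, with domain $QD(\tilde A)=\II D(A)$, is again sectorial with a bounded $\Hinf(\Sigma_\sigma)$-calculus of angle $\le\sigma$. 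Finally, on $\II D(A)$ one computes $\mathbb A(\II u)=\II Au$, that is $\mathbb A|_{Q\tilde Y}=\II A\II^{-1}$, so $\mu+A$ is similar, through the isomorphism $\II$, to $(\mu+\mathbb A)|_{Q\tilde Y}$. Since similarity preserves sectoriality (hence closedness and density of the domain) and the bounded $\Hinf$-calculus together with its angle, this gives that $A$ is closed and densely defined and that $\mu+A$ has a bounded $\Hinf(\Sigma_\sigma)$-calculus with $\om_{\Hinf}(\mu+A)\le\sigma$.

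I expect the main obstacle to be pinning down the operator $\mathbb A$ itself: it must simultaneously be a lower-order perturbation of $\tilde A$ — so that Theorem~\ref{thm:lowe_ord_pert} applies and the $\Hinf$-calculus is not destroyed, recalling that it is unstable under general small perturbations — and admit $Q$ as a genuinely commuting projection, not merely an invariant one, which is what makes the restriction to $Q\tilde Y$ legitimate. Both parts of \ref{it:lemLV6.11_3} are needed precisely here: $B_1$ controls the diagonal contribution $QB_1Q$ to $\mathbb A-\tilde A$, while $B_2$, through the commutator identity, controls the off-diagonal pieces $Q\tilde A(\id-Q)$ and $(\id-Q)\tilde AQ$. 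The remaining work — the domain bookkeeping, the equivalence of the interpolation and fractional-domain norms, and the stability of all the relevant properties under similarity and under passing to a reducing subspace — is routine.
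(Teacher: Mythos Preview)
The paper does not prove this lemma; it is quoted from \cite[Lemma~6.11]{LV18} and used as a black box in the localisation argument (the paper only hints that it ``follows from lower order perturbation results''). Your argument is correct and matches this hint precisely: you conjugate $A$ via the isomorphism $\II\colon Y\to Q\tilde Y$ to the part of an auxiliary operator $\mathbb A$ on $\tilde Y$, arrange $\mathbb A$ so that $\mathbb A-\tilde A$ is bounded $[\tilde Y,D(\tilde A)]_\theta\to\tilde Y$ (which you verify using the commutator identity $\tilde AQ-Q\tilde A=B_2-B_1$), invoke Theorem~\ref{thm:lowe_ord_pert} together with $[\tilde Y,D(\tilde A)]_\theta=D((\mu_0+\tilde A)^\theta)$ from $\BIP$, and then pass to the reducing subspace $Q\tilde Y$ via the commutation $Q\mathbb A=\mathbb AQ$. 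The only point worth making explicit is that $Q$ is bounded on $D(\tilde A)$ (needed so that $QB_1Q$ is bounded on $[\tilde Y,D(\tilde A)]_\theta$ by interpolation); this follows from $\tilde AQ=Q\tilde A+(B_2-B_1)$, which you already have. The remaining claims --- that $Q$ commutes with the resolvent and with $f(\mu+\mathbb A)$, that sectoriality and the $\Hinf$-calculus pass to the part on $Q\tilde Y$, and that similarity via $\II$ transfers everything to $\mu+A$ --- are standard and correctly invoked.
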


We now turn to the proofs of Theorems \ref{thm:Dirichlet_Laplacian} and \ref{thm:Neumann_Laplacian} concerning the $\Hinf$-calculus on bounded domains.

\begin{proof}[Proof of Theorems \ref{thm:Dirichlet_Laplacian} and \ref{thm:Neumann_Laplacian}] 
We start with the proof for the Dirichlet Laplacian. Let $\lambda\in[0,1]$, $\gam\in((1-\lambda)p-1, 2p-1)\setminus\{p-1\}$ and let $\OO$ be a bounded $C^{1,\lambda}$-domain. Define $A:=-\delDir$ on $W^{k,p}(\OO,w_{\gam+kp}^{\d\OO};X)$.
We show that the operator $\mu-\delDir$ has a bounded $\Hinf$-calculus for $\mu$ sufficiently large. 

If $\lambda=0$, then take $(V_n)_{n=1}^N, (\OO_n)_{n=1}^N,(\eta_n)_{n=0}^N$ from Lemma \ref{lem:decomp} such that for all $n\in\{1,\dots, N\}$ we have $[\OO_n]_{C^1}< \delta$ where $\delta\in(0,1)$ is small enough such that Theorem \ref{thm:Dirichlet_Laplacian_special} applies for every $\OO_n$.
If $\lambda\in (0,1]$, then let $\eps\in (0,\lambda)$ be such that $\gam>(1-(\lambda-\eps))p-1$. Take $(V_n)_{n=1}^N, (\OO_n)_{n=1}^N, (\eta_n)_{n=0}^N$ from Lemma \ref{lem:decomp} such that for all $n\in\{1,\dots, N\}$ we have $[\OO_n]_{C^{1,\lambda-\eps}}< \delta$ where $\delta\in(0,1)$ is small enough such that Theorem \ref{thm:Dirichlet_Laplacian_special} (applied with $\lambda$ replaced by $\lambda-\eps$) applies for every $\OO_n$.
We define the following operators
  \begin{enumerate}[(i)]
  \item $\tilde{A}:=\bigoplus_{n=0}^N\tilde{A}_n$ on $\WW^{k,p}_{\gam+kp}$ as defined in \eqref{eq:Fk}, where
    \begin{enumerate}
      \item $\tilde{A}_0$ on $W^{k,p}(\RRd;X)$ with $D(\tilde{A}_0):=W^{k+2,p}(\RRd;X)$ is given by $\tilde{A}_0 \tilde{u}:=\del \tilde{u} $,
      \item $\tilde{A}_n$ on $W^{k,p}(\OO_n,w_{\gam+kp}^{\d\OO_n};X)$ with $D(\tilde{A}_n):=W^{k+2,p}_{\Dir}(\OO_n,w_{\gam+kp}^{\d\OO_n};X)$ is given by $\tilde{A}_n \tilde{u}:=\delDir \tilde{u} $ for $n\in\{1,\dots, N\}$,
    \end{enumerate}
    \item $B:D(A)\to \WW^{k,p}_{\gam+kp}$ given by $Bu:=([\del,\eta_n]u)_{n=0}^N$,
    \item $C:D(\tilde{A})\to W^{k,p}(\OO, w_{\gam+kp}^{\d\OO};X)$ given by $C\tilde{u}:=\sum_{n=0}^N[\del,\eta_n]\tilde{u}$.
  \end{enumerate}
  Let $\mu>0$. By \cite[Lemma 2.6]{LLRV24}, Proposition \ref{prop:calc_pert_Id} and Theorem \ref{thm:Dirichlet_Laplacian_special} it holds that $\mu-\tilde{A}_n$ for any $n\in\{ 0,\dots, N\}$ has a bounded $\Hinf$-calculus with $\om_{\Hinf}(\mu-\tilde{A}_n)=0$. Thus $\mu-\tilde{A}$ has a bounded $\Hinf$-calculus with $\om_{\Hinf}(\mu-\tilde{A})=0$ as well.

  Let $\PP$ and $\II$ be as defined in \eqref{eq:retraction}. It is straightforward to verify that the conditions \ref{it:lemLV6.11_1} and \ref{it:lemLV6.11_2} from Lemma \ref{lem:LV6.11} hold. It remains to check condition \ref{it:lemLV6.11_3} in Lemma \ref{lem:LV6.11}. From Proposition \ref{prop:complex_int_W_Dir} we obtain
  \begin{equation*}
     [W^{k,p}(\OO_n, w_{\gam+kp}^{\d\OO_n};X), D(\tilde{A}_n)]_{\half} = W^{k+1,p}_{\Dir}(\OO_n,w_{\gam+kp}^{\d\OO_n};X)\quad \text{ for }n\in\{1,\dots,N\},
  \end{equation*}
  and in combination with (see \cite[Theorems 5.6.9 \& 5.6.11]{HNVW16})
  \begin{equation*}
     [W^{k,p}(\RRd;X), D(\tilde{A}_0)]_{\half} = W^{k+1,p}(\RRd;X),
  \end{equation*}
  this yields
  \begin{equation}\label{eq:intPI}
      \begin{aligned}
    [\WW^{k,p}_{\gam+kp}, D(\tilde{A})]_{\half} &=[W^{k,p}(\RRd;X), D(\tilde{A}_0)]_{\half} \oplus\bigoplus_{n=1}^N [W^{k,p}(\OO_n, w_{\gam+kp}^{\d\OO_n};X), D(\tilde{A}_n)]_{\half} \\
     & =W^{k+1,p}(\RRd;X)\oplus\bigoplus_{n=1}^N W^{k+1,p}_{\Dir}(\OO_n,w_{\gam+kp}^{\d\OO_n};X).
  \end{aligned}
  \end{equation}
 Note that
  \begin{equation*}
    \II A u-\tilde{A}\II u=- Bu,\quad u\in D(A),\quad \text{ and }\quad A\PP\tilde{u}-\PP\tilde{A}\tilde{u}=C\tilde{u},\quad \tilde{u}\in D(\tilde{A}),
  \end{equation*}
and every commutator $[\del, \eta_n]$ is a first-order partial differential operator with smooth and compactly supported coefficients. This and \eqref{eq:intPI} yield that
\begin{align*}
  \II A-\tilde{A}\II&\colon W^{k+1,p}_{\Dir}(\OO, w_{\gam+kp}^{\d\OO};X)\to \WW^{k,p}_{\gam+kp}\quad \text{ and }\\
  \PP&\colon [\WW^{k,p}_{\gam+kp}, D(\tilde{A})]_{\half} \to W^{k+1,p}_{\Dir}(\OO, w_{\gam+kp}^{\d\OO};X)
\end{align*}
are bounded.  Similarly, we obtain by \eqref{eq:intPI} that
\begin{align*}
  A\PP-\PP\tilde{A}&\colon [\WW^{k,p}_{\gam+kp}, D(\tilde{A})]_{\half} \to  W^{k,p}(\OO, w_{\gam+kp}^{\d\OO};X)\quad \text{ and }\\
  \II&\colon W^{k,p}(\OO, w_{\gam+kp}^{\d\OO};X) \to \WW^{k,p}_{\gam+kp}
\end{align*}
are bounded. This shows that $(\II A - \tilde{A}\II)\PP$ and $\II(A\PP-\PP\tilde{A})$ extend to bounded operators from
 $[\WW^{k,p}_{\gam+kp}, D(\tilde{A})]_{\half}$ to $\WW^{k,p}_{\gam+kp}$. Applying Lemma \ref{lem:LV6.11} gives that for all $\sigma\in(0,\pi)$ there exists a $\tilde{\mu}>0$ such that for all $\mu>\tilde{\mu}$ the operator $\mu-\delDir$ on $W^{k,p}(\OO, w_{\gam+kp}^{\d\OO};X)$ has a bounded $\Hinf$-calculus with $\om_{\Hinf}(\mu-\delDir)\leq \sigma$. 
 
 The boundedness of the $\Hinf$-calculus for the Neumann Laplacian on $W^{k+j,p}(\OO, w^{\d\OO}_{\gam+kp};X)$ can be shown similarly as for the Dirichlet Laplacian using Theorem \ref{thm:Neumann_Laplacian_special} and Proposition \ref{prop:complex_int_W_Neu}. 
 
 It remains to prove the boundedness of the $\Hinf$-calculus for $\mu-\delNeu$ on the quotient space $Y/K:= W^{k+j,p}(\OO, w^{\d\OO}_{\gam+kp};X)/\{c\ind_\OO: c\in X\}$. Fix $\sigma\in(0,\pi)$ and let $\mu$ be large enough such that $\mu-\delNeu$ on $W^{k+j,p}(\OO, w^{\d\OO}_{\gam+kp};X)$ has a bounded $\Hinf$-calculus of angle $\om_{\Hinf}(\mu-\delNeu)\leq \sigma$. Let $\om\in(\sigma,\pi)$ and let $\ph\in H^1(\Sigma_{\om})\cap H^\infty(\Sigma_\om)$. For any $c\in K$ we have that $x\in Y/K$ can be represented as $x = y + c$ with $y\in Y$. Note that for $z\in \rho(\mu-\delNeu)$ the equation
 \begin{equation*}
    zu -(\mu-\delNeu)u = c
 \end{equation*}
 has the unique solution $u=c/(z-\mu)$. Therefore, by definition of the functional calculus and Cauchy's integral formula, we obtain 
 \begin{equation}\label{eq:ph_c}
 \begin{aligned}
   \ph(\mu-\delNeu)c &=\frac{1}{2\pi \ii}\int_{\d\Sigma_{\nu}} \ph(z) R(z, \mu-\delNeu)c\dd z\\
    &=\frac{1}{2\pi \ii}\int_{\d\Sigma_{\nu}} \frac{\ph(z)c}{z-\mu}\dd z = \ph(\mu) c\in K,\quad \nu\in (\sigma, \om).
   \end{aligned}
 \end{equation}
By \eqref{eq:ph_c} and the bounded $\Hinf$-calculus for $\mu-\delNeu$ on $Y$, it follows that for $x\in Y/K$ and $c\in K$ we have
 \begin{align*}
  \|(\ph (\mu-\delNeu) x) - \ph(\mu)c\|_Y  & = \|(\ph (\mu-\delNeu) (y+c))-\ph(\mu)c\|_Y = \|\ph(\mu-\delNeu)y \|_Y\\&\lesssim \|\ph\|_{\Hinf(\Sigma_\om)} \|y\|_Y = \|\ph\|_{\Hinf(\Sigma_\om)} \|x-c\|_Y.
 \end{align*}
 Taking the infimum over $c\in K$ yields that $ \|\ph (\mu-\delNeu) x\|_{Y/K}\lesssim \|\ph\|_{\Hinf(\Sigma_\om)}\|x\|_{Y/K}$ for $x\in Y/K$, which proves the boundedness of the $\Hinf$-calculus on $Y/K$ with angle $\om_{\Hinf}(\mu-\delNeu)\leq \sigma$.
\end{proof}

\subsection{The proofs of Theorems \ref{thm:scalarDir} and \ref{thm:scalarNeu}}\label{sec:cor_X=C} 
We continue with the proof of Theorems \ref{thm:scalarDir} and \ref{thm:scalarNeu}, which deal with the $\Hinf$-calculus in the special case of $X=\CC$. We start with some preliminary results about the consistency of resolvents.\\

Let $X_0$ and $X_1$ be two compatible Banach spaces and suppose that $B_{0}\in\mc{L}(X_0)$ and $B_{1}\in \mc{L}(X_1)$. Then we call the operators $B_{0}$ and $B_{1}$ \emph{consistent} if
\begin{equation*}
  B_{0}u=B_{1}u\qquad \text{ for all }u\in X_0\cap X_1.
\end{equation*}
For $z\in \Sigma \subseteq \CC$ the two families of operators $B_{0}(z)\in\mc{L}(X_0)$ and $B_{1}(z)\in\mc{L}(X_1)$ are called consistent if $B_{0}(z)$ and $B_{1}(z)$ are consistent for all $z\in \Sigma$.\\

We introduce the forms on the Hilbert spaces $V$ (as dense subspace of $L^2(\OO)$) given by
\begin{align*}
      a_{\Dir}(v_1,v_2)&:=\int_{\OO} \grad v_1 \cdot \overline{\grad v_2} \dd x,\qquad v_1,v_2\in V= W^{1,2}_0(\OO),\\
      a_{\Neu}(v_1,v_2)&:=\int_{\OO} \grad v_1 \cdot \overline{\grad v_2} \dd x,\qquad v_1,v_2\in V= W^{1,2}(\OO).
    \end{align*}
    Associated with the forms $a_{\Dir}$ and $a_{\Neu}$ are the densely defined closed Laplace operators $-A_{\Dir,2}$ and $-A_{\Neu,2}$ on $L^2(\OO)$, respectively, see for instance \cite[Chapter 12]{Ne22}. The domains of these operators are 
    \begin{align*}
      D(A_{\Dir,2}) &=\{f\in W^{1,2}_0(\OO)\cap W^{2,2}_\loc(\OO): \del f\in L^2(\OO)\},\\
     D(A_{\Neu,2}) &=\{f\in W^{1,2}(\OO)\cap W^{2,2}_\loc(\OO): \del f\in L^2(\OO)\},
    \end{align*}
    see \cite[Sections 12.3.b \& 12.3.c]{Ne22}. A characterisation of the domains as a closed subspace of $W^{2,2}(\OO)$ requires more regularity of the domain (compared to the regularity we consider in Theorems \ref{thm:scalarDir} and \ref{thm:scalarNeu}), see \cite[Sections 12.3.b \& 12.3.c]{Ne22}. For instance, for the Dirichlet Laplacian, $C^2$-regularity is required.\\

We have the following lemma on the consistency of the resolvents for the Dirichlet Laplacian. 
\begin{lemma}\label{lem:consis_resol_domain}
   Let $p\in(1,\infty)$, $k\in\NN_0$, $\lambda\in[0,1]$, $\gam\in ((1-\lambda)p-1,2p-1)\setminus\{p-1\}$ and let $\OO$ be a bounded $C^{1,\lambda}$-domain. 
   Let
    \begin{align*}
      A_{p,k,\gam}&:=\delDir \quad\text{ on } W^{k,p}(\Dom,w^{\BDom}_{\gam+kp})\text{ with }D(A_{p,k,\gam}) = W^{k+2,p}_\Dir(\OO,w^{\d\OO}_{\gam+kp})
    \end{align*}
     be as in Definition \ref{def:delDirObounded} and let 
    \begin{equation*}
      A_{\Dir,2}=\delDir \quad\text{ on } L^2(\OO) \text{ with }D(A_{\Dir,2}) = \{f\in W^{1,2}_0(\OO)\cap W^{2,2}_\loc(\OO): \del f\in L^2(\OO)\}
    \end{equation*}
    be as above.
     Then there exists a $\tilde{\mu}>0$ such that for all $\mu>\tilde{\mu}$ the resolvents $R(\mu, A_{p,k,\gam})$ and $R(\mu, A_{\Dir, 2})$ are consistent.
\end{lemma}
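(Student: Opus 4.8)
The plan is to establish consistency by the standard density-and-overlap argument: both $R(\mu,A_{p,k,\gam})$ and $R(\mu,A_{\Dir,2})$ are, for $\mu$ large, inverses of shifted Laplace operators with Dirichlet conditions, so on a common dense subspace they must agree, and then one propagates this to the full intersection $W^{k,p}(\OO,w^{\d\OO}_{\gam+kp})\cap L^2(\OO)$ by continuity. First I would fix $\tilde\mu>0$ large enough that $\mu-\delDir$ on $W^{k,p}(\OO,w^{\d\OO}_{\gam+kp})$ is invertible (Theorem \ref{thm:Dirichlet_Laplacian}) and such that $\mu-A_{\Dir,2}$ is invertible on $L^2(\OO)$ (which holds for every $\mu>0$ since $a_{\Dir}$ is accretive, indeed coercive). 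For $\mu>\tilde\mu$ and a test function $f\in\Cc^\infty(\OO)\subseteq W^{k,p}(\OO,w^{\d\OO}_{\gam+kp})\cap L^2(\OO)$, set $u:=R(\mu,A_{p,k,\gam})f\in W^{k+2,p}_{\Dir}(\OO,w^{\d\OO}_{\gam+kp})$ and $v:=R(\mu,A_{\Dir,2})f\in D(A_{\Dir,2})$. Both satisfy the same distributional equation $\mu w-\del w=f$ on $\OO$, and I must argue $u=v$.

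The key point is uniqueness of the solution to $\mu w-\del w=f$ within a space large enough to contain both $u$ and $v$. I would show $u\in W^{1,2}_0(\OO)$: since $\OO$ is bounded, $W^{k+2,p}_{\Dir}(\OO,w^{\d\OO}_{\gam+kp})\hookrightarrow W^{1,2}_0(\OO)$ once one checks the embedding of the underlying weighted $L^p$-space into $L^2$ and the vanishing-trace compatibility. Concretely, using the localisation of Lemma \ref{lem:decomp}, it suffices to check the half-space model: for $\gam\in((1-\lambda)p-1,2p-1)$ and $\OO_n$ bounded, Hardy's inequality (Corollary \ref{cor:Sob_embRRdh}, together with Lemma \ref{lem:frac_Hardy2}) gives $W^{k+2,p}_{\Dir}(\OO_n,w^{\d\OO_n}_{\gam+kp})\hookrightarrow W^{2,p}_{\Dir}(\OO_n,w^{\d\OO_n}_{\gam})$, and since $\gam<2p-1$ this in turn embeds into $W^{1,p}(\OO_n)$ with vanishing trace; then boundedness of $\OO_n$ gives $W^{1,p}(\OO_n)\hookrightarrow L^2(\OO_n)$ for $p\geq 2$, while for $p<2$ one uses that $W^{2,p}_\Dir\hookrightarrow W^{1,q}\hookrightarrow L^2$ with the Sobolev exponent, or simply that the weight is bounded near the interior and the trace vanishes. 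Having $u,v\in W^{1,2}_0(\OO)$ both solving $\mu w-\del w=f$ weakly against $W^{1,2}_0(\OO)$-test functions, the coercivity of $\mu\|\cdot\|_{L^2}^2+a_{\Dir}(\cdot,\cdot)$ forces $u=v$. Thus $R(\mu,A_{p,k,\gam})f=R(\mu,A_{\Dir,2})f$ for all $f\in\Cc^\infty(\OO)$.

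To finish, I would upgrade from test functions to the whole intersection. Given $f\in W^{k,p}(\OO,w^{\d\OO}_{\gam+kp})\cap L^2(\OO)$, approximate $f$ by $f_n\in\Cc^\infty(\OO)$ in both norms simultaneously — here I would again use Lemma \ref{lem:decomp} to reduce to approximating in weighted $W^{k,p}$ on special domains and in $L^2$, which a common mollification with a compactly-supported cut-off achieves (density of $\Cc^\infty(\OO)$ in $W^{k,p}(\OO,w^{\d\OO}_{\gam+kp})$ is not automatic, but density of $\Cc^\infty(\OO)$ in the \emph{intersection} follows from the standard truncation-plus-mollification argument since both spaces are ``localisable''; alternatively, one notes $W^{k,p}\cap L^2\subseteq L^2$ densely and that $R(\mu,A_{p,k,\gam})$, $R(\mu,A_{\Dir,2})$ are both bounded on the intersection with its intersection norm). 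Then $R(\mu,A_{p,k,\gam})f_n\to R(\mu,A_{p,k,\gam})f$ in $W^{k,p}(\OO,w^{\d\OO}_{\gam+kp})$ and $R(\mu,A_{\Dir,2})f_n\to R(\mu,A_{\Dir,2})f$ in $L^2(\OO)$; since the left-hand sides are equal for each $n$ and both limits exist in spaces that embed into $\mc D'(\OO)$, the two limits coincide as distributions, hence $R(\mu,A_{p,k,\gam})f=R(\mu,A_{\Dir,2})f$.

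The main obstacle I anticipate is the embedding step $W^{k+2,p}_{\Dir}(\OO,w^{\d\OO}_{\gam+kp})\hookrightarrow W^{1,2}_0(\OO)$ across the full parameter range $\gam\in((1-\lambda)p-1,2p-1)$ and all $p\in(1,\infty)$, $k\in\NN_0$: one needs the weighted Sobolev regularity, after using Hardy's inequality to reduce the weight to something integrable, to yield genuine (unweighted) $W^{1,2}_0$ membership on the bounded pieces, and the vanishing-trace bookkeeping for $\gam$ on either side of $p-1$ has to be tracked carefully (for $\gam>p-1$ the Dirichlet space coincides with $W_0$, for $\gam<p-1$ with the full weighted space, cf.\ the discussion preceding Definition \ref{def:delRRdh}). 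Once that embedding is in hand, the rest is a soft density argument. An alternative that sidesteps some of this: prove consistency first for $p=2$, $k=0$, $\gam=0$ (where $A_{2,0,0}$ and $A_{\Dir,2}$ are literally the same operator once one checks the domain $W^{2,2}_{\Dir}(\OO)\cap\{\del f\in L^2\}$ agrees, using $C^{1,\lambda}$-regularity), and then chain consistency along the parameters via the pairwise overlaps of the weighted $L^p$-scales, using that each resolvent is defined by the same PDE.
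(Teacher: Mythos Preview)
Your overall strategy is sound, but the key embedding step $W^{k+2,p}_{\Dir}(\OO,w^{\d\OO}_{\gam+kp})\hookrightarrow W^{1,2}_0(\OO)$ can genuinely fail in the stated parameter range, so the argument as written has a gap. Take $k=0$, $\gam\in(p-1,2p-1)$, and work in the half-space model near a boundary chart. For a compactly supported $\phi\in W^{2,p}(\RR^{d-1})$ set $u(x)=x_1\phi(\tilde x)\eta(x)$ with $\eta$ a smooth cutoff constant in $x_1$ near $0$; a direct check shows $u\in W^{2,p}_{\Dir}(\RRdh,w_\gam)$, while $\partial_1 u\approx\phi$ near $x_1=0$, so $\nabla u\in L^2$ forces $\phi\in L^2(\RR^{d-1})$. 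But for $p<\tfrac{2(d-1)}{d+3}$ (possible once $d\geq 6$) one has $W^{2,p}\not\hookrightarrow L^2$ on bounded subsets of $\RR^{d-1}$, so such $\phi\notin L^2$ exist. Thus your coercivity-in-$W^{1,2}_0$ uniqueness argument does not cover the full range $p\in(1,\infty)$. (A smaller point: you take $f\in\Cc^\infty(\OO)$ as your dense class, but $\Cc^\infty(\OO)$ is dense in $W^{k,p}_0(\OO,w^{\d\OO}_{\gam+kp})$, not in $W^{k,p}(\OO,w^{\d\OO}_{\gam+kp})$; you would need $\Cc^\infty(\overline\OO)$, and then simultaneous approximation in the intersection norm still requires an argument.)

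The paper circumvents the embedding obstruction by going \emph{down} rather than up: it chooses $q<\min\{p,2\}$ and a weight exponent $\kappa>0$ with $\kappa>\tfrac{q(\gam+1)}{p}-1>(1-\lambda)q-1$, so that (a) $L^p(\OO,w^{\d\OO}_\gam)\hookrightarrow L^q(\OO,w^{\d\OO}_\kappa)$ by H\"older, and (b) $D(A_{\Dir,2})\hookrightarrow W^{2,q}_{\Dir}(\OO,w^{\d\OO}_\kappa)$ by $L^2\hookrightarrow L^q(w^{\d\OO}_\kappa)$ combined with the elliptic regularity already proved in Theorem~\ref{thm:Dirichlet_Laplacian}. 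Both solutions then lie in $W^{2,q}_{\Dir}(\OO,w^{\d\OO}_\kappa)$, where Theorem~\ref{thm:Dirichlet_Laplacian} again supplies uniqueness, and no density step is needed. Your proposed ``chaining'' alternative is in the right spirit, but the paper's single intermediate space $W^{2,q}_\Dir(\OO,w^{\d\OO}_\kappa)$ is exactly the efficient realisation of that idea.
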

\begin{proof}
  Take $1<q<\min\{p,2\}$ and $\kappa\in (0,2q-1)\setminus\{q-1\}$ such that
\begin{equation}\label{eq:kappa}
  \kappa > \frac{q(\gam+1)}{p} -1 > (1-\lambda)q-1. 
\end{equation}
First, we claim that $L^p(\OO, w^{\d\OO}_{\gam})\hookrightarrow L^q(\OO, w^{\d\OO}_{\kappa})$. Indeed, for $u\in L^p(\OO, w^{\d\OO}_{\gam})$ we have by H\"older's inequality that
\begin{equation*}
  \int_\OO |u(x)|^q w^{\d\OO}_{\kappa}(x)\dd x \leq \Big(\int_\OO |u(x)|^p w^{\d\OO}_{\gamma}(x)\dd x\Big)^{\frac{q}{p}}\Big(\int_\OO w^{\d\OO}_{\frac{\kappa p-q\gamma}{p-q}}(x)\dd x\Big)^{\frac{p-q}{p}}<\infty.
\end{equation*}
The latter integral can be written as an integral over $\RRdh$ (using localisation from Lemma \ref{lem:decomp} and the diffeomorphism from Lemma \ref{lem:localization_weighted_blow-up}), hence the integral is finite since  \eqref{eq:kappa} implies $(\kappa p -q\gam)/(p-q)>-1$. This proves the claim.

To continue, we introduce the space
\begin{equation*}
  Z_{r,\nu}:= \{f\in W^{1,r}_0(\OO, w^{\d\OO}_{\nu})\cap W^{2,r}_{\loc}(\OO): \del f\in L^r(\OO,w^{\d\OO}_{\nu})\}\quad \text{ for } r\in(1,2],\, \nu>-1,
\end{equation*} 
and note that $D(A_2)=Z_{2,0}$. Now, consider the equation
\begin{equation}\label{eq:resolveeq_domains}
    \mu u-\delDir u =f ,\qquad f\in W^{k,p}(\Dom,w^{\BDom}_{\gam+kp})\cap  L^2(\OO).
\end{equation}
By Theorem \ref{thm:Dirichlet_Laplacian} (using that $\gam>(1-\lambda)p-1$) and \cite[Section 12.3.b]{Ne22} there exist unique
\begin{equation*}
   u_0\in W^{k+2,p}_{\Dir}(\Dom,w^{\BDom}_{\gam+kp})\quad \text{ and }\quad u_1\in Z_{2,0}
\end{equation*}
solving \eqref{eq:resolveeq_domains} for $\mu$ sufficiently large. By Hardy's inequality (for bounded Lipschitz domains, see for instance \cite[Section 8.8]{Ku85}) and the claim, we have
\begin{equation*}
    W^{k+2,p}_{\Dir}(\Dom,w^{\BDom}_{\gam+kp})\hookrightarrow W^{2,p}_\Dir(\OO, w^{\d\OO}_{\gam})\hookrightarrow W^{2,q}_\Dir(\OO, w^{\d\OO}_{\kappa}).
\end{equation*}
Moreover, using $\kappa>0$, $q<2$ and elliptic regularity (Theorem \ref{thm:Dirichlet_Laplacian} using \eqref{eq:kappa}), we have 
\begin{equation*}
  Z_{2,0}\hookrightarrow Z_{q,\kappa} = W^{2,q}_\Dir(\OO, w^{\d\OO}_{\kappa}).
\end{equation*}
Note that the equation \eqref{eq:resolveeq_domains} with right-hand side $f\in L^q(\OO, w^{\d\OO}_{\kappa})$ has a unique solution in $W^{2,q}_\Dir(\OO, w^{\d\OO}_{\kappa})$ by Theorem \ref{thm:Dirichlet_Laplacian} (using \eqref{eq:kappa}). It follows that $u_0=u_1$, which proves that the resolvents of $A_{p,k,\gam}$ and $A_{2}$ are consistent. 
\end{proof}

For the Neumann Laplacian, we have the following result concerning the consistency of resolvents. Its proof is similar to the proof of Lemma \ref{lem:consis_resol_domain}.
\begin{lemma}\label{lem:consis_resol_domainNeu}
   Let $p\in(1,\infty)$, $k\in\NN_0$, $\lambda\in[0,1]$, $\gam\in ((1-\lambda)p-1,p-1)$, $j\in\{0,1\}$ and let $\OO$ be a bounded $C^{j+1,\lambda}$-domain.  Let
    \begin{align*}
      A_{p,k,j,\gam}&:=\delNeu \quad\text{ on } W^{k+j,p}(\Dom,w^{\BDom}_{\gam+kp})\text{ with }D(A_{p,k,j,\gam}) = W^{k+j+2,p}_\Neu(\OO,w^{\d\OO}_{\gam+kp})
    \end{align*}
     be as in Definition \ref{def:delDirObounded}\ref{it:def:delO2bounded} and let 
    \begin{equation*}
      A_{\Neu,2}=\delNeu \quad\text{ on } L^2(\OO) \text{ with }D(A_{\Neu,2}) = \{f\in W^{1,2}(\OO)\cap W^{2,2}_\loc(\OO): \del f\in L^2(\OO)\}
    \end{equation*}
    be as above.
     Then there exists a $\tilde{\mu}>0$ such that for all $\mu>\tilde{\mu}$ the resolvents $R(\mu, A_{p,k,j,\gam})$ and $R(\mu, A_{\Neu,2})$ are consistent.
\end{lemma}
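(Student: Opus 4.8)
The plan is to follow the strategy of the proof of Lemma~\ref{lem:consis_resol_domain}, with the obvious modifications for Neumann boundary conditions. The key is to compare the resolvent of the Neumann Laplacian $A_{p,k,j,\gam}$ on the weighted Sobolev space with the resolvent of the $L^2$-realisation $A_{\Neu,2}$, by passing through an intermediate unweighted (or lightly weighted) $L^q$-space in which both resolvents can be compared directly by uniqueness of solutions to the resolvent equation.

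First I would choose $1<q<\min\{p,2\}$ and $\kappa\in((1-\lambda)q-1,q-1)$ such that $\kappa > \tfrac{q(\gam+1)}{p}-1$; this is possible precisely because $\gam\in((1-\lambda)p-1,p-1)$ forces $\tfrac{q(\gam+1)}{p}-1<q-1$ and, after shrinking $q$ towards $1$ if necessary, also $\tfrac{q(\gam+1)}{p}-1>(1-\lambda)q-1$. Exactly as in the proof of Lemma~\ref{lem:consis_resol_domain}, H\"older's inequality together with the fact that $(\kappa p - q\gam)/(p-q)>-1$ (which follows from the choice of $\kappa$, after rewriting the relevant integral over $\RRdh$ via the localisation in Lemma~\ref{lem:decomp} and the diffeomorphism in Lemma~\ref{lem:localization_weighted_blow-up}) yields the embedding $L^p(\OO,w^{\d\OO}_{\gam})\hookrightarrow L^q(\OO,w^{\d\OO}_{\kappa})$, and hence also on the higher-order scale $W^{k+j+2,p}_{\Neu}(\OO,w^{\d\OO}_{\gam+kp})\hookrightarrow W^{k+j,p}_{\Neu}(\OO,w^{\d\OO}_{\gam}) \hookrightarrow W^{2,q}_{\Neu}(\OO,w^{\d\OO}_{\kappa})$, using Hardy's inequality on bounded Lipschitz domains (\cite[Section 8.8]{Ku85}) to absorb the lower-order derivatives and the fact that $\gam+kp>-1$.

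Next I would introduce the intermediate space $Z^{\Neu}_{r,\nu}:=\{f\in W^{1,r}(\OO,w^{\d\OO}_{\nu})\cap W^{2,r}_{\loc}(\OO):\del f\in L^r(\OO,w^{\d\OO}_{\nu})\}$, so that $D(A_{\Neu,2})=Z^{\Neu}_{2,0}$ by \cite[Section 12.3.c]{Ne22}. As in Lemma~\ref{lem:consis_resol_domain}, using $\kappa>(1-\lambda)q-1$ and elliptic regularity for the Neumann Laplacian (Theorem~\ref{thm:Neumann_Laplacian} with $k=0$, $j=0$, the weight exponent $\kappa$) one gets $Z^{\Neu}_{2,0}\hookrightarrow Z^{\Neu}_{q,\kappa}=W^{2,q}_{\Neu}(\OO,w^{\d\OO}_{\kappa})$. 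Then, given $f\in W^{k+j,p}(\OO,w^{\d\OO}_{\gam+kp})\cap L^2(\OO)$ and $\mu$ large, Theorem~\ref{thm:Neumann_Laplacian} and \cite[Section 12.3.c]{Ne22} produce unique solutions $u_0\in W^{k+j+2,p}_{\Neu}(\OO,w^{\d\OO}_{\gam+kp})$ and $u_1\in Z^{\Neu}_{2,0}$ of $\mu u-\delNeu u=f$; by the embeddings just established, both lie in $W^{2,q}_{\Neu}(\OO,w^{\d\OO}_{\kappa})$, and the uniqueness of the solution of $\mu u-\delNeu u=f$ in that space (again Theorem~\ref{thm:Neumann_Laplacian} applied with the parameters $q,\kappa$) forces $u_0=u_1$, i.e.\ $R(\mu,A_{p,k,j,\gam})$ and $R(\mu,A_{\Neu,2})$ agree on $W^{k+j,p}(\OO,w^{\d\OO}_{\gam+kp})\cap L^2(\OO)$, which is the desired consistency after taking $\tilde\mu$ large enough for all the invoked results to apply simultaneously.

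The only genuinely new point compared to the Dirichlet case is bookkeeping: one must make sure that for every application of Theorem~\ref{thm:Neumann_Laplacian} (to the parameters $(p,k,j,\gam)$, to $(q,0,0,\kappa)$, and to $(q,0,0,\kappa)$ again for the higher-order regularity) the hypotheses $\lambda\in(0,1]$, $\gam\in((1-\lambda)p-1,p-1)$ resp.\ $\kappa\in((1-\lambda)q-1,q-1)$ are all met, and that the single $\tilde\mu$ can be chosen larger than the finitely many thresholds coming from these applications; since the case $\lambda=0$ is excluded in the Neumann theorems this needs the choice $\kappa>(1-\lambda)q-1$ above, which is exactly what the construction of $\kappa$ guarantees. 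I expect this parameter juggling, rather than any analytic difficulty, to be the main (and rather mild) obstacle.
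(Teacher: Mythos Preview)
Your proposal follows exactly the route the paper indicates (the paper's proof is literally ``similar to the proof of Lemma~\ref{lem:consis_resol_domain}''), and the parameter choices are essentially right. Two points deserve more care.

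First, as in the paper's Dirichlet proof you should also impose $\kappa>0$; your stated constraints $\kappa>\tfrac{q(\gam+1)}{p}-1$ and $\kappa>(1-\lambda)q-1$ do not by themselves force $\kappa>-(2-q)/2$, which is what the H\"older argument for $L^2(\OO)\hookrightarrow L^q(\OO,w^{\d\OO}_\kappa)$ actually needs. Since $q-1>0$ there is always room to take $\kappa>0$, so this is a harmless extra condition. (Also, the inequality $\tfrac{q(\gam+1)}{p}-1>(1-\lambda)q-1$ is equivalent to $\gam>(1-\lambda)p-1$ and holds for every $q$, so no shrinking of $q$ is needed there; and the intermediate space in your embedding chain should be $W^{2,p}_{\Neu}(\OO,w^{\d\OO}_{\gam})$, not $W^{k+j,p}_{\Neu}(\OO,w^{\d\OO}_{\gam})$.)

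Second, and more substantively, the Dirichlet argument works because the boundary condition is encoded in $W^{1,r}_0$, which is part of the definition of $Z_{r,\nu}$; your space $Z^{\Neu}_{r,\nu}=\{f\in W^{1,r}(\OO,w^{\d\OO}_{\nu})\cap W^{2,r}_\loc:\del f\in L^r(\OO,w^{\d\OO}_{\nu})\}$ carries no Neumann condition at all, so the asserted equality $Z^{\Neu}_{q,\kappa}=W^{2,q}_{\Neu}(\OO,w^{\d\OO}_{\kappa})$ is false as written (any $f\in W^{2,q}(\OO,w^{\d\OO}_\kappa)$ with $\d_n f\not\equiv 0$ lies in the left-hand side but not the right). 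What you actually need is only the \emph{embedding} $D(A_{\Neu,2})\hookrightarrow W^{2,q}_{\Neu}(\OO,w^{\d\OO}_{\kappa})$, and here the Neumann condition \emph{is} present: $u_1\in D(A_{\Neu,2})$ satisfies $\int_\OO\nabla u_1\cdot\nabla\phi=-\int_\OO(\del u_1)\phi$ for all $\phi\in W^{1,2}(\OO)$ by the form definition. You must carry this weak Neumann information through the passage to $L^q(w^{\d\OO}_\kappa)$ (e.g.\ by checking that the $W^{2,q}_{\Neu}(\OO,w^{\d\OO}_\kappa)$-solution produced by Theorem~\ref{thm:Neumann_Laplacian} also solves the $L^2$-form equation and invoking uniqueness there, or vice versa). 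This is the one genuinely Neumann-specific step beyond bookkeeping, and it is not captured by your $Z^{\Neu}$ device.
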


We can now turn to the $\Hinf$-calculus on scalar-valued spaces.
\begin{proof}[Proof of Theorems \ref{thm:scalarDir} and \ref{thm:scalarNeu}]
 We start with the proof of Theorem \ref{thm:scalarDir}\ref{it:thmscalar1}. Since the embedding $W^{1,p}(\OO, w_{\gam+kp}^{\d\OO})\hookrightarrow L^p(\OO, w_{\gam+kp}^{\d\OO})$ is compact, see \cite[Theorem 8.8]{GO89}, we have 
 \begin{equation*}
   D(\delDir)=W^{k+2,p}_{\Dir}(\OO, w_{\gam+kp}^{\d\OO})\hookrightarrow W^{k+1,p}(\OO, w_{\gam+kp}^{\d\OO})\stackrel{\text{compact}}{\hookrightarrow} W^{k,p}(\OO, w_{\gam+kp}^{\d\OO}).
 \end{equation*}
Since $(\mu-\delDir)^{-1}$ with $\mu\in \rho(\delDir)$ exists (by Theorem \ref{thm:Dirichlet_Laplacian}), the compact embedding above implies that $(\mu-\delDir)^{-1}$ is compact. Thus by the Riesz--Schauder theorem for compact operators, the resolvent operator $(\mu-\delDir)^{-1}$ has a discrete countable spectrum $\{\sigma_j:j\in \NN_0\}$, where $\sigma_j\neq 0$ are eigenvalues of $(\mu-\delDir)^{-1}$. Moreover, zero is in the spectrum of $(\mu-\delDir)^{-1}$ and is the only accumulation point of the spectrum. Therefore, by the spectral mapping theorem 
 \begin{equation*}
   \sigma(-\delDir)=\{\mu_j: \mu_j=\sigma_j^{-1}-\mu, j\in\NN_0\text{ with }\sigma_j\neq 0\}.
 \end{equation*}
 
 Next, we claim that the spectrum $\sigma(-\delDir)$ is independent of $p\in(1,\infty)$, $k\in\NN_0$ and $\gam\in ((1-\lambda)p-1,2p-1)\setminus\{p-1\}$. Let $A_{p,k,\gam}$ and $A_2$ be as in Lemma \ref{lem:consis_resol_domain}.
    It suffices to show that $\sigma(-A_{p,k,\gam})=\sigma(-A_2)$. We proceed as in the proof of \cite[Proposition 2.6]{Ar94}. Recall that $\sigma(-A_2)$ is discrete and only consists of a countable number of positive eigenvalues, see \cite[Theorem 12.26]{Ne22}. 
By Lemma \ref{lem:consis_resol_domain} and analytic continuation we find that $R(z, -A_2)$ and $R(z,-A_{p,k,\gam})$ are consistent for all $z\in \rho(-A_2)\cap\rho(-A_{p,k,\gam})$. Now, if $\mu\in \rho(-A_2)$, then since $\sigma(-A_{p,k,\gam})$ is discrete and countable it follows that
there exists an $r>0$ such that $\overline{B(\mu, r)}\setminus\{\mu\}\subseteq \rho(-A_2)\cap\rho(-A_{p,k,\gam})$. Therefore, by consistency of the resolvents we obtain
\begin{equation*}
  \int_{\d B(\mu, r)} R(z, -A_{p,k,\gam})\dd z = \int_{\d B(\mu, r)} R(z, -A_2)\dd z = 0,
\end{equation*}
 and thus $\mu\in \rho(-A_{p,k,\gam})$. The other inclusion follows similarly. This proves that $\sigma(-A_{p,k,\gam})=\sigma(-A_2)$ and the claim follows.
 
Finally, using that $\sigma(-A_2)$ is discrete, $\sigma(-A_2)\subseteq [\tilde{\mu},\infty)\subseteq (0,\infty)$ with $\tilde{\mu}:=\min\{\mu_j:j\in\NN_0\}>0$ and the claim gives that  $\sigma(-A_{p,k,\gam})$ is discrete and $\sigma(-A_{p,k,\gam})\subseteq [\tilde{\mu},\infty)\subseteq (0,\infty)$. This completes the proof of Theorem \ref{thm:scalarDir}\ref{it:thmscalar1}.

We continue with the proof of  Theorem \ref{thm:scalarDir}\ref{it:thmscalar2}. From Theorem \ref{thm:Dirichlet_Laplacian} we have that for fixed $\sigma \in(0,\pi)$ and $\mu$ sufficiently large, $\mu-\delDir$ is sectorial with $\om(\mu-\delDir)\leq \sigma$. Combining this with the analyticity of $z\mapsto (z-\delDir)^{-1}$ on $\CC\setminus(-\infty, -\tilde{\mu}]$ yields that for $\mu>-\tilde{\mu}$ and $\sigma'> \sigma$ the operator $\mu-\delDir$ is sectorial with $\om(\mu-\delDir)\leq \sigma'$. Therefore, Theorem \ref{thm:scalarDir}\ref{it:thmscalar2} follows from Proposition \ref{prop:calc_pert_Id}, Theorem \ref{thm:Dirichlet_Laplacian} and the fact that $\sigma\in(0,\pi)$ is arbitrary.

The proof of Theorem \ref{thm:scalarNeu} for the Neumann Laplacian is similar to the proof for the Dirichlet Laplacian above if we use Theorem \ref{thm:Neumann_Laplacian} and Lemma \ref{lem:consis_resol_domainNeu}. Note that for the Neumann Laplacian on $L^2(\OO)$, zero is an eigenvalue and the corresponding eigenspace consists of constant functions, see \cite[Proposition 12.24 \& Theorem 12.26]{Ne22}. Therefore, we obtain the bounded $\Hinf$-calculus for $\mu-\delNeu$ with $\mu>0$ on $W^{k+j,p}(\OO,w^{\d\OO}_{\gam+kp})$. In addition, on $W^{k+j,p}(\OO,w^{\d\OO}_{\gam+kp})/\{c\ind_\OO: c\in X\}$ the eigenvalue zero is removed and we obtain the  bounded $\Hinf$-calculus for $\mu-\delNeu$ with $\mu >-\tilde{\mu }$, where
$\tilde{\mu}$ is the first positive eigenvalue of $-\Delta _{\operatorname{Neu}}$.
\end{proof}

\appendix
\section{Localisation techniques for rough domains}\label{sec:appendix_lemma} 

In this appendix, we will construct diffeomorphisms from special domains $\OO:=\{x\in \RRd: x_1>h(\tilde{x})\}$ to the half-space $\RRdh$. In the literature, such diffeomorphisms are frequently used for smooth domains and in this appendix, we will construct modifications of such diffeomorphisms for rough domains.\\

We start with some definitions. Throughout this appendix, we consider $d\geq 2$.
\begin{definition} Let $U,V\subseteq \RRd$, $\ell\in\NN_1$ and $\lambda\in[0,1]$. A map $\Phi:U\to V$ is called a \emph{$C^{\ell,\lambda}$-diffeomorphism} if $\Phi$ is a bijection, $\Phi\in C^{\ell,\lambda}(U;\RRd)$ and $\Phi^{-1}\in C^{\ell,\lambda}(V;\RRd)$.
\end{definition}
To be able to deal with boundary value problems involving the normal derivative on the boundary, we need a diffeomorphism that preserves the direction of the normal vector. This is the notion of admissibility, see \cite[Section 2.4]{Wl87}.
\begin{definition}
Let $U$ and $V$ be $C^1$-domains, $\ell\in\NN_1$ and $\lambda\in[0,1]$.
    A $C^{\ell, \lambda}$-diffeomorphism $\Phi:U\to V$ is called \emph{admissible at the point $x\in \d U$} if the push-forward $D\Phi$ corresponding to $\Phi$ has the following properties:
    \begin{enumerate}[(i)]
        \item $D\Phi$ maps the tangent space $T_{\d U,x}$ to $T_{\d V, \Phi(x)}$;
        \item $D\Phi$ maps the inner normal $\nu(x)$ of $\d U$ to the inner normal $\nu(\Phi(x))$ of $\d V$.
    \end{enumerate}
\end{definition}
Note that in the definition above, we do not require that the length of the normal vector is preserved. Furthermore, it holds that $\Phi$ is admissible at $x\in \d U$ if and only if $\Phi^{-1}$ is admissible at $y=\Phi(x)\in \d V$. 

We note that for special $C^1$-domains of the form $\OO:=\{x\in \RR^d: x_1>h(\tilde{x})\}$ for some $h\in C^{1}(\RR^{d-1})$, the inner normal direction at the boundary of $\OO$ is $\nu(x)=\nu(\tilde{x})=(1,-\grad_{\tilde{x}}h(\tilde{x}))^\top$ and the tangent space is spanned by the vectors
\begin{equation*}
    (\d_{x_2}h(\tilde{x}), 1, 0,\dots, 0)^{\top}, \dots, (\d_{x_d}h(\tilde{x}), 0, 0,\dots, 1)^{\top}.
\end{equation*}

\begin{example}\label{ex:diff} Let $\ell\in \NN_1$ and $\lambda\in[0,1]$. 
    For a special domain $\OO$ corresponding to $h\in \Cc^{\ell,\lambda}(\RR^{d-1})$, the following two \emph{classical} diffeomorphisms are well known in the literature.
\begin{enumerate}[(i)]
    \item\label{it:diff1} The most frequently used diffeomorphism $\Phi_{\cl}:\overline{\OO}\to \overline{\RRdh}$ is given by 
\begin{align*}
    \Phi_{\cl}(x)&= (x_1-h(\tilde{x}), \tilde{x}), \qquad x=(x_1,\tilde{x})\in \overline{\OO},\\
    \Phi_{\cl}^{-1}(y)&=(y_1+h(\tilde{y}), \tilde{y}),\qquad y=(y_1, \tilde{y})\in \overline{\RRdh},
\end{align*}
see, e.g., \cite{DHP03, Ev10, KrBook08}. This is a $C^{\ell, \lambda}$-diffeomorphism which, in general, is not admissible for all $x\in \d\OO$. This can be seen, e.g., from the push-forward of $\Phi_{\cl}^{-1}$, which is given by
\begin{equation*}
    D\Phi_{\cl}^{-1}(y) = \begin{pmatrix}
        1& (\grad_{\tilde{y}} h(\tilde{y}))^\top\\ 0& I_{d-1}
    \end{pmatrix},\qquad y\in\overline{\RRdh}.
\end{equation*}
\item\label{it:diff2} Let $\ell\geq 2$. Following the construction in \cite[Theorem 2.12]{Wl87}, one can obtain a less regular diffeomorphism that is admissible for all points at the boundary. Let $$\nu(x)=(1, \tilde{\nu}(\tilde{x}))=(1, -\grad_{\tilde{x}}h(\tilde{x}))$$ be the inner normal direction at the boundary of $\OO$. Consider $\overline{\Psi}_{\cl}:\overline{\RRdh}\to \overline{\OO}$ given by
\begin{equation*}
    \overline{\Psi}_{\cl}(y)= (y_1 + h(\tilde{y}), \tilde{y}+ y_1 \tilde{\nu}(\tilde{y})).
\end{equation*}
The push-forward of $\overline{\Psi}_{\cl}$ at the boundary $y_1=0$ is
\begin{equation}\label{eq:Psicl}
\begin{aligned}
      D\overline{\Psi}_{\cl}(y)|_{y_1=0} &= \begin{pmatrix}
        1 & (\grad_{\tilde{y}} h(\tilde{y}))^{\top}\\
        \tilde{\nu}(\tilde{y}) & I_{d-1} + y_1 D_{\tilde{y}}\tilde{\nu}(\tilde{y})
    \end{pmatrix}\bigg|_{y_1=0}\\
    &= 
    \begin{pmatrix}
        1 & (\grad_{\tilde{y}} h(\tilde{y}))^{\top}\\
        -\grad_{\tilde{y}} h(\tilde{y})& I_{d-1}
    \end{pmatrix},
\end{aligned}
\end{equation}
for all $\tilde{y}\in \RR^{d-1}$. Hence, for $y_1=0$ we have $|\det D\overline{\Psi}_{\cl}(y)|\geq 1$ and thus there exists a $\delta>0$ such that $|\det D\overline{\Psi}_{\cl}(y)|\geq \frac{1}{2}>0$ for all $y\in\overline{\RRdh}$ with $0\leq y_1\leq \delta$. By the inverse function theorem, there exists an inverse $\Psi_\cl$ to $\overline{\Psi}_{\cl}$ on this strip. Then $\Psi_{\cl}$ is a $C^{\ell-1, \lambda}$-diffeomorphism since $\tilde{\nu}\in C^{\ell-1, \lambda}(\RR^{d-1})$. Moreover, \eqref{eq:Psicl} shows that $\Psi_\cl$ is an admissible $C^{\ell-1, \lambda}$-diffeomorphism for all $x\in \d\OO$. In the case that $\ell=1$, we obtain that $\Psi_\cl$ is a homeomorphism.
\end{enumerate}
\end{example}

If $\Phi_{\cl}$ is the $C^{\ell,\lambda}$-diffeomorphism from Example \ref{ex:diff}\ref{it:diff1}, then the change of coordinates mapping $(\Phi_{\cl})_*f:= f\circ \Phi_{\cl}^{-1} $ is an isomorphism between 
$W^{k,p}(\OO, w_{\gam}^{\d\OO})$ and $W^{k,p}(\RRdh, w_{\gam})$ for $\ell \geq k$. To be able to deal with rougher domains, i.e., $\ell<k$, we will mollify the diffeomorphism to make it smooth in the interior. This causes blow-up behaviour of higher-order derivatives near the boundary. In the rest of this appendix, we present these mollified versions of $\Phi_{\cl}$ and $\Psi_{\cl}$, denoted $\Phi$ and $\Psi$ respectively.\\

First, we consider a mollified version of the diffeomorphism $\Phi_{\cl}$ in Example \ref{ex:diff}\ref{it:diff1}. 
This mollified diffeomorphism is in the literature also known as the \emph{Dahlberg--Kenig--Stein pullback}, which dates back to \cite{Da79,Da86} and is, for instance, applied in \cite{DKPV97, FJR78, KP01}. The Dahlberg--Kenig--Stein pullback is often used for domains with low regularity (less than $C^1$), see the above-mentioned literature. For our purposes, we require estimates on higher-order derivatives of the pullback in the case of more regular domains (more than $C^1$). The following lemma is an extension of the result for $C^1$-domains in \cite[Lemmas 2.6 \& 3.8]{KK04}, which is based on the works \cite{GH80, Li85}. The result of Lemma \ref{lem:localization_weighted_blow-up} is also obtained in \cite[Lemma 2.14]{KimD07} with a somewhat different proof as below. Nonetheless, we provide the proof since parts of it will be reused for constructing the diffeomorphism $\Psi$ in Lemma \ref{lem:loc_normal}.

\begin{lemma}\label{lem:localization_weighted_blow-up}
Let $\OO$ be a special $\Cc^{1}$-domain.
Then there exist continuous functions $h_1\colon\overline{\Dom} \to \R$ and $h_2 \colon \overline{\R^d_+} \to \R$ which satisfy the following properties.
\begin{enumerate}[(i)]
    \item\label{it:lem:localization_weighted_blow-up;Psi_inverse} The map $\Phi:\overline{\Dom} \to \overline{\R^d_+}$
    given by
    $$
    \Phi(x)= (x_1-h_1(x),\tilde{x}), \qquad x=(x_1,\tilde{x}) \in \overline{\Dom},
    $$
    is a $C^{1}$-diffeomorphism with inverse $\Phi^{-1}:\overline{\R^d_+} \to \overline{\Dom}$ given by
    $$
    \Phi^{-1}(y) = (y_1+h_2(y),\tilde{y}),\qquad y=(y_1,\tilde{y}) \in \overline{\R^d_+}.
    $$
    \item\label{it:lem:localization_weighted_blow-up;dist_preserving} We have
    \begin{equation*}
    \begin{aligned}
    \mrm{dist}(\Phi(x),\partial\R^d_+) &\eqsim \mrm{dist}(x,\BDom), \qquad&& x \in \Dom,\\
    \mrm{dist}(\Phi^{-1}(y),\BDom)
    &\eqsim \mrm{dist}(y,\partial\R^d_+),\qquad &&y \in \R^d_+,
    \end{aligned}
    \end{equation*}
    where the implicit constants depend on $[\OO]_{C^1}$.
    \item\label{it:lem:localization_weighted_blow-up;smoothness} We have $h_1 \in C^\infty(\Dom)$ and $h_2 \in C^\infty(\R^d_+)$.
    \end{enumerate} 
 In addition, let $\ell\in\NN_1$, $\lambda\in[0,1]$ and let $\OO$ be a special $\Cc^{\ell,\lambda}$-domain with $[\OO]_{C^{\ell,\lambda}}\leq 1$.
    \begin{enumerate}[resume*]
    \item\label{it:lem:localization_weighted_blow-up;est} The map $\Phi$ in \ref{it:lem:localization_weighted_blow-up;Psi_inverse} is a $C^{\ell,\lambda}$-diffeomorphism and for all $\alpha\in \N_0^d$, $\ell_0\in \{0, \dots, \ell\}$ and $\lambda_0\in[0,\lambda]$, we have
    \begin{equation*}
    \begin{aligned}
    |\d^{\alpha} h_1(x)| &\leq  C\cdot[\OO]_{C^{\ell,\lambda}}\cdot\mrm{dist}(x,\BDom)^{-(|\alpha|-\ell_0-\lambda_0)_+},\qquad  &&x \in \Dom,\\
    |\d^{\alpha} h_2(y)| &\leq C\cdot[\OO]_{C^{\ell,\lambda}}\cdot\mrm{dist}(y,\partial\R^d_+)^{-(|\alpha|-\ell_0-\lambda_0)_+},\qquad &&y \in \R^d_+,
    \end{aligned}
    \end{equation*}
    where the constant $C>0$ only depends on $\ell, \lambda, \alpha$ and $d$.
\end{enumerate}
\end{lemma}

We have a similar result for the mollified version of the admissible diffeomorphism $\Psi_{\cl}$ in Example \ref{ex:diff}\ref{it:diff2}.
\begin{lemma}\label{lem:loc_normal}
Let $\OO$ be a special $\Cc^{1}$-domain. Then there exists a $\Lambda\in (0,1)$ such that if $[\OO]_{C^{1}}\leq \Lambda$, then there exist continuous functions $h_1:\overline{\OO}\to \RR$, $\tilde{h}_1:\overline{\OO}\to \RR^{d-1}$, $h_2:\overline{\RRdh}\to \RR$ and $\tilde{h}_2:\overline{\RRdh}\to \RR^{d-1}$ which satisfy the following properties.
\begin{enumerate}[(i)]
    \item\label{it:lem:loc_normal1} The map $\Psi: \overline{\OO}\to \overline{\RRdh} $ given by
    \begin{equation*}
        \Psi(x)=(x_1-h_1(x), \tilde{x}-\tilde{h}_1(x)),\qquad x=(x_1, \tilde{x})\in \overline{\OO},
    \end{equation*}
    is a $C^{1}$-diffeomorphism with inverse $\Psi^{-1}:\overline{\RRdh} \to \overline{\OO}$ given by
    \begin{equation*}
        \Psi^{-1}(y)=(y_1+h_2(y), \tilde{y}+ \tilde{h}_2(y)),\qquad y=(y_1,\tilde{y})\in \overline{\RRdh}.
    \end{equation*}
    \item\label{it:lem:loc_normal2} We have
    \begin{equation*}
    \begin{aligned}
    \mrm{dist}(\Psi(x),\partial\R^d_+) &\eqsim \mrm{dist}(x,\BDom), \qquad&& x \in \Dom,\\
    \mrm{dist}(\Psi^{-1}(y),\BDom)
    &\eqsim \mrm{dist}(y,\partial\R^d_+),\qquad &&y \in \R^d_+.
    \end{aligned}
    \end{equation*}
    \item\label{it:lem:loc_normal3} We have $h_1\in C^\infty(\OO)$, $\tilde{h}_1\in C^\infty(\OO; \RR^{d-1})$, $h_2\in C^\infty(\RRdh)$ and $\tilde{h}_2\in C^\infty(\RRdh; \RR^{d-1})$.
\item\label{it:lem:loc_normal4} The diffeomorphism $\Psi$ is admissible for all $x\in \d\OO$.
\end{enumerate}
In addition, let $\ell\in\NN_1$, $\lambda\in [0,1]$ and let $\OO$ be a special $\Cc^{\ell,\lambda}$-domain with $[\OO]_{C^{\ell,\lambda}}\leq 1$.
\begin{enumerate}[resume*]
\item\label{it:lem:loc_normal5} The map $\Psi$ in \ref{it:lem:loc_normal1} is a $C^{\ell,\lambda}$-diffeomorphism and for all $\alpha\in \NN^d_0$, $\ell_0\in \{0,\dots, \ell\}$ and $\lambda_0\in [0,\lambda]$, we have
\begin{align*}
     |\d^{\alpha} h_1(x)|+ |\d^{\alpha} \tilde{h}_1(x)|  &\leq  C\cdot[\OO]_{C^{\ell,\lambda}}\cdot\mrm{dist}(x,\BDom)^{-(|\alpha|-\ell_0-\lambda_0)_+}, &&x \in \Dom,\\
    |\d^{\alpha} h_2(y)|+ |\d^{\alpha} \tilde{h}_2(y)| &\leq C\cdot[\OO]_{C^{\ell,\lambda}}\cdot\mrm{dist}(y,\partial\R^d_+)^{-(|\alpha|-\ell_0-\lambda_0)_+}, &&y \in \R^d_+,
\end{align*}
where the constant $C>0$ only depends on $\ell, \lambda, \alpha$ and $d$.
\end{enumerate}
\end{lemma}
\begin{remark}
  We make the following remarks about Lemmas \ref{lem:localization_weighted_blow-up} and \ref{lem:loc_normal}.
  \begin{enumerate}[(i)]
    \item The first three statements in both lemmas are standard results for localisation. For the standard localisation procedure, one can take $h_1$ and $h_2$ equal to $h$, see Example \ref{ex:diff}. In our case, since $h$ is not smooth enough, we need to use a mollifier to make $h_2$ smooth in Lemma \ref{lem:localization_weighted_blow-up}. Afterwards, $h_1$ is determined using the inverse function theorem. Additionally, in Lemma \ref{lem:loc_normal} we need to mollify the normal vector to make $\tilde{h}_2$ smooth.
    \item The important part of Lemma \ref{lem:localization_weighted_blow-up} is \ref{it:lem:localization_weighted_blow-up;est}, which allows us to estimate higher-order derivatives of the diffeomorphism $\Phi$ and its inverse. If the number of derivatives exceeds the smoothness of the domain, then there is a blow-up near the boundary. We note that the construction of $\Phi$ is independent of $\ell$ and $\lambda$.
    \item The important novelty of Lemma \ref{lem:loc_normal} is that $\Psi$ preserves the direction of the normal and tangential vectors. This is done by choosing $\tilde{h}_2(y)= y_1 \tilde{\mc{V}}(y)$, where $\tilde{\mc{V}}(y)$ is a mollified version of the normal direction $-\grad_{\tilde{y}} h(\tilde{y})$. While the diffeomorphism in Example \ref{ex:diff}\ref{it:diff2} is less regular than the domain, in Lemma \ref{lem:loc_normal} the regularity of the diffeomorphism is equal to the regularity of the domain. Moreover, the estimates on the derivatives in Lemma \ref{lem:loc_normal}\ref{it:lem:loc_normal5} are the same as in Lemma \ref{lem:localization_weighted_blow-up}\ref{it:lem:localization_weighted_blow-up;est}.
    \item The condition $[\OO]_{C^{\ell,\lambda}}\leq 1$ in Lemmas \ref{lem:localization_weighted_blow-up}\ref{it:lem:localization_weighted_blow-up;est} and \ref{lem:loc_normal}\ref{it:lem:loc_normal5} slightly simplifies the proofs. This condition is not necessary and can be removed. The more restrictive condition on $[\OO]_{C^{1}}$ in Lemma \ref{lem:loc_normal} seems to be necessary to construct a global inverse. 
    For our application in Section \ref{sec:calc_dom}, imposing such conditions is not restrictive since $[\OO]_{C^{\ell,\lambda}}$ can be made arbitrarily small in our localisation procedure. 
    \item  Our proofs would also work for Lipschitz domains if one uses an inverse function theorem for Lipschitz functions. In the setting of Lemma \ref{lem:localization_weighted_blow-up}, this is done in \cite{KimD07}. The result in Lemma \ref{lem:loc_normal} appears to be new even in the case of $C^1$-domains.
  \end{enumerate}
\end{remark}
The rest of this appendix is devoted to the proofs of Lemmas \ref{lem:localization_weighted_blow-up} and \ref{lem:loc_normal}. 

\begin{proof}[Proof of Lemma \ref{lem:localization_weighted_blow-up}] 
    Let $\ph \in \Cc^\infty(\R^{d-1})$ be a non-negative function with its support in the unit ball such that $\int_{\R^{d-1}} \ph(\tilde{x})\dd \tilde{x} =1$. Let $h\in \Cc^{1}(\RR^{d-1})$ such that $\OO= \{x\in\RR^d:x_1>h(\tilde{x})\}$, see Definition \ref{def:domains}.

\textit{Step 1: proof of \ref{it:lem:localization_weighted_blow-up;Psi_inverse}, \ref{it:lem:localization_weighted_blow-up;dist_preserving} and \ref{it:lem:localization_weighted_blow-up;smoothness}.} Define the mapping $\overline{\Phi}:\RRd\to \RRd$ by
\begin{equation*}
    \begin{aligned}
        \overline{\Phi}(y)&:= (y_1+ h_2(y), \tilde{y}),\quad \text{ where }\\
        h_2(y)&:=\int_{\RR^{d-1}}h(\tilde{y}- L^{-1}y_1 \tilde{z})\ph(\tilde{z})\dd \tilde{z},
    \end{aligned}
\end{equation*}
for some suitable $L>0$ to be chosen later. Note that $\overline{\Phi}$ maps $\d\RRdh$ to $\d\OO$. It holds that $h_2\in C^{1}(\RRd)\cap C^{\infty}(\RRd\setminus\d\RRdh)$. The Jacobian matrix of $\overline{\Phi}$ is given by
\begin{equation}\label{eq:DPhi}
    D\overline{\Phi}(y) = \begin{pmatrix}
        1+ \d_{y_1}h_2(y)& (\grad_{\tilde{y}}h_2(y))^\top\\
        0 & I_{d-1}
    \end{pmatrix},\qquad y\in \RR^d.
\end{equation}
Since
\begin{equation*}
    \d_{y_1}h_2(y) = -L^{-1}\int_{\RR^{d-1}}(\grad h)(\tilde{y}-L^{-1}y_1\tilde{z})\cdot \tilde{z}\ph(\tilde{z})\dd \tilde{z},\qquad y\in \RR^d,
\end{equation*}
it follows that $|\d_{y_1}h_2(y)|\leq \half$ for all $y\in \RR^d$ if $L = 2[\OO]_{C^1}$. Hence, for all $y\in \RRd$, we have $1+ \d_{y_1}h_2(y)\geq \half$ and hence
\begin{equation}\label{eq:det}
    |\det D\overline{\Phi}(y) |\geq \tfrac{1}{2}>0,
\end{equation} 
meaning that $D\overline{\Phi}$ is invertible.  

We construct an inverse of $\overline{\Phi}$ with Hadamard's inverse function theorem \cite[Theorem 6.2.8]{KP02}. Note that if $(y_n)_{n\geq 1}$ is a sequence in $\R^d$ such that $|y_n|\to \infty$, then also $|\overline{\Phi}(y_n)|\to \infty$  as $n\to \infty$. Indeed, this follows from the fact that $h_2$ is bounded. Hence, $\overline{\Phi}$ is proper in the sense of \cite[Definition 6.2.2]{KP02}. This and \eqref{eq:det} imply that all the conditions of \cite[Theorem 6.2.8]{KP02} are satisfied and thus that there exists a continuous inverse $\Phi:\RR^d\to \RR^d$ of $\overline{\Phi}$. Since actually $\overline{\Phi}\in C^1(\RR^d; \RR^d)\cap C^{\infty}(\RR^d\setminus\d\RRdh;\RR^d)$, it also follows that $\Phi\in C^1(\RRd;\RR^d)\cap C^\infty(\RRd\setminus\d\OO;\RR^d)$ by the inverse function theorem \cite[Theorem 3.3.2]{KP02}. Moreover, we obtain
\begin{equation}\label{eq:formulaDPhi}
    D\Phi(x) \cdot D\overline{\Phi}(\Phi(x)) = I_d,\qquad x\in \RRd,
\end{equation}
see, e.g., \cite[Equation (3.37)]{KP02}. From now on, we will write $\Phi^{-1}$ for $\overline{\Phi}$. Moreover, if $x\in \OO$ and $y=\Phi(x)\in \RRdh$, then by the definition of $\Phi^{-1}=\overline{\Phi}$ it holds that
\begin{equation}\label{eq:defh1}
    \Phi_1(x)= y_1 = x_1 - h_2(y)= x_1 - h_2(\Phi(x)),
\end{equation}
and hence we can write $\Phi(x)=(x_1-h_1(x), \tilde{x})$, where $h_1(x)= h_2(\Phi(x))$. This completes the proof of \ref{it:lem:localization_weighted_blow-up;Psi_inverse} and \ref{it:lem:localization_weighted_blow-up;smoothness}. 

To continue, we prove that the diffeomorphism preserves the distance to the boundary. Since both $\Phi$ and $\Phi^{-1}$ are Lipschitz on $\RR^d$, we have
\begin{equation}\label{eq:biLip}
    |y-y'|\lesssim |\Phi^{-1}(y)-\Phi^{-1}(y')|\lesssim |y-y'|,\qquad y,y'\in \RR^d.
\end{equation}
Let $x\in \OO$ and set $y=\Phi(x)\in \RRdh$. Then \eqref{eq:biLip} gives
\begin{equation*}
    \dist(x, \d\OO)\leq |x-(h(\tilde{x}), \tilde{x})| = |\Phi^{-1}(y)-\Phi^{-1}(0, \tilde{y})|\lesssim y_1=\Phi_1(x)=\dist(\Phi(x), \d\RRdh).
\end{equation*}
In addition, let $x'\in \d\OO$ be such that $\dist(x, \d\OO)=|x-x'|$ and set $y'=\Phi(x')$. Note that $y'= (0, \tilde{y}\,')$ for some $\tilde{y}\,'\in \RR^{d-1}$, hence \eqref{eq:biLip} also implies
\begin{equation*}
    \dist(\Phi(x), \d\RRdh) = \Phi_1(x)=y_1\leq |y-y'|\lesssim |\Phi^{-1}(y)-\Phi^{-1}(y')| = |x-x'|= \dist(x, \d\OO).
\end{equation*}
Thus, we have proved that $\dist(x, \d\OO)\eqsim \dist(\Phi(x), \d\RRdh)$, where the implicit constants depend on the Lipschitz constants of $\Phi$ and $\Phi^{-1}$ and thus on  $[\OO]_{C^1}$. The other equivalence in \ref{it:lem:localization_weighted_blow-up;dist_preserving} follows by substituting $x$ by $\Phi^{-1}(y)$. This completes the proof of \ref{it:lem:localization_weighted_blow-up;dist_preserving}.

\textit{Step 2: proof of estimates on $h_2$ in \ref{it:lem:localization_weighted_blow-up;est}.} Let $\ell\in\NN_1$, $\lambda\in[0,1]$ and let $\OO$ be a special $\Cc^{\ell,\lambda}$-domain with $[\OO]_{C^{\ell,\lambda}}\leq 1$. For multi-indices we write $\alpha=(\alpha_1,\tilde{\alpha})\in\NN_0\times \NN_0^{d-1}$. By the chain rule it holds that
\begin{equation*}
  \d_{y_1}h(\tilde{y}-L^{-1}y_1\tilde{z}) = (\grad h)(\tilde{y}-L^{-1}y_1\tilde{z}) \cdot \frac{-\tilde{z}}{L} = -L^{-1}\sum_{|\nu|=1}(\d^\nu h)(\tilde{y}-L^{-1}y_1\tilde{z}) \tilde{z}^\nu,
\end{equation*}
and by iteration one can check for any $\alpha \in \NN_0^{d}$ that 
\begin{align}
  \d^{\alpha}h_2(y_1,\tilde{y})&=\frac{1}{(-L)^{\alpha_1}}\int_{\RR^{d-1}}\sum_{|\nu|=\alpha_1} (\d^{\nu+\tilde{\alpha}}h)(\tilde{y}-L^{-1}y_1\tilde{z})\tilde{z}^{\nu}\ph(\tilde{z})\dd \tilde{z}.\label{eq:F1}
\end{align}
Take $\ell_0 \in \{0,\ldots,\ell\}$. If $|\alpha|\leq \ell_0$, it follows from \eqref{eq:F1} that
\begin{equation*}
  |\d^{\alpha}h_2(y_1,\tilde{y})|\leq C\|h\|_{C^\ell(\RR^{d-1})}\sum_{|\nu|=\alpha_1}\int_{\RR^{d-1}} | \tilde{z}^\nu\ph(\tilde{z})|\dd \tilde{z}\leq C [\OO]_{C^{\ell,\lambda}},
\end{equation*}
which proves the estimates for $h_2$ in \ref{it:lem:localization_weighted_blow-up;est}. Now let $|\alpha|\geq \ell_0+1$ and let $\beta, \overline{\beta}\in \NN_0^d$ be such that $\beta+\overline{\beta}=\alpha$ with $|\beta|=\ell_0$ and $|\overline{\beta}|=|\alpha|-\ell_0$. 
From \eqref{eq:F1} and a substitution $\tilde{z}\mapsto((\tilde{y}-\tilde{z})L)/y_1$ it follows that
\begin{align}
  \d^{\beta}h_2(y_1,\tilde{y}) &=\frac{1}{(-L)^{\beta_1}}\Big(\frac{y_1}{L}\Big)^{1-d}\int_{\RR^{d-1}}\sum_{|\nu|=\beta_1} (\d^{\nu+\tilde{\beta}}h)(\tilde{z})\Big(\frac{(\tilde{y}-\tilde{z})L}{y_1}\Big)^{\nu} \ph\Big(\frac{(\tilde{y}-\tilde{z})L}{y_1}\Big)\dd \tilde{z}\label{eq:F2}.
\end{align}
By computing the $\overline{\beta}$-derivatives using \eqref{eq:F2}, we claim that 
\begin{equation}\label{eq:claimh_2est}
  \begin{aligned}
  \d^{\alpha}h_2(y_1,\tilde{y}) & =\d^{\overline{\beta}}\d^{\beta}h_2(y_1,\tilde{y}) \\
   & = C \frac{1}{y_1^{|\alpha|-\ell_0}} \Big(\frac{y_1}{L}\Big)^{1-d}\int_{\RR^{d-1}}\sum_{|\nu|=\beta_1} (\d^{\nu+\tilde{\beta}}h)(\tilde{z}) \ph_{\beta, \overline{\beta}, \nu}\Big(\frac{(\tilde{y}-\tilde{z})L}{y_1}\Big)\dd \tilde{z},
\end{aligned}
\end{equation}
where $\ph_{\beta, \overline{\beta}, \nu}\in \Cc^\infty(\RR^{d-1})$ and $\int \ph_{\beta, \overline{\beta}, \nu}(\tilde{z})\dd \tilde{z}=0$.  Indeed, if $\overline{\beta}=e_j$ is the $j$-th unit vector for some $j\in \{2,\dots, d\}$, then by writing $\tilde{y}=(y_2,\dots, y_d)$ and $\tilde{z}=(z_2,\dots, z_d)$, a calculation shows that
\begin{align*}
  \d_{y_j}\Big[\Big(\frac{(\tilde{y}-\tilde{z})L}{y_1}\Big)^{\nu} \ph\Big(\frac{(\tilde{y}-\tilde{z})L}{y_1}\Big)\Big]   =&\;\frac{L}{y_1}\Big[ \nu_j \Big(\frac{(y_j-z_j)L}{y_1}\Big)^{\nu_j-1} \prod_{\substack{n=2\\n\neq j}}^d \Big(\frac{(y_n-z_n)L}{y_1}\Big)^{\nu_n}\ph\Big(\frac{(\tilde{y}-\tilde{z})L}{y_1}\Big)\\
  &+ \Big(\frac{(\tilde{y}-\tilde{z})L}{y_1}\Big)^\nu (\d_j\ph )\Big(\frac{(\tilde{y}-\tilde{z})L}{y_1}\Big)
  \Big]\\
  =:&\; y_1^{-1}\ph_{\beta, e_j, \nu}\Big(\frac{(\tilde{y}-\tilde{z})L}{y_1}\Big).
\end{align*}
Moreover, note that
\begin{equation}\label{eq:ph=0}
  \begin{aligned}
  \int_{\RR^{d-1}}\ph_{\beta, e_j, \nu}(\tilde{z})\dd \tilde{z}&= \Big(\frac{y_1}{L}\Big)^{1-d} y_1 \d_{y_j}\int_{\RR^{d-1}}\Big(\frac{(\tilde{y}-\tilde{z})L}{y_1}\Big)^{\nu} \ph\Big(\frac{(\tilde{y}-\tilde{z})L}{y_1}\Big) \dd\tilde{z}\\
  &=y_1\d_{y_j}\int_{\RR^{d-1}}\tilde{z}^\nu\ph(\tilde{z})\dd \tilde{z}=0,
\end{aligned}
\end{equation}
and clearly we have $\ph_{\beta, e_j, \nu}\in \Cc^\infty(\RR^{d-1})$. This shows \eqref{eq:claimh_2est} for $\overline{\beta}=e_j$ with $j\in \{2,\dots, d\}$. If $\overline{\beta}=e_1$, then a calculation shows that
\begin{align*}
 \d_{y_1}\Big[\Big(\frac{y_1}{L}\Big)^{1-d}&\Big(\frac{(\tilde{y}-\tilde{z})L}{y_1}\Big)^{\nu} \ph\Big(\frac{(\tilde{y}-\tilde{z})L}{y_1}\Big)\Big]\\=&\; \frac{1}{y_1}\Big(\frac{y_1}{L}\Big)^{1-d} \Big(\frac{(\tilde{y}-\tilde{z})L}{y_1}\Big)^{\nu}  \Big[(1-d-\beta_1) \ph\Big(\frac{(\tilde{y}-\tilde{z})L}{y_1}\Big)\\
 &\;-(\grad \ph)\Big(\frac{(\tilde{y}-\tilde{z})L}{y_1}\Big)\cdot \Big(\frac{(\tilde{y}-\tilde{z})L}{y_1}\Big) \Big]\\
 =:&\;y_1^{-1} \Big(\frac{y_1}{L}\Big)^{1-d}\ph_{\beta, e_1, \nu}\Big(\frac{(\tilde{y}-\tilde{z})L}{y_1}\Big).
\end{align*}
The properties of $\ph_{\beta, e_1, \nu}$ follow similarly as in \eqref{eq:ph=0}. Therefore, we have proved \eqref{eq:claimh_2est} for $|\overline{\beta}|=1$. For $|\overline{\beta}|\geq 2$ we can argue by induction to show that 
\begin{equation*}
  \d_y^{\overline{\beta}}\Big[\Big(\frac{y_1}{L}\Big)^{1-d}\Big(\frac{(\tilde{y}-\tilde{z})L}{y_1}\Big)^{\nu} \ph\Big(\frac{(\tilde{y}-\tilde{z})L}{y_1}\Big)\Big] = C\frac{1}{y_1^{|\alpha|-\ell_0}} \Big(\frac{y_1}{L}\Big)^{1-d}\ph_{\beta, \overline{\beta}, \nu}\Big(\frac{(\tilde{y}-\tilde{z})L}{y_1}\Big).
\end{equation*}
This follows in the same manner as for $|\overline{\beta}|=1$ by considering the $\d_{y_1}$ and $\d_{y_j}$ separately. Therefore, \eqref{eq:claimh_2est} follows.

Let $\lambda_0\in[0,\lambda]$. Performing the substitution $\tilde{z}\mapsto \tilde{y}-L^{-1}y_1\tilde{z}$ in \eqref{eq:claimh_2est} and using that $\ph_{\beta, \overline{\beta}, \nu}$ integrates to zero, gives
\begin{equation*}\label{eq:est_h2}
  \begin{aligned}
  |\d^{\alpha}h_2(y)| & \leq C \, y_1^{-(|\alpha|-\ell_0)}\int_{\RR^{d-1}}\sum_{|\nu|=\beta_1}\big|(\d^{\nu+\tilde{\beta}}h)(\tilde{y}-L^{-1}y_1\tilde{z})-(\d^{\nu+ \tilde{\beta}}h)(\tilde{y})\big|\, |\ph_{\beta, \overline{\beta}, \nu}(\tilde{z})|\dd \tilde{z}\\
  &\leq  C\, 
  \|h\|_{C^{\ell, \lambda_0}(\RR^{d-1})}\, y_1^{-(|\alpha|-\ell_0)}\int_{\RR^{d-1}}|L^{-1}y_1 \tilde{z}|^{\lambda_0}|\ph_{\beta, \overline{\beta}, \nu}(\tilde{z})|\dd \tilde{z}\\
  &\leq  C \, [\OO]_{C^{\ell, \lambda}}\,y_1^{-(|\alpha|-\ell_0-\lambda_0)}.
\end{aligned}
\end{equation*}
This completes the proof of all the estimates for $h_2$ in \ref{it:lem:localization_weighted_blow-up;est}.

\textit{Step 3: proof of estimates on $h_1$ in \ref{it:lem:localization_weighted_blow-up;est}.} It remains to prove the estimates on $\d^\alpha h_1$ for $\alpha\in \NN_0^d$. As $h_2$ is bounded by Step 2, we find that $h_1(x)=h_2(\Phi(x))$ (see \eqref{eq:defh1}) is bounded as well, which proves the required estimate for $\alpha=0$. 
For $|\alpha|=1$ we first show that $\|D\Phi\|$ is bounded. Note that by \eqref{eq:formulaDPhi} we have 
\begin{equation*}
    \|D\Phi(x)\| = \|[(D\Phi^{-1})(\Phi(x))]^{-1}\| = \frac{\|\operatorname{adj}[ (D \Phi^{-1})(\Phi(x))]\|}{|\det [(D\Phi^{-1})(\Phi(x))]|},\qquad x\in\OO,
\end{equation*}
where $\operatorname{adj}A$ denotes the adjugate matrix of the matrix $A$, i.e., the transpose of the cofactor matrix. Recall that the adjugate matrix of $A$ only consists of polynomials of entries of $A$. Hence, combining this with \eqref{eq:det} and the fact that the entries of $D\Phi^{-1}$ are bounded by Step 2, we find that
\begin{equation}\label{eq:alpha=1}
    \|D \Phi(x)\| \leq C,\qquad x\in \OO.
\end{equation}
Now the estimate on $\d^\alpha h_1$ with $|\alpha|=1$ follows again from $h_1(x)=h_2(\Phi(x))$ together with the estimates on $h_2$ from Step 2, $\dist(\Phi(x), \d\RRdh)\eqsim \dist(x, \d\OO)$ and \eqref{eq:alpha=1}.

For the general case, we proceed by induction on $|\alpha|\geq 1$. We prove that 
\begin{equation}\label{eq:normal_IH}
    |\dist(x, \d\OO)|^{(|\alpha|-\ell_0-\lambda_0)_+}\|D^{|\alpha|} h_1(x)\|\leq C\, [\OO]_{C^{\ell,\lambda}},\qquad x\in\OO, 
\end{equation}
for all $\ell_0\in\{0,\dots, \ell\}$ and $\lambda_0\in[0,\lambda]$. 
Recall that for $f:\RR^d\supseteq U\to \RR^d$,  $x\in U$ and $n\in \NN_1$, the $n$-th order Fréchet derivative $D^n f(x)$ is an $n$-linear mapping from $(\RR^d)^n$ to $\RRd$, i.e., for $(\xi_1, \dots, \xi_n)\in (\RRd)^n$ we have $D^n f(x) (\xi_1, \dots, \xi_n)\in \RRd$.

The statement \eqref{eq:normal_IH} for $|\alpha|=1$ is proved above. Let $m\geq 1$ and assume that \eqref{eq:normal_IH} holds for all  $|\alpha|\leq m$, $\ell_0\in\{0,\dots, \ell\}$ and $\lambda_0\in[0,\lambda]$. It remains to prove \eqref{eq:normal_IH} for $|\alpha|=m+1$. 

By taking derivatives of the formula \eqref{eq:formulaDPhi}, isolating the highest-order derivatives on $\Phi$ and applying the multivariate Faà di Bruno's formula, we obtain the estimate (see \cite[Lemma 4]{CR72})
\begin{align}\label{eq:est_IH}
    \|D^{m+1} \Phi(x)\|\lesssim \|D\Phi(x)\|\cdot \sum_{j=2}^{m+1}\|(D^{j}\Phi^{-1})(\Phi(x))\|\cdot\sum_{\beta}\prod_{r=1}^{m}\|D^r \Phi(x)\|^{\beta_r},\qquad x\in\OO,
\end{align}
where $\beta\in \NN_0^{m}$ are such  that
\begin{equation}\label{eq:est_IH_cond}
    \sum_{r=1}^{m}\beta_r = j\quad \text{ and }\quad \sum_{r=1}^{m} r \beta_r =m+1.
\end{equation}
Note that $\|D\Phi(x)\|$ is uniformly bounded by \eqref{eq:alpha=1} and that for $n\geq 2$ we have $\|D^{n} h_1\| = \|D^{n} \Phi\|$ and $\|D^{n} h_2\| = \|D^{n}\Phi^{-1}\|$. Therefore, multiplying \eqref{eq:est_IH} with $|\dist(x, \d\OO)|^{(m+1-\ell_0-\lambda_0)_+}$, it suffices to show the uniform boundedness of 
\begin{equation}\label{eq:normal_IHestimate}
    |\dist(x, \d\OO)|^{(m+1-\ell_0-\lambda_0)_+}\|(D^j h_2)(\Phi(x))\|\prod_{r=2}^m\|D^r h_1(x)\|^{\beta_r},
\end{equation}
for $j\in\{2,\dots, m+1\}$ and $\beta$ such that \eqref{eq:est_IH_cond} holds. Suppose that there exist $\kappa, \kappa_2,\dots, \kappa_m\in(0,\infty)$ such that 
\begin{equation}\label{eq:defkappasnormal}
    \begin{aligned}
    (j-\ell- \lambda)_+&\leq \kappa \leq j\\
    (r-\ell-\lambda)_+&\leq \kappa_r\leq r, \qquad r\in\{2,\dots, m\} 
\end{aligned}
\end{equation}
and
\begin{equation}\label{eq:conditionkappasnormal}
    \kappa+ \sum_{r=2}^m\beta_r \kappa_r = (m+1-\ell_0-\lambda_0)_+.
\end{equation}
Then, \eqref{eq:normal_IHestimate} can be estimated as
\begin{equation}\label{eq:IIHresult}
    |\dist(x,\d\OO)|^{\kappa}\|(D^j h_2)(\Phi(x))\|\prod_{r=2}^m\big(|\dist(x,\d\OO)|^{\kappa_r}\|D^r h_1(x)\|\big)^{\beta_r}\leq C\, [\OO]_{C^{\ell,\lambda}},
\end{equation}
where we have applied the induction hypothesis \eqref{eq:normal_IH} and the estimates for $h_2$ from Step 2 together with $\dist(\Phi(x), \d\RRdh)\eqsim \dist(x, \d\OO)$.
It remains to show the existence of $\kappa$'s satisfying \eqref{eq:defkappasnormal} and \eqref{eq:conditionkappasnormal}. We distinguish several cases.

If $m+1\leq \ell_0$, then $(m+1-\ell_0-\lambda_0)_+=0$ and we can take $\kappa=\kappa_2= \dots= \kappa_m=0$. From now on, we assume that $m\geq \ell_0$. If $j\geq \ell_0+1$, then we can take
\begin{equation*}
    \kappa= j-\ell_0-\lambda_0 \quad \text{ and }\quad \kappa_r=r-1  \text{ for }r\in\{2, \dots, m\},
\end{equation*}
and \eqref{eq:est_IH_cond} implies that \eqref{eq:conditionkappasnormal} is satisfied. For the remaining case $j\leq \ell_0$, we will not provide the explicit values of the $\kappa$'s, but only show the existence of the $\kappa$'s. Taking the largest possible $\kappa$'s in \eqref{eq:defkappasnormal}, gives 
\begin{equation*}
    \kappa + \sum_{r=2}^m \beta_r \kappa_r =j +\sum_{r=2}^m r\beta_r =j+m+1-\beta_1 \geq m+1-\ell_0-\lambda_0,
\end{equation*}
where we have used \eqref{eq:est_IH_cond} and $\beta_1\leq j$. We will now take the smallest possible $\kappa$'s in \eqref{eq:defkappasnormal}. First assume that  $m\geq \ell+1$ and that there exists an $\tilde{r}\in\{\ell+1, \dots, m\}$ such that $\beta_{\tilde{r}}\geq 1$. In this case, we have
\begin{align*}
    \kappa + \sum_{r=2}^m \beta_r \kappa_r &=(j-\ell-\lambda)_+ +\sum_{r=2}^m (r-\ell-\lambda)_+\beta_r
    =\sum_{r=\ell+1}^m (r-\ell-\lambda)\beta_r\\
    &\leq \sum_{r=1}^m r\beta_r -(\ell+\lambda)\sum_{r=\ell+1}^{m} \beta_r=m+1  -(\ell+\lambda)\sum_{r=\ell+1}^{m} \beta_r\\
    &\leq m+1-\ell-\lambda\leq m+1-\ell_0-\lambda_0, 
\end{align*}
where we have used $(j-\ell-\lambda)_+\leq (j-\ell_0-\lambda_0)_+=0$ and \eqref{eq:est_IH_cond}. 
If $m\leq \ell$ or $\beta_r=0$ for all $r\in \{\ell+1, \dots, m\}$, then 
$\kappa + \sum_{r=2}^m \beta_r \kappa_r =0 \leq m+1-\ell_0-\lambda_0.$
The existence of the $\kappa$'s shows that \eqref{eq:IIHresult} holds. This finishes the induction.

Finally, we remark that the estimates on $h_1$ and $h_2$ imply that if $h\in \Cc^{\ell,\lambda}(\RR^{d-1})$, then $\Phi\in C^{\ell,\lambda}(\overline{\OO}; \RR^d)$ and $\Phi^{-1}\in C^{\ell,\lambda}(\overline{\RRdh};\R^d)$. This proves that $\Phi$ is a $C^{\ell,\lambda}$-diffeomorphism and this finishes the proof of \ref{it:lem:localization_weighted_blow-up;est}.
\end{proof}

To prepare for the proof of Lemma \ref{lem:loc_normal}, we prove the following elementary lemma.

\begin{lemma}\label{lem:linearalgebra}
    Let $A$ and $B$ be $d\times d$-matrices. 
    If  $\det(A)\neq 0$  and
    $
{\|A\|^{d-1}\|B\|}\cdot|\det(A)|^{-1}<1,
    $
   then we have
    \begin{equation*}
|\det(A+B)| \geq |\det(A)|\Bigl(1-\frac{\|A\|^{d-1}\|B\|}{|\det(A)|}\Bigr)^d.
    \end{equation*}
\end{lemma}
\begin{proof}
 Let $0<\sigma_1\leq\cdots\leq \sigma_d$ be the singular values of $A$. Then we have
$$
\|A^{-1}\| = \frac{1}{\sigma_1} = \frac{\prod_{j=2}^d \sigma_j}{|\det(A)|} \leq  \frac{\|A\|^{d-1}}{|\det(A)|}.
$$
In particular, we have $\|A^{-1}B\|\leq \|A^{-1}\|\|B\|\leq \|A\|^{d-1}\|B\|\cdot |\det(A)|^{-1}<1$. Now let $\lambda_1,\ldots,\lambda_d$ be the eigenvalues of $A^{-1}B$. Then $|\lambda_j|\leq\|A^{-1}B\|<1$ for all $j\in\{1, \dots, d\}$ and 
$$
|\det(I_d+A^{-1}B)| = \prod_{j=1}^d |1+\lambda_j| \geq \prod_{j=1}^d 1-|\lambda_j| \geq (1-\|A^{-1}B\|)^d,
$$
which, combined with the norm estimate for $\|A^{-1}\|$, yields
\begin{align*}
    |\det(A+B)| &= |\det(A)||\det(I_d+A^{-1}B)|\\& \geq |\det(A)| (1-\|A^{-1}B\|)^d\geq |\det(A)|\Bigl(1-\frac{\|A\|^{d-1}\|B\|}{|\det(A)|}\Bigr)^d,
\end{align*}
finishing the proof.
\end{proof}

To conclude this appendix, we give the proof of Lemma \ref{lem:loc_normal}.
\begin{proof}[Proof of Lemma \ref{lem:loc_normal}] 
 Let $\ph \in \Cc^\infty(\R^{d-1})$ be a non-negative and radially symmetric function with its support in the unit ball  such that $\int_{\R^{d-1}} \ph(\tilde{x})\dd \tilde{x} =1$. Define $h\in \Cc^{1}(\RR^{d-1})$ such that $\OO= \{x\in\RR^d:x_1>h(\tilde{x})\}$, see Definition \ref{def:domains}. Furthermore, let 
\begin{equation}\label{eq:normalvector}
    \nu(x)=\big(1, \tilde{\nu}(\tilde{x})\big)^\top= \big(1, -\grad_{\tilde{x}}h(\tilde{x})\big)^\top, \qquad x\in \RR^{d},
\end{equation}
be an inward pointing normal vector at $\d\OO$.

\textit{Step 1: proof of \ref{it:lem:loc_normal1}, \ref{it:lem:loc_normal2} and \ref{it:lem:loc_normal3}.} 
Define the mapping $\overline{\Psi}:\RR^d\to \RR^d$ given by
\begin{equation*}
    \begin{aligned}
    \overline{\Psi}(y) &:= (y_1+ h_2(y), \tilde{y}+\tilde{h}_2(y)),\quad \text{ where}\\
    h_2(y)&:=\int_{\RR^{d-1}}h(\tilde{y}- L^{-1}y_1\tilde{z})\ph(\tilde{z})\dd \tilde{z},\\
    \tilde{h}_2(y)&:= y_1 \tilde{\mc{V}}(y):= y_1\int_{\RR^{d-1}} \tilde{\nu}(\tilde{y}-L^{-1}y_1\tilde{z})\ph(\tilde{z})\dd \tilde{z},
\end{aligned}
\end{equation*}
for some suitable $L>0$ to be chosen later. Note that $\overline{\Psi}$ maps $\d\RRdh$ to $\d\OO$. It holds that $h_2\in C^{1}(\RRd)\cap C^{\infty}(\RRd\setminus\d\RRdh)$. Furthermore, we claim that 
$\tilde{h}_2\in C^{1}(\RRd; \RR^{d-1})\cap C^{\infty}(\RRd\setminus\d\RRdh; \RR^{d-1})$. Indeed, it is clear that $\tilde{h}_2\in C(\RRd; \RR^{d-1})\cap C^{\infty}(\RRd\setminus\d\RRdh; \RR^{d-1})$ since $\tilde{\nu} = -\grad_{\tilde{x}}h\in C(\RR^{d-1};\RR^{d-1})$. Note that
\begin{equation*}
    \d_1 \tilde{h}_2(0,\tilde{a})= \lim_{y_1\to 0}\frac{y_1\tilde{\mc{V}}((y_1, \tilde{a}))}{y_1}= \tilde{\mc{V}}(0,\tilde{a})=\tilde{\nu}(\tilde{a}),\qquad \tilde{a}\in \RR^{d-1},
\end{equation*}
and all the tangential partial derivatives of $\tilde{h}_2$ at $\d\RRdh$ are zero. If $y\in \RR^d\setminus\d\RRdh$ and $|\alpha|=1$, then by a similar computation as \eqref{eq:claimh_2est} (with $|\overline{\beta}|=1$, $\beta=0$, $\ell_0=0$ and $h$ replaced by $\tilde{\nu}$) and a substitution $\tilde{z}\mapsto \tilde{y}-L^{-1}y_1\tilde{z}$, we obtain
\begin{align*}
    \d^{\alpha}\tilde{\mc{V}}(y)&= y_1^{-1}\Big(\frac{y_1}{L}\Big)^{1-d}\int_{\RR^{d-1}}\tilde{\nu}(\tilde{z})\ph_{0,\alpha,0}\Big(\frac{(\tilde{y}-\tilde{z})L}{y_1}\Big)\dd \tilde{z}\\
    &=y_1^{-1}\int_{\RR^{d-1}}\tilde{\nu}(\tilde{y}-L^{-1}y_1\tilde{z})\ph_{0,\alpha,0}(\tilde{z})\dd\tilde{z},
\end{align*}
where $\ph_{0,\alpha,0}\in \Cc^\infty(\RR^{d-1})$ and $\int \ph_{0,\alpha,0}(\tilde{z})\dd\tilde{z}=0$. Let $y=(y_1,\tilde{y})\in \RR^{d}\setminus\d\RRdh$, then we obtain
\begin{equation}\label{eq:partderh2}
    \begin{aligned}
    \d_{y_1}\tilde{h}_2(y) &= \tilde{\mc{V}}(y)+ y_1 \d_{y_1}\tilde{\mc{V}}(y)= \int_{\RR^{d-1}}\tilde{\nu}(\tilde{y}-L^{-1}y_1\tilde{z})\big[\ph(\tilde{z})+ \ph_{0,e_1,0}(\tilde{z})\big]\dd\tilde{z},\\
    \d_{y_j}\tilde{h}_2(y) &= y_1\d_{y_j}\tilde{\mc{V}}(y)=\int_{\RR^{d-1}}\tilde{\nu}(\tilde{y}-L^{-1}y_1\tilde{z})\ph_{0,e_j,0}(\tilde{z})\dd\tilde{z},\qquad j\in\{2,\dots, d\}.
\end{aligned}
\end{equation}
By applying the dominated convergence theorem and the properties of $\ph$ and $\ph_{0,e_j,0}$ for $j\in \{1,\dots, d\}$, we find that for all $\tilde{a}\in \RR^{d-1}$
\begin{equation*}
    \lim_{y\to (0, \tilde{a})}\d_{y_1}\tilde{h}_2(y)= \tilde{\nu}(\tilde{a})\quad \text{ and }\quad  \lim_{y\to (0, \tilde{a})}\d_{y_j}\tilde{h}_2(y) =0,\quad j\in \{2,\dots, d\}.
\end{equation*}
Hence, all the partial derivatives of $\tilde{h}_2$ at $\d\RRdh$ exist and are continuous. Therefore, $\tilde{h}_2\in C^1(\RR^d; \RR^{d-1})$ and we have proved the claim.

The Jacobian matrix of $\overline{\Psi}$ is given by
\begin{equation}\label{eq:DPsi}
    \begin{aligned}
    D \overline{\Psi}(y)&=\begin{pmatrix}
        1+ \d_{y_1}h_2(y)& (\grad_{\tilde{y}}h_2(y))^\top\\
        \d_{y_1}\big(y_1\tilde{\mc{V}}(y)\big)& I_{d-1}+ y_1D_{\tilde{y}}\tilde{\mc{V}}(y)
    \end{pmatrix}\\
    &= D\overline{\Phi}(y)+ \begin{pmatrix}
        0& 0\\
        \d_{y_1}\big(y_1\tilde{\mc{V}}(y)\big)& y_1 D_{\tilde{y}}\tilde{\mc{V}}(y)
    \end{pmatrix}=: D\overline{\Phi}(y)+ P(y),\quad y\in \RRd,
\end{aligned}
\end{equation}
where $\overline{\Phi}$ is as defined in the proof of Lemma \ref{lem:localization_weighted_blow-up} (see \eqref{eq:DPhi}) and $P$ is a perturbation. 
To show that the mapping $\overline{\Psi}$ is invertible, we make use of Lemma \ref{lem:linearalgebra} applied to $A= D\overline{\Phi}$ and $B=P$. Note that \eqref{eq:partderh2} implies
 $\sup_{y\in \RRd}\|P(y)\|\leq C[\OO]_{C^1}$. Furthermore, we recall from Lemma \ref{lem:localization_weighted_blow-up}\ref{it:lem:localization_weighted_blow-up;est} and the definition of $\overline{\Phi}$ that $\sup_{y\in \RRd}\| D\overline{\Phi}(y)\|\leq C$. From Lemma \ref{lem:linearalgebra}, \eqref{eq:det} and the before mentioned estimates, we obtain that  there exists a $\Lambda\in (0,1)$ such that if $[\OO]_{C^1}\leq \Lambda$, then
\begin{align*}
   \inf_{y\in \RRd} |\det D\overline{\Psi}(y)|&\geq \inf_{y\in\RRd} |\det D\overline{\Phi}(y)|\Big(1-\frac{\|D\overline{\Phi}(y)\|^{d-1}\|P(y)\|}{|\det D\overline{\Phi}(y)|}\Big)^d\\
   &\geq c(1- C[\OO]_{C^1})^d\geq c(1- C\Lambda)^d>0,
\end{align*}
for some $C,c>0$.

Using  Hadamard's inverse function theorem, we can argue similarly as in the proof of Lemma \ref{lem:localization_weighted_blow-up} to obtain an inverse $\Psi\in C^1(\RR^d;\RR^d)\cap C^\infty(\RR^d\setminus\d\OO;\RR^d)$ to $\overline{\Psi}$. We will write $\Psi^{-1}$ for $\overline{\Psi}$. 

Moreover, if $x\in \OO$ and $y=\Psi(x)\in \RRdh$, then by the definition of $\Psi^{-1}=\overline{\Psi}$ it holds that
\begin{equation}\label{eq:defh1tildeh1}
\begin{aligned}
     \Psi_1(x)&= y_1 = x_1 - h_2(y)= x_1 - h_2(\Psi(x)),\\ \tilde{\Psi}(x)& = \tilde{y} = \tilde{x}-\tilde{h}_2(y) = \tilde{x}-\tilde{h}_2(\Psi(x)),
\end{aligned}
\end{equation}
and hence we can write $\Psi(x)=(x_1-h_1(x), \tilde{x}-\tilde{h}_1(x))$ with $h_1(x)=h_2(\Psi(x))$ and $\tilde{h}_1(x)= \tilde{h}_2(\Psi(x))$.
This completes the proof of \ref{it:lem:loc_normal1} and \ref{it:lem:loc_normal3}. Statement \ref{it:lem:loc_normal2} follows similarly as in the proof of Lemma \ref{lem:localization_weighted_blow-up}\ref{it:lem:localization_weighted_blow-up;dist_preserving}.

\textit{Step 2: proof of \ref{it:lem:loc_normal4}.}
 We prove that the diffeomorphism $\Psi$ is admissible for any $x\in \d\OO$. First, note that
\begin{equation*}
    (\d_{y_1}h_2(y))|_{y_1=0} = -L^{-1} (\grad h)(\tilde{y})\cdot\int_{\RR^{d-1}} \tilde{z}\ph(\tilde{z})\dd \tilde{z}=0,\quad y\in \RRd,\label{eq:Jacobi_left_upper_y1=0}
\end{equation*}
where we have used the radial symmetry of $\ph$ to obtain that the latter integral vanishes, i.e., $\int_{\RR} z_i\ph(\tilde{z})\dd z_i=0$ for all $i\in\{2,\dots, d\}$. Together with \eqref{eq:normalvector}, we see that the push-forward of $\overline{\Psi}=\Psi^{-1}$ (recall \eqref{eq:DPsi}) at the boundary $\d\RRdh$ is given by 
\begin{equation*}
    D \overline{\Psi}(0, \tilde{y})=\begin{pmatrix}
        1 & (\grad_{\tilde{y}}h(\tilde{y}))^{\top}\\
        \tilde{\mc{V}}(0, \tilde{y}) & I_{d-1}
    \end{pmatrix}
    =\begin{pmatrix}
        1 & (\grad_{\tilde{y}}h(\tilde{y}))^{\top}\\
        -\grad_{\tilde{y}}h(\tilde{y}) & I_{d-1}
    \end{pmatrix},\qquad \tilde{y}\in \RR^{d-1}.
\end{equation*}
It is clear that for all $\tilde{y}\in \RR^{d-1}$, the push-forward $D \overline{\Psi}(0, \tilde{y})$ maps the normal vector $(1,0)$ at $\d\RRdh$ to the normal vector $\nu(x)$ at $\d\OO$. Similarly, any tangent vector at $\d\RRdh$ is mapped to the tangent space at $\d\OO$. Moreover, note that the push-forward does not preserve the length of the vectors. This proves that $\overline{\Psi}=\Psi^{-1}$ is admissible for any $y\in \d\RRdh$, hence $\Psi$ is admissible for any $x\in \d\OO$ as well.

\textit{Step 3: proof of \ref{it:lem:loc_normal5}.} Let $\ell\in \NN_1$, $\lambda\in [0,1]$ and let $\OO$ be a special $\Cc^{\ell,\lambda}$-domain with $[\OO]_{C^{\ell, \lambda}}\leq \Lambda$.
Note that the estimates on the derivatives of $h_2$ follow immediately from Lemma \ref{lem:localization_weighted_blow-up}\ref{it:lem:localization_weighted_blow-up;est}. Let $\alpha=(\alpha_1,\tilde{\alpha})\in \NN_0\times \NN_0^{d-1}$, $\ell_0\in \{0,\dots, \ell\}$ and $\lambda_0\in [0,\lambda]$. For the estimate on $\d^{\alpha}\tilde{h}_2$, we find with the product rule 
\begin{equation}\label{eq:esttildeh2}
\begin{aligned}
      |\d^\alpha\tilde{h}_2(y)|&\lesssim |y_1\d^{\alpha}\tilde{\mc{V}}(y)| +|\d^{(\alpha_1-1,\tilde{\alpha})}\tilde{\mc{V}}(y)|,\quad y\in \RRdh,
\end{aligned}
\end{equation}
where the latter term is only present if $\alpha_1\geq 1$.
Since $\tilde{\mc{V}}$ is obtained by mollifying $\tilde{\nu}=-\grad h$, we can redo Step 2 in the proof of Lemma \ref{lem:localization_weighted_blow-up} but with $h$ replaced by $-\grad h$, to obtain 
\begin{equation*}
    |\d^\beta \tilde{\mc{V}}(y)|\leq C\, [\OO]_{C^{\ell,\lambda}}\, y_1^{-(|\beta|+1-\ell_0-\lambda_0)_+},\qquad y\in\RRdh,\, \beta\in\NN_0^d. 
\end{equation*}
Applying this estimate to \eqref{eq:esttildeh2} gives the desired estimates for $\tilde{h}_2$.

It remains to prove the estimates on $h_1$ and $\tilde{h}_1$, which are defined as below \eqref{eq:defh1tildeh1}. Hence, as $h_2$ and $\tilde{h}_2$ are bounded, we find that $h_1$ and $\tilde{h}_1$ are bounded as well. To prove the estimates on the derivatives $\d^\alpha h_1$ and $\d^\alpha \tilde{h}_1$ for $|\alpha|\geq 1$, one can proceed by induction similar to Step 3 in the proof of Lemma \ref{lem:localization_weighted_blow-up}.
\end{proof}

\bibliographystyle{plain}
\bibliography{references_LLRV25}
\end{document}